\documentclass{article} 
\usepackage{graphicx} 
\usepackage[utf8]{inputenc}
\usepackage[numbers]{natbib}
\usepackage{graphicx}
\usepackage{mathdots}
\usepackage{hyperref}
\usepackage{float}
\usepackage{booktabs}
\usepackage{adjustbox}
\usepackage[linesnumbered,ruled,vlined]{algorithm2e}
\usepackage{xltabular}
\usepackage{amssymb}

\usepackage{tabularx}       
\usepackage{siunitx}        
\usepackage{multirow}
\usepackage{amsmath, amssymb, bm}
\usepackage{bbm}  
\usepackage{amsmath,amsthm,amssymb,mathrsfs}
\usepackage{geometry} 
\usepackage{centernot} 
\usepackage{stmaryrd}
\usepackage{graphicx}
\usepackage{tikz} 
\usetikzlibrary{shapes,decorations}
\usepackage{caption} 
\usepackage{subcaption} 
\usepackage{mdframed} 
\usepackage{upquote}
\usepackage{booktabs}
\usepackage{xspace}
\usepackage{hyperref}
\usepackage[normalem]{ulem} 
\usepackage[title]{appendix}

\usepackage[ruled,vlined]{algorithm2e}
\usepackage{algorithmic}

\usepackage{makecell}   

\newcommand{\bena}{\begin{eqnarray}\begin{array}{l}}
\newcommand{\eena}{\end{array}\end{eqnarray}}
\newcommand{\benas}{\begin{eqnarray*}\begin{array}{l}}
\newcommand{\eenas}{\end{array}\end{eqnarray*}}
\newcommand{\benall}{\begin{eqnarray}\begin{array}{ll}}
\newcommand{\eenall}{\end{array}\end{eqnarray}}
\newcommand{\besn}{\begin{subnumcases}}
\newcommand{\eesn}{\end{subnumcases}}
\newcommand{\ben}{\begin{eqnarray}}
\newcommand{\een}{\end{eqnarray}}
\newcommand{\bea}{\begin{array}}
\newcommand{\eea}{\end{array}}
\newcommand{\bes}{\begin{subequations}}
\newcommand{\ees}{\end{subequations}}
\newcommand{\bec}{\begin{cases}}
\newcommand{\eec}{\end{cases}}
\newcommand{\bef}{\begin{figure}[H]\centering}
\newcommand{\eef}{\end{figure}}
\newcommand{\bet}{\begin{tikzpicture}}
\newcommand{\eet}{\end{tikzpicture}}
\newcommand{\beq}{\begin{equation}}
\newcommand{\eeq}{\end{equation}}
\newcommand{\bep}{\begin{proof}}
\newcommand{\eep}{\end{proof}}
\newcommand{\bei}{\begin{itemize}}
\newcommand{\eei}{\end{itemize}}
\newcommand{\beu}{\begin{enumerate}}
\newcommand{\eeu}{\end{enumerate}}
\def\beg#1\eeg{\begin{align}#1\end{align}}
\def\begd#1\eegd{\bena \begin{aligned}#1\end{aligned}\eena}
\def\besp#1\eesp{\begin{split}#1\end{split}}
\def\besl#1\eesl{\begin{subequations}\begin{align}#1\end{align}\end{subequations}} 

\def\inc(#1){\includegraphics[width=0.5\linewidth]{#1}}

\newcommand{\bigO}{\mathcal{O}}

\def\be{\ensuremath{{\bf e}}}

\def\bI{\ensuremath{{\bf I}}}

\def\b{\ensuremath{{\bf M}}}

\def\br{\ensuremath{{\bf r}}}

\def\bu{\ensuremath{{\bf u}}}
\def\bV{\ensuremath{{\bf V}}}

\def\bx{\ensuremath{{\bf x}}}
\def\by{\ensuremath{{\bf y}}}

\newtheorem{alg}{Algorithm}[section]
\newtheorem{theorem}{Theorem}[section]
\newtheorem{lemma}[theorem]{Lemma}

\theoremstyle{remark}
\newtheorem{remark}{Remark}[section]

\theoremstyle{definition}
\newtheorem{definition}{Definition}[section]
\newcommand{\la}{\langle}
\newcommand{\ra}{\rangle}

\DeclareMathOperator*{\argmin}{arg\,min}

\theoremstyle{remark}

\title{ Swarm-Based Inertial Methods for Optimization}
\author{Qiyu Wu, Kunhui Luan and Qi Wang\\\\
Department of Mathematics, 
 University of South Carolina, Columbia, SC 29208, USA}
\date{\today}

\begin{document}
\maketitle

\begin{abstract}

We introduce a new class of swarm-based inertial methods (SBIMs) for global minimization, formulated as coupled dissipative inertial dynamical systems derived from the generalized Onsager principle. The proposed framework identifies the friction operator and the scaling of the potential energy, namely the objective function to be minimized, as the key ingredients governing relaxation dynamics over the energy landscape. 
Within this framework, we propose a new underdamped inertial dynamics whose damping mechanisms incorporate both gradient and Hessian information, allowing the system to adjust damping or acceleration according to the agent trajectories and the curvature of the landscape.
Under suitable conditions, we prove that the underdamped system satisfies an energy dissipation law, from which we establish an upper bound on the asymptotic decay rate of the gap between the objective function and its global minimum, given by $O(1/\delta(t))$ (defined in \S 3). We further construct structure-preserving discretizations that retain both discrete energy dissipation and the convergence rate estimate, $O(1/\delta_k)$ (defined in \S3). In addition, we present several other efficient numerical algorithms for the dynamical system.
Numerical experiments for all proposed algorithms validate the theory on convex test problems and demonstrate convergence rates in function values that are substantially faster than the theoretical guarantees ($O(1/\delta_k)$). On nonconvex benchmark problems, the proposed methods achieve high success rates in reaching the global minimum, and exhibit more stable energy decay than swarm-based gradient descent and Nesterov methods. Overall, this work provides a systematic framework for the construction and analysis of SBIMs from an energy-dissipative perspective.

\end{abstract}
\noindent {\bf Keywords:} {Optimization, swarm-based method, dissipative dynamical system, inertial dynamics, structure-preserving algorithms.}
\section{Introduction}

\noindent \indent 
Optimization problems involve minimizing or maximizing an objective function. Since any maximization problem can be reformulated as a minimization problem by negating the objective function, we focus on global minimization problems.
Swarm-based minimization seeks to explore the minimum of an objective function or potential energy landscape, denoted here by 
$F(x)$, by evolving a collection of interacting agents rather than a single agent.  This multi-agent strategy increases the likelihood of reaching a global minimum in nonconvex landscapes. However, the general mathematical principles underlying the design of such methods remain incomplete. In this paper, we develop a systematic, dynamics-driven framework for constructing \emph{swarm-based inertial methods} (SBIMs) for minimization through a novel underdamped, dissipative, inertial dynamical system. The proposed methods are designed to improve the convergence rate of the objective function, guided by an energy dissipation law.

In general SBIMs, each agent $x_i$ with mass $m_i$, where $i=1, \cdots, N$ indexes the agents, evolves according to a coupled dynamical system whose relaxation dynamics can be derived from the generalized Onsager principle~\cite{Wang2020}: 
\begin{equation}\label{onsager:inrtial}
m_i \ddot{x}_i
+ \left(\tfrac{1}{2}\dot{m}_i + m_i R_i\right)\dot{x}_i
+ a_i \nabla F(x_i) = 0,\quad \dot{m}_i=\Psi(m_i, x_i),
\end{equation}
where $R_i$ is the friction operator, whose form depends on the specific model, $a_i$ is the scaling operator acting on the potential energy, and $\Psi(m_i, x_i)$ is a prescribed decay rate for the ith agent's mass $m_i$. 
By choosing  $R_i$ and $a_i$ and stipulating  the dynamics of $m_i$, this formulation provides a guiding framework for designing swarm-based inertial dynamics and subsequent numerical algorithms. This framework incorporates the influence of mass on each agent and extends the previous work by Lu et al.~\cite{lu2024swarm}.

In this study, we choose $R_i$ and $a_i$ in~\eqref{onsager:inrtial} to arrive at a novel family of inertial dynamical system for agent i as follows (note that we drop the index i for the sake of simplicity):
\begin{equation} \label{ODE}
\ddot{x}(t) + [\frac{\alpha}{t}\mathbf{I}
- \frac{\alpha}{t} \mathbf{1} \nabla F(x(t))^T
+ \gamma(t)\nabla^2 F(x(t))]\cdot \dot{x}(t)
+ \beta(t)\nabla F(x(t)) = 0,
\end{equation}
where $\alpha > 0$ is a constant, $\mathbf{I}$ is the $d\times d$ identity matrix, $\beta(t), \gamma(t) > 0$ are time-dependent functions that implicitly incorporate mass, $\mathbf{1}=(1, \cdots, 1)^T \in R^d$ is a column vector, and $\nabla^2=\nabla \nabla$. In particular, $\beta(t) = \frac{a}{m}$. 
The key feature of~\eqref{ODE} is that the damping mechanism takes into account \emph{both} the gradient and Hessian information. When the agent is in a basin with a local minimum, the friction is enhanced by a positive definite Hessian; whereas the friction is reduced when the agent is in a region with a negative definite Hessian \emph{et al.}~\cite{attouch2022first}. In addition, 
one of the main novelties of ~\eqref{ODE} is the new nonlinear "disturbing" term $- \frac{\alpha}{t} \nabla F(x(t)) \cdot \dot{x}(t) \mathbf{1}$, which extends the model of Attouch \emph{et al.}~\cite{attouch2022first}. The new term plays two distinctive roles: when the agent is in a descent trajectory in the energy landscape, this adds an additional friction force in all directions along the trajectory, whereas it adds an accelerating force in all directions when the agent is in an ascent trajectory, disturbing the system and promoting more vigorous exploration.

This new mechanism contributes to an acceleration when the agent is on a path where the potential energy increases and a deceleration while it is on a descent trajectory.
As a result, the evolution of the effective kinetic energy is more complex in system~\eqref{ODE}. 
This is detailed in Theorem~\ref{continuous:thm} in Section~\ref{section:new:inertial}.

In global minimization problems,  $F - F^*\geq 0$ is well defined. Moreover, $F - F^*$ can be interpreted as the effective potential energy or the optimality gap in function values, so it is appropriate to establish convergence rates in terms of $F - F^*$.
In Theorem~\ref{continuous:thm}, we show that  the quantity, $F(x(t)) - F^*$, converges to zero asymptotically at a rate of bounded by $\mathcal{O}\!\left(1/\delta(t)\right)$ in dynamical system ~\eqref{ODE}, where $\delta(t)\to \infty$  as $t\to \infty$, defined in Section~\ref{section:new:inertial}.
We then develop structure-preserving numerical algorithms for system \eqref{ODE} and show that numerical function value $F_k - F^* $ converges at least in the rate of $\mathcal{O}\!\left(1/\delta_k\right)$, as $k\to \infty$, where $\delta_k$ is the discrete analogue of $\delta(t)$ (see Theorem~\ref{fully-discretized:energystability} and~\ref{imexrb:energystability} in Section~\ref{section:new:inertial}). 

We then design our \emph{swarm-based inertial methods} (SBIMs) by incorporating the numerical algorithms for \eqref{ODE} into the swarm-based optimization framework.
The predicated convergence rates are confirmed in the first part of Section~\ref{numerical:exp} using four convex functions. The numerical results demonstrate much better performance than the theoretical estimates, including faster convergence rates and improved energy stability in higher-dimensional settings.
Finally, we compare the SBIMs with swarm-based gradient descent and Nesterov methods on a large class of nonconvex optimization problems. The results show improved overall performance in reaching the global minimum, substantiated by a wider range of exploration and greater stability when approaching the minimum.

This work is motivated by three main lines of research in the past: swarm-based optimization, dissipative inertial dynamics, and structure-preserving algorithms. We briefly review these areas next
 and explain how they motivate this work.

\subsection{Swarm-based optimization}
\noindent \indent Swarm-based optimization methods use a collection of interacting agents to minimize an objective function $F$ over a domain $\Omega$. Each agent explores the landscape locally, while information exchange among agents guides the swarm toward regions with lower values of the objective function. This multi-agent mechanism is particularly useful for landscapes of the objective function with multiple minima.

Particle swarm optimization (PSO), introduced by Kennedy and Eberhart~\cite{kennedy1995particle}, is one of the best-known swarm-based algorithms. 
In PSO, particles combine local exploration with social interactions (communication stage), updating their states based on their current positions, velocities, and historical best positions.
PSO methods have been studied from various perspectives, including parameter tuning~\cite{shi1998parameter,shi1999empirical,clerc2002particle}, exploration enhancement~\cite{clerc2002particle,eberhart2001tracking,bratton2007defining,pant2007simple,han2014diversity}, network design~\cite{mendes2004fully,liang2005dynamic,bird2006adaptively,kennedy2002population}, and hybridization with other optimization strategies~\cite{angeline1998using,lovbjerg2001hybrid,premalatha2009hybrid,chen2007particle}. Rigorous convergence analysis of PSO remains challenging. Existing approaches include linear stability analysis~\cite{clerc2002converge}, Markov-chain and martingale techniques~\cite{XU201865}, and stochastic differential equation and mean-field formulations~\cite{huang2023global, Ding2024ssde}.
These challenges motivate the development of swarm-based frameworks from a dynamical perspective.

\subsubsection{Gradient-driven swarm dynamical systems}\label{section:eitan}
\noindent\indent Recent work by Lu \emph{et al.}~\cite{lu2024swarm} and Tadmor and Zengino\u{g}lu~\cite{tadmor2024swarm} propose gradient-driven swarm dynamics in which each agent carries both a mass. In the models, the agent mass influences the effective step size of its motion, while mass exchange provides a mechanism for communication across the swarm of agents. 
As a result, agents with smaller masses explore more, whereas those in regions with lower objective function values tend to accumulate mass.

The swarm-based gradient descent model in~\cite{lu2024swarm} consists of the following equations. The mass conservation condition:
\begin{equation}
\sum_{i=1}^{N} m_i(t) = 1,
\end{equation}
where $m_i(t)\in(0,1]$ denotes the mass of agent $x_i$. During the motion, the agents follow the following gradient descent dynamics:
\begin{equation}\label{SBG:inertial}
\frac{d x_i}{dt} = - h_i(x_i,m_i)\nabla F(x_i), \qquad i=1,\dots,N,
\end{equation}
where $h_i$ is a mass-dependent mobility factor. During the communication stage, mass is transferred from agents with higher objective function values to those with lower values, while the total mass remains conserved. A representative communication rule is given by 
\begin{align}\label{masstrans}
\frac{d}{dt}m_i(t) &= -\phi_p(\eta_i(t))\,m_i(t), \qquad i \neq i_-(t),\\
m_{i_-(t)}(t) &= 1 - \sum_{j \neq i_-(t)} m_j(t),
\end{align}
where $i_-(t)\in\arg\min_{1\le j\le N} F(x_j(t))$ and
\[
\eta_i(t) =
\frac{F(x_i(t)) - F_{\min}(t)}{F_{\max}(t) - F_{\min}(t)}, 
\qquad
\phi_p(\eta)=\eta^p,\quad p>0,
\]
with $F_{\min}(t)=\min_j F(x_j(t))$ and $F_{\max}(t)=\max_j F(x_j(t))$. When $F_{\max}(t)=F_{\min}(t)$, $\eta_i(t)=0$.
This class of models provides deterministic dynamical-systems to study the agents' exploration and communication in swarm optimization. We also incorporate this communication mechanism into our swarm-based optimization framework.

\subsection{Dissipative inertial dynamical systems}\label{section:attouch}
\noindent \indent Continuous-time viewpoints play an important role in understanding acceleration and convergence in optimization. Early inertial methods employed in optimization algorithms include the heavy-ball method of Polyak~\cite{polyak1964some}, which introduces momentum into gradient-based optimization, and Nesterov's accelerated gradient method~\cite{Nesterov1983}, which achieves faster convergence for convex problems. Both admit continuous-time interpretations, viewed as second-order dissipative systems with friction~\cite{attouch2000heavy,su2016differential}.

Su, Boyd, and Cand\`es~\cite{su2016differential} showed that Nesterov's accelerated gradient method can be interpreted as the continuous-time limit of the second-order ODE
\begin{equation}\label{Nes:ODE}
\ddot{x}(t) + \frac{3}{t}\dot{x}(t) + \nabla F(x(t)) = 0.
\end{equation}
Subsequently, Attouch \emph{et al.}~\cite{attouch2018fast} considered the more general damping term $\frac{r}{t}\dot{x}(t)$ with $r>0$. This term acts as a time-dependent damping that vanishes as $t\to\infty$, admits a nonincreasing energy, and yields accelerated convergence rates when $r\ge 3$.

In~\cite{alvarez2002second}, Alvarez \emph{et al.} studied a second-order dissipative system including the Hessian in the friction:
\begin{equation}
\ddot{x}(t) + \alpha \dot{x}(t)+ \beta \nabla^2 F(x(t)) \dot{x}(t)+ \nabla F(x(t)) = 0.
\end{equation}
This framework was later extended in~\cite{attouch2022first}, where Attouch \emph{et al.} studied inertial dynamics with time-dependent Hessian-driven damping in the following form,
\begin{equation}\label{Attouch:Hessian:ODE}
\ddot{x}(t) + \frac{\alpha}{t}\dot{x}(t)+ \beta(t)\nabla^2 F(x(t))\dot{x}(t)+ b(t)\nabla F(x(t)) = 0.
\end{equation}
For~\eqref{Attouch:Hessian:ODE}, they established convergence of the continuous-time trajectories via a Lyapunov analysis. Moreover, by viewing $\nabla^2 F(x(t))\dot{x}(t)$ as the time derivative of $\nabla F(x(t))$, they derived first-order discrete algorithms that incorporate gradient-difference corrections. These algorithms preserve key features of the continuous-time model and retain accelerated convergence rates in discrete time under suitable assumptions and parameter choices.

\subsection{Structure-preserving discretizations}
\noindent \indent Bridging continuous-time dynamics and numerical algorithms has motivated substantial work on inertial ODEs, Hessian-driven damping, and related numerical schemes; see, for example,~\cite{attouch2022first,attouch2022fast,attouch2020newton,aujol2023fast} for more details.In general, many optimization algorithms can be understood as time discretizations of continuous dynamical systems. 
This viewpoint has led to growing interest in optimization schemes that retain key features of the continuous dynamics, such as stability, energy dissipation, and convergence behavior; see, for example,~\cite{muehlebach2021optimization,shi2019acceleration}. Related methods for accelerating optimization by using previous iterates can be found in~\cite{scieur2016regularized}.

\subsection{Outline of the paper}
\noindent \indent The works above motivated us to develop a numerical optimization framework that combines swarm interaction, inertial dynamics, and structure-preserving discretization. In this paper, we develop a framework for relaxation dynamics through the Onsager principle~\eqref{onsager:inrtial}, propose new inertial dynamics~\eqref{ODE}, and integrate them into a swarm-based optimization framework with corresponding structure-preserving numerical algorithms. Our main contribution is to formulate this framework and analyze both the continuous-time system and the discrete algorithms via an energy or Lyapunov approach.

The remainder of this paper is organized as follows. In Section~\ref{section:sbim}, we introduce relaxation dynamics within the swarm-based optimization framework using the generalized Onsager principle, and use the Nesterov accelerated dynamics~\eqref{Nes:ODE} to illustrate parameter selection. In Section~\ref{section:new:inertial}, we present the proposed swarm-based inertial dynamics~\eqref{ODE} and analyzes its asymptotic behavior via a Lyapunov functional in Theorem~\ref{continuous:thm}; we also develop discrete algorithms that preserve the accelerated convergence rates in Theorems~\ref{fully-discretized:energystability} and~\ref{imexrb:energystability}. In Section~\ref{numerical:exp}, we conduct numerical experiments to confirm convergence rates, to demonstrate performance of the swarm-based algorithms on nonconvex problems, and to compare the results with various available swarm-based methods. Finally, Section~\ref{section:conclude} concludes the paper and discusses future directions. Additional proofs and numerical results are provided in the appendix.



\section{Swarm-based inertial algorithms}\label{section:sbim}
\noindent\indent In the swarm-based optimization with $N$ agents, optimizing the objective function $F(x_i)$ for each agent $x_i$ requires a specific optimization algorithm to reduce the value of $F(x_i)$, $i = 1, \dots, N$ during the process. In classical swarm-based optimization methods, the steepest gradient descent is commonly used. However, a major drawback of the gradient descent approach is that some agents may be trapped in local extrema. To address this limitation and improve the optimization process, we introduce an underdamping mechanism to allow the system to escape from local extrema and thereby to improve the chance of reaching the global extremum within a non-equilibrium thermodynamic framework. 

Specifically, we replace the overdamped gradient descent method with an underdamped  approach with inertia. We call this method the Swarm-Based Inertial Method (SBIM). The introduction of inertia into the optimization process or dynamics strengthens the coupling between each agent’s dynamics and its mass, which effectively enhances the activity (or vitality) of the agents during the optimization process. If the inertia and damping are properly balanced, this approach can increase the likelihood that one of the agents reaches the global extremum. As the result, the method could significantly improve the swarm-based global optimization process when optimizing non-convex objective functions in practice.

\subsection{Relaxation dynamics with inertia }\label{Relaxation:dynamics}

\noindent \indent We first discuss how to design the under-damped dynamics systematically. We note that an optimization algorithm is in fact a discrete dynamical system. Its convergence could be analyzed by connecting the discrete iteration with a corresponding continuous-time dynamical system. Lyapunov functions are then  called upon to understand the long-time behavior of the dynamical system, which often sheds light on convergence of the algorithm.
Motivated by this connection, we begin with the total mechanical energy of a continuous dynamical system, consisting of the kinetic energy and the potential energy (i.e., the objective function). We then formulate a relaxation dynamics that dissipate the energy of the system in a certain way. Discretizing the dynamical system yields an optimization algorithm.  

Specifically, the mechanical energy, or Lyapunov function, for each agent $x_i$ is defined as follows:
\ben\label{def:sb:energy}
E_i=E(x_i)=\frac{1}{2}m(x_i,t)\|\dot{x_i}(t)\|^2+a_iF(x_i),\quad i=1, \cdots, N,
\een
 $a_i > 0$ is a scaling constant, $F(x_i)$ and $\frac{1}{2} m(x_i,t)\|\dot{x}_i(t)\|^2$ denote the potential energy and kinetic energy of the agent, $x_i(t)$, respectively.

In a dissipative dynamical system, the kinetic energy should eventually vanish while the potential energy reaches its minimum at the value $\min_{x \in \Omega} F(x)$.  To design such relaxation dynamics, we compute the energy dissipation rate directly from \eqref{def:sb:energy}:
\begin{align*}
    {\frac{d}{dt}}E_i &= \frac{1}{2} \bigg(\frac{\partial m}{\partial x_i}\dot{x_i}+\frac{\partial m}{\partial t}\bigg)\|\dot{x_i}\|^2+m(x_i,t)\ddot{x_i}\dot{x_i}+a_i\nabla F(x_i)\cdot \dot{x_i}\\
    &= \frac{1}{2}\frac{dm}{dt}\dot{x_i}\dot{x_i}+\bigg(m_i\ddot{x_i}+a_i\nabla F(x_i)\bigg)\dot{x_i}\\
    &=\bigg(\frac{1}{2}\dot{m_i}\dot{x_i}+m_i\ddot{x_i}+a_i\nabla F(x_i)\bigg)\dot{x_i}.
\end{align*}

To ensure energy dissipation, we consider a quadratic form and set
\[
\frac{d}{dt} E_i = - m_i \dot{x}_i^{\top} R_i \dot{x}_i,
\]
with $R_i \ge 0$, where $m_i R_i$ denotes the friction matrix or operator for the $i$-th agent. The prefactor $m_i$ in front of the friction operator reflects that the energy dissipation rate scales with the mass of the agent, which couples mass to the swarm-based dynamics.
Under this assumption, we obtain the momentum balance equation for the $i$-th agent as follows:
 \begin{equation}\label{NM}
     m_i\ddot{x_i}=-\bigg(\frac{1}{2}\dot{m_i}+m_iR_i\bigg)\dot{x_i}-a_i\nabla F(x_i).
 \end{equation}

 We denote $\dot{x}_i(t) = y_i(t)$. Together with a postulated evolutionary equation of mass, complete governing system of equations for the N-agent system is then rewritten into the following system:
\ben
\left\{
\bea{l}
\dot{x}_i = y_i,\\
\dot{y}_i = -\bigg(\dfrac{\dot{m}_i}{2m_i} + R_i\bigg) y_i - \dfrac{a_i}{m_i} \nabla F(x_i)=-(-\frac{1}{2}\phi_p+{R_i})y_i-\frac{a_i}{m_i}\nabla F_i,\\
\dot{m_i}=-\phi_p(m_i,t)m_i, \quad i=1,\cdots, N.
\eea
\right.\label{N-system}
\een
 This is an under-damped dynamical system, where the total energy decreases with a proper choice of the friction matrix, $R_i$. The total energy is given by
\ben
E=\sum_{i=1}^NE_i(x_i)=\sum_{i=1}^N[\frac{1}{2}m(x_i,t)\|y_i(t)\|^2+a_iF(x_i)].
\een

If we ignore the inertial term in the momentum balance equation, it reduces to the over-damped dynamical system, on which the gradient descent algorithm is based. We remark that different choices of $R_i$ yields different descent algorithms. For example, $R_i=r \bI$ leads to the steepest descent method while $R_i=\nabla F(\bx_i)$ yields the Newton's method.

In summary, the dynamical system of a compressible system is given by \eqref{N-system}, in which mass may not be conserved.
To add an incompressible constraint to the system, the dynamical system must be modified as follows:
\ben
\left\{
\bea{l}
\dot{x_i}=y_i,\\
\dot{y_i}=-(-\frac{1}{2}\phi_p+{R_i})y_i-\frac{a_i}{m_i}\nabla F_i,\ i\neq i_-,\\
\dot{m_i}=-\phi_p(m_i,t)m_i,\ i\neq i_-,\\
\sum_{i=1}^N m_i=1.
\eea\right.
\label{gf-2}
\een
We will comment on the performance of swarm-based inertial algorithms designed based on the incompressible dynamics later.

Notice that \eqref{gf-2} is not a gradient flow system. However, 
the following subsystem when $m_i$ is viewed as a prescribed is a gradient flow:
\ben
\left(
\bea{l}
\dot{x_i}\\
\dot{y_i}
\eea\right)
=\left(
\bea{cc}
0 & 1\\
 -1&-(R_i-\phi_p/2)\eea
\right)
\cdot \left(
\bea{l}
\frac{\delta e_i}{\delta x_i}\\
\frac{\delta e_i}{\delta y_i}
\eea
\right),\label{GF-3}
\een
where $e_i=\frac{E_i}{m_i}$. We define
\ben
Q_i=R_i-\phi_p/2, i=1,\cdots, N.
\een
The dynamical system is written into the following form
\ben
\left(
\bea{l}
\dot{x_i}\\
\dot{y_i}
\eea\right)
=\left [\left(
\bea{cc}
0 & 0\\
0&-Q_i
\eea
\right)+\left (
\bea{cc}
0& 1\\
-1 & 0
\eea\right)\right]
\cdot \left(
\bea{l}
\frac{\delta e_i}{\delta x_i}\\
\frac{\delta e_i}{\delta y_i}
\eea
\right),
\een
where the mobility matrix can be decomposed into a symmetric and an anti-symmetric part. The symmetric part contributes to energy dissipation provided $Q_i>0$, while the anti-symmetric part does not. In this study, we assume $Q_i\geq 0$ in addition to $R_i\geq 0$ for $i=1,\cdots, N$. 

In general, we can introduce a general dynamical system to guide the relaxation of the agents over the landscape defined by the energy as follows:
\ben
\left(
\bea{l}
\dot{x_i}\\
\dot{y_i}
\eea\right)
=-\left(
\bea{cc}
M_{11}& M_{12}\\
M_{21}& M_{22}
\eea
\right)
\cdot \left(
\bea{l}
\frac{\delta e_i}{\delta y_i}\\
\frac{\delta e_i}{\delta x_i}
\eea
\right),\label{GF-30}
\een
where $(M_{ij})\geq 0$ is known as the mobility operator. The selection of an effective mobility operator using asymptotic convergence and convergence rates as design criteria is systematically studied in~\cite{fazlyab2018analysis}. Here, we will not pursue this general dynamics in this study.

\subsection{Nesterov accelerated dynamics within the SBIM framework}


\noindent \indent For optimization problems, the classical Nesterov accelerated gradient method is given as follows. Given an initial point $x_0$ and $y_0 = x_0$ and step size $s$, the iterative scheme generates the following sequence:
\ben
\left\{
\bea{l}
x^{k}=y^{k-1}-s\nabla F(y^{k-1}),\\
y^{k}=x^{k}+\frac{k-1}{k+2}(x^{k}-x^{k-1}),\quad k=1, 2, \cdots.
\eea\right.
\een
According to \cite{su2016differential}, the Nesterov accelerated gradient method can be obtained from a proper discretization of the following ODE:
\ben
\ddot{X}+\frac{3}{t}\dot{X}+\nabla F(X)=0,\label{Nesterov-ode}
\een
where $\Delta t=\sqrt{s}$ and $t=k\Delta t$.

Compared with ODE \eqref{NM} in SBIM, we have the following mass dynamic equation for the i-th agent:
\ben\label{continuous:Riai}
\left\{
\bea{l}
\dot{m_i}=(-2R_i+\frac{6}{t})m_{i},\\
a_i=m_i,
\eea\right.
\een
where $t=k\sqrt{s},\ k\in \mathbb{N^+}$. 

We discretize \eqref{GF-3} using the backward Euler method on the first and the forward Euler on the second equation to obtain the Nesterov method:
\ben
\bea{l}
x_i^{n+1}-x_i^{n}=\Delta t y_i^{n+1},\\
y_i^{n+1}-y_i^{n}=\Delta t[(-R_i^{n}+\frac{1}{2}\phi_p(x_i^{n}))y_i^{n}-\frac{a_i^{n}}{m_i^{n}}\nabla F_i(x_i^{n})],
\eea
\een
where $x_i^{n+1}$ denotes the state of the $i$-th agent at the $(n+1)$-th time step.
In addition, we discretize the dynamical equation of mass using the forward Euler method:
\ben
m_i^{n+1}-m_i^{n}=-\Delta t \phi_p(x_i^{n}) m_i^{n}, i\neq i_-,
\een
where $i_-=arg \min F(x_i^{n}).$ 
To enforce the mass conservation law at every time step, the mass of the agent corresponding to the minimum energy is given by:
\ben
m_{i_-}^{n+1}=1-\sum_{i\neq i_-} m_i^{n+1}.
\een
The above discrete equations give the Nesterov scheme within the SBIM framework.
Moreover, based on \eqref{continuous:Riai}, we have the following requirements on the discretized forms $R_i$ and $a_i$ at the $n$-th time step:

If $i\neq i^-_{n}$
\ben\label{riai:Nes}
\left\{
\bea{l}
-\frac{1}{2}\phi_p(x_i^{n-1})m_{i}^{n-1}+m_{i}^{n} R_i^{n}=\frac{3}{n\sqrt{s}}m_i^{n}\\
a_i^{n}=m_i^{n}
\eea\right.
\een

If $i= i^-_{n}$
\ben
\left\{
\bea{l}
-\frac{1}{2}[(1-\sum_{i\neq i^-_{n}}m_i^{n})-m_{i^-}^{n-1}]+m^n_{i^-} R_{i^-}^n=\frac{3}{n\sqrt{s}}m^n_{i^-}\\
a_{i^-}^{n}=m_{i^-}^{n}
\eea\right.
\een

We summarize the above into the following algorithm.
\begin{alg}[Nesterov Accelerated Gradient SBIM]\label{classical:Nesterov Algorithm}
\ben
\left\{
\bea{l}
x_i^{n+1}=x_i^{n}+\Delta t y_i^{n+1},\\
m_i^{n+1}=m_i^{n}-\Delta t \phi_p(x_i^{n}) m_i^{n}, i\neq i^-_n,\\
m_{i_-}^{n+1}=1-\sum_{i\neq i_-} m_i^{n+1},\ i= i^-_n\\
y_i^{n+1}=y_i^{n}+\Delta t[(-R^n_i+\frac{1}{2}\phi_p(x_i^{n}))y_i^{n}-\frac{a^n_i}{m_i^{n}+\epsilon}\nabla F_i(x_i^{n})],\ i\neq i^-_n,\\
y_i^{n+1}=y_i^{n}+\Delta t[(-R^n_i+\frac{1}{2}\frac{(1-\sum_{i\neq i^-_{n}}m_i^{n+1}-m_{i^-_{n}}^{n})}{m^n_{i^-_{n}}})y_i^{n}-\frac{a^n_i}{m_i^{n}+\epsilon}\nabla F_i(x_i^{n})],\ i= i^-_n,
\eea\right.
\een
where $\epsilon>0$ is a small positive regularization parameter to ensure there is no zeros in denominators.
\end{alg}

The convergence property of the classical Nesterov method has been well studied through analysis of the objective function (potential energy) \cite{aujol2019optimal} as well as  from an ODE perspective \cite{su2016differential}. 
Here, we analyze the convergence property of Algorithm~\eqref{classical:Nesterov Algorithm} in the SBIM setting. Instead of analyzing the error in  $\|x_n - x^*\|$, we focus on a discrete potential energy of the discrete system, $|F_n - F^*|$. We refer to this convergence property as the convergence of function values. We will show next that energy dissipation implies the convergence of $|F_n - F^*|$, which in turn indicates the convergence of $x - x^*$ when $F$ is strongly convex. 

The reason for focusing on the convergence of $F_n - F^*$ is that, in practice, especially for large-scale optimization problems, the distance $\|x_n - x^*\|$ may vary significantly depending on the choice of norm due to the high dimension of the problem. Moreover, for highly oscillatory objective functions, even when $\|x_n - x^*\|$ is small, the value of $F_n - F^*$ can still be large, which is undesirable in minimizing residual problems. 
For this type of optimization problem, the primary goal is to reduce the objective function value rather than finding the exact minimizer. More importantly, when the objective function $F$ is strongly convex, convergence of $F_n - F^*$ implies convergence of $x_n - x^*$. From this perspective, studying the convergence of $F_n - F^*$ is more meaningful than studying $\|x_n - x^*\|$ in some optimization problems

\begin{definition}
The discrete energy for each agent is defined by
\[E_i^{n} = a_i \bigl(F_i^{n} - F(x^*)\bigr) + \frac{1}{2} m_i^{n} \|y_i^{n}\|^2,\]
where $x^*$ is a minimizer of $F$, and $i$ denotes the $i$-th agent. We denote $i_n^- = \arg\min_{1 \le i \le N} F(x_i^{n})$. The total discrete energy of the system is defined as follows
\[E^{n} = \sum_{i=1}^N E_i^{n}.\]
\end{definition}

\begin{theorem}[Discrete Energy Dissipation of the SBIM--Nesterov Scheme]\label{thm:Nes}
	
Let $\epsilon = 0$ in Algorithm~\eqref{classical:Nesterov Algorithm}. Assume for all $i$, $F \in C^1$, $m_i^{n} > 0$ for all $n \in \mathbb{N}$, and the updates of $R_i^{n}$ and $a_i^{n}$ follow \eqref{riai:Nes}.
	
Then, discrete energy in Algorithm~\eqref{classical:Nesterov Algorithm} satisfies
\[E_i^{n+1} - E_i^{n}= -\, m_i^{n}\, \Delta t\, R_i^{n}\, \|y_i^{n}\|^2 \le 0\]
for all $n \in \mathbb{N}$ and all $i \neq i_n^-$, neglecting higher-order terms of order $\mathcal{O}(\Delta t^2)$.
\end{theorem}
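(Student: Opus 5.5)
The plan is a direct first-order expansion of the one-step change of the discrete energy $E_i^{n}$ for a fixed agent $i\neq i_n^-$, using the update rules of Algorithm~\ref{classical:Nesterov Algorithm} with $\epsilon=0$. Throughout, terms of order $\mathcal{O}(\Delta t^2)$ are discarded, as the statement allows. I would write
\[
E_i^{n+1}-E_i^{n}=a_i\bigl(F(x_i^{n+1})-F(x_i^{n})\bigr)+\tfrac12\bigl(m_i^{n+1}\|y_i^{n+1}\|^2-m_i^{n}\|y_i^{n}\|^2\bigr)
\]
and treat the potential and kinetic parts separately. The scaling $a_i$ is taken as $a_i^{n}=m_i^{n}$ by \eqref{riai:Nes}.

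For the potential part, $F\in C^1$ and $x_i^{n+1}-x_i^{n}=\Delta t\,y_i^{n+1}$ give, by Taylor expansion,
\[
a_i^n\bigl(F(x_i^{n+1})-F(x_i^{n})\bigr)=\Delta t\,a_i^n\nabla F(x_i^{n})\cdot y_i^{n+1}+o(\Delta t).
\]
From the $y$-update, $y_i^{n+1}=y_i^n+\mathcal{O}(\Delta t)$, so this equals $\Delta t\,a_i^n\nabla F(x_i^n)\cdot y_i^n$ up to higher order.

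For the kinetic part, I would split
\[
m_i^{n+1}\|y_i^{n+1}\|^2-m_i^{n}\|y_i^{n}\|^2=(m_i^{n+1}-m_i^n)\|y_i^{n+1}\|^2+m_i^n\bigl(\|y_i^{n+1}\|^2-\|y_i^n\|^2\bigr).
\]
The mass update gives $m_i^{n+1}-m_i^n=-\Delta t\,\phi_p(x_i^n)m_i^n$, and by the same $\mathcal{O}(\Delta t)$ argument $\|y_i^{n+1}\|^2$ may be replaced by $\|y_i^n\|^2$. The second bracket equals $2y_i^n\cdot(y_i^{n+1}-y_i^n)+\|y_i^{n+1}-y_i^n\|^2$, and the last term is $\mathcal{O}(\Delta t^2)$. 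Inserting the $y$-update then gives, to first order,
\[
\tfrac12(\cdots)=-\tfrac12\Delta t\,\phi_p m_i^n\|y_i^n\|^2+\Delta t\,m_i^n\bigl(-R_i^n+\tfrac12\phi_p\bigr)\|y_i^n\|^2-\Delta t\,a_i^n\nabla F(x_i^n)\cdot y_i^n .
\]

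Adding the two parts, the gradient terms cancel exactly. This cancellation is the discrete analogue of the skew-symmetric structure in \eqref{GF-3}. The two $\phi_p$ terms also cancel, because the mass loss in the kinetic energy is exactly compensated by the $+\frac12\phi_p$ anti-damping built into the velocity update. What remains is $-\Delta t\,m_i^nR_i^n\|y_i^n\|^2$. This is nonpositive because $R_i\ge 0$, as assumed throughout Section~\ref{Relaxation:dynamics}; the relation \eqref{riai:Nes} expresses $R_i^n$ in terms of the Nesterov damping $3/(n\sqrt{s})$ and is consistent with this.

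The main obstacle is the bookkeeping of orders. The $y$-update is explicit but the $x$-update uses $y_i^{n+1}$, so one must check that every discrepancy between $y_i^{n+1}$ and $y_i^n$ is genuinely $\mathcal{O}(\Delta t)$ before it is multiplied by another factor of $\Delta t$. This requires $\nabla F(x_i^n)$, $R_i^n$ and $\phi_p$ to be bounded along the step. For $\phi_p$ this holds since $\phi_p\in[0,1]$, and for the others it follows from $F\in C^1$ locally; I would state this explicitly as the justification for neglecting the higher-order terms. The restriction $i\neq i_n^-$ is what allows the simple mass update to be used; the minimizing agent's mass is fixed by the conservation constraint instead.
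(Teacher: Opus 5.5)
Your proposal is correct and follows the paper's proof essentially step for step: the same split into potential and kinetic parts, the Taylor expansion with $y_i^{n+1}=y_i^n+\mathcal{O}(\Delta t)$, the mass update $m_i^{n+1}=m_i^n(1-\Delta t\,\phi_p(x_i^n))$, and the exact cancellation of the gradient and $\phi_p$ terms, leaving $-\Delta t\,m_i^nR_i^n\|y_i^n\|^2$. The one difference is the last step: the paper reads the sign directly off \eqref{riai:Nes}, which gives $R_i^n=\frac{3}{n\sqrt{s}}+\frac{\phi_p(x_i^{n-1})m_i^{n-1}}{2m_i^n}>0$, instead of invoking the standing assumption $R_i\ge 0$; this formula, not $F\in C^1$, is also what keeps $R_i^n$ bounded when $t_n=n\sqrt{s}$ stays away from $0$.
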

The proof is given in Appendix ~\ref{proof:NM}.

\begin{remark}\label{remark:Nes}
	Note that in Theorem~\ref{thm:Nes}, we can only prove that the discrete energy corresponding to agents with $i \neq i_n^{-}$ is non-increasing. In general, it is difficult to conclude that the total discrete energy of the system is decreasing. This is because when $i = i_n^{-}$, the discrete energy of that agent may increase as its mass increases. As a result, the total discrete energy of the system may either increase or decrease over time, which leads to oscillatory behavior in the discrete energy plot.
	
	The theorem shows the asymptotic behavior of $y_i^{n}$ for all agents with $i \neq i_n^{-}$. In particular, it implies that the dynamics of the Swarm-Based Nesterov algorithm approach stationary points of the objective function $F$. The limit of the sequence generated by the Swarm-Based Nesterov algorithm for agent i with $i \neq i_n^{-}$ is a stationary point of $F$. If the objective function $F$ is strongly convex, then this stationary point is unique. Moreover, if there exist an index $i^*$ and an integer $N$ such that $i_n^- = i^*$ for all $n > N$, then under the strong convexity of $F$ locally, the trajectory $x_{i^*,n}$ converges to the unique minimizer of $F$.
\end{remark}

The practical implementation of the algorithm is given as follows: 

\begin{algorithm}[H]
\caption{Swarm-Based Nesterov Algorithm}
\label{sbnm:alg}
\DontPrintSemicolon
\SetKwInOut{Input}{Input}
\SetKwInOut{Output}{Output}

\Input{
Number of agents $N$, objective function $F(x)$, time step $h$,\\
parameters $\alpha$,\\
tolerances $\mathrm{tol}_{\mathrm{res}}, \mathrm{tol}_m, \mathrm{tol}_{\mathrm{merge}}$
}

\Output{Final agent set and best solution}

\BlankLine

Initialize agents $\{x_i^{0}\}_{i=1}^N$ randomly.
Set masses $m_i^{0} = 1/N$.
Generate $\{x_i^{1}\}_{i=1}^N$ if needed.

\For{$n=2,\dots$ until the stopping criterion is satisfied}{

  \tcp{1. Communication (mass update)}
  \ForEach{agent $i$}{
    \uIf{$m_i^{n} < \mathrm{tol}_m$}{
      Remove agent $i$\;
    }
    \Else{
      Update $m_i^{n+1}$ using the mass transition rule~\eqref{masstrans}\;
    }
  }

  \tcp{2. Nesterov inertial update}
  \ForEach{remaining agent $i$}{
    Compute $x_i^{n+1}$ using~\eqref{classical:Nesterov Algorithm}\;
    \begin{align*}    
    y_i^{n+1}&=y_i^{n}+h[(-R^n_i+\frac{1}{2}\phi_p(x_i^{n}))y_i^{n}-\frac{a^n_i}{m_i^{n}+\epsilon}\nabla F_i(x_i^{n})],\ i\neq i^-_n,\\
    y_i^{n+1}&=y_i^{n}+h[(-R^n_i+\frac{1}{2}\frac{(1-\sum_{i\neq i^-_{n}}m_i^{n+1}-m_{i^-_{n}}^{n})}{m^n_{i^-_{n}}})y_i^{n}-\frac{a^n_i}{m_i^{n}+\epsilon}\nabla F_i(x_i^{n})],\ i= i^-_n,\\
    x_i^{n+1}&=x_i^{n}+h y_i^{n+1},
    \end{align*}
    where $\epsilon>0$ is a small positive parameter
  }

  \tcp{3. Merging}
  \For{each pair of agents $(i,j)$}{
    \If{$\|x_i^{n+1} - x_j^{n+1}\| < \mathrm{tol}_{\mathrm{merge}}$}{
      Merge agents $i$ and $j$\;
    }
  }

}

\Return best agent and corresponding objective function value\;

\end{algorithm}

	
	
		
		

\section{New underdamped inertial dynamics}\label{section:new:inertial}

\subsection{Underdamped ODEs with damping coupled with the Hessian}

\noindent \indent Recently, a new underdamped dynamics, inspired by the continuous-time ODE formulation of the Nesterov accelerated gradient method,  was introduced, leading to an inertial system with Hessian-driven damping \cite{attouch2022first},
\[\ddot{x}(t) + \frac{\alpha}{t}\dot{x}(t) + \beta(t)\nabla^2 F(x(t))\dot{x}(t) + \nabla F(x(t)) = 0.\]
Compared with the continuous-time Nesterov dynamics, we note that this system adds the curvature effect to the damping dynamics via the Hessian of potential function $F(x)$. 

To promote interactions between the curvature and the energy gradient, we propose a new dynamics for each agent as follows,
\begin{equation}
	\ddot{x}(t)+\frac{\alpha}{t}\dot{x}(t) - \frac{\alpha}{t}\langle \nabla F(x(t)) , \dot{x}(t)\rangle \mathbf{1}
	+ \gamma(t)\nabla^2 F(x(t))\dot{x}(t)+\beta(t)\nabla F(x(t)) = 0,\label{gen-ode}
\end{equation}
where $\alpha > 0, \beta>0,$ and $\gamma(t) > 0$ are control parameters,  $x \in \mathbb{R}^d$, and $\mathbf{1}$ denotes the vector in $\mathbb{R}^d$ whose entries are all equal to one.


For this ODE, we have the following theorem.
\begin{theorem}\label{continuous:thm}
	Let $F : \mathbb{R} \to \mathbb{R}$ be a convex function and in $C^2$, $\nabla F$ is $L$-Lipschitz continuous with Lipschitz constant $L$,  $\alpha \ge 1$,   $x : [t_0, +\infty) \to \mathbb{R}$ a solution trajectory of \eqref{gen-ode},  $x^* := \argmin_{x\in\mathbb{R}} F(x)$, and $F^* := F(x^*)$. We define
	\begin{align}
		\omega(t) &:= \frac{\gamma(t)}{t} + \dot{\gamma}(t) - \beta(t), 		\qquad 
		\delta(t) := -t^2 \omega(t), \\ 
		E(t) &:= \delta(t)\bigl(F(x(t)) - F(x^*)\bigr) + \frac{1}{2}\|v(t)\|^2,\label{eq:lyapunov} \\
		v(t) &:= (\alpha - 1)(x(t) - x^*) 
		- \alpha\bigl(F(x(t)) - F(x^*)\bigr)\mathbf{1}
		+ t\bigl(\dot{x}(t) + \gamma(t)\nabla F(x(t))\bigr).
	\end{align}
	Suppose that the following conditions hold:
	\begin{enumerate}
		\item $\dfrac{\gamma(t)}{t} + \dot{\gamma}(t) - \beta(t) < 0$;
		\item $\dot{\delta}(t) + (\alpha - 1)t\omega(t) - \alpha t \omega(t)\sqrt{2L} \le 0$;
		\item $\gamma(t) \ge 0$.
	\end{enumerate}
	Then,
	\[\frac{d}{dt} E(t) \le 0,\]
	and
    \[F(x(t)) - \min_{x} F \leq \frac{E(t_0)}{\delta (t)}.\]
\end{theorem}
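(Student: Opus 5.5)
The plan is a direct Lyapunov computation. I differentiate $E(t)$ along trajectories of \eqref{gen-ode} and show that the surviving terms have a sign under conditions 1--3. Since the statement takes $F:\mathbb{R}\to\mathbb{R}$, I work with $d=1$, so that $\mathbf{1}=1$ and $\langle \nabla F,\dot x\rangle\mathbf{1}=F'(x)\dot x$. I write the argument so that it formally carries over to $\mathbb{R}^d$.

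\textbf{Step 1: compute $\dot v$.} The key observation is that the disturbing term $-\frac{\alpha}{t}\langle\nabla F,\dot x\rangle\mathbf{1}$ is designed to cancel exactly the derivative of $-\alpha(F-F^*)\mathbf{1}$ in $v$. Differentiating $v$ gives
\begin{equation*}
\dot v=(\alpha-1)\dot x-\alpha\langle\nabla F,\dot x\rangle\mathbf{1}+\dot x+\gamma\nabla F+t\bigl(\ddot x+\dot\gamma\nabla F+\gamma\nabla^2F\,\dot x\bigr).
\end{equation*}
I then substitute $t\ddot x$ from \eqref{gen-ode}. The $\dot x$ terms, the $\langle\nabla F,\dot x\rangle\mathbf{1}$ terms and the Hessian terms all cancel. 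What remains is the clean identity
\begin{equation*}
\dot v=(\gamma+t\dot\gamma-t\beta)\nabla F=t\,\omega(t)\nabla F(x(t)).
\end{equation*}

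\textbf{Step 2: compute $\dot E$.} Using Step 1,
\begin{equation*}
\dot E=\dot\delta\,(F-F^*)+\delta\langle\nabla F,\dot x\rangle+t\omega\langle v,\nabla F\rangle.
\end{equation*}
Because $\delta=-t^2\omega$, the term $\delta\langle\nabla F,\dot x\rangle$ cancels exactly against the $t\cdot t\omega\langle\dot x,\nabla F\rangle$ part of $t\omega\langle v,\nabla F\rangle$. What is left is
\begin{equation*}
\dot E=\dot\delta(F-F^*)+(\alpha-1)t\omega\langle\nabla F,x-x^*\rangle-\alpha t\omega(F-F^*)\langle\nabla F,\mathbf{1}\rangle+t^2\omega\gamma\|\nabla F\|^2 .
\end{equation*}
I treat the four terms on the right as follows.
\begin{itemize}
\item Condition 1 says $\omega<0$, and condition 3 says $\gamma\ge0$. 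Hence the last term is $\le 0$.
\item Convexity gives $\langle\nabla F,x-x^*\rangle\ge F-F^*$. Together with $\alpha\ge1$ and $\omega<0$, this yields $(\alpha-1)t\omega\langle\nabla F,x-x^*\rangle\le(\alpha-1)t\omega(F-F^*)$.
\item For the cross term I use the standard co-coercivity bound for convex $C^{1,1}$ functions, $|\nabla F|\le\sqrt{2L(F-F^*)}$. Since $\omega<0$, this gives $-\alpha t\omega(F-F^*)\langle\nabla F,\mathbf{1}\rangle\le -\alpha t\omega\,(F-F^*)\sqrt{2L(F-F^*)}$.
\end{itemize}

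\textbf{Step 3 (main obstacle): the cross term.} This is the step I expect to be hardest. The bound above carries an extra factor $\sqrt{F-F^*}$ compared with the linear term $-\alpha t\omega\sqrt{2L}\,(F-F^*)$ in condition 2. I would close the gap in one of two ways.
\begin{itemize}
\item \emph{Continuation argument.} Show $F(x(t))-F^*\le1$ along the trajectory. This follows from $F-F^*\le E(t_0)/\delta(t)$ as long as $\delta(t)\ge E(t_0)$, which holds for large $t$ since $\delta\to\infty$; a normalization of $E(t_0)$ handles the initial segment. I would run this as a bootstrap on the maximal interval where $\dot E\le0$.
\item \emph{Absorption, as a fallback.} Use the negative term $t^2\omega\gamma\|\nabla F\|^2$ to absorb part of the cross term via Young's inequality.
\end{itemize}
Once the extra factor is controlled, the three $(F-F^*)$ contributions combine to
\begin{equation*}
\dot E\le\bigl[\dot\delta+(\alpha-1)t\omega-\alpha t\omega\sqrt{2L}\bigr](F-F^*)\le0
\end{equation*}
by condition 2.

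\textbf{Step 4: the rate.} Monotonicity gives $E(t)\le E(t_0)$. Dropping the nonnegative term $\frac12\|v\|^2$, and using $\delta>0$ (again from condition 1), I obtain $\delta(t)\bigl(F(x(t))-F^*\bigr)\le E(t_0)$, which is the claimed estimate.
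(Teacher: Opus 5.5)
Your Steps 1, 2 and 4 are correct and match the paper's computation: the identity $\dot v=t\omega\nabla F$, the choice $\delta=-t^2\omega$ that removes $\langle\nabla F,\dot x\rangle$, the convexity bound, and $|\nabla F|\le\sqrt{2L(F-F^*)}$. The gap is the one you flagged in Step 3, and neither of your repairs closes it.

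\textbf{Why the repairs fail.} Your bootstrap needs $\delta(s)>E(t_0)$ for all $s\ge t_0$. With that as an added hypothesis it is a complete proof, but nothing in the statement supplies it. The property $\delta\to\infty$ is not assumed. Moreover, since $t\omega=-\delta/t$, condition 2 is equivalent to $\dot\delta\le\frac{\delta}{t}\bigl[(\alpha-1)-\alpha\sqrt{2L}\bigr]$, so $\delta$ is nonincreasing whenever $\alpha\sqrt{2L}\ge\alpha-1$.

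``Normalizing $E(t_0)$'' is not free either. Rescaling $F$ alone does not preserve \eqref{gen-ode}. The rescaling that does is $x\mapsto\lambda x$, $F\mapsto\lambda F(\cdot/\lambda)$, $(\beta,\gamma)\mapsto(\lambda\beta,\lambda\gamma)$. It multiplies $E(t_0)/\delta$ by $\lambda$, but it also replaces $L$ by $L/\lambda$, so it strengthens condition 2.

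The Young fallback does not work either. After absorbing into $t^2\omega\gamma|\nabla F|^2$, it leaves the remainder $\frac{\alpha^2|\omega|}{4\gamma}(F-F^*)^2$. This is still superlinear in $F-F^*$, and $\gamma\equiv 0$ is allowed.

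\textbf{The statement itself fails.} No better estimate can fix this, because under conditions 1--3 alone the claim $\dot E\le 0$ is false. Your Step 2 identity expresses $\dot E$ through $x(t)$ only. For quadratic $F$ the cross term is cubic in $x-x^*$, while the other terms are quadratic.

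Take $\alpha=1$, $\gamma\equiv 0$, $F(x)=x^2/4$ (so $L=\tfrac12$), $\beta=c/t^3$ and $t_0=1$. Then $\delta=c/t$, conditions 1--3 hold (condition 2 with equality), $v=t\dot x-x^2/4$, and
\[
\dot E=\frac{c\,x^2}{4t^2}\Bigl(\frac{x}{2}-1\Bigr).
\]
Choose $x(1)=3$ and $\dot x(1)=-\tfrac74$. Then $v(1)=-4$ and $\dot v=-cx/(2t^2)$, so for small $c>0$ the quantity $v$ stays near $-4$. In the variable $\tau=\ln t$ we have $dx/d\tau=v+x^2/4$, which traps $x$ in roughly $[-4,3]$. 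Hence the solution is global, yet $\dot E(1)=9c/8>0$.

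\textbf{Comparison with the paper and a fix.} The paper's proof has the same hole, hidden in the step $aX+bX^{3/2}\le(a+b)\max\{X,X^{3/2}\}$ with $X=F-F^*$. Condition 2 forces $a=\dot\delta+(\alpha-1)t\omega<0$, and then this step holds only for $X\le 1$.

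An extra hypothesis is needed. One option is $F(x(t))-F^*\le M$ on $[t_0,\infty)$, with $\sqrt{2L}$ in condition 2 replaced by $\sqrt{2LM}$; the paper implicitly uses $M=1$. Your Step 3 then reduces to $(F-F^*)^{3/2}\le\sqrt{M}\,(F-F^*)$. Another option is the checkable condition $\delta(t)>E(t_0)$ for all $t\ge t_0$. Under it your bootstrap supplies $F-F^*<1$, and the rest of your argument goes through unchanged.
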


\begin{proof}
	We consider the energy function, $E(t)$, which consists of a potential energy term and a kinetic energy term:
	\begin{equation}\label{energy:func}
	E(t) := \delta(t)(F(x(t))-F(x^*))+\frac{1}{2}\|v(t)\|^2 .
	\end{equation}
	$\delta(t)$ will be determined later. Note that $v(t)$ is no longer given by the simple expression $\dot{x}(t)$. Nevertheless, as $t \to +\infty$, we still have $v(t) \to 0$. The function $v(t)$ is defined as
	\begin{equation}\label{velocity}
	v(t) := (\alpha-1)(x-x^*)-\alpha(F(x(t))-F(x^*))\mathbf{1}+t(\dot{x}+\gamma(t)\nabla F(x(t))).
    \end{equation}
   We now consider the behavior of the energy function over time. Differentiate $E(t)$ yields
    \begin{equation}
    	\dfrac{d}{dt} E(t) = \dot{\delta}(t)(F(x(t))-F(x^*))+\delta(t)\langle\nabla F(x(t)), \dot{x}(t) \rangle + \la v(t),\dot{v}(t)\ra.
    \end{equation}
    Then, from \eqref{velocity} we observe
    \begin{align*}
    	\dot{v}(t) &= (\alpha-1) \dot{x}(t)-\alpha \la \nabla F(x(t)), \dot{x}(t)\ra \mathbf{1}+\dot{x}(t)+\gamma(t)\nabla F(x(t))+ t\bigg(\Ddot{x}(t)+\dot{\gamma}(t)\nabla F(x(t))+\gamma(t) \nabla^2 F(x(t)) \dot{x}(t) \bigg)\\
    	&= \alpha \dot{x}(t)-\alpha \la \nabla F(x(t)),\dot{x}(t)\ra \mathbf{1}+\gamma(t)\nabla F(x(t))+t\bigg(\Ddot{x}(t)+\dot{\gamma}(t)\nabla F(x(t))+\gamma(t)\nabla^2F(x(t))\dot{x}(t)\bigg)\\
    	&= \gamma(t)\nabla F(x(t))-t\beta(t)\nabla F(x(t))+t\dot{\gamma}(t)\nabla F(x(t)) \\
    	&=t\omega(t)\nabla F(x(t)). 
    \end{align*}
    Here we denote $\omega(t):= \frac{\gamma(t)}{t}+\dot{\gamma(t)}-\beta(t)$. Then,
    \begin{align*}
    	\la v(t), \dot{v}(t) \ra&= \la (\alpha-1)(x(t)-x^*)-\alpha (F(x(t))-F^*)\mathbf{1}+t(\dot{x}(t)+\gamma(t)\nabla F(x(t)), t\omega(t)\nabla F(x(t)) \ra \\
    	&= (\alpha-1)t\omega(t) \la(x(t)-x^*), \nabla F(x(t)) \ra-\alpha t\omega(t)(F(x(t))-F^*)\la \mathbf{1},\nabla F(x(t))\ra\\ &+t^2 \omega(t) \la \dot{x}(t), \nabla F(x(t))\ra +t^2 \omega(t)\gamma(t)\|\nabla F(x(t))\|^2
    \end{align*}
     The term $\langle \nabla F(x(t)), \dot{x}(t) \rangle$ is difficult to control. To eliminate this term in $\frac{d}{dt}E(t)$, we define $\delta(t) := -t^2 \omega(t)$.
     
    \begin{align*}
    	\frac{d}{dt}E(t)  &= \dot{\delta}(t)(F(x(t))-F^*)+\delta(t)\la \nabla F(x(t)), \dot{x}(t)\ra  + (\alpha-1)t\omega(t) \la(x(t)-x^*), \nabla F(x(t)) \ra \\ &-\alpha t\omega(t)(F(x(t))-F^*)\la \nabla F(x(t)), \mathbf{1} \ra+t^2 \omega(t) \la \dot{x}(t), \nabla F(x(t))\ra +t^2 \omega(t)\gamma(t)\|\nabla F(x(t))\|^2 \\
    	&= \dot{\delta}(t)(F(x(t))-F^*)+ (\alpha-1)t\omega(t) \la(x(t)-x^*), \nabla F(x(t)) \ra\\&-\alpha t\omega(t)(F(x(t))-F^*)\la \nabla F(x(t)), \mathbf{1} \ra+t^2 \omega(t)\gamma(t)\|\nabla F(x(t))\|^2 .\\ 
    \end{align*}
    Since $F$ is convex, $\omega(t)<0$, and $\alpha>1$, we have
    \begin{align*}
    	F^* - F(x(t)) \geq \la \nabla F(x(t)),x^*-x\ra.
    \end{align*}
    It then follows that
    \begin{align*}
    	(\alpha-1)t\omega(t) \la x-x^*, \nabla F(x(t))\ra \leq (\alpha-1)t\omega(t)(F(x(t))-F^*).
    \end{align*}
    The above inequality implies that:
    \begin{align*}
    \frac{d}{dt}E(t) &\leq (\dot{\delta}(t)+(\alpha-1)t\omega(t))(F(x(t))-F^*)- \alpha t\omega(t)(F(x(t))-F^*)\la \nabla F(x(t)), \mathbf{1}\ra + t^2\gamma(t)\omega(t)\|\nabla F(x(t))\|^2.
    \end{align*}
    By Cauchy-Schwarz, we have that
    \begin{equation*}
    	\la \nabla F(x(t)), \mathbf{1}\ra \leq \|\mathbf{1}\|\|\nabla F(x(t))\| = \sqrt{d}\|\nabla F(x(t))\|.
    \end{equation*}
    In addition, by the $L$-Lipschitz condition on $\nabla F$, $\|\nabla^2 F\|\leq L$ holds.
    Define $\phi(y):= F(x)+\nabla F(x)(y-x)+\frac{L}{2}\|y-x\|^2$.
    Hence,
    \begin{align*}
    	F^*\leq \phi(y^*) = F(x) - \frac{1}{2L}\|\nabla F(x(t))\|^2.
    \end{align*}
    Hence,
    \begin{align*}
    	\|\nabla F(x(t))\| \leq \sqrt{2L (F(x(t))-F^*)}.
    \end{align*}
    We now can demonstrate energy dissipation.
    \begin{align*}
    	\frac{d}{dt} E(t) &\leq \bigg(\dot{\delta}(t)+(\alpha-1)t\omega(t)\bigg)(F(x(t))-F^*)-\alpha\omega(t)t\sqrt{2Ld}(F(x(t))-F^*)^{\frac{3}{2}}
        +t^2\gamma(t)\omega(t)\|\nabla F(x(t))\|^2 \\
    	&\leq \bigg(\dot{\delta}(t)+(\alpha-1)t\omega(t)-\alpha\omega(t)t\sqrt{2Ld}\bigg)\max \bigg\{(F(x(t))-F^*), (F(x(t))-F^*)^{\frac{3}{2}}\bigg\} \\&+ t^2\gamma(t)\omega(t)\|\nabla F(x(t))\|^2.
    \end{align*}
    With $\dot{\delta}(t)+(\alpha-1)t\omega(t)-\alpha\omega(t)t\sqrt{2L} \leq 0$ and $t^2\gamma(t)\omega(t) \leq 0$,
    \[\frac{d}{dt} E(t) \le 0,\]
 	Using the energy dissipation property and the non-negativity of the terms in the energy function $E(t)$, we obtain
	\[\delta(t)(F(x(t))-F^*)\leq E(t)\leq E(t_0).\]
	This implies
    \begin{equation}\label{con:F:con}
    (F(x(t))-F^*)\leq E(t)/\delta(t)\leq E(t_0)/\delta(t).
    \end{equation}

\end{proof}
\begin{remark}
It follows from the result of the theorem, 
\[0\leq F(x(t)) - \min F \leq E(t)/\delta(t)\leq E(t_0)/\delta(t),\]
that  $F(x(t)) - \min F $ is bounded by quantity of order $\mathcal{O}\!\left(\frac{1}{\delta(t)}\right)$ for $t\in [t_0, \infty)$. Namely, $F(x(t)) - \min F$ converges to zero at least at the rate of $\mathcal{O}\!\left(\frac{1}{\delta(t)}\right)$ as $t\to \infty$. In the numerical section, we will show the actual "convergence rate of $F(x(t))-minF$" is better than $\mathcal{O}\!\left(\frac{1}{\delta(t)}\right)$ in all the cases tested.

\end{remark}

\subsection{A fully discretized algorithm (FD algorithm)}

\noindent \indent Guided by the theorem, we next develop a set of discrete algorithms that preserve both the energy dissipation and the convergence rate of function values by properly discretizing the ODE with proper control parameters.  
  In the first scheme, we apply the backward Euler method to discretize $\dot{x}(t)$, $\langle \nabla F(x(t)), \dot{x}(t) \rangle$, and $\nabla^2 F(x(t)) \dot{x}(t)$, and use a central difference to discretize $\ddot{x}(t)$. Let $h > 0$ denote the time step. The discretized  ODE~\eqref{ODE} is given as follows: 
\begin{equation*}
	\frac{x^{k+1}-2x^{k}+x^{k-1}}{h^2}+(\frac{\alpha}{kh}-\frac{\alpha}{kh}\cdot\frac{F^{k+1}-F^{k}}{x^{k+1}-x^{k}}+\gamma_k \frac{\nabla F^{k+1}-\nabla F^{k}}{x^{k+1}-x^{k}})\cdot \frac{x^{k+1}-x^{k}}{h}+\beta_k \nabla F^{k+1} = 0,
\end{equation*}
where $F^{k}=F(x^{k})$.

Multiplying both sides by $kh^2$, we have
\begin{align}\label{fully_discretized_system}
	&k(x^{k+1}-2x^{k}+x^{k-1})+\alpha(x^{k+1}-x^{k})-\alpha(F^{k+1}-F^{k})+\gamma_k kh(\nabla F^{k+1}-\nabla F^{k})
    +\beta_kkh^2 \nabla F^{k+1} = 0.
\end{align}
Although this scheme involves implicit terms in the update of $x^{k+1}$, it preserves the energy dissipation and the convergence rate of $\{F^{k}\}$ stated in Theorem~\ref{continuous:thm} at the discrete level.

\begin{algorithm}[H] 
	\caption{Swarm-Based Fully Discretized Algorithm}
	\label{sbfd:alg}
	\DontPrintSemicolon
	\SetKwInOut{Input}{Input}
	\SetKwInOut{Output}{Output}
	
	\Input{
		Number of agents $N$, objective function $F(x)$, time step $h$,\\
		parameters $\alpha$, $\gamma_k$, $\beta_k$,\\
		tolerances $\mathrm{tol}_{\mathrm{res}}, \mathrm{tol}_m, \mathrm{tol}_{\mathrm{merge}}$
	}
	
	\Output{Final agent set and best solution}
	
	\BlankLine
	
	Initialize agents $\{x_i^{0}\}_{i=1}^N$ randomly.\;
	Set masses $m_i^{0} = 1/N$.\;
	Generate $\{x_i^{1}\}_{i=1}^N$ if needed.\;
	
	\For{$n=2,\dots$ until the stopping criterion is satisfied}{
		
		\tcp{1. Communication (mass update)}
		\ForEach{agent $i$}{
			\uIf{$m_i^{n} < \mathrm{tol}_m$}{
				Remove agent $i$\;
			}
			\Else{
				Update $m_i^{n+1}$ using the mass transition rule~\eqref{masstrans}\;
			}
		}
		
		\tcp{2. Fully discretized inertial update}
		\ForEach{remaining agent $i$}{
			Compute $x_i^{n+1}$ by solving the fully discretized inertial system:
			\begin{align*}
				k(x^{k+1}-2x^{k}+x^{k-1})
				+\alpha(x^{k+1}-x^{k})
				-\alpha\big(F^{k+1}-F^{k}\big)
				+\gamma_k k h(\nabla F^{k+1}-\nabla F^{k})
				+\beta_k k h^2 \nabla F^{k+1}
				= 0
			\end{align*}
			where $x^{k} = x_i^{n}$ and $x^{k-1} = x_i^{n-1}$\;
		}
		
		\tcp{3. Merging}
		\For{each pair of agents $(i,j)$}{
			\If{$\|x_i^{n+1} - x_j^{n+1}\| < \mathrm{tol}_{\mathrm{merge}}$}{
				Merge agents $i$ and $j$\;
			}
		}
		
	}
	\Return best agent and corresponding objective function value\;
	
\end{algorithm}

\begin{theorem}\label{fully-discretized:energystability}
	Let $F: \mathbb{R}\rightarrow\mathbb{R}$ to be a convex function in $C^2$ with $\nabla F$ being $L$-Lipschitz continuous,  $\alpha\geq 1$. Let $x^* \in \arg\min_{\mathbb{R}} F$, and  $F^* := F(x^*)$. 	
We define
	\begin{align*}
		C_k &:= \gamma_{k+1}+k(\gamma_{k+1}-\gamma_k)-\beta_k kh, \quad \delta_k:= -C_k h (k+1), \\
		E^{k}& : = \delta_k (F^{k}-F^*) +\frac{1}{2} \|v_k\|^2, \\
		v_k &:=(\alpha - 1)(x^{k}-x^*) - \alpha (F^{k} - F^*)+k(x^{k}-x^{k-1}+\gamma_k h \nabla F^{k}).
	\end{align*}
	Suppose the following conditions hold:
	\begin{enumerate}
		\item  $C_k = \gamma_{k+1}+k(\gamma_{k+1}-\gamma_k)-\beta_k kh < 0$;
		\item  $\delta_{k+1}-\delta_k + (\alpha-1)C_k h - \alpha C_k h \sqrt{2L} \leq 0$;
		\item  $\gamma_k \geq 0$, $\forall k$.
	\end{enumerate}
	Then, with solutions of (\ref{fully_discretized_system}), we have:
	\[E^{k+1}-E^{k}\leq 0,\ \forall k\] 
	and
    \[0\leq F(x^{k})-\min F\leq  E^{k-l}/\delta_k,\]
where $0\leq l\leq k.$
\end{theorem}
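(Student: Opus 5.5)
The plan is to mirror the continuous Lyapunov argument of Theorem~\ref{continuous:thm}, replacing derivatives by forward differences. We work in one dimension ($F:\mathbb{R}\to\mathbb{R}$), so the difference quotients in \eqref{fully_discretized_system} are legitimate. The main algebraic step is an exact identity for $v_{k+1}-v_k$. From the definition of $v_k$,
\[
v_{k+1}-v_k=(\alpha-1)(x^{k+1}-x^k)-\alpha(F^{k+1}-F^k)+(k+1)(x^{k+1}-x^k)-k(x^k-x^{k-1})+(k+1)\gamma_{k+1}h\nabla F^{k+1}-k\gamma_k h\nabla F^k .
\]
Grouping gives $k(x^{k+1}-2x^k+x^{k-1})+\alpha(x^{k+1}-x^k)-\alpha(F^{k+1}-F^k)$ plus gradient terms. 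Then substitute \eqref{fully_discretized_system} to eliminate the first group, leaving $-\gamma_k kh(\nabla F^{k+1}-\nabla F^k)-\beta_k kh^2\nabla F^{k+1}$. Adding $(k+1)\gamma_{k+1}h\nabla F^{k+1}-k\gamma_k h\nabla F^k$, the $\nabla F^k$ terms cancel. This yields
\[
v_{k+1}-v_k=h\bigl(\gamma_{k+1}+k(\gamma_{k+1}-\gamma_k)-\beta_k kh\bigr)\nabla F^{k+1}=C_k h\,\nabla F^{k+1},
\]
which is the discrete analogue of $\dot v=t\omega\nabla F$. The implicit evaluation of $\nabla F^{k+1}$ is what makes this work.

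Next, to handle the quadratic term, use the identity $\tfrac12\|v_{k+1}\|^2-\tfrac12\|v_k\|^2=\langle v_{k+1},v_{k+1}-v_k\rangle-\tfrac12\|v_{k+1}-v_k\|^2\le \langle v_{k+1},C_kh\nabla F^{k+1}\rangle$. Expanding $v_{k+1}$ gives four terms:
\begin{itemize}
\item $(\alpha-1)C_kh\langle x^{k+1}-x^*,\nabla F^{k+1}\rangle\le(\alpha-1)C_kh(F^{k+1}-F^*)$, by convexity and $C_k<0$;
\item $-\alpha C_kh(F^{k+1}-F^*)\nabla F^{k+1}$, which is bounded using $|\nabla F^{k+1}|\le\sqrt{2L(F^{k+1}-F^*)}$, as in the continuous proof;
\item $(k+1)C_kh\langle x^{k+1}-x^k,\nabla F^{k+1}\rangle$;
\item $(k+1)\gamma_{k+1}C_kh^2|\nabla F^{k+1}|^2\le0$, by condition 3.
\end{itemize}
For the potential part, write
\[
\delta_{k+1}(F^{k+1}-F^*)-\delta_k(F^k-F^*)=(\delta_{k+1}-\delta_k)(F^{k+1}-F^*)+\delta_k(F^{k+1}-F^k).
\]
Convexity gives $F^{k+1}-F^k\le\langle\nabla F^{k+1},x^{k+1}-x^k\rangle$, and $\delta_k=-C_kh(k+1)\ge0$. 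Hence $\delta_k(F^{k+1}-F^k)$ is bounded above by $-(k+1)C_kh\langle\nabla F^{k+1},x^{k+1}-x^k\rangle$, which exactly cancels the third term above. This cancellation is the reason for the choice of $\delta_k$.

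Collecting terms gives
\[
E^{k+1}-E^k\le\bigl(\delta_{k+1}-\delta_k+(\alpha-1)C_kh\bigr)(F^{k+1}-F^*)-\alpha C_kh\sqrt{2L}\,(F^{k+1}-F^*)^{3/2}.
\]
I expect this last step to be the main obstacle. The term of order $3/2$ must be absorbed into condition 2, and this requires $F^{k+1}-F^*\le 1$ or a similar max-type argument, as in the continuous proof. Alternatively, one could exploit $E$-boundedness inductively: if $E^{k}\le E^0$ and $\delta_k$ is large, then $F^{k+1}-F^*$ is small. I would first try the max bound, $(F-F^*)^{3/2}\le\max\{F-F^*,(F-F^*)^{3/2}\}$, accepting the same level of rigor as in the proof of Theorem~\ref{continuous:thm}. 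Since the coefficient $-\alpha C_kh\sqrt{2L}$ is positive, condition 2 then gives $E^{k+1}\le E^k$.

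Finally, monotonicity gives $\delta_k(F^k-F^*)\le E^k\le E^{k-l}$ for $0\le l\le k$. Since $\delta_k>0$ by condition 1, this yields the stated bound, and nonnegativity follows from $x^*$ being a minimizer.
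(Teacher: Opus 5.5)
Up to your last displayed inequality the argument is correct and matches the paper's proof step for step. This covers the identity $v_{k+1}-v_k=C_kh\nabla F^{k+1}$, the polarization identity, convexity at $x^{k+1}$ giving the cancellation that fixes $\delta_k=-C_kh(k+1)$, and the bound $|\nabla F^{k+1}|\le\sqrt{2L(F^{k+1}-F^*)}$. Applying convexity to the potential increment instead of the cross term is immaterial. The step you flagged, however, is a genuine gap. The paper's proof (like that of Theorem~\ref{continuous:thm}) makes exactly the same max-type move, so appealing to ``the same level of rigor'' does not rescue it.

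Put $u=F^{k+1}-F^*\ge0$, $A=\delta_{k+1}-\delta_k+(\alpha-1)C_kh$ and $B=-\alpha C_kh\sqrt{2L}>0$. You have $E^{k+1}-E^k\le Au+Bu^{3/2}$, and condition 2 reads $A+B\le0$, so $A\le-B<0$. The bound $Au+Bu^{3/2}\le(A+B)\max\{u,u^{3/2}\}$ requires $Au\le A\max\{u,u^{3/2}\}$. Since $A<0$, this holds only when $u\le1$. For $u>1$, condition 2 gives nothing better than $Au+Bu^{3/2}\le Bu(\sqrt{u}-1)$, which is positive. So you have only proved $E^{k+1}\le E^k$ at steps where $F^{k+1}-F^*\le1$, where indeed $Au+Bu^{3/2}\le(A+B)u\le0$.

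Your inductive fallback cannot close this, because the implicit step gives no a priori control of $F^{k+1}-F^*$. In fact the statement as written (``with solutions of'' the scheme) is false. Take $F(x)=x^2/16$, so $L=1/8$ and $\sqrt{2L}=1/2$. Take $\alpha=4$, $\gamma_k\equiv\gamma\ge0$, and $\beta_k=\beta/(kh)$ with $\beta>\gamma$. Then $C_k\equiv\gamma-\beta<0$ and the left side of condition 2 is exactly $0$, so all three hypotheses hold. If $x^{k-1}=x^k=0=x^*$, then $E^k=0$. The scheme reduces to $x^{k+1}\bigl(b_k-x^{k+1}/4\bigr)=0$ with $b_k=k+4+(\gamma k+\beta)h/8$. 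Its root $x^{k+1}=4b_k$ gives $E^{k+1}\ge\delta_{k+1}(F^{k+1}-F^*)=\delta_{k+1}b_k^2>0=E^k$.

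So an extra hypothesis on the trajectory is unavoidable; either of the following suffices.
\begin{enumerate}
\item Assume $F(x^k)-F^*\le1$ along the iterates. This makes your max step valid.
\item Assume an a priori bound $F(x^k)-F^*\le U$ (for instance, iterates confined to a bounded set), and strengthen condition 2 by replacing $\sqrt{2L}$ with $\sqrt{2LU}$. Then $Au+Bu^{3/2}=u(A+B\sqrt{u})\le u\bigl(A-\alpha C_kh\sqrt{2LU}\bigr)\le0$.
\end{enumerate}
Either assumption also excludes the spurious far root above.
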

\begin{proof}
	Following the proof of Theorem~\ref{continuous:thm}, we define the discrete energy and velocity as follows:
	\begin{align*}
		E^{k}:& = \delta_k (F^{k}-F^*) +\frac{1}{2} \|v_k\|^2. \\
		v_k :
		&=(\alpha - 1)(x^{k}-x^*) - \alpha (F^{k} - F^*)+k(x^{k}-x^{k-1}+\gamma_k h \nabla F^{k}).
	\end{align*}
	Our goal is to show that $\Delta E^{k} := E^{k+1} - E^{k}$ is non-positive.
	\begin{align}
		\Delta E^{k} 
		&= (\delta_{k+1}-\delta_k)(F^{k+1}-F^*)+\delta_k (F^{k+1}-F^{k})+\frac{1}{2}(\|v_{k+1}\|^2-\|v_k\|^2).
	\end{align}
	Note that
	\begin{equation*}
		\frac{1}{2}(\|v_{k+1}\|^2 - \|v_k\|^2) = \la v_{k+1}-v_k, v_{k+1}\ra - \frac{1}{2}(\|v_{k+1}-v_k\|^2).
	\end{equation*}
	For this term, we only need to compute $\Delta v_k := v_{k+1}-v_k$:
	\begin{align}
		\Delta v_k 
		&= \bigg(   \alpha (x^{k+1}-x^{k}) -\alpha (F^{k+1}-F^{k}) + k(x^{k+1}-2x^{k} +x^{k-1}) +kh \gamma_k (\nabla F^{k+1}-\nabla F^{k})    \bigg) \nonumber \\
		&+\gamma_{k+1}h \nabla F^{k+1}+kh (\gamma_{k+1}-\gamma_k)\nabla F^{k+1} \nonumber \\
		 &=h \bigg( \gamma_{k+1}+k(\gamma_{k+1}-\gamma_k) -\beta_k kh \bigg) \nabla F^{k+1}.
	\end{align}
	Here, we set 
	\begin{equation}
		C_k := \gamma_{k+1} + k(\gamma_{k+1}-\gamma_k)-\beta_k kh.
	\end{equation}
	Then $\Delta v_k = C_k h \nabla F^{k+1}$. We now observe
	\begin{align}
		\frac{1}{2} (\|v_{k+1}\|^2 - \|v_k\|^2) 
		&=  \bigg( C_k(k+1)\gamma_{k+1}h^2 -\frac{h^2}{2}C_k^2 \bigg ) \|\nabla F^{k+1}\|^2 + C_kh(k+1)\la \nabla F^{k+1}, x^{k+1}-x^{k}\ra \nonumber \\
		&- (\alpha-1)C_kh \la \nabla F^{k+1}, x^*-x^{k+1}\ra  - \alpha C_k h \la \nabla F^{k+1}, F^{k+1} -F^*\ra.
	\end{align}
	Since  $C_k = \gamma_{k+1}+k(\gamma_{k+1}-\gamma_k)-\beta_k kh < 0,$, then we have
	\begin{align}
		\frac{1}{2} (\|v_{k+1}\|^2 - \|v_k\|^2)  &\leq -C_k h (k+1)\la \nabla F^{k+1}, x^{k}-x^{k+1}\ra  - (\alpha -1 )C_k h \la \nabla F^{k+1}, x^* - x^{k+1}\ra \nonumber \\
		&- \alpha C_k h \la \nabla F^{k+1}, F^{k+1}-F^*\ra. 
	\end{align}
	By convexity of $F$, two of the three terms above can be controlled in the following inequalities:
	\begin{equation}
		\begin{cases}
			-C_kh(k+1)\la \nabla F^{k+1}, x^{k} - x^{k+1}\ra \leq -C_k h (k+1)(F^{k}-F^{k+1}),\\
			-(\alpha - 1)C_k h \la \nabla F^{k+1},x^*- x^{k+1}\ra \leq -(\alpha-1)C_k h (F^*- F^{k+1}).
		\end{cases}
	\end{equation}
	We then set $\delta_k:= -C_k h (k+1)$. This leads to
	\begin{align}
		\Delta E^{k} &\leq (\delta_{k+1}-\delta_k)(F^{k+1}-F^*) - (\alpha-1)C_kh (F^*- F^{k+1}) \nonumber
		-\alpha C_k h \la \nabla F^{k+1}, F^{k+1} - F^* \ra.
	\end{align}
	With $\nabla F$ $L$-Lipschitz, this gives the following:
	\begin{align}
		\la \nabla F^{k+1}, F^{k+1}-F^*\ra \leq \sqrt{2L}\|F^{k+1}-F^*\|^{\frac{3}{2}}.
	\end{align}
	Altogether, we have
	\begin{align}
		\Delta E^{k} \leq \bigg ((\delta_{k+1}-\delta_k)+(\alpha-1)C_k h - \alpha C_k h \sqrt{2L} \bigg)\max \{F^{k+1}-F^*,\|F^{k+1}-F^*\|^{\frac{3}{2}}\}.
	\end{align}
	With $\delta_{k+1}-\delta_k + (\alpha-1)C_k h - \alpha C_k h \sqrt{2L} \leq 0$ and $\gamma_k \geq 0$, $\forall k$,
	\[E^{k+1}-E^{k}\leq 0,\ \forall k.\]
	Similarly, with the energy dissipation, and the positivity of each term in the energy function, we have
	\[\delta_k (F^{k}-F^*)\leq E^{k}\leq E^{k-l}.\]

\end{proof}
	The theorem implies $0\leq F(x^{k})-\min F$ converges at least in the order of $\bigO(\frac{1}{\delta_k})$ as $k\to \infty$.	
\subsection{The IMEX-RB algorithm}

\noindent \indent In addition to algorithms that preserve energy dissipation and function-value convergence rates, other methods also maintain energy dissipation while generating a convergent sequence of solution values.
The IMEX-RB algorithm is a self-adaptive implicit-explicit time integrator for systems of ODEs. The method is first-order accurate in time.  
For the following initial value problem with $\by: I\subset \mathbb{R}\rightarrow\mathbb{R}^{N}$ and $\by\in C^2(I)$,
\begin{equation}\label{ode:cauchy}
    \begin{cases}
\by'(t)=f(t,\by(t)),\,\forall t\in I,\\[8pt]
\by(0)=\by_0,
\end{cases}
\end{equation}
where $I=(0,T]$ is the time interval and $f :I\times\mathbb{R}^{N}\rightarrow\mathbb{R}^{N}$.
This method was first introduced in \cite{bassanini2025imex}, which generates a sequence $\{\bu^{n}\}_{n=0}^{N_t}$ to approximate $\by(t_n)$.

\begin{alg}(IMEX-RB Algorithm)\label{IMEXRB-single}

\noindent\textbf{Step 1.} Initialize $\by_0$, the time step $\Delta t$, the number of time steps $N_t$, the absolute stability parameter $\epsilon$, the maximum number of inner iterations $M$, and a lower bound $\delta$ to avoid quasi-collinearity at each step of the QR factorization. Set $\bu_0 = \by_0$.\\[0.5em]

\noindent\textbf{Step 2.} While $0 \le n \le N_t - 1$, compute the QR factorization
\[ [\bu^{n}, \dots, \bu^{n-\min\{N,n\}+1}] = \bV^n_{0} \br_n,\]
with $\delta$ imposed as a lower bound to avoid quasi-collinearity.\\[0.5em]

\noindent For $k = 0, 1, \dots, M-1$, do:
\begin{enumerate}
\item Apply an implicit Euler step by solving
\[\bar{\bu}^{n+1}_N=\Delta t\, {\bV^n_{k}}^{T}\bf\bigl(t_{n+1}, \bV^n_{k} \bar{\bu}^{n+1}_N + \bu^{n}\bigr).\]

\item Update
\[\bu^{n+1}_{k}=\bu^{n}+\Delta t\, \bf\bigl(t_{n+1},\bV^n_{k} \bar{\bu}^{n+1}_N + \bu^{n}\bigr).\]

\item Compute
\[\br^{n+1}_{k}=(\bI - \bV^n_{k} {\bV^n_{k}}^{T})\bu^{n+1}_{k}.\]
If \[\|\br^{n+1}_{k}\| / \|\bu^{n+1}_{k}\| \le \epsilon,\]
stop the inner iteration. Otherwise, update
\[\bV^n_{k+1}=\bigl[\bV^n_{k}, \br^{n+1}_{k} / \|\br^{n+1}_{k}\|\bigr].\]
\end{enumerate}

\noindent\textbf{Step 3.} Set $\bu^{n+1} = \bu^{n+1}_{k}$, update $n \leftarrow n+1$, and return to Step $2$. Repeat until $n = N_t$.

\end{alg}

\begin{algorithm}[H]
\caption{Swarm-Based IMEX-RB Algorithm}
\label{sbimex:alg}
\DontPrintSemicolon
\SetKwInOut{Input}{Input}
\SetKwInOut{Output}{Output}

\Input{
Number of agents $N$, objective function $F(x)$, IMEX-RB time step $\Delta t$,\\
parameters $\alpha$, $\gamma_k$, $\beta_k$, IMEX-RB parameters $(\epsilon, \delta, M)$,\\
tolerances $\mathrm{tol}_{\mathrm{res}}, \mathrm{tol}_m, \mathrm{tol}_{\mathrm{merge}}$
}

\Output{Final agent set and best solution}

\BlankLine

Initialize agents $\{x_i^{0}\}_{i=1}^N$ randomly.
Set masses $m_i^{0} = 1/N$.
Generate $\{x_i^{1}\}_{i=1}^N$ if needed.

\For{$n=2,\dots$ until the stopping criterion is satisfied}{

  \tcp{1. Communication (mass update)}
  \ForEach{agent $i$}{
    \uIf{$m_i^{n} < \mathrm{tol}_m$}{
      Remove agent $i$\;
    }
    \Else{
      Update $m_i^{n+1}$ using the mass transition rule~\eqref{masstrans}\;
    }
  }
  \tcp{2. IMEX-RB inertial update}
   \ForEach{remaining agent $i$}{
   Compute $x_i^{n+1}$ by applying the IMEX-RB Algorithm~\eqref{IMEXRB-single} to the ODE system ~\eqref{ode:system}
  }

  \tcp{3. Merging}
  \For{each pair of agents $(i,j)$}{
    \If{$\|x_i^{n+1} - x_j^{n+1}\| < \mathrm{tol}_{\mathrm{merge}}$}{
      Merge agents $i$ and $j$\;
    }
  }

}

\Return best agent and corresponding objective function value\;

\end{algorithm}

In \cite{bassanini2025imex}, the convergence of this method was proved, given in the following theorem.
\begin{theorem}\label{imex:convergence:thm}
    Consider the Cauchy problem of \ref{ode:cauchy}. Let $\by\in C^2(I)$, and suppose that $f$ is Lipschitz continuous in its second argument, with Lipschitz constant $L>0$. Then the IMEX-RB method is convergent of order 1, i.e. $\exists C>0$ such that
    \[\|\be^{n+1}\|=\|\by(t_{n+1})-\bu^{n+1}\|<C\Delta t\].
\end{theorem}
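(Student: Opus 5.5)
The convergence result for IMEX-RB is quoted from \cite{bassanini2025imex}. I would prove it with the standard consistency-plus-stability argument for one-step methods. The key structural observation is that, whatever reduced basis $\bV^n_k$ the inner loop produces, the accepted update has the form
\[
\bu^{n+1}=\bu^{n}+\Delta t\, f\bigl(t_{n+1},\bw^{n+1}\bigr),\qquad \bw^{n+1}:=\bV^n_k\bar{\bu}^{n+1}_N+\bu^n .
\]
So the scheme is a forward-Euler-type update, with the vector field evaluated at the auxiliary point $\bw^{n+1}$. That point solves a Galerkin-projected implicit Euler equation, $\bw^{n+1}-\bu^n=\Delta t\,\bV^n_k{\bV^n_k}^T f(t_{n+1},\bw^{n+1})$. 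The first step is to show that this projected equation has a unique solution for $\Delta t$ small. Since $\bV^n_k$ has orthonormal columns, $\|\bV^n_k{\bV^n_k}^T\|\le 1$, so the map $\bz\mapsto \bu^n+\Delta t\,\bV\bV^Tf(t_{n+1},\bz)$ is a contraction when $L\Delta t<1$. The same estimate gives the bound $\|\bw^{n+1}-\bu^n\|\le \Delta t\,\|f(t_{n+1},\bw^{n+1})\|$, which is $\mathcal{O}(\Delta t)$ uniformly in $n$.

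Next I would compute the local truncation error. Substitute the exact solution into the scheme and define $\tau^{n+1}$ by
\[
\by(t_{n+1})=\by(t_n)+\Delta t f(t_{n+1},\tilde\bw^{n+1})+\Delta t\,\tau^{n+1},
\]
where $\tilde\bw^{n+1}$ solves the projected problem started from $\by(t_n)$. Taylor expansion with $\by\in C^2$ gives
\[
\by(t_{n+1})-\by(t_n)=\Delta t\,\by'(t_{n+1})+\mathcal{O}(\Delta t^2)=\Delta t f(t_{n+1},\by(t_{n+1}))+\mathcal{O}(\Delta t^2).
\]
Lipschitz continuity then gives
\[
\|f(t_{n+1},\by(t_{n+1}))-f(t_{n+1},\tilde\bw^{n+1})\|\le L\|\by(t_{n+1})-\tilde\bw^{n+1}\|,
\]
and both $\by(t_{n+1})$ and $\tilde\bw^{n+1}$ lie within $\mathcal{O}(\Delta t)$ of $\by(t_n)$. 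This yields only $\tau=\mathcal{O}(1)\cdot L\,\mathcal{O}(\Delta t)$, i.e.\ $\|\tau^{n+1}\|\le C_1\Delta t$. This is the consistency of order one the theorem requires, with $C_1$ depending on $\|\by''\|_\infty$, $\|f(\cdot,\by(\cdot))\|_\infty$ and $L$.

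Then I would prove stability by subtracting the scheme from the perturbed identity. Writing $\be^{n}=\by(t_n)-\bu^n$, we get
\[
\|\be^{n+1}\|\le\|\be^n\|+\Delta t L\|\tilde\bw^{n+1}-\bw^{n+1}\|+\Delta t\|\tau^{n+1}\|.
\]
Subtracting the two fixed-point equations and using $\|\bV\bV^T\|\le1$ gives $\|\tilde\bw^{n+1}-\bw^{n+1}\|\le(1-L\Delta t)^{-1}\|\be^n\|$. Hence $\|\be^{n+1}\|\le(1+C_2\Delta t)\|\be^n\|+C_1\Delta t^2$. A discrete Gronwall argument with $\be^0=0$ then gives $\|\be^{n+1}\|\le \frac{C_1}{C_2}(e^{C_2T}-1)\Delta t$, which is the claim.

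I expect the main obstacle to be the dependence on the adaptive basis. The basis $\bV^n_k$ built for the numerical iterate differs from the one the exact-solution comparison would build, so the two projected problems are not the same map. I would sidestep this by defining $\tilde\bw^{n+1}$ using the \emph{same} $\bV^n_k$ as the numerical step. All the estimates above use only $\|\bV\bV^T\|\le1$, never any property of the specific subspace, so the argument is uniform over all admissible bases and over the inner stopping index $k\le M$.
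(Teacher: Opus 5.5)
Your proof is correct. The paper gives no argument for this statement; it is quoted from \cite{bassanini2025imex}, so there is no in-paper proof to compare against. What you supply is the classical consistency, zero-stability and discrete Gronwall argument, correctly adapted to IMEX-RB.

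The two moves that make it work are the right ones:
\begin{enumerate}
\item You rewrite the accepted step as $\bu^{n+1}=\bu^n+\Delta t\,f(t_{n+1},\mathbf{w}^{n+1})$, where $\mathbf{w}^{n+1}-\bu^n=\Delta t\,\bV^n_k{\bV^n_k}^T f(t_{n+1},\mathbf{w}^{n+1})$.
\item You build the comparison point $\tilde{\mathbf{w}}^{n+1}$ with the \emph{same} history-dependent basis, so the only property ever used is $\|\bV^n_k{\bV^n_k}^T\|\le 1$.
\end{enumerate}
This gives more than the statement asks for. Your constant is uniform over all orthonormal bases, all inner stopping indices $k\le M$, and all tolerances $\varepsilon$. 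Even an empty basis, where $\mathbf{w}^{n+1}=\bu^n$, yields first order. So the reduced implicit solve affects stability at moderate $\Delta t$, not the convergence order.

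A few hypotheses should be made explicit.
\begin{itemize}
\item Well-posedness of the reduced problem needs $L\Delta t<1$.
\item Writing $(1-L\Delta t)^{-1}\le 1+C_2\Delta t$ with $C_2$ independent of $\Delta t$ needs a bound such as $L\Delta t\le\tfrac12$, which gives $C_2=2L$.
\item $\by\in C^2(I)$ must be read on $[0,T]$, so that $\|\by'\|_\infty$ and $\|\by''\|_\infty$ are finite.
\item $f$ must be Lipschitz uniformly in $t$.
\end{itemize}
Finally, your constant $\frac{C_1}{C_2}(e^{C_2T}-1)$ grows exponentially in $T$, so this is a finite-horizon estimate. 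It cannot by itself give the $n\to\infty$ limit $\bu^n\to(\bx^*,\mathbf{0})$ that the paper later draws from it in Theorem~\ref{imexrb:thm}.
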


To show that the sequence generated by the IMEX-RB method converges to the solution of our new inertial dynamics~\eqref{gen-ode},
we recast it into the following system:
\begin{equation}\label{ode:system}
\begin{cases}
\dot{\bx}(t) = \bV(t), \\[8pt]
\dot{\bV}(t) = -\dfrac{\alpha}{t}\,\bV(t)
+ \dfrac{\alpha}{t}\,\big\langle \nabla F(\bx(t)),\, \bV(t) \big\rangle \,\mathbf{1}
- \gamma(t)\,\nabla^{2} F(\bx(t))\,\bV(t)
- \beta(t)\,\nabla F(\bx(t)).
\end{cases}
\end{equation}
Here, we denote
\begin{equation}
\by(t) =
\begin{bmatrix}
\bx(t) \\[4pt] \bV(t)
\end{bmatrix},
\end{equation}
and
\begin{equation}
    g(t,\by(t))=
    \begin{bmatrix}
        \bV(t)\\[4pt]
         -\dfrac{\alpha}{t}\,\bV(t)
+ \dfrac{\alpha}{t}\,\big\langle \nabla F(\bx(t)),\, \bV(t) \big\rangle \,\mathbf{1}
- \gamma(t)\,\nabla^{2} F(\bx(t))\,\bV(t)
- \beta(t)\,\nabla F(\bx(t))
    \end{bmatrix}
\end{equation}

We need to verify $\by\in C^2(I)$ and $g$ is Lipschitz continuous respect to $\by$. Then by \ref{imex:convergence:thm}, we have that the IMEX-RB scheme for our ODE also converges.
In order to show $g$ is Lipschitz continuous, we need an extra assumption that $\nabla^2f$ is Lipschitz continuous.

\begin{theorem}\label{imexrb:thm}
	Under the same assumptions as Theorem~\ref{continuous:thm} together with that $\nabla^2 F$ is Lipschitz continuous, the IMEX-RB scheme generates a convergent sequence $\{\bu^{n}\}$:
	\[\bu^{n} \to 
	\begin{bmatrix}
		\bx^* \\[4pt] \b0
	\end{bmatrix}\].
\end{theorem}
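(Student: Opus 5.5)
The plan is to prove the statement as worded, namely that the IMEX-RB iterates satisfy $\bu^{n}\to(\bx^*,\mathbf{0})$ as $n\to\infty$. I would use the triangle inequality
\[
\Bigl\|\bu^{n}-\begin{bmatrix}\bx^*\\ \mathbf{0}\end{bmatrix}\Bigr\|\le \|\bu^{n}-\by(t_n)\|+\Bigl\|\by(t_n)-\begin{bmatrix}\bx^*\\ \mathbf{0}\end{bmatrix}\Bigr\|
\]
and show that each term tends to zero as $n\to\infty$. The first term is handled with Theorem~\ref{imex:convergence:thm} applied to \eqref{ode:system}. The second is handled with the Lyapunov estimate of Theorem~\ref{continuous:thm}.

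\textbf{Step 1: hypotheses of Theorem~\ref{imex:convergence:thm}.} The Lipschitz property of $g$ rests on three facts.
\begin{itemize}
\item Since $E(t)\le E(t_0)$, both $\delta(t)(F(x)-F^*)$ and $\|v(t)\|$ are bounded. With $\alpha\ge1$, bounded sublevel sets of $F$ and $\gamma$ bounded, this confines $\bx(t)$ to a compact set $K$ and gives $t\,\|\dot{\bx}+\gamma\nabla F\|\le C$. In particular $\bV$ is bounded.
\item On $K\times\{\|\bV\|\le C\}$, the terms $\nabla F$, $\nabla^2F$ (Lipschitz by the extra assumption) and $\langle\nabla F,\bV\rangle\mathbf{1}$ are Lipschitz in $\by$, as products of bounded Lipschitz maps.
\item On $[t_0,\infty)$ the coefficients $\alpha/t$, $\gamma$ and $\beta$ are bounded, so $g$ is Lipschitz in $\by$ uniformly in $t$.
\end{itemize}
Regularity $\by\in C^2$ follows because $\dot{\by}=g(t,\by)$ with $g$ of class $C^1$ along the trajectory, given $\dot\gamma$ and $\dot\beta$ exist.

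\textbf{Step 2: the continuous limit.}
\begin{itemize}
\item Theorem~\ref{continuous:thm} gives $F(\bx(t))-F^*\le E(t_0)/\delta(t)\to0$.
\item Since $\bx(t)$ stays in $K$, every cluster point is a minimizer. Assuming $x^*$ is the unique minimizer, as the notation $\argmin$ in the theorem indicates, $\bx(t)\to\bx^*$.
\item For the velocity, $\|\dot{\bx}\|\le C/t+\gamma(t)\|\nabla F(\bx(t))\|$. The first term vanishes, and the second tends to zero because $\|\nabla F\|\le\sqrt{2L(F-F^*)}$ and $\gamma$ is bounded. Hence $\bV(t)\to\mathbf{0}$.
\end{itemize}

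\textbf{Step 3: the main obstacle.} I need $\|\bu^n-\by(t_n)\|\to0$ as $n\to\infty$. Theorem~\ref{imex:convergence:thm} only gives $\|\be^{n}\|\le C\Delta t$, and its $C$ normally grows like $e^{LT}$, so it is not uniform on an infinite horizon. My first attempt would be a discrete Gronwall argument with contractive error propagation: linearize $g$ near $(\bx^*,\mathbf{0})$ and use the dissipation (the friction $\frac{\alpha}{t}+\gamma\nabla^2F$, with $\nabla^2F\succeq0$ by convexity) to show the error recursion $\be^{n+1}=(\bI+\Delta t\,A_n)\be^n+\mathcal{O}(\Delta t^2)$ is eventually non-expanding. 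Combined with the decay of the local truncation error, $\|\ddot{\by}(t_n)\|\to0$, this should give $\|\be^n\|\to0$.

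If contractivity cannot be established, I would fall back on a discrete Lyapunov argument. This means repeating the computation of Theorem~\ref{fully-discretized:energystability} for the implicit step of IMEX-RB, which is an implicit Euler step on the reduced space. That yields $E^{n+1}\le E^n+\mathcal{O}(\Delta t^2)\,r_n$ with summable residuals $r_n$, hence $F(\bx^n)\to F^*$ directly. From there the argument of Step 2 applies verbatim to the iterates and gives $\bu^n\to(\bx^*,\mathbf{0})$.
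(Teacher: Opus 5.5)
\textbf{There is a genuine gap: your Step 3 is never closed.} Your Steps 1--2 match the paper's entire proof. The paper checks $\by\in C^2$ and the Lipschitz continuity of $g$ with the same add-and-subtract estimate. It then asserts, without argument, that the trajectory tends to $(\bx^*,\mathbf{0})$, and concludes $\bu^n\to(\bx^*,\mathbf{0})$ directly from Theorem~\ref{imex:convergence:thm}. You are right that this last inference is invalid: that theorem works on a fixed interval $(0,T]$ with a constant $C=C(T)$, so it says nothing about $n\to\infty$ at fixed $\Delta t$.

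\textbf{Your first route fails as sketched.} An eventually non-expanding recursion $\be^{n+1}=G_n\be^n+\tau_n$ only gives $\|\be^n\|\le\|\be^{N}\|+\sum_{k\ge N}\|\tau_k\|$. The $\mathcal{O}(\Delta t)$ error present at the switching index $N$ is never damped. You would also need $\sum_k\|\tau_k\|<\infty$, not merely $\|\ddot{\by}(t_n)\|\to0$. What you need is a uniform strict contraction, and it is not available here:
\begin{itemize}
\item With $\xi=\bx-\bx^*$ and $H=\nabla^2F(\bx^*)\succeq0$, the linearization is $\dot\xi=\bV$, $\dot{\bV}=-\beta H\xi-(\tfrac{\alpha}{t}\bI+\gamma H)\bV$. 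On $\ker H$ this reduces to $\dot{\bV}=-\tfrac{\alpha}{t}\bV$. Its solutions settle at a shifted point, or drift logarithmically if $\alpha=1$, instead of returning to $\bx^*$. This kernel can be nontrivial even for a unique minimizer, e.g.\ $F(x)=x^4$.
\item The damping $\alpha/t$ and a coefficient such as $\beta=500/t$ vanish as $t\to\infty$, so no rate $1-c\Delta t$ is uniform in $n$.
\item $\bI+\Delta tA_n$ is the explicit Euler amplification factor, not that of the implicit, projected IMEX-RB step.
\end{itemize}

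\textbf{The fallback is only asserted.} Theorem~\ref{fully-discretized:energystability} is proved for the two-step scheme~\eqref{fully_discretized_system} in $(x^k,x^{k-1})$, not for a one-step update of $(\bx,\bV)$, so its computation does not transfer verbatim. Moreover, the IMEX-RB iterate differs from the exact implicit step by the projection residual. By Lemmas~\ref{imexrb:lemma3} and~\ref{imex:E:lip}, this perturbs the energy by up to $CL_E\varepsilon\Delta t$ per step. With fixed $\varepsilon$ these perturbations accumulate to order $\varepsilon t_n$. They also cannot be absorbed by dissipation for all $n$. A bound like $\Delta_k\ge c_0\Delta t$, the one used in Theorem~\ref{imexrb:energystability}, cannot hold for every $k$, because $\sum_k\Delta_k\le E^0<\infty$. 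That is exactly why that theorem is restricted to $t_k\le T$.

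\textbf{What is missing.} You need a genuinely infinite-horizon estimate for the actual IMEX-RB map. One option is a discrete Lyapunov inequality valid for all $n$, with a weight $\delta_n\to\infty$ and tolerances satisfying $\sum_n\varepsilon_n\Delta t<\infty$. Another is a strict-contraction hypothesis, such as strong convexity near $\bx^*$ with damping bounded below. Without this, your Steps 1--2 plus Theorem~\ref{imex:convergence:thm} (and likewise the paper's proof) establish only an iterated limit:
\begin{itemize}
\item $\max_{t_n\le T}\|\bu^n-\by(t_n)\|\le C(T)\Delta t$ for each $T$;
\item $\by(t)\to(\bx^*,\mathbf{0})$ as $t\to\infty$.
\end{itemize}

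\textbf{Step 2 also needs an extra assumption.} Step 2 requires $\delta(t)\to\infty$, which is not among the hypotheses of Theorem~\ref{continuous:thm}. Condition 2 gives $\dot\delta\le t\omega\bigl(\alpha\sqrt{2L}-\alpha+1\bigr)\le0$ whenever $\sqrt{2L}\ge1-1/\alpha$, and always when $\alpha=1$. So $\delta(t)\to\infty$ must be added as an explicit assumption; the paper also relies on it tacitly.
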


From Theorem~\ref{imexrb:thm} , we learn that sequence $\{u^n\}$ generated by the IMEX-RB method converges. We next show that the IMEX-RB method preserves the energy dissipation property and the convergence rate of $F^{k}$ established in Theorem~\ref{fully-discretized:energystability}. The following lemmas and the main theorem establish this result. The detailed proofs are provided in Appendix~\ref{proof:imexrb}.

\begin{lemma}\label{imex:E:lip}
	We assume $F$ satisfies the conditions in Theorem~\ref{imexrb:thm}, the discrete sequence generated remain in a bounded domain $\mathcal B$,  parameters $\alpha$, $\beta$, and $\gamma$ in the discrete energy $E$ defined in~\ref{fully-discretized:energystability} are uniformly bounded. Then, the discrete energy is Lipschitz continuous on $\mathcal B\times\mathcal B$ with Lipschitz constant $L_E$.
\end{lemma}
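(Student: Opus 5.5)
The plan is to view the discrete energy of Theorem~\ref{fully-discretized:energystability},
\[
E^{k} = \delta_k (F(x^{k})-F^*) + \tfrac12\|v_k\|^2,\qquad
v_k=(\alpha-1)(x^{k}-x^*)-\alpha(F(x^{k})-F^*)\mathbf{1}+k\bigl(x^{k}-x^{k-1}+\gamma_k h\nabla F(x^{k})\bigr),
\]
as a function $E(x,y)$ of the pair $(x,y)=(x^{k},x^{k-1})\in\mathcal B\times\mathcal B$, with the index-dependent coefficients frozen. I then show it is Lipschitz by composing Lipschitz maps and bounded maps on the bounded set $\mathcal B$. I will read the uniform boundedness hypothesis as covering $|\delta_k|$, $|k|$, $|k\gamma_k h|$ and $\alpha$, so that the combined coefficients entering $E$ are bounded by a constant $K$ independent of $k$. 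In the IMEX-RB setting the iteration index enters only through these coefficients, so this is the natural reading.

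The first step is the potential term. Since $\nabla F$ is $L$-Lipschitz and $\mathcal B$ is bounded, $\nabla F$ is bounded on $\mathcal B$: pick $x_0\in\mathcal B$; then $\|\nabla F(x)\|\le \|\nabla F(x_0)\|+L\,\mathrm{diam}(\mathcal B)=:G$. By the mean value theorem, $|F(x)-F(x')|\le G\|x-x'\|$ on the convex hull of $\mathcal B$. I will assume $\mathcal B$ is convex, or replace it by a closed ball containing it. Hence $|\delta_k(F(x)-F^*)-\delta_k(F(x')-F^*)|\le KG\|x-x'\|$.

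The second step treats $v$ as a map $v(x,y)$. Each of its pieces is Lipschitz:
\begin{itemize}
\item $(\alpha-1)(x-x^*)$ has constant $|\alpha-1|$;
\item $\alpha(F(x)-F^*)\mathbf{1}$ has constant $\alpha G\sqrt d$;
\item $k(x-y)$ has constant bounded by $2K$;
\item $k\gamma_k h\nabla F(x)$ has constant $KL$.
\end{itemize}
So $v$ is Lipschitz with some constant $L_v$. Because $\mathcal B\times\mathcal B$ is bounded, $v$ is also bounded there, say $\|v\|\le V$. The quadratic term then satisfies
\[
\bigl|\tfrac12\|v\|^2-\tfrac12\|v'\|^2\bigr|=\tfrac12\bigl|\langle v-v',v+v'\rangle\bigr|\le V L_v\|(x,y)-(x',y')\|.
\]
Adding the two contributions gives $L_E=KG+VL_v$.

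The only delicate point is not analytic but one of interpretation. The quadratic term is only locally Lipschitz, so the boundedness of $\mathcal B$ and uniform bounds on the $k$-dependent coefficients, in particular the factor $k$ multiplying $x^{k}-x^{k-1}$, are essential. I will state explicitly that the constant $L_E$ depends on these uniform bounds and on $\mathrm{diam}(\mathcal B)$, $L$ and $d$. The extra hypothesis of Theorem~\ref{imexrb:thm} that $\nabla^2F$ is Lipschitz is not needed for this lemma.
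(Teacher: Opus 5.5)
Your proof is correct and follows essentially the same route as the paper. You split $E$ into the $\delta_k(F-F^*)$ term, bounded via $\sup_{\mathcal B}\|\nabla F\|$, and the quadratic term, bounded by a uniform bound on $\|v_k\|$ times the Lipschitz constant of $v_k$; your explicit uniform bound on $k$ (the paper's constant also contains $k$ and silently relies on a finite horizon) and your remark on the convexity needed for the mean value theorem are welcome clarifications, not departures.
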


\begin{lemma}\label{imexrb:lemma3}
	Under the same assumptions as Lemma~\ref{imex:E:lip}, let $\bu^{n+1}$ and $\by^{n+1}$ be the one-step updates of the IMEX-RB method and the backward Euler method starting from the same point $\bu^n$, respectively.
	Assume that the stopping condition in the IMEX-RB method satisfies
	\[\|(I-V^n{V^n}^\top)\bu^{n+1}\|/\|\bu^{n+1}\|\leq \varepsilon.\]
	If $\Delta t\leq \frac{1}{2L_g}$, where $L_g$ is the Lipschitz constant of $g$, then
	\[\|\bu^{n+1}-\by^{n+1}\|\leq C\,\Delta t\,\varepsilon,\]
	where $C$ is independent of $\Delta t$.
\end{lemma}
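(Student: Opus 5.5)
The plan is to compare the two one-step maps directly. Both are driven by the same right-hand side $g$ and start from the same point $\bu^n$, so the only difference between them should be the reduced-basis projection error, which the stopping criterion controls.

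\textbf{Step 1 (the two fixed-point relations).} The backward Euler update satisfies
\[
\by^{n+1}=\bu^n+\Delta t\, g(t_{n+1},\by^{n+1}).
\]
For the IMEX-RB update, write $\bw:=\bV^n\bar{\bu}^{n+1}_N+\bu^n$ for the implicit reduced state. Then, by Step 2 of Algorithm~\ref{IMEXRB-single},
\[
\bu^{n+1}=\bu^n+\Delta t\, g(t_{n+1},\bw),
\]
and $\bw$ solves the Galerkin-projected implicit Euler equation.

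\textbf{Step 2 (reduce to a projection-error bound).} Subtracting the two relations and using the Lipschitz continuity of $g$ on the bounded set $\mathcal B$ gives
\[
\|\bu^{n+1}-\by^{n+1}\|\le \Delta t\,L_g\,\|\bw-\by^{n+1}\|.
\]
By the triangle inequality,
\[
\|\bw-\by^{n+1}\|\le \|\bw-\bu^{n+1}\|+\|\bu^{n+1}-\by^{n+1}\|.
\]
Since $\Delta t\,L_g\le \tfrac12$, the second term can be absorbed into the left-hand side, which yields
\[
\|\bu^{n+1}-\by^{n+1}\|\le 2\Delta t\,L_g\,\|\bw-\bu^{n+1}\|.
\]
The remaining task is therefore to show $\|\bw-\bu^{n+1}\|\le C'\varepsilon$ with $C'$ independent of $\Delta t$.

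\textbf{Step 3 (bounding $\bw-\bu^{n+1}$, the main obstacle).} By construction, $\bV^n{\bV^n}^\top$ is the orthogonal projector onto the reduced space. The implicit reduced equation says that $\bw-\bu^n$ equals the projection of $\Delta t\,g(t_{n+1},\bw)$, that is, of $\bu^{n+1}-\bu^n$. Hence
\[
\bw-\bu^{n+1}=-(\bI-\bV^n{\bV^n}^\top)(\bu^{n+1}-\bu^n).
\]
The basis contains $\bu^n$ as its first column, so $(\bI-\bV^n{\bV^n}^\top)\bu^n=0$. It follows that
\[
\|\bw-\bu^{n+1}\|=\|(\bI-\bV^n{\bV^n}^\top)\bu^{n+1}\|\le \varepsilon\,\|\bu^{n+1}\|\le \varepsilon\,\sup_{\mathcal B}\|\cdot\|,
\]
using the stopping criterion and the assumption that the iterates stay in $\mathcal B$.

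The delicate point is the identity $\bw-\bu^n=\bV^n{\bV^n}^\top(\bu^{n+1}-\bu^n)$. As written, Algorithm~\ref{IMEXRB-single} has the reduced coordinates satisfy $\bar{\bu}^{n+1}_N={\bV^n}^\top\Delta t\, g(\cdot)$, which gives exactly this identity once we multiply by $\bV^n$. I will also check that the QR step with the collinearity threshold $\delta$ keeps $\bV^n$ orthonormal and keeps $\bu^n$ in its span. If that fails, the term $(\bI-\bV^n{\bV^n}^\top)\bu^n$ would need a separate bound, which I would again control by the same stopping tolerance from the previous step.

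\textbf{Step 4 (conclusion).} Combining Steps 2 and 3 gives
\[
\|\bu^{n+1}-\by^{n+1}\|\le 2L_g\,\Delta t\,\varepsilon\,\sup_{\mathcal B}\|\cdot\|=:C\,\Delta t\,\varepsilon .
\]
The constant $C$ depends only on $L_g$ and the bound of $\mathcal B$, and is independent of $\Delta t$.
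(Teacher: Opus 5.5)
Your proposal is correct and follows essentially the same argument as the paper: you bound the difference via the Lipschitz continuity of $g$, apply the triangle inequality, absorb $\Delta t L_g\|\bu^{n+1}-\by^{n+1}\|$ using $\Delta t\le 1/(2L_g)$, and finish with the stopping criterion and the boundedness of the iterates, obtaining the same constant $C=2L_gM$. The only difference is that you explicitly derive that the implicit state $\bV^n\bar{\bu}^{n+1}_N+\bu^n$ equals $\bV^n{\bV^n}^\top\bu^{n+1}$, using the reduced Galerkin equation and the fact that $\bu^n$ is the first column of the QR input; the paper simply uses this as the argument of $g$ without justification.
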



\begin{theorem}\label{imexrb:energystability}
	Under the assumptions of Theorem~\ref{fully-discretized:energystability},  Lemmas~\ref{imex:E:lip}, and \ref{imexrb:lemma3}, let $\{E^k_{\mathrm{IMEX}}\}$ denote the discrete energy in the IMEX--RB method, and $\{E^k_{\mathrm{BE}}\}$ be the discrete energy in the backward Euler method in Theorem~\ref{fully-discretized:energystability}.
	We define
	\[\Delta_k := E^k_{\mathrm{BE}} - E^{k+1}_{\mathrm{BE}} \ge 0,\]
	and assume that there exists a constant $c_0 > 0$ independent of $\Delta t$, such that for all $k$ with $t_k \le T$,
	\[\Delta_k \ge c_0\, \Delta t.\]	
	Assume that the tolerance $\varepsilon$ in the IMEX-RB method's stopping condition satisfies
	\[\varepsilon \le C_0 \Delta t,\]
	where $C_0 = \frac{c_0}{2 L_E M}$, $L_E$ denotes the Lipschitz constant of the discrete energy, and $M > 0$ is a constant depending only on $T$, $L_g$, $C$, and $L_E$.
	Then, for all $k$ with $t_k \le T$,
	\[E^{k+1}_{\mathrm{IMEX}} - E^k_{\mathrm{IMEX}} \le 0\]
	and
	\[F(\bu^k_{\mathrm{IMEX}}) - \min_{x} F\leq E^k_{\mathrm{IMEX}}/\delta_k. \]
\end{theorem}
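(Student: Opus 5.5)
The plan is a perturbation argument. We compare one IMEX--RB step with one backward Euler (fully discretized) step started from the same state, and bound the resulting energy discrepancy with Lemma~\ref{imex:E:lip} and Lemma~\ref{imexrb:lemma3}. We then absorb that discrepancy into the strict decrease $\Delta_k \ge c_0\Delta t$ of the backward Euler energy guaranteed by Theorem~\ref{fully-discretized:energystability} together with the extra hypothesis.

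Fix $k$ with $t_k\le T$. Let $\bu^k=\bu^k_{\mathrm{IMEX}}$, and let $\by^{k+1}$ be the backward Euler update from $\bu^k$. Because the energy $E^k$ of Theorem~\ref{fully-discretized:energystability} depends on the current and previous iterate through $v_k$, we regard $E$ as a function $E(\cdot,\cdot)$ on $\mathcal B\times\mathcal B$. We write
\[
E^{k+1}_{\mathrm{IMEX}}-E^k_{\mathrm{IMEX}}
=\bigl[E(\bu^{k+1},\bu^k)-E(\by^{k+1},\bu^k)\bigr]+\bigl[E(\by^{k+1},\bu^k)-E(\bu^k,\bu^{k-1})\bigr].
\]
For the second bracket, apply Theorem~\ref{fully-discretized:energystability} to a backward Euler step started from $(\bu^k,\bu^{k-1})$. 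Its conditions concern only $\gamma_k,\beta_k,\delta_k$ and convexity, so they are independent of the starting state. This shows the bracket is $\le -\Delta_k\le -c_0\Delta t$, where we interpret $\Delta_k$ as the one-step BE decrease from the current state, as the hypothesis intends.

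For the first bracket, Lemma~\ref{imex:E:lip} gives the bound $L_E\|\bu^{k+1}-\by^{k+1}\|$. Lemma~\ref{imexrb:lemma3}, valid since $\Delta t\le 1/(2L_g)$, then bounds this by $L_E C\Delta t\,\varepsilon$. The constant $M$ must also absorb the mismatch between the IMEX state history and the BE reference trajectory over $[0,T]$, via a Gronwall-type accumulation with factor $e^{2L_gT}$. With $M\ge C$ (times such factors depending only on $T,L_g,C,L_E$) and $\varepsilon\le C_0\Delta t=c_0\Delta t/(2L_EM)$, we obtain
\[
E^{k+1}_{\mathrm{IMEX}}-E^k_{\mathrm{IMEX}}\le L_E M\Delta t\,\varepsilon - c_0\Delta t\le \tfrac{c_0}{2}\Delta t^2-c_0\Delta t\le 0
\]
for $\Delta t\le 2$. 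In fact, using $\varepsilon\le C_0\Delta t$ directly already gives $L_EM\varepsilon\le c_0\Delta t/2$ with room to spare.

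The rate statement then follows exactly as at the end of the proof of Theorem~\ref{fully-discretized:energystability}. Condition 1 gives $C_k<0$, hence $\delta_k=-C_kh(k+1)>0$. Since $\frac12\|v_k\|^2\ge0$, we get $\delta_k(F(\bu^k)-F^*)\le E^k_{\mathrm{IMEX}}$. Dividing by $\delta_k$ yields the claim, and monotonicity additionally gives the bound by $E^0_{\mathrm{IMEX}}/\delta_k$.

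The main obstacle is the second bracket. The hypothesis $\Delta_k\ge c_0\Delta t$ is stated along the BE trajectory, but we need it from the IMEX state. I would handle this by reading the hypothesis as a uniform lower bound on the one-step BE decrease from any state in $\mathcal B$ reachable before time $T$. If that reading is not permitted, the fallback is to compare the two trajectories globally: bound $\|\bu^k-\by^k_{\mathrm{BE}}\|\le M'\varepsilon$ by summing the local errors of Lemma~\ref{imexrb:lemma3} with the stability of the BE map (Lipschitz constant $1+2L_g\Delta t$, giving the $e^{2L_gT}$ factor). Then use Lemma~\ref{imex:E:lip} twice to transfer the decrease $\Delta_k$, at a cost $2L_EM'\varepsilon\le c_0\Delta t$. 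This is precisely why $M$ is allowed to depend on $T$.
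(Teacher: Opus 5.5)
\textbf{The gap in your main argument.} Your one-step comparison does not prove the theorem as stated. The quantity $\Delta_k=E^k_{\mathrm{BE}}-E^{k+1}_{\mathrm{BE}}$ is defined along the backward Euler sequence $\{\by^k_{\mathrm{BE}}\}$ started from the same initial data as IMEX--RB. The hypothesis $\Delta_k\ge c_0\Delta t$ is a property of that sequence only.

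In your second bracket you need a lower bound on the backward Euler decrease started from the IMEX state $(\bu^k,\bu^{k-1})$. Theorem~\ref{fully-discretized:energystability} tells you only that this decrease is $\ge 0$. That cannot absorb the positive term $L_EC\Delta t\,\varepsilon$ coming from the first bracket. Reading the hypothesis as a uniform bound over all reachable states strengthens the assumption, so it is not available to you.

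The only way to carry the decrease from the BE trajectory over to the IMEX states is to control $\|\bu^k-\by^k_{\mathrm{BE}}\|$ uniformly for $t_k\le T$, which is the global comparison. This is also why a Gronwall factor appears in your main paragraph, even though a genuinely one-step argument would not need one.

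\textbf{Your fallback is the right proof.} It is essentially the paper's argument. Decompose
$E^{k+1}_{\mathrm{IMEX}}-E^k_{\mathrm{IMEX}}=(E^{k+1}_{\mathrm{IMEX}}-E^{k+1}_{\mathrm{BE}})-\Delta_k+(E^k_{\mathrm{BE}}-E^k_{\mathrm{IMEX}})$.
Bound the two energy mismatches by Lemma~\ref{imex:E:lip}. Bound the trajectory error $e^k=\bu^k-\by^k_{\mathrm{BE}}$ by a discrete Gronwall argument to get $\|e^k\|\le M'\varepsilon$. This gives a decrease of at least $c_0\Delta t/2$ once $M$ is a suitable multiple of $M'$.

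One difference from the paper is worth keeping. You split
$e^{k+1}=[\phi_{\mathrm{IMEX}}(\bu^k)-\phi_{\mathrm{BE}}(\bu^k)]+[\phi_{\mathrm{BE}}(\bu^k)-\phi_{\mathrm{BE}}(\by^k_{\mathrm{BE}})]$.
This uses Lemma~\ref{imexrb:lemma3} exactly as stated, at the actual IMEX iterate, together with the backward Euler stability bound $(1-L_g\Delta t)^{-1}\le 1+2L_g\Delta t$ for $\Delta t\le 1/(2L_g)$. The paper instead splits through $\phi_{\mathrm{IMEX}}(\by^k_{\mathrm{BE}})$. That route needs a Lipschitz constant for the IMEX--RB step map and an IMEX step started at a BE state, so your version is cleaner.

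One correction to your cost estimate. Since $E^k$ depends on the pair $(x^k,x^{k-1})$, the two mismatches cost $L_E(\|e^{k+1}\|+2\|e^k\|+\|e^{k-1}\|)\le 4L_EM'\varepsilon$, not $2L_EM'\varepsilon$. Taking $M\ge 4M'$ absorbs this.

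Make the fallback your main argument. The rate bound $F(\bu^k_{\mathrm{IMEX}})-\min F\le E^k_{\mathrm{IMEX}}/\delta_k$ then follows exactly as you wrote.
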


\subsection{Additional algorithms}

\noindent \indent Finally, we present two additional explicit optimization algorithms, which solve the subproblems at each iteration using optimization approaches, rather than relying on Newton-type methods or contraction mappings to handle the implicit steps used in the previous schemes.

We first present a semi-discretization, keeping $\nabla F$ unchanged. Then, we discretize the semi-discrete equation by a forward-backward algorithm to address the internal iteration to decouple the implicit step. The main difference between these two methods lies in the representation of $\nabla F^{k}$ in the damping term.

\subsubsection{The semi-discretized algorithm (semi algorithm)}

\noindent \indent In the semi-discrete algorithm, we  discretize the ODE as follows:

\begin{align}\label{semi:dis:ode}
      &k(x^{k+1}-2x^{k}+x^{k-1})+\alpha (x^{k+1}-x^{k})-\alpha \la\nabla F^{k}, x^{k}-x^{k-1}\ra \mathbf{1}+ \gamma_k k h (\nabla F^{k+1}-\nabla F^{k})+\beta_k kh^2 \nabla F^{k+1} = 0.
 \end{align}
Here, the velocity term $\dot{x}$ in $\frac{\alpha}{t}\langle \nabla F(x(t)), \dot{x}(t) \rangle$ is discretized using the forward Euler method.
Applying the proximal gradient method, which is
\[x^{k+1} = y^{k} - \mu_{k} \nabla F^{k+1},\]
we obtain
\begin{align*}
    &(k+\alpha)(y^{k} - \mu_{k} \nabla F^{k+1}) - (2k+\alpha)x^{k} + kx^{k-1}
    - \alpha \langle \nabla F^{k}, x^{k} - x^{k-1} \rangle \mathbf{1} \\
    &\qquad + \gamma_k k h (\nabla F^{k+1} - \nabla F^{k})
    + \beta_k k h^2 \nabla F^{k+1} = 0.
\end{align*}
We choose $\mu_{k}$ such that the coefficient of the implicit term $\nabla F^{k+1}$ vanishes, $ \mu_{k} = \dfrac{k h}{k+\alpha}(\gamma_k + \beta_k h).$
Rearranging the terms and solving $y^{k}$ lead to the following explicit optimization algorithm:
\begin{equation}\label{semi:alg}
    \begin{cases}
        y^{k} = x^{k} + \dfrac{k}{k+\alpha}(x^{k} - x^{k-1})
        + \dfrac{\alpha}{k+\alpha}\langle \nabla F^{k}, x^{k} - x^{k-1} \rangle \mathbf{1}
        + \dfrac{k h}{k+\alpha}\gamma_k \nabla F^{k}, \\[0.5em]
        \mu_{k} = \dfrac{k h}{k+\alpha}(\gamma_k + \beta_k h), \\[0.5em]
        x^{k+1} = \mathrm{prox}_{\mu_{k} F}(y^{k}).
    \end{cases}
\end{equation}

\subsubsection{The Forward-backward algorithm (FB algorithm)}
\noindent \indent We now present the forward-backward semi-discrete algorithm, where $\nabla F^{k}$ in the damping term is discretized using the forward Euler method, 
\begin{align}\label{fb:dis:ode}
	k(x^{k+1}-2x^{k}+x^{k-1})+\alpha (x^{k+1}-x^{k})-\alpha (F^{k} - F^{k-1})\mathbf{1}+ \gamma_k kh(\nabla F^{k+1}-\nabla F^{k})+\beta_k kh^2 \nabla F^{k+1} = 0.
\end{align}
Similarly, applying the proximal gradient method, substituting the expression for $x^{k+1}$ into the discretized system, assigning the coefficient of the implicit term into zero, and rearranging the terms, we obtain
\begin{align*}
	[(k+\alpha)y^{k} - (2k+\alpha)x^{k}+kx^{k-1}-\alpha(F^{k}-F^{k-1}) \mathbf{1}-\gamma_k kh \nabla F^{k}]+[\gamma_k kh +\beta_k kh^2-\mu_{k} (k+\alpha)]\nabla F^{k+1} = 0.
\end{align*}
This leads to the following explicit optimization method:
\begin{equation}\label{fb:alg}
	\begin{cases}
		y^{k} = x^{k} + \frac{k}{k+\alpha}(x^{k}-x^{k-1})+\frac{\alpha}{k+\alpha}(F^{k} - F^{k-1})\mathbf{1} + \frac{kh}{k+\alpha}\gamma_k \nabla F^{k}, \\
		\mu_{k} = \frac{kh}{k+\alpha}(\gamma_k + \beta_k h), \\
		x^{k+1} = \text{prox}_{\mu_{k} F}(y^{k}).
	\end{cases}
\end{equation}

Both algorithms leverage the proximal gradient method, providing alternative ways to solve the new underdamped inertial dynamics introduced in \eqref{gen-ode} explicitly. In each step, the proximal operator $x^{k+1} = \mathrm{prox}_{\mu_{k} F}(y^{k})$
is applied when updating $x^{k+1}$. In general, 
the update $x^{k+1}$ can be computed using other optimization methods, such as quasi-Newton methods, trust-region methods, etc. 

This approach can alternate between a classical time-space discretization, where $x^{k+1}$ is updated implicitly through \eqref{semi:dis:ode} or \eqref{fb:dis:ode}, and an optimization step, where $x^{k+1}$ is obtained by solving a subproblem at each iteration through the proximal operator. This gives more flexibility in practice.
\begin{remark}\label{semi:energy:remark}
	We note that, for these two methods derived from the Euler method, we have yet able to establish energy stability for them.
\end{remark}

Applying the above algorithms, to the multi-agent system,  we end up with the Swarm-based inertial algorithms:

\begin{algorithm}[H]
\caption{Swarm-Based Semi/FB Algorithm}
\label{sbsemifb:alg}
\DontPrintSemicolon
\SetKwInOut{Input}{Input}
\SetKwInOut{Output}{Output}

\Input{
Number of agents $N$, objective function $F(x)$, time step $h$,\\
parameters $\alpha$, $\gamma_k$, $\beta_k$\\
tolerances $\mathrm{tol}_{\mathrm{res}}, \mathrm{tol}_m, \mathrm{tol}_{\mathrm{merge}}$
}

\Output{Final agent set and best solution}

\BlankLine

Initialize agents $\{x_i^{0}\}_{i=1}^N$ randomly.
Set masses $m_i^{0} = 1/N$.
Generate $\{x_i^{1}\}_{i=1}^N$ if needed.

\For{$n=2,\dots$ until the stopping criterion is satisfied}{

  \tcp{1. Communication (mass update)}
  \ForEach{agent $i$}{
    \uIf{$m_i^{n} < \mathrm{tol}_m$}{
      Remove agent $i$\;
    }
    \Else{
      Update $m_i^{n+1}$ using the mass transition rule~\eqref{masstrans}\;
    }
  }

  \tcp{2. Semi/FB algorithm inertial update}
  \ForEach{remaining agent $i$}{
    Compute $x_i^{n+1}$  by solving~\eqref{semi:alg} for Semi algorithm,~\eqref{fb:alg} for FB algorithm, where $x^{k} = x_i^{n}$ and $x^{k-1} = x_i^{n-1}$\;
  }

  \tcp{3. Merging}
  \For{each pair of agents $(i,j)$}{
    \If{$\|x_i^{n+1} - x_j^{n+1}\| < \mathrm{tol}_{\mathrm{merge}}$}{
      Merge agents $i$ and $j$\;
    }
  }

}

\Return best agent and corresponding objective function value\;

\end{algorithm}

	
	
		


\section{Numerical results}\label{numerical:exp}


\noindent \indent  In this section, we first present numerical experiments to examine convergence properties of the new inertial algorithms in terms of function values, including the Forward-Backward (FB) method, the semi-discretized (Semi) method, the fully backward-discretized (FD) method, and the IMEX-RB method. We then apply these inertial algorithms, together with the classical Nesterov method, to the swarm-based optimization problem and benchmark their performance with the swarm-based gradient descent method proposed in \cite{lu2024swarm}.

\subsection{Convergence-rate test}

\noindent \indent We use four convex test functions of various dimensions to examine convergence rates of the IMEX-RB method and the FD method, respectively. Although we do not have theoretical results for the energy stability of the FB method and the Semi-discrete method, we examine their energy evolution and convergence behavior through numerical experiments. 

 Theorem~\ref{fully-discretized:energystability} and Theorem~\ref{imexrb:energystability} show that the FB method and the IMEX-RB method have a convergence rate of function values  as follows
\[0\leq F(x^{k}) - \min F \leq E^{k}/\delta_k.\]
This indicates that the worst convergence rate of function values is $O(E^{k}/\delta_k)$. We would like to see if actual convergence rates of the two algorithms and others do better than the upper bound.
We test this using the following four convex functions:
\begin{equation}
	\begin{aligned}
		\text{Sphere Function:}\qquad &
		F_{\mathrm{S}}(x)
		= \sum_{i=1}^d x_i^2, 
		\quad x_i \in [-5.12, 5.12],\; 1 \le i \le d, 
		\\[0.3em]
		\text{Modified Sphere Function:} \qquad &
		F_{\mathrm{MS}}(x)
		= \frac{1}{899}(\sum_{i=1}^d x_i^22^i-1745), 
		\quad x_i \in [-5.12, 5.12],\; 1 \le i \le d, 
		\\[0.3em]
		\text{Sum Squares Function:} \qquad &
		F_{\mathrm{SS}}(x)
		= \sum_{i=1}^d i\, x_i^2, 
		\quad x_i \in [-10,10],\; 1 \le i \le d,
		\\[0.3em]
		\text{Rotated Hyper-Ellipsoid Function:} \qquad &
		F_{\mathrm{RHE}}(x)
		= \sum_{i=1}^d \sum_{j=1}^i x_j^2,
		\quad x_i \in [-65.536, 65.536],\; 1 \le i \le d.
	\end{aligned}
\end{equation}
Note that all the functions have their global minima at $x^* = (0, \dots, 0)$. 

We use the forward-backward (FB), semi-discretized (Semi), fully backward-discretized (FD), and IMEX-RB method to examine the convergence rate of function values next.
For comparison, we also include the inertial proximal algorithm with Hessian damping (IPAHD)\cite{attouch2022first}, the classical Nesterov method (NM), and the gradient descent method (GD) in the investigation. These three methods provide reference convergence rates for each test function and serve as baselines.

The Inertial Proximal Algorithm with Hessian Damping (IPAHD) in \cite{attouch2022first} is given as follows:
\begin{equation}
	\begin{cases}
		\mu_{k} = \frac{k}{k + \alpha } (\beta_k h + h^2b_k  ), \\
		y^{k}=   x^{k} + \left( 1- \frac{\alpha}{k + \alpha}\right) ( x^{k}  - x^{k-1}) + \beta_k h   \left( 1- \frac{\alpha}{k + \alpha}\right) \nabla F (x^{k}), \\
		x^{k+1} = \text{prox}_{\mu_{k} F}(y^{k}).
	\end{cases}
\end{equation}

The following conditions are applied to all tested inertial algorithms in all test cases.
\begin{itemize}
\item {\textbf{Stopping Criteria:}}
All algorithms are terminated when both conditions listed below are satisfied:
\[\|F(x^{n+1}) - F(x^{n})\| \le 10^{-6},\quad\|x^{n+1} - x^{n}\| \le 10^{-6}.\]
\item 
{\textbf{Success Criterion:}}
To determine whether the optimal point is successfully reached, we use $1$ to denote success and $0$ to denote failure. The condition for success is defined as:
\[\| F(x^{n+1}) - F(x^*) \| \le 10^{-4}.\]
\item 
{\textbf{Parameter Settings:}}
Since all the new inertial algorithms require a time step size $h$ in the iteration, we set the time step size $h$ accordingly to the values listed in the tables.
\end{itemize}
For new ODE~\eqref{gen-ode}, we set the control parameters as follows:
\[\alpha = 2, \quad \gamma(t) = 200, \quad \beta(t) = \frac{500}{t},\]
to ensure that the conditions in Theorem~\ref{fully-discretized:energystability} are met.
In the resulting algorithms, the parameter values  are
\[\alpha = 2, \quad \gamma_k = 200, \quad \beta_k = \frac{500}{k h}.\]

\noindent {\textbf{Convergence rates of function values:}}

Next, we test the convergence rate of $F^{k}-F^*$. First recall the following from Theorem~\ref{fully-discretized:energystability} and Theorem~\ref{imexrb:energystability}:
\[ \delta_k := -C_k h (k+1)>0,\quad 
C_k = \gamma_{k+1} + k(\gamma_{k+1} - \gamma_k) - \beta_k k h<0.\]
Here, $\delta_k$ is the discretized version of $\delta(t)$ in \eqref{eq:lyapunov}. From the definition of $\delta_k$, we have
$\delta_k \to \infty$ as $k\to \infty$.
 Since $F^{k} - F^* = O(E^{k}/\delta_k)$,
we assume there exists $C>0$ and $K>0$ such that $|F^{k}-F^*|\approx C/\delta_k^p$ for all $k\ge K$, where $p>0$.   

The goal is to estimate the convergence rate, $p$. We name it the convergence rate of function values.  To this end, we have
\[\frac{F^{k} - F^*}{F^{k+1} - F^*}\approx\left(\frac{\delta_{k+1}}{\delta_k}\right)^p \Longrightarrow p \approx \frac{\log\!\left(\frac{F^{k} - F^*}{F^{k+1} - F^*}\right)
}{\log\!\left(\frac{\delta_{k+1}}{\delta_k}\right)}.\]
Hence, the local convergence rate can be approximated by $p_k$ for a sufficiently large $k$: 
\[p_k=\frac{\log\!\left(\frac{F^{k} - F^*}{F^{k+1} - F^*}\right)}{\log\!\left(\frac{\delta_{k+1}}{\delta_k}\right)}.\]
Since the local slope $p_k$ may be affected by individual iterations, we report the following average convergence rate in the tables define by 
\[\bar{p}=\frac{\sum_{k} p_k}{\text{iterations number}}.\]
This averaged $\bar{p}$ reduces the influence of local oscillations and provides a more stable estimate of the asymptotic behavior of the convergence rate.
In a set of numerical experiments, we decrease time step $h$ monotonically from 1. This leads to a slight, monotonic decrease in the average convergence rate $\bar{p}$ in most algorithms as shown in Tables 1, suggesting that one should not choose a small $h$ while using the algorithms.

Next, we present numerical results for optimizations of the Rotated Hyper-Ellipsoid function using $h=1/16$. All new inertial algorithms successfully reach the global optimum for this test function. The following plots include three parts for each iteration of each optimization method: the function value or the potential energy $F - F^*$ in Figure \ref{fig:ffstar_comparison}, the convergence rate in Figure \ref{fig:convergence_comparison}, and the evolution of the discrete energy $E^k$ in Figure \ref{fig:energy_all}. These results provide a comprehensive view of the methods’ accuracy, convergence behavior, and stability. The convergence rates of function values all approach a value slightly larger than 4 when iteration number k is large. The NM and GD give smaller $p$ values at large $k$ however. 
\begin{figure}[H]
  \centering
  \captionsetup[subfigure]{justification=centering}

  \begin{subfigure}{0.24\textwidth}\centering
    \includegraphics[width=\textwidth]{"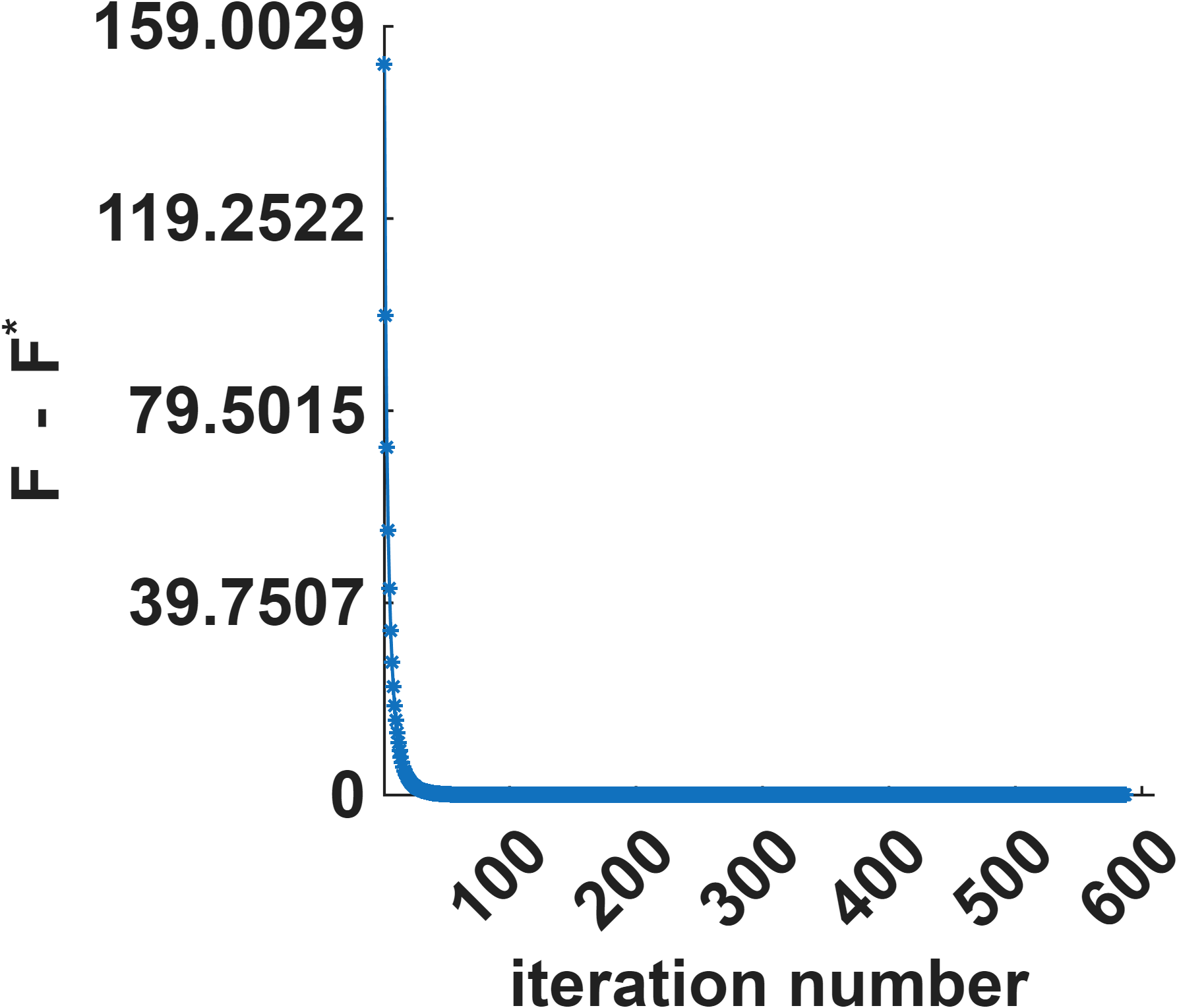"}
    \caption{FD}
  \end{subfigure}\hfill
  \begin{subfigure}{0.24\textwidth}\centering
    \includegraphics[width=\textwidth]{"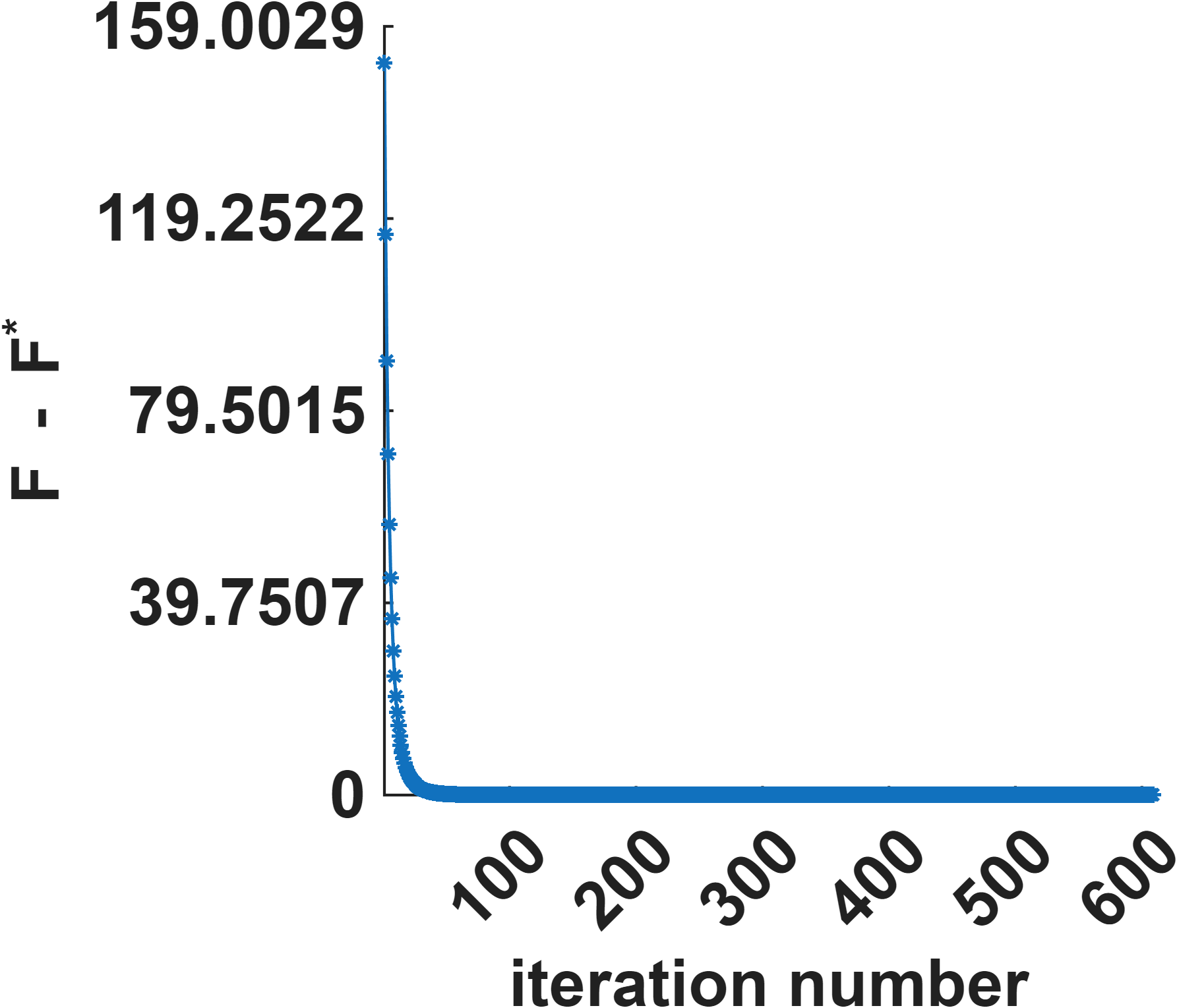"}
    \caption{IMEX-RB}
  \end{subfigure}\hfill
   \begin{subfigure}{0.24\textwidth}\centering
    \includegraphics[width=\textwidth]{"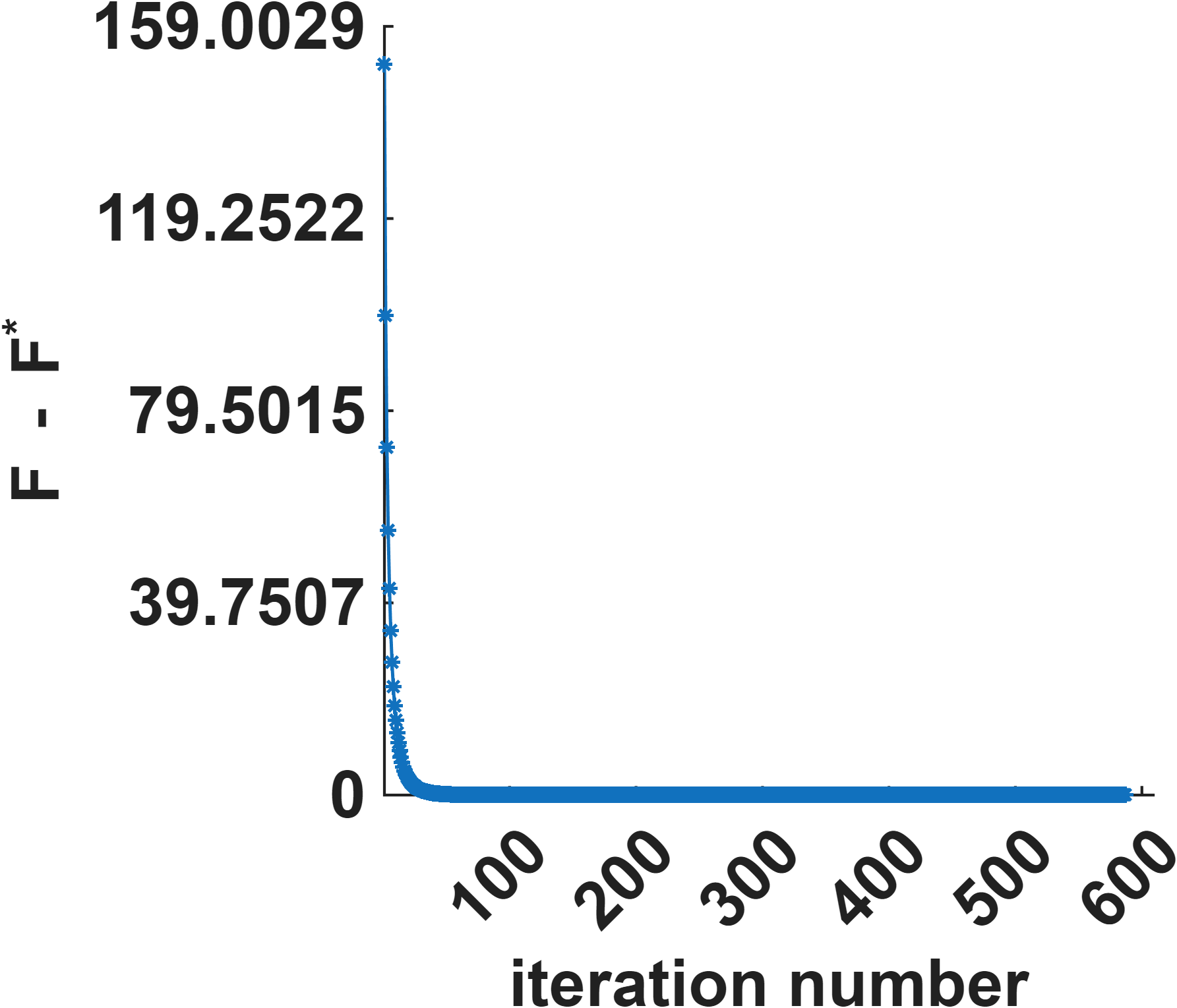"}
    \caption{FB}
  \end{subfigure}\hfill
\begin{subfigure}{0.24\textwidth}\centering
    \includegraphics[width=\textwidth]{"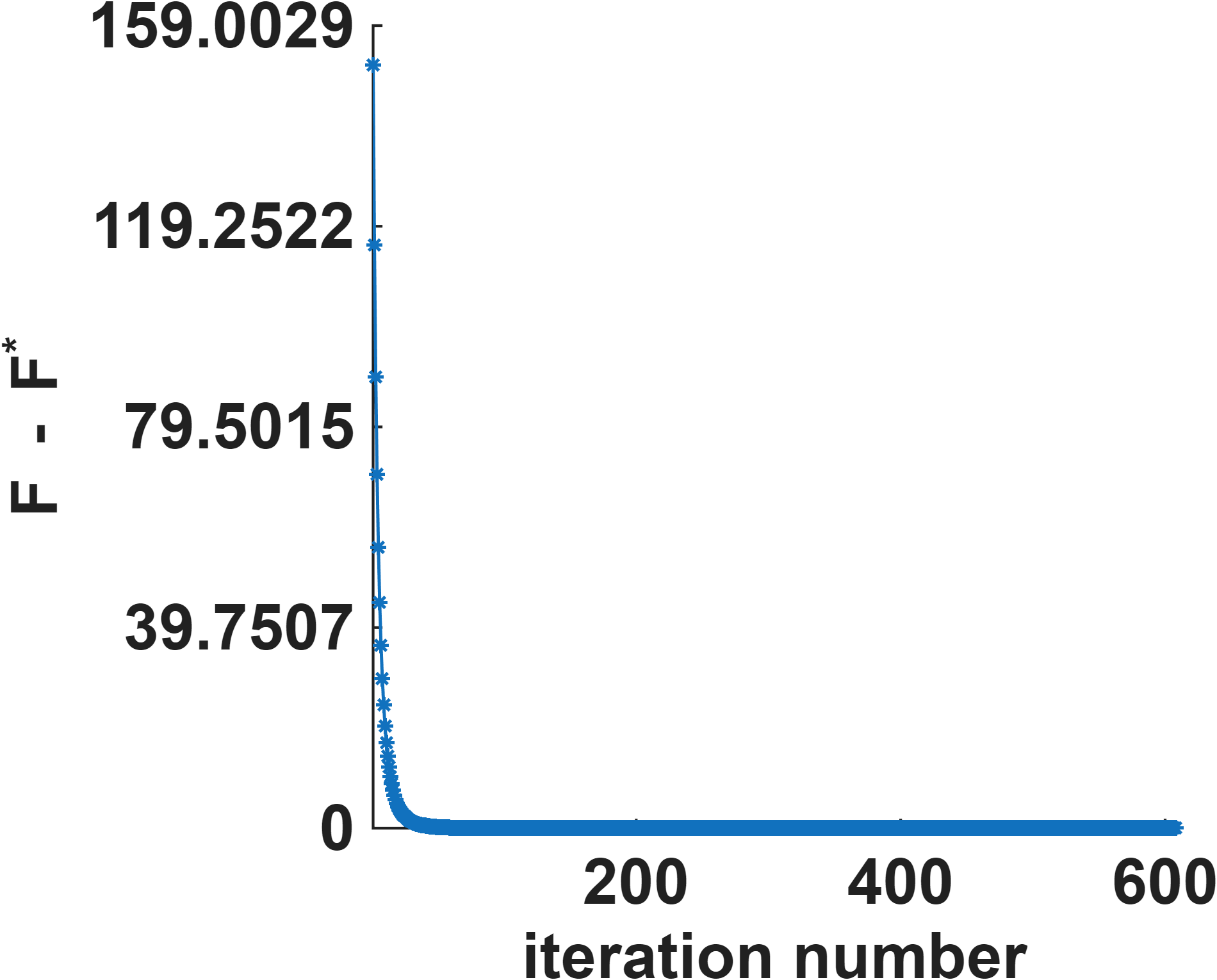"}
    \caption{Semi}
  \end{subfigure}

  \vspace{0.45em}

  \begin{subfigure}{0.24\textwidth}\centering
    \includegraphics[width=\textwidth]{"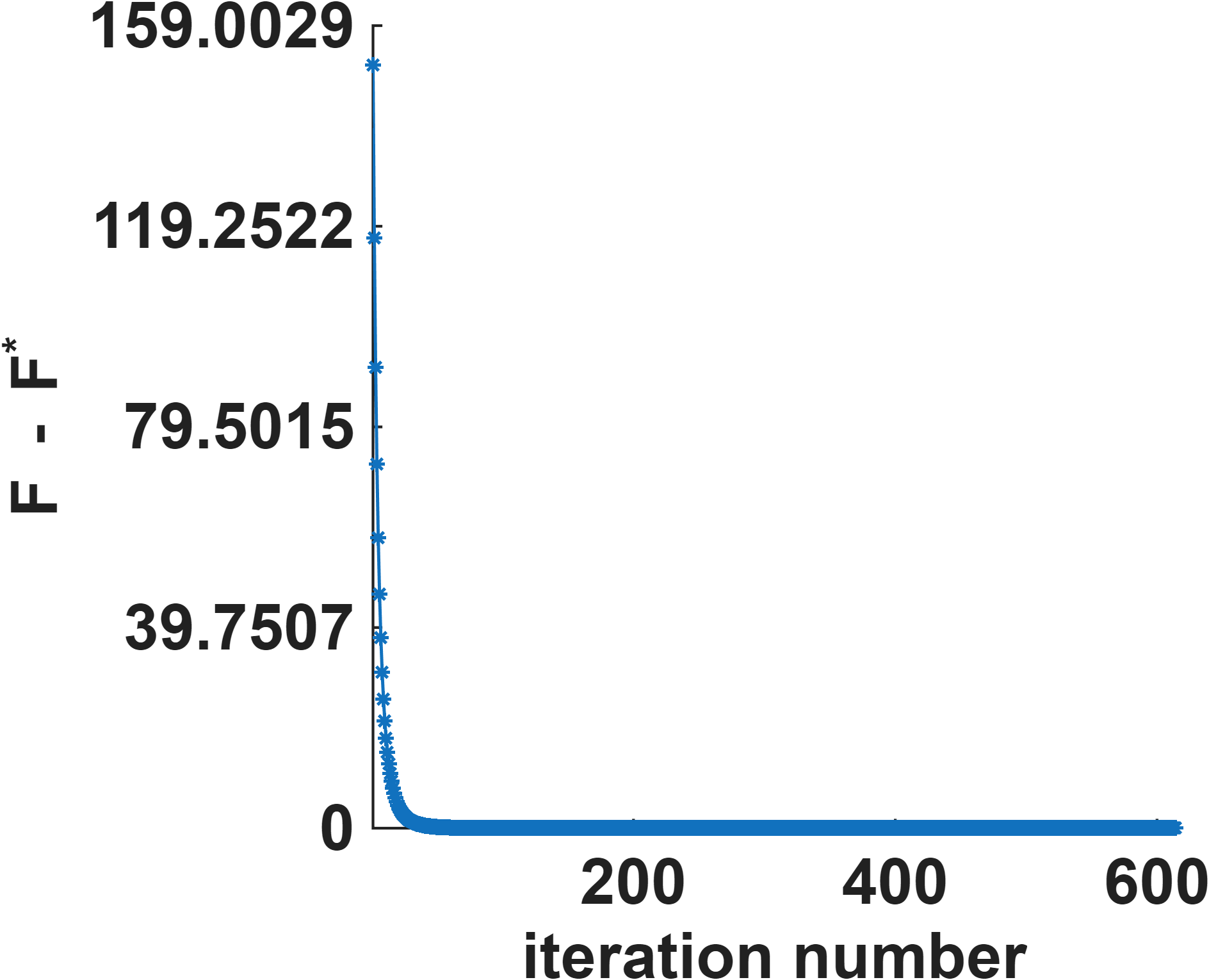"}
    \caption{IPAHD}
  \end{subfigure}\hfill
  \begin{subfigure}{0.24\textwidth}\centering
    \includegraphics[width=\textwidth]{"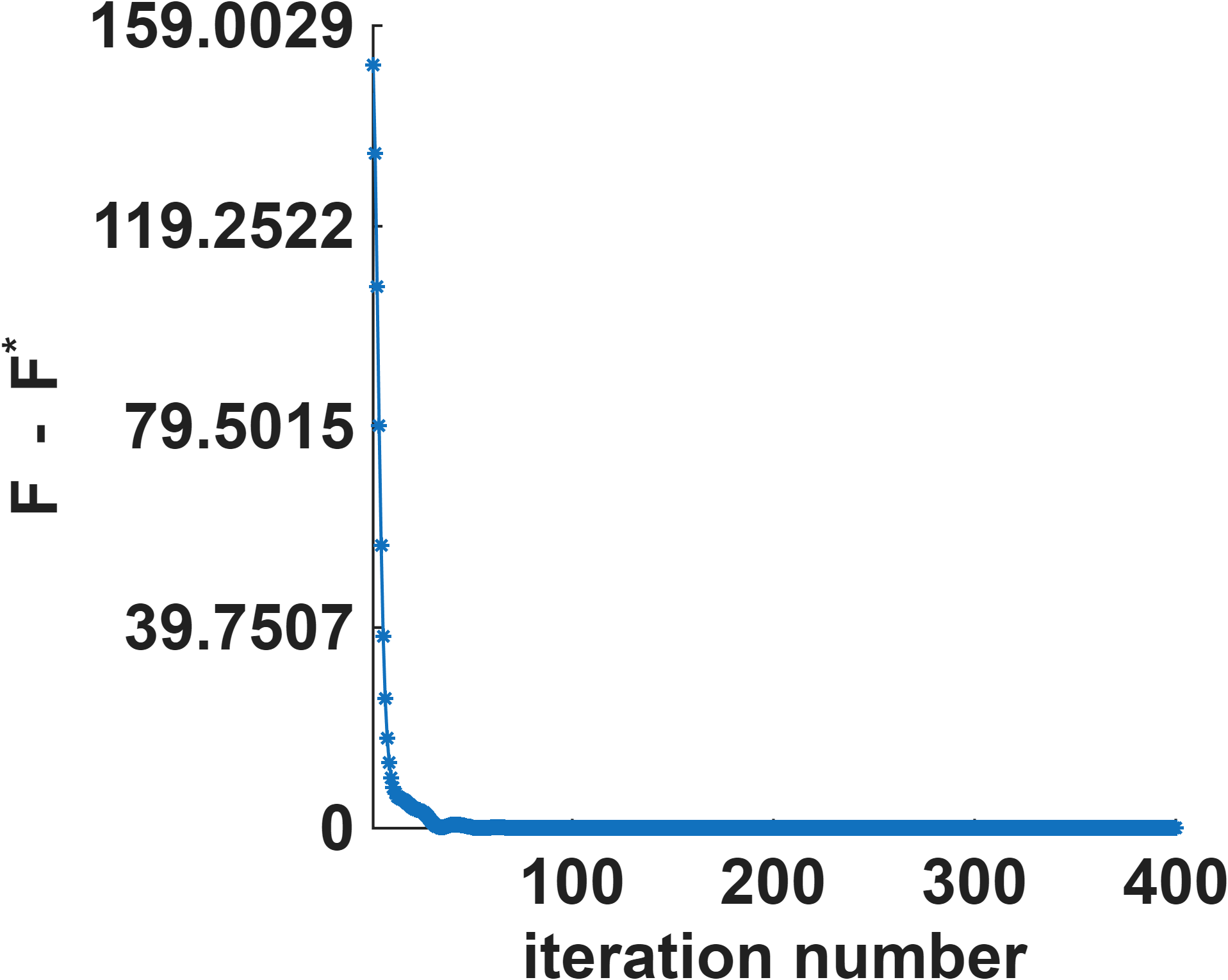"}
    \caption{Nesterov}
  \end{subfigure}\hfill
  \begin{subfigure}{0.24\textwidth}\centering
    \includegraphics[width=\textwidth]{"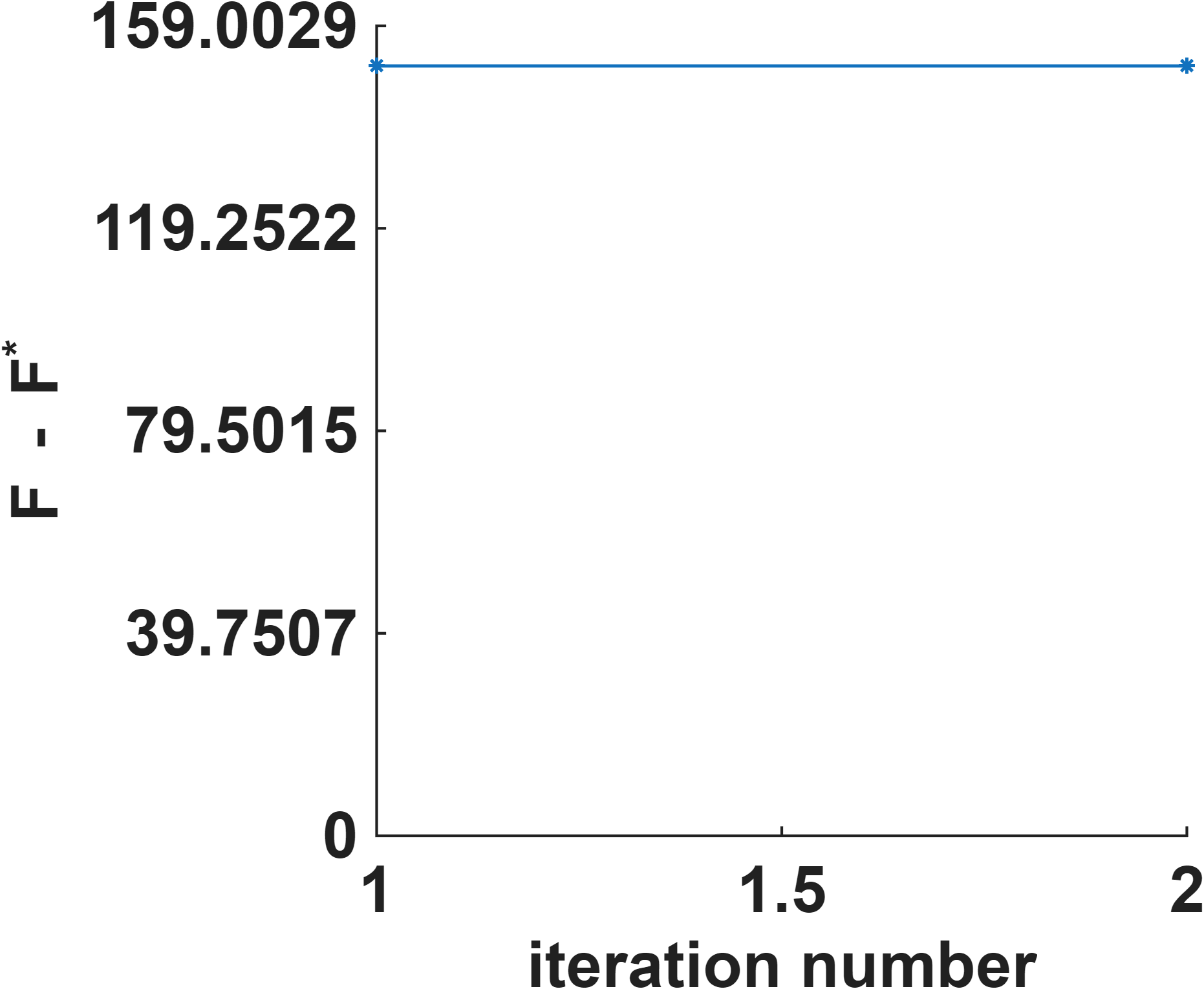"}
    \caption{GD}
  \end{subfigure}

  \caption{Comparison of \(F-F^{*}\) across methods for the 10D Rotated Hyper-Ellipsoid Function (\(h=1/16\)). Each subplot is one method (left-to-right, top-to-bottom). The potential energy decays monotonically during the  iteration for all methods except for the Nesterov method, which exhibits some slight oscillations. The GD method converges in two steps to the minimum value of $F(x^*)$.}
  \label{fig:ffstar_comparison}
\end{figure}

\begin{figure}[H]
  \centering
  \captionsetup[subfigure]{justification=centering}

  \begin{subfigure}{0.24\textwidth}\centering
    \includegraphics[width=\textwidth]{"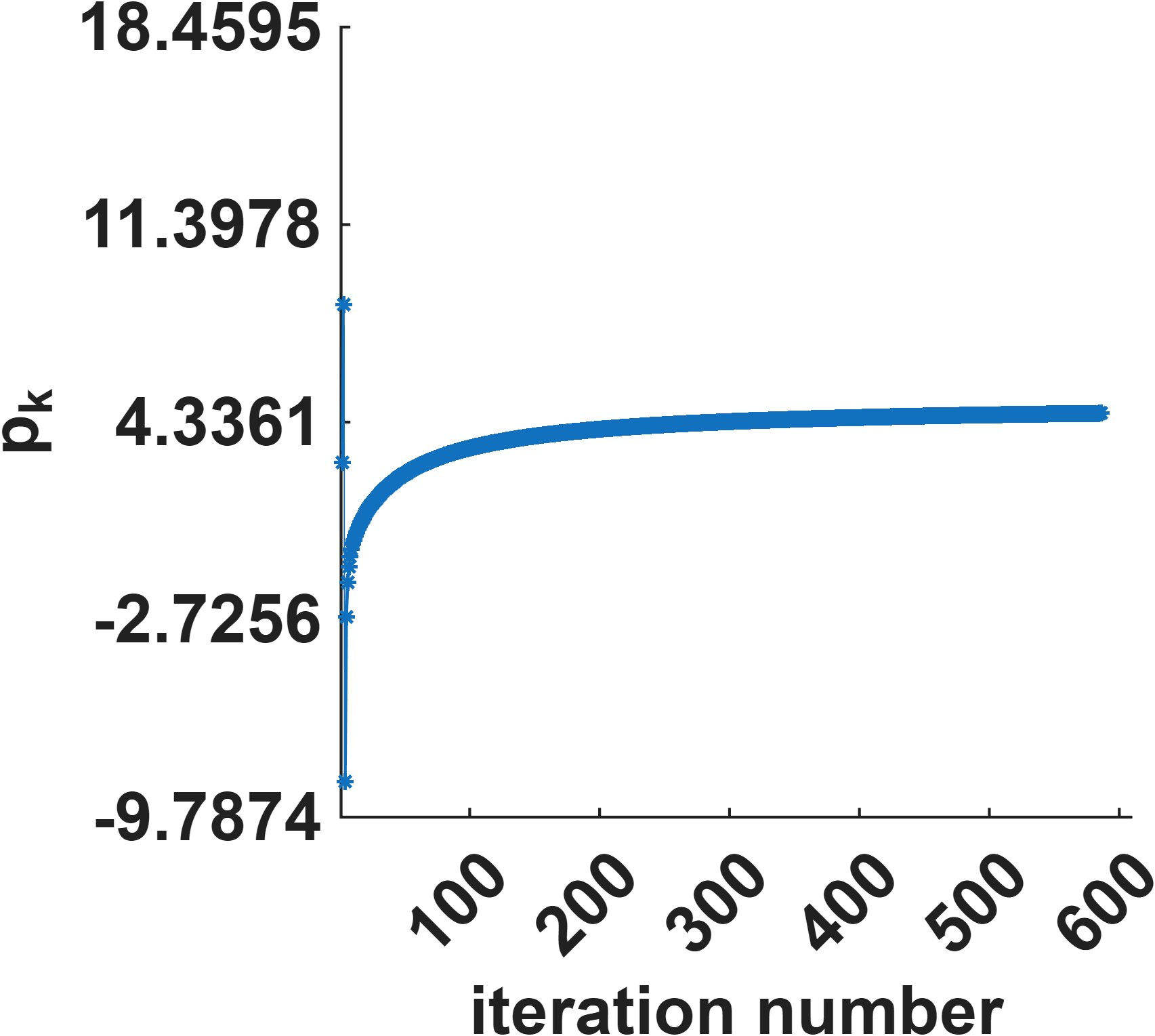"}
    \caption{FD}
  \end{subfigure}\hfill
  \begin{subfigure}{0.24\textwidth}\centering
    \includegraphics[width=\textwidth]{"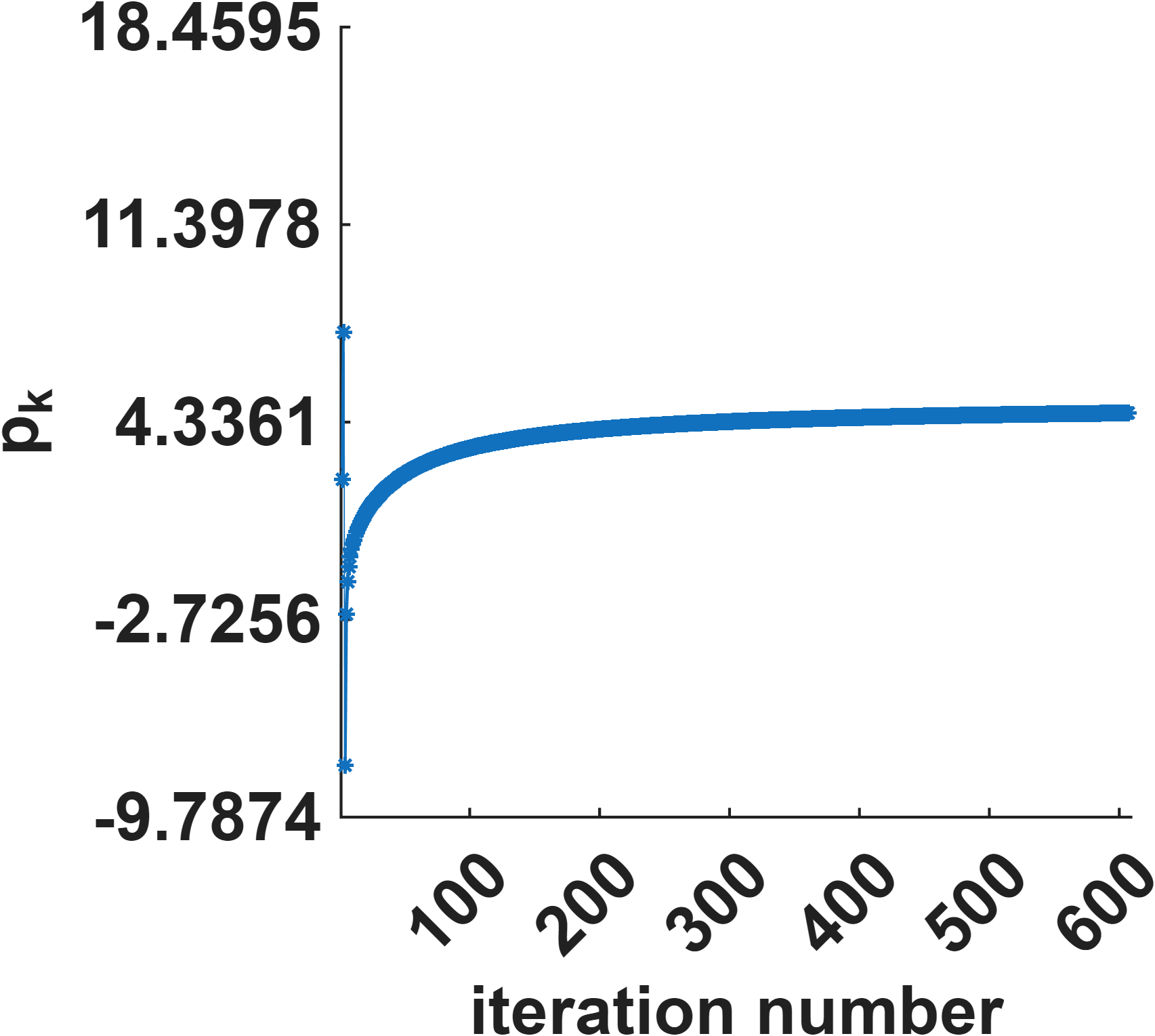"}
    \caption{IMEX-RB}
  \end{subfigure}\hfill
  \begin{subfigure}{0.24\textwidth}\centering
    \includegraphics[width=\textwidth]{"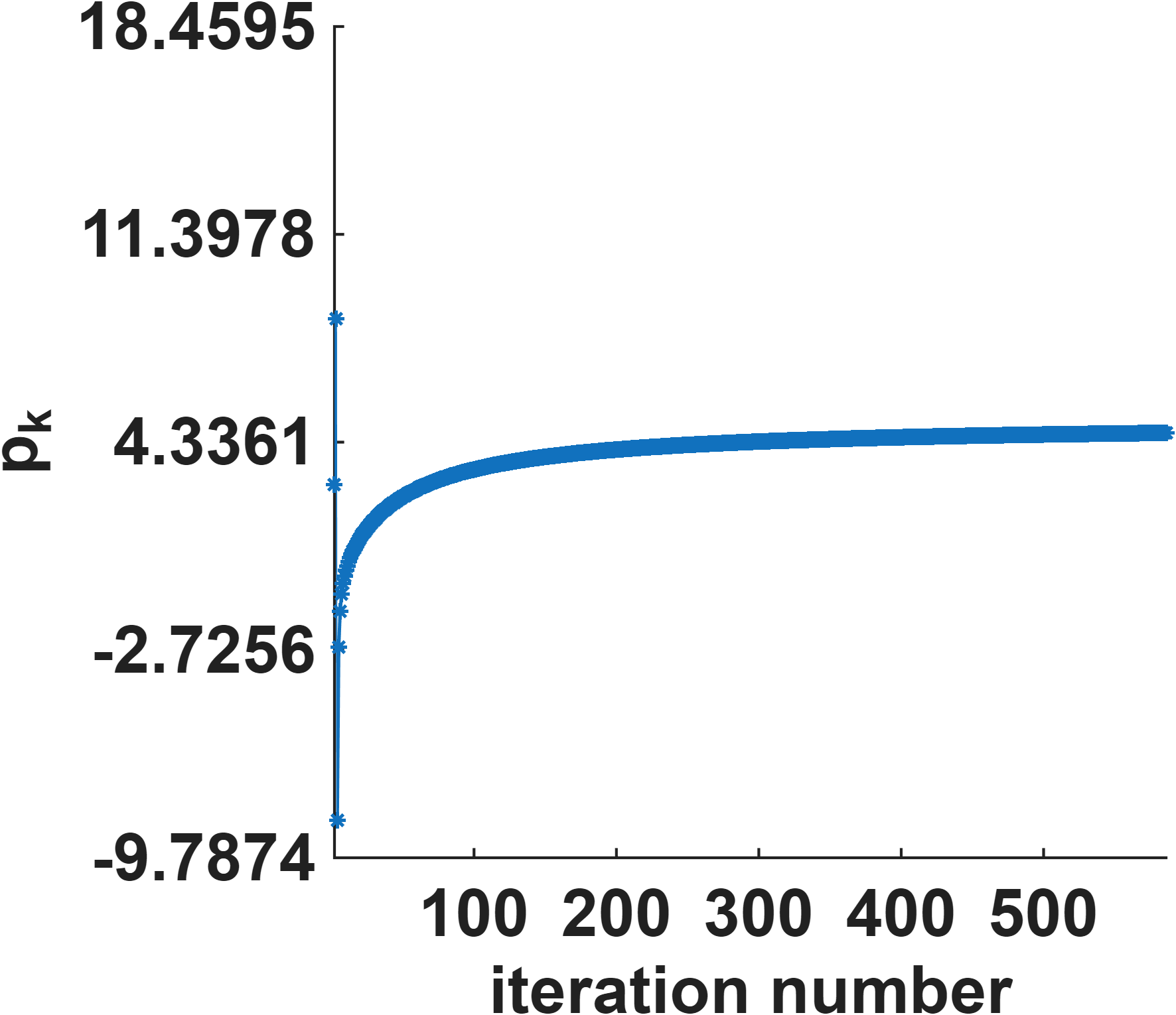"}
    \caption{FB}
  \end{subfigure}\hfill
  \begin{subfigure}{0.24\textwidth}\centering
    \includegraphics[width=\textwidth]{"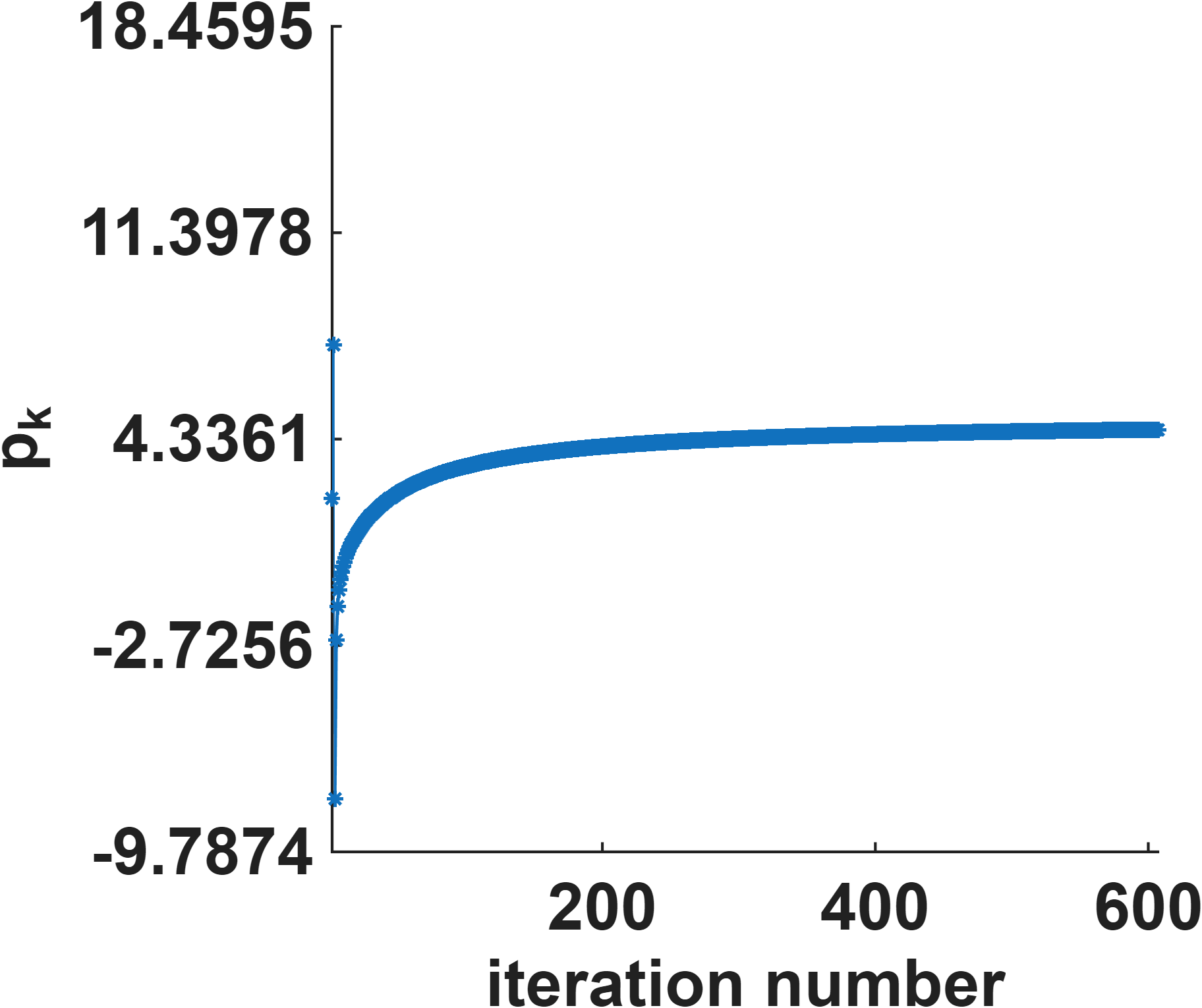"}
    \caption{Semi}
  \end{subfigure}

  \vspace{0.45em}

  \begin{subfigure}{0.24\textwidth}\centering
    \includegraphics[width=\textwidth]{"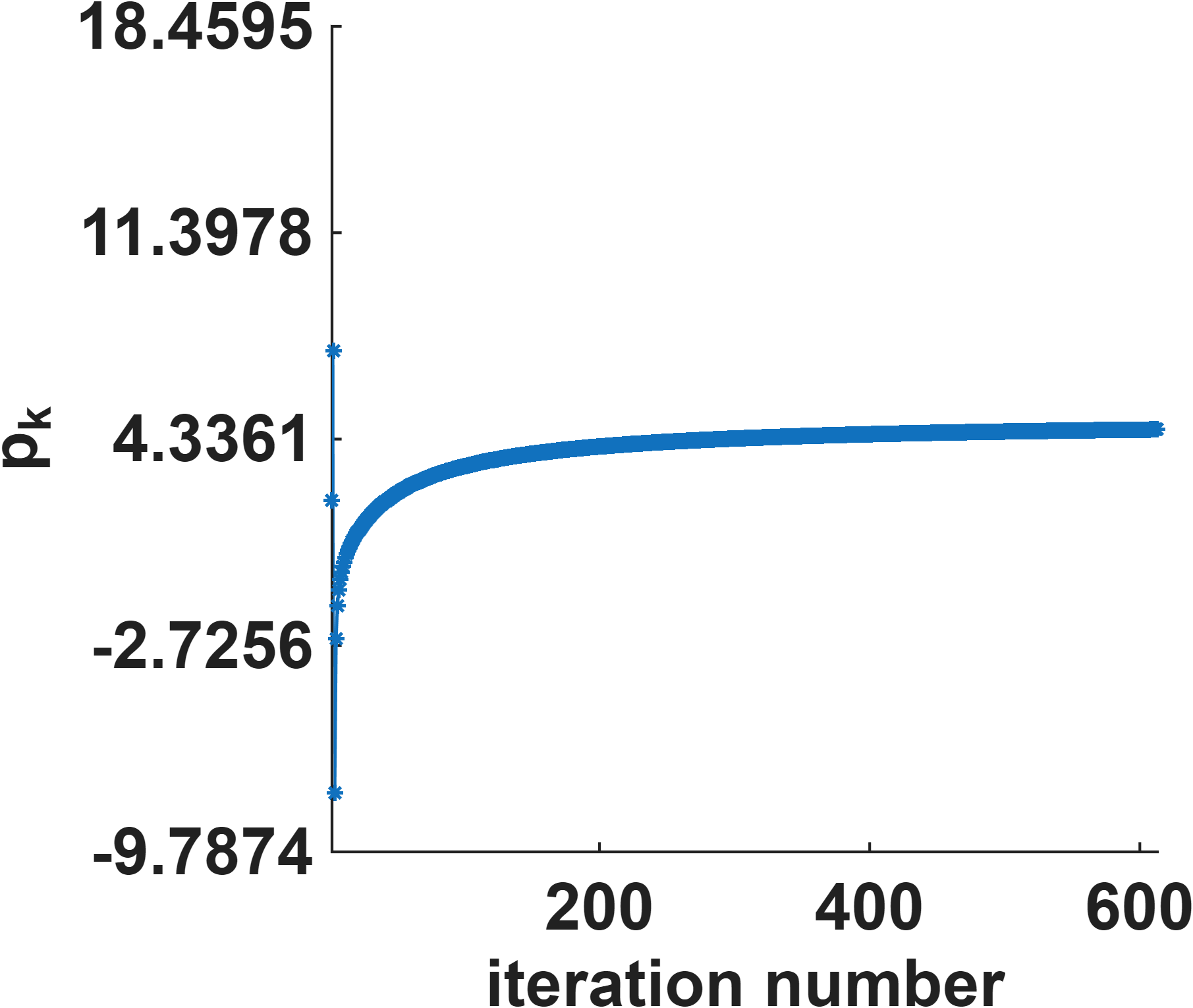"}
    \caption{IPAHD}
  \end{subfigure}\hfill
  \begin{subfigure}{0.24\textwidth}\centering
    \includegraphics[width=\textwidth]{"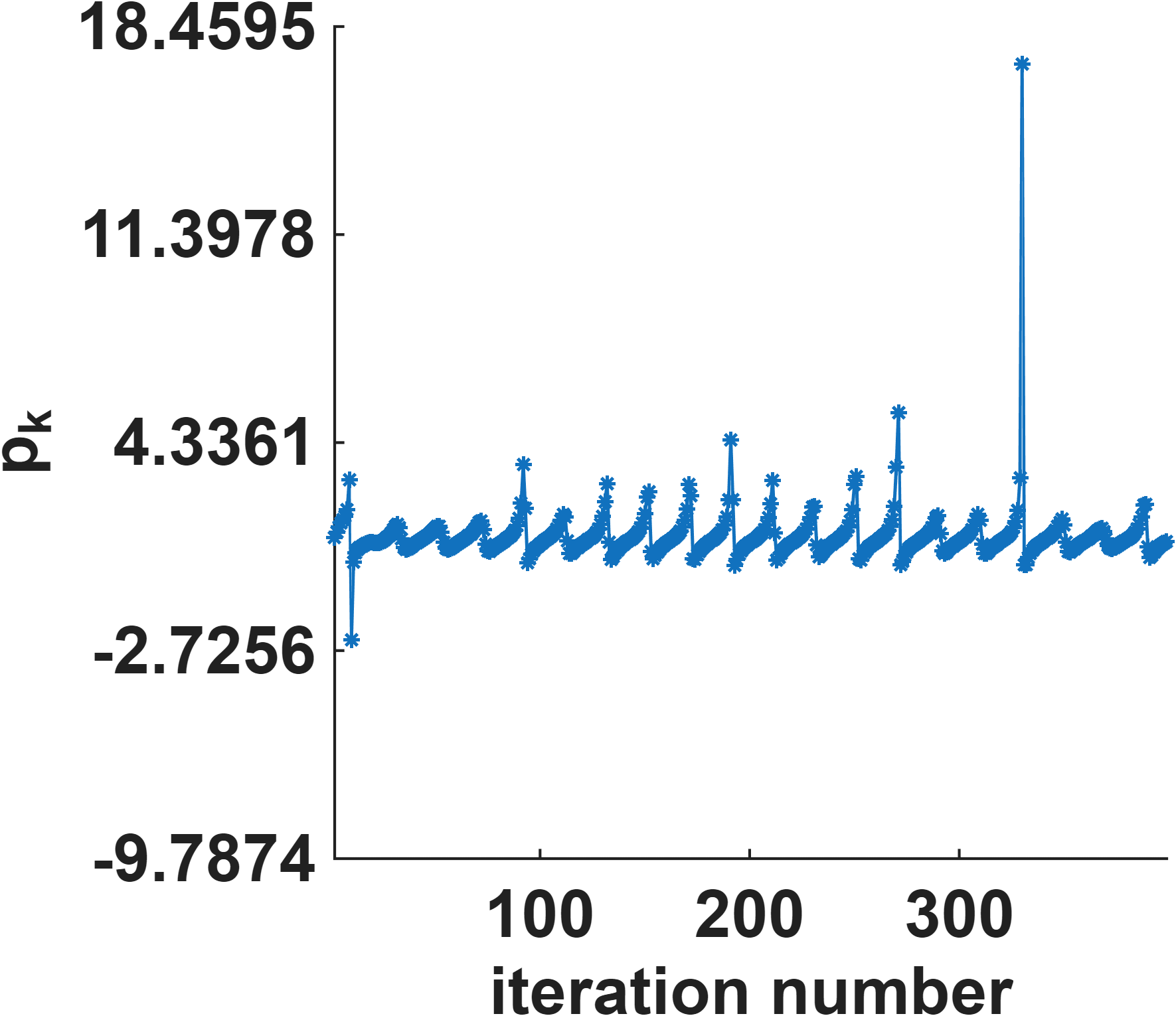"}
    \caption{Nesterov}
  \end{subfigure}\hfill
  \begin{subfigure}{0.24\textwidth}\centering
    \includegraphics[width=\textwidth]{"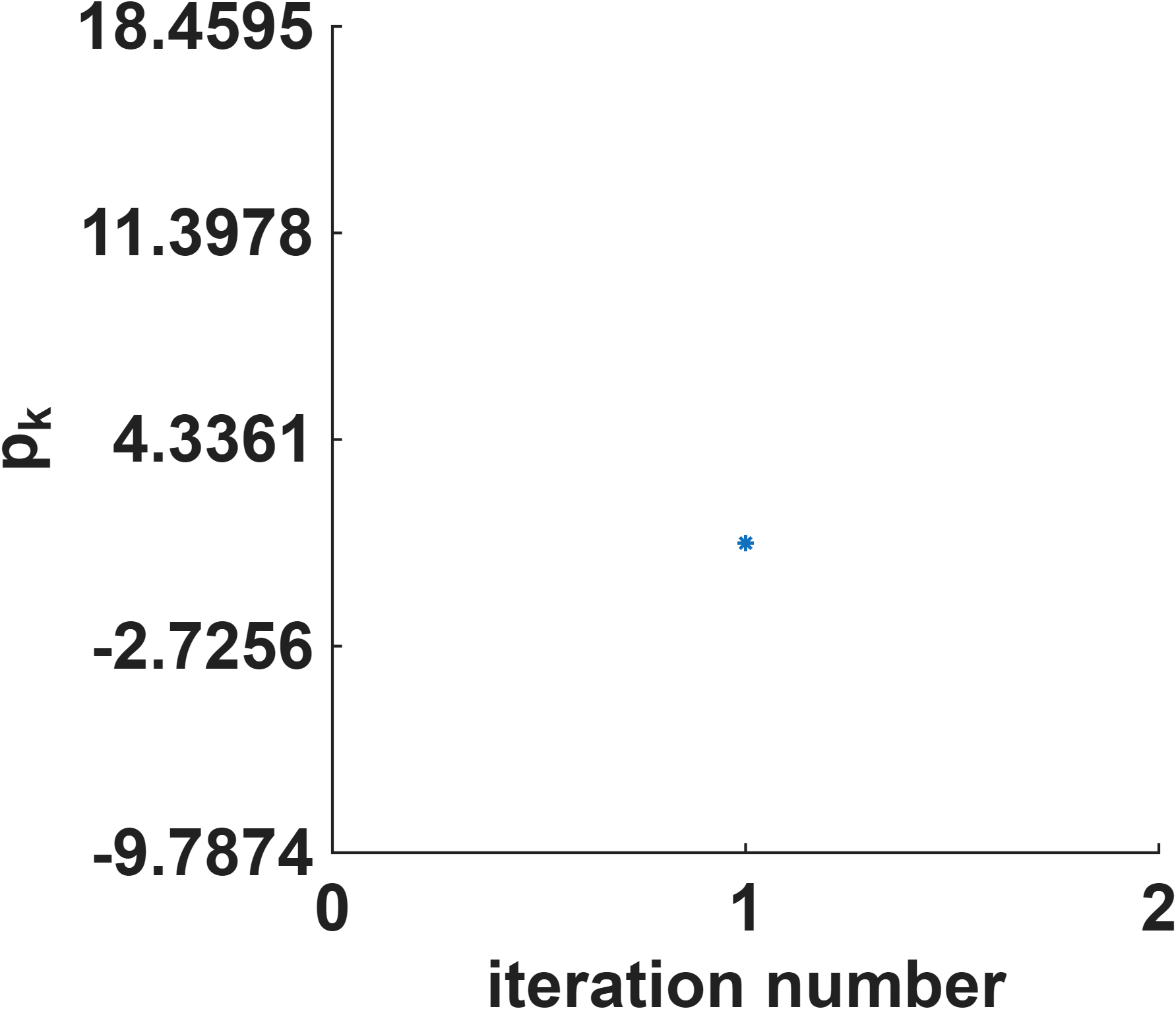"}
    \caption{GD}
  \end{subfigure}

  \caption{Local convergence rate exponent $p_k$ across methods (10D Rotated Hyper-Ellipsoid Function, $h = 1/16$). The first iteration is initialized by $x_1 = x_0 - 10^{-4} \nabla F(x_0)$, and the subsequent iterations follow the corresponding numerical schemes. All inertial methods demonstrate a uniform plateau except for the Nesterov method, which shows violent oscillations during the iteration.}
  \label{fig:convergence_comparison}
\end{figure}

\begin{figure}[H]
  \centering
  \captionsetup[subfigure]{justification=centering}

  \begin{subfigure}{0.24\textwidth}\centering
    \includegraphics[width=\textwidth]{"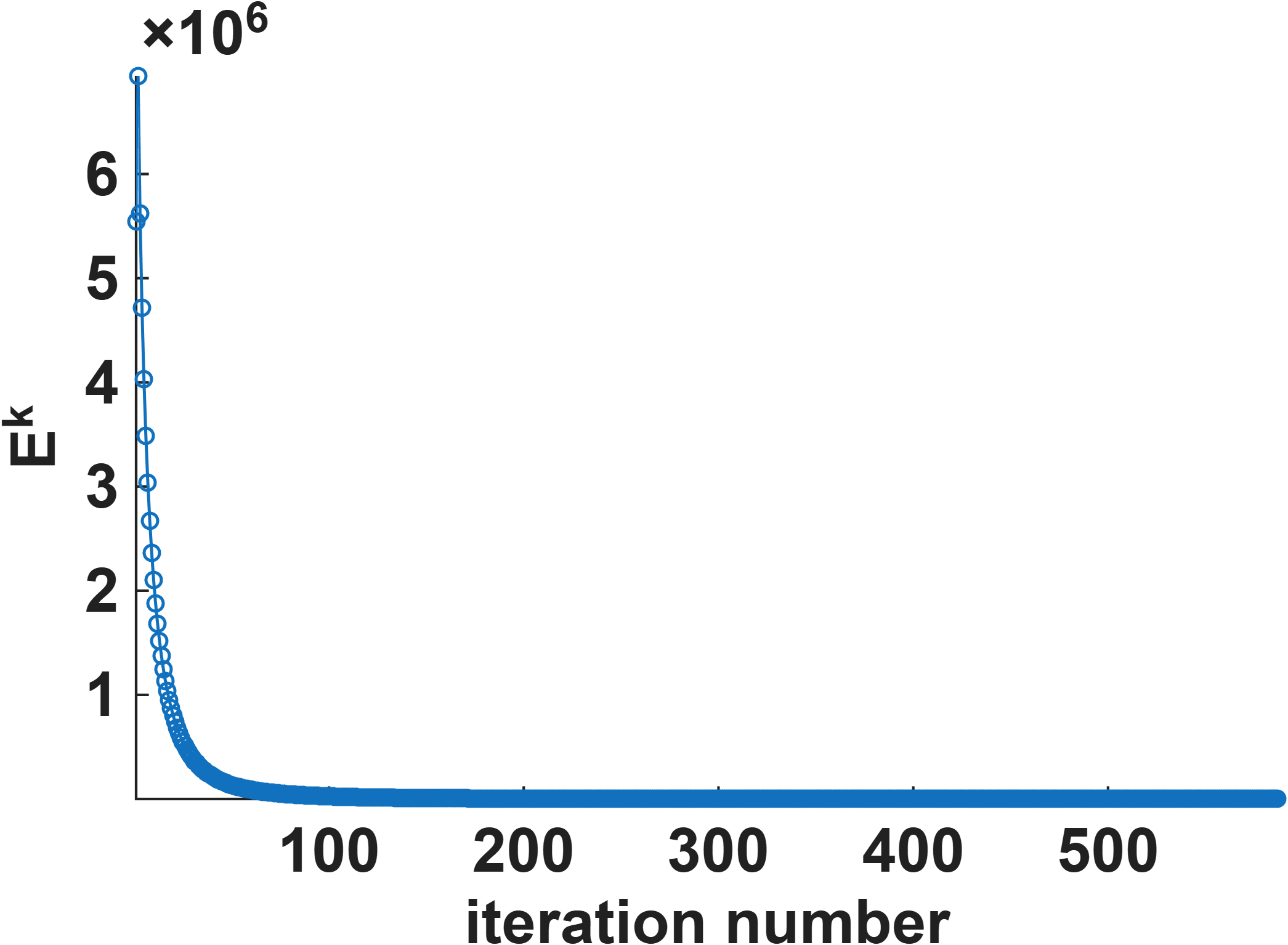"}
    \caption{FD}
  \end{subfigure}\hfill
  \begin{subfigure}{0.24\textwidth}\centering
    \includegraphics[width=\textwidth]{"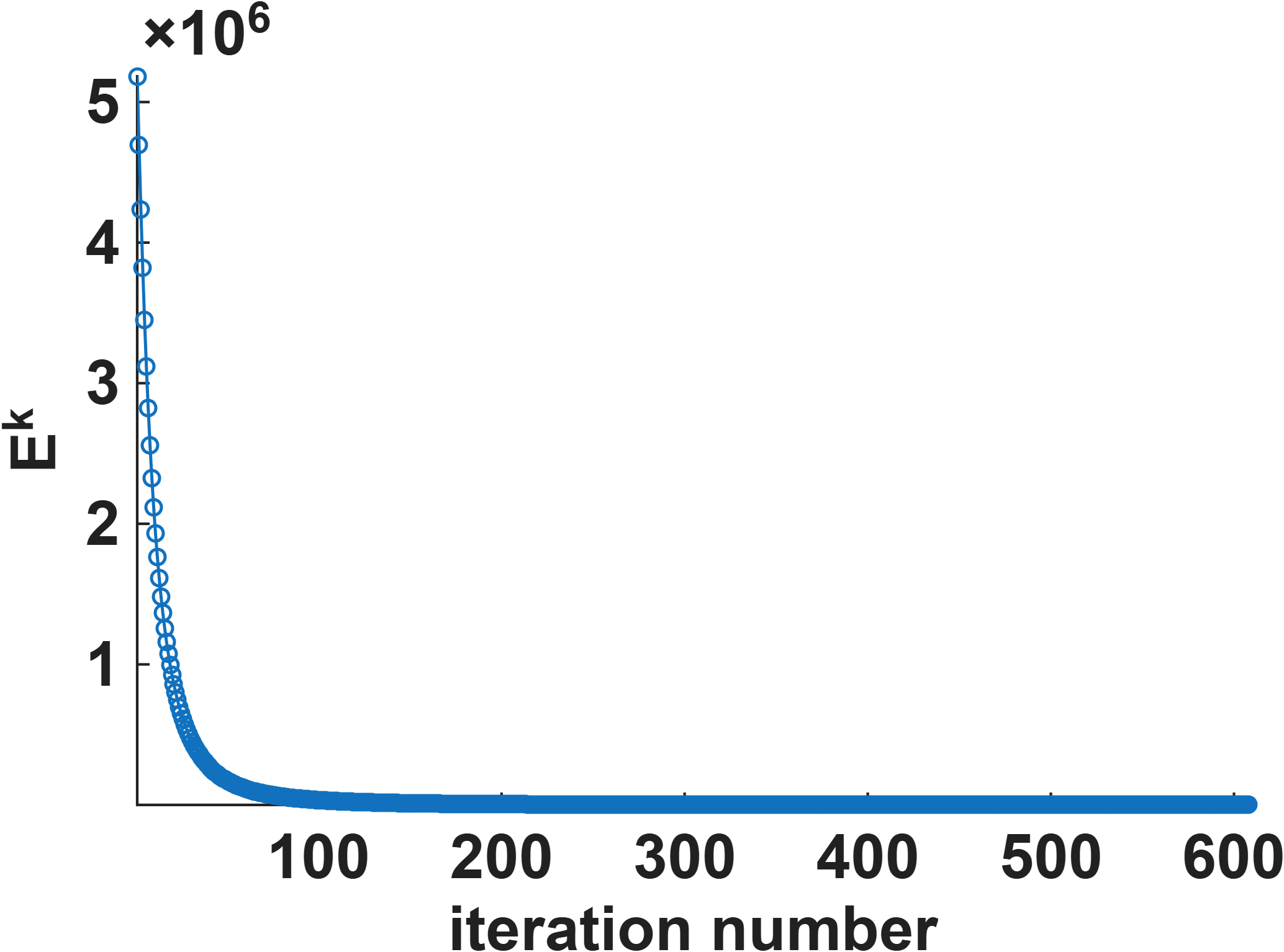"}
    \caption{IMEX-RB}
  \end{subfigure}\hfill
  \begin{subfigure}{0.24\textwidth}\centering
    \includegraphics[width=\textwidth]{"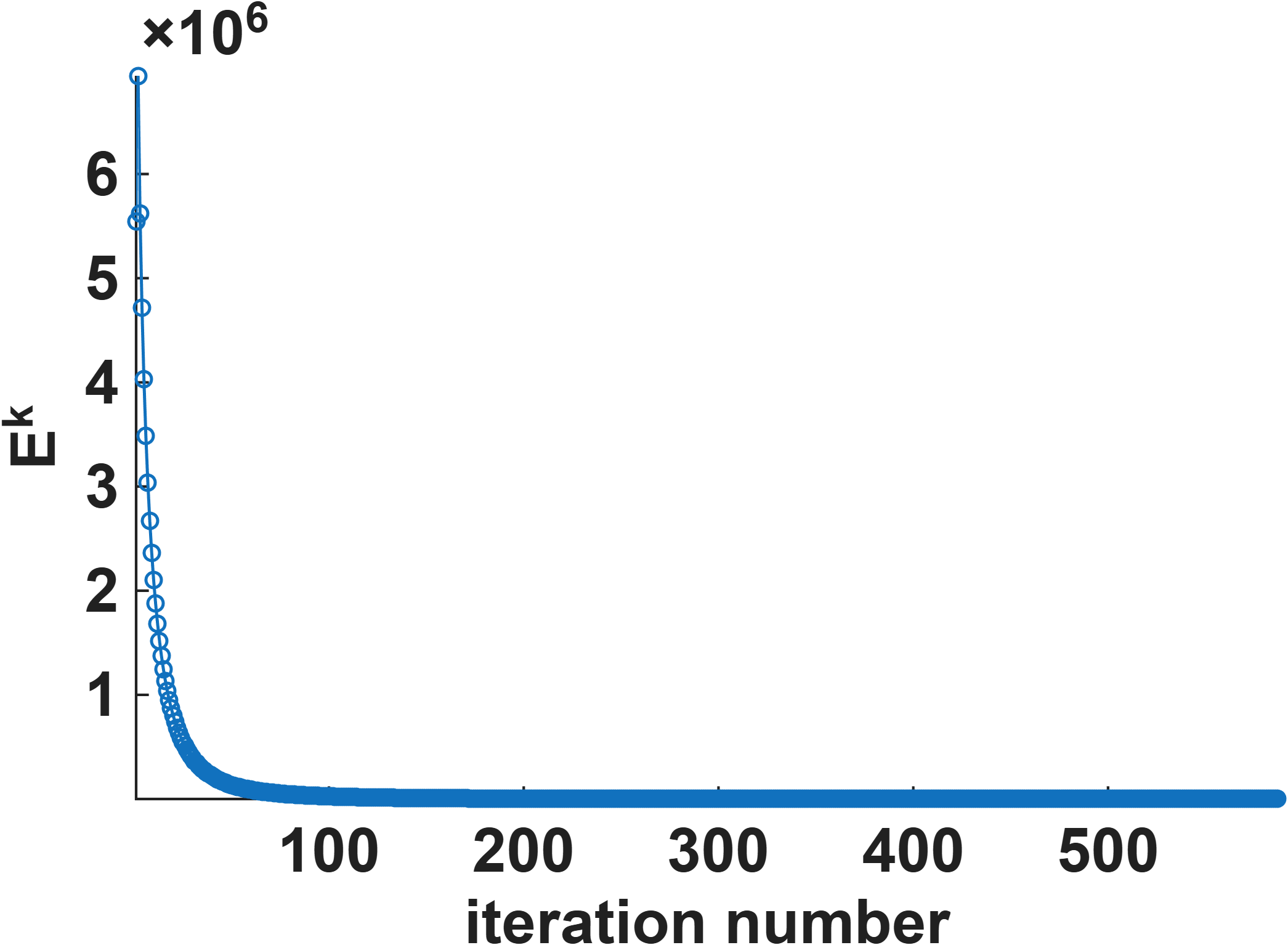"}
    \caption{FB}
  \end{subfigure}\hfill
  \begin{subfigure}{0.24\textwidth}\centering
    \includegraphics[width=\textwidth]{"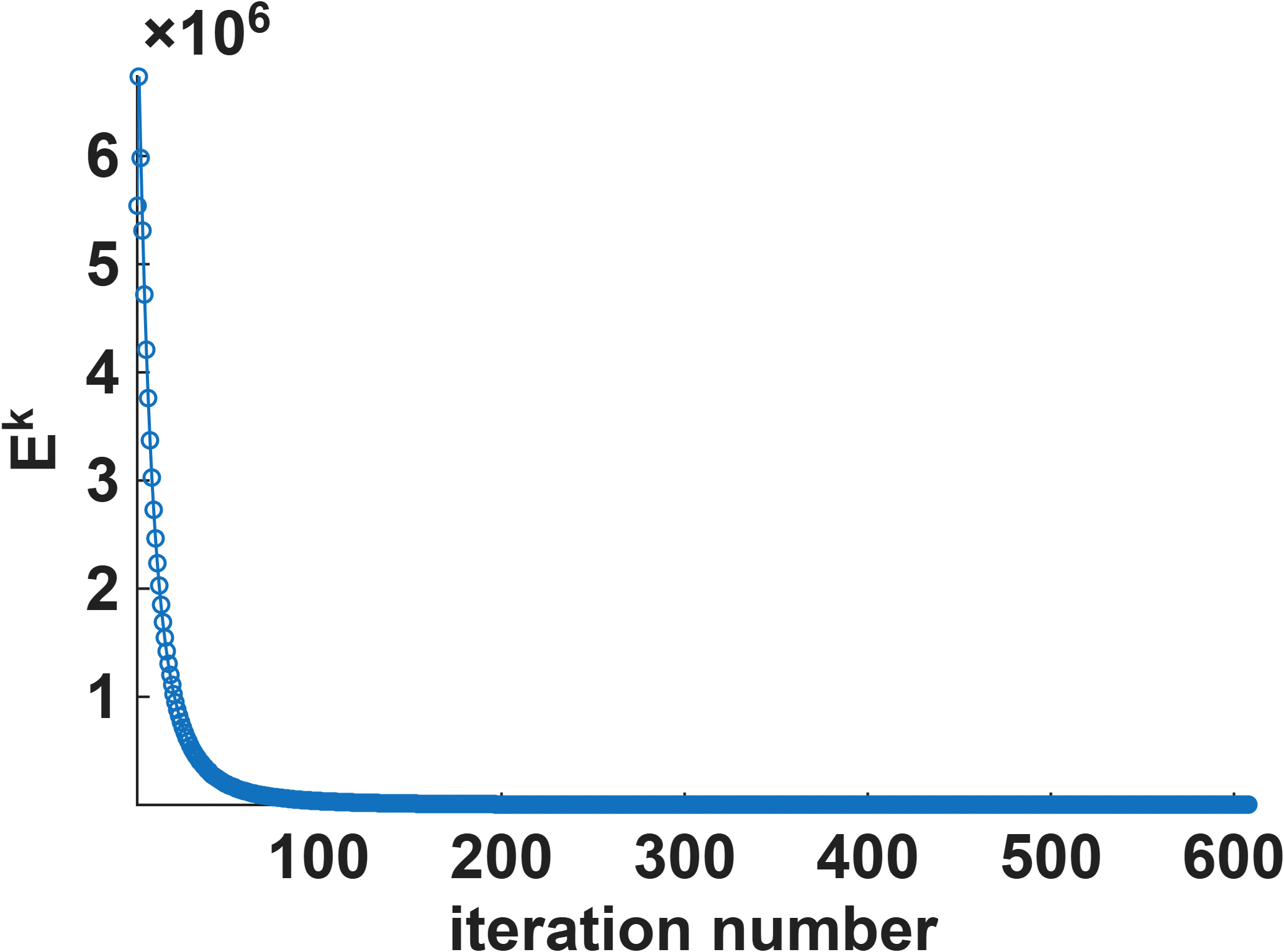"}
    \caption{Semi}
  \end{subfigure}

  \vspace{0.45em}

  \begin{subfigure}{0.24\textwidth}\centering
    \includegraphics[width=\textwidth]{"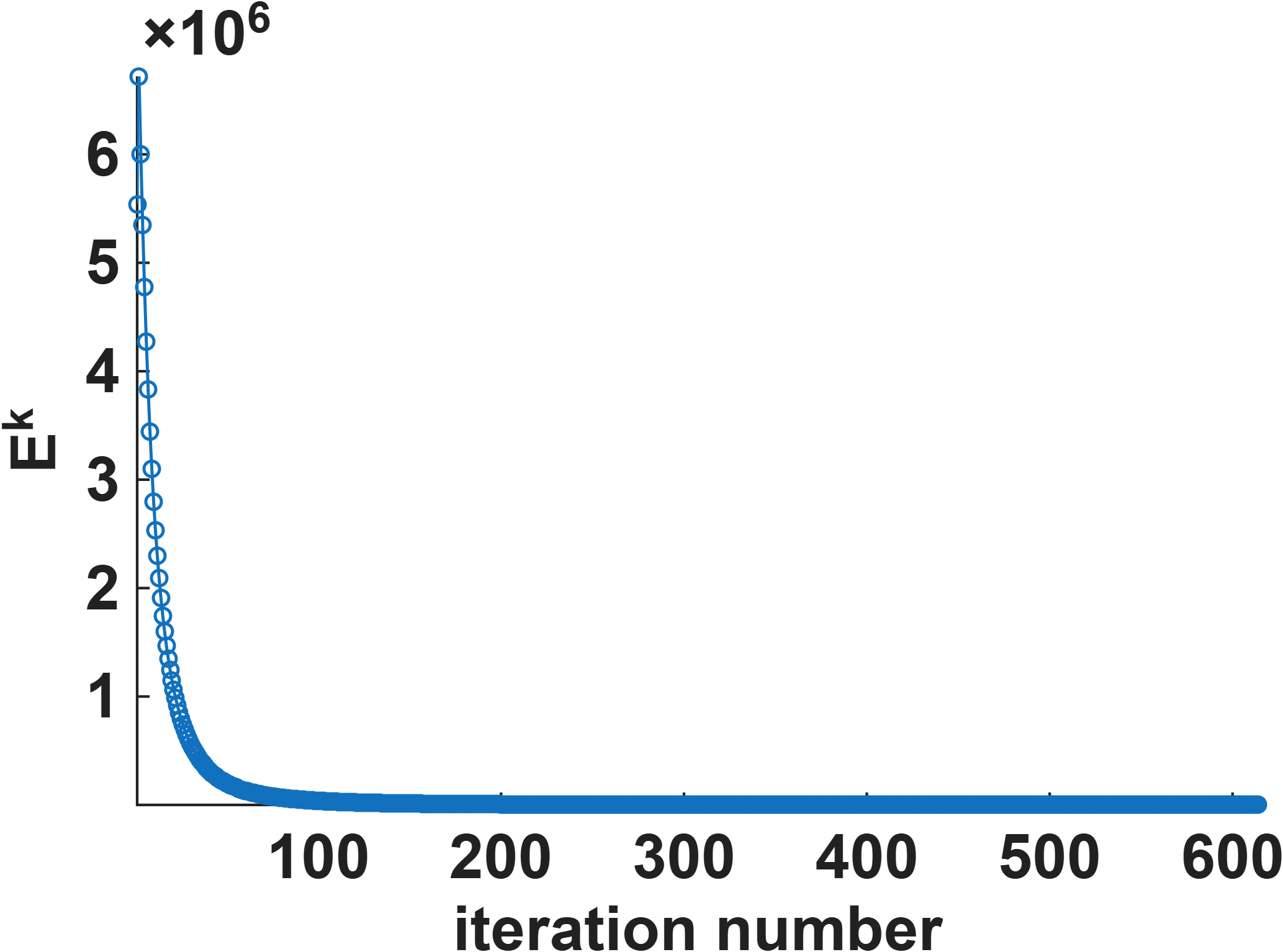"}
    \caption{IPAHD}
  \end{subfigure}\hfill
  \begin{subfigure}{0.24\textwidth}\centering
    \includegraphics[width=\textwidth]{"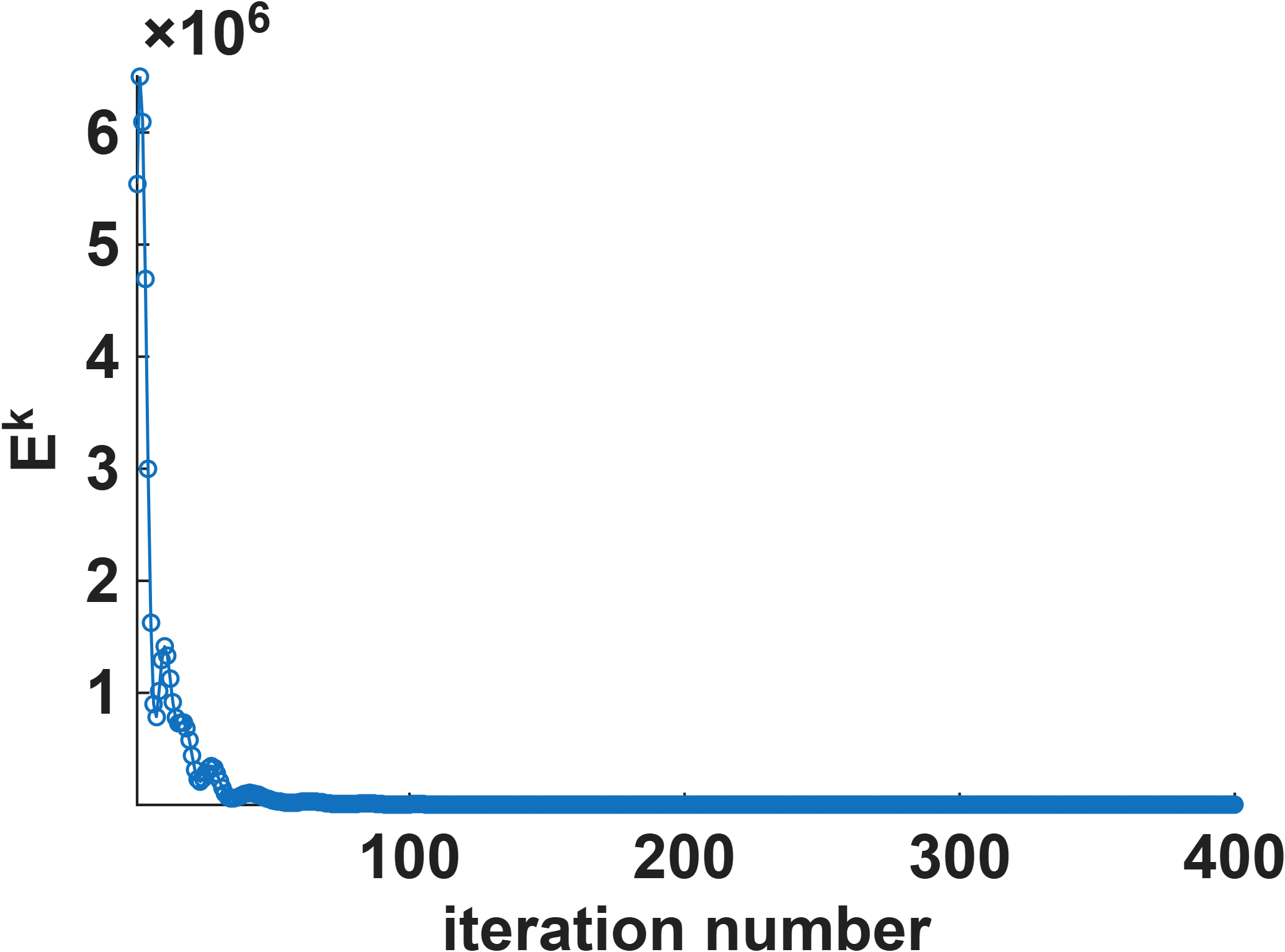"}
    \caption{Nesterov}
  \end{subfigure}\hfill
  \begin{subfigure}{0.24\textwidth}\centering
    \includegraphics[width=\textwidth]{"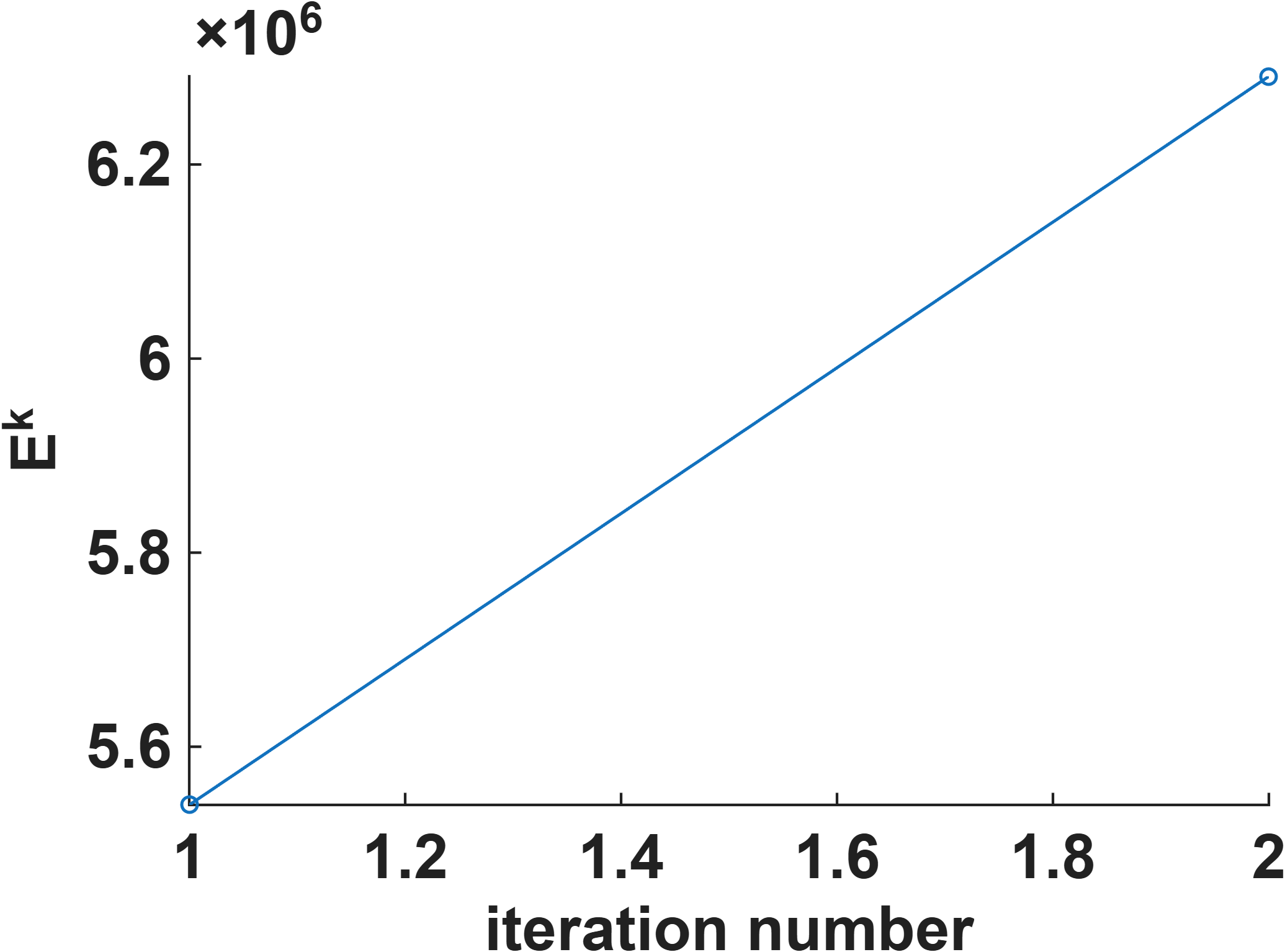"}
    \caption{GD}
  \end{subfigure}

  \caption{Comparison of $E^k$ across methods for the 10D Rotated Hyper-Ellipsoid Function (\(h=1/16\)). Each subplot is one method (left-to-right, top-to-bottom). The discrete energy oscillates wildly when using the Nesterov method. }
  \label{fig:energy_all}
\end{figure}

In Tables~\ref{tab:rothe1d_compact}--\ref{tab:rothe50d_compact}, we present the average convergence rate, $\bar{p}$, for the Rotated Hyper-Ellipsoid function in different dimensions.
Numerical results on convergence rates of other test functions are provided in the appendix~\ref{num:append:con}.

All new inertial dynamics based methods have $\bar{p} > 1$ and successfully converge to the minimum of  the test function's. This means that $F^{k} - F^*$ decays at least at the rate, $\mathcal{O}\!\left(\frac{1}{\delta_k^p}\right)$. 
In these cases, the value of $\bar{p}$ ranges from $3$ to $5$, depending on $h$.  
These results are consistent with the result of Theorem~\ref{continuous:thm}. In fact, they show that the actual convergence rate is much faster than $\mathcal{O}\!\left(\frac{1}{\delta_k}\right)$. The rate $\mathcal{O}\!\left(\frac{1}{\delta_k}\right)$ is therefore a "worst-case" upper bound. From the results, we notice a phenomenon: the larger $h$ is, the higher $p$ is. We think this applies only in a finite range of $h$ bounded by an upper bound that depends on the test function.

In contrast, the classical Nesterov method and the gradient descent method do not always reach the optimal convergence rate. In particular, the gradient descent method reaches the minimum only in the 1D case and fails in the higher dimensional cases.
For these two methods, $\bar{p}$ is sometimes negative. This indicates that $F^{k} - F^*$ does not decay monotonically and it increases more often than it decreases during the iterations, which is consistent with the failure to reach the minimum in higher dimensional cases. This also suggests that the dynamic paths in these two methods in the optimization process are quite different from the rest. 

Although $\bar{p}$ for these two methods can be quite large in the 1D case (For "NaN" in the table, it indicates that $F^{k} - F^*$ decreases to 0 extremely fast), it drops rapidly in higher-dimensional cases to around $1$ for the classical Nesterov method. This is much smaller than $\bar{p}$ obtained by the new inertial dynamics based methods with the same parameter values. 
This shows that these two methods do not achieve convergence rates as high and stable as the new inertial dynamics–based methods. Moreover, in higher dimensions, their convergence to the minimum is less stable than that of the new methods.

The value of $\bar{p}$ for the classical Nesterov method indicates a decay rate of approximately $\mathcal{O}\!\left(\frac{1}{\delta_k}\right)$, which is equivalent to $\mathcal{O}\!\left(\frac{1}{t_k^2}\right)$. This is consistent with the theoretical results reported in \cite{aujol2019optimal}.
For the gradient descent method, $\bar{p}$ is generally between $0.5$ and $1$, corresponding to a decay rate of approximately $\mathcal{O}\!\left(\frac{1}{t_k}\right)$, and in some cases approaching $\mathcal{O}\!\left(\frac{1}{t_k^2}\right)$. This behavior agrees with the classical convergence analysis in \cite{nocedal2006numerical}.


\begin{table}[H]
  \centering
  \caption{1D Rotated Hyper-Ellipsoid Function}
  \label{tab:rothe1d_compact}
  {\fontsize{9}{11}\selectfont
  \setlength{\tabcolsep}{4pt}
  \begin{adjustbox}{max width=\textwidth}
  \begin{tabular}{l
                  r c
                  r c
                  r c
                  r c
                  r c
                  r c
                  r c}
    \toprule
    \makecell[l]{h} &
    \makecell{FB\\$\bar{p}$} & \makecell{FB\\S/F} &
    \makecell{Semi\\$\bar{p}$} & \makecell{Semi\\S/F} &
    \makecell{FD\\$\bar{p}$} & \makecell{FD\\S/F} &
    \makecell{IMEX\\$\bar{p}$} & \makecell{IMEX\\S/F} &
    \makecell{IPAHD\\$\bar{p}$} & \makecell{IPAHD\\S/F} &
    \makecell{NM\\$\bar{p}$} & \makecell{NM\\S/F} &
    \makecell{GD\\$\bar{p}$} & \makecell{GD\\S/F} \\
    \midrule
    1          & 4.7236 & 1 & 4.7236 & 1 & 4.7236 & 1 & 4.7229 & 1 & 4.7230 & 1 & 241.7813 & 1 & NaN & 1 \\
    $1/2^{1}$  & 4.6111 & 1 & 4.6111 & 1 & 4.6111 & 1 & 4.6111 & 1 & 4.6103 & 1 & 10.2989  & 1 & NaN & 1 \\
    $1/2^{2}$  & 4.4735 & 1 & 4.4745 & 1 & 4.4735 & 1 & 4.4735 & 1 & 4.4717 & 1 & 27.6623  & 1 & NaN & 1 \\
    $1/2^{3}$  & 4.3428 & 1 & 4.3427 & 1 & 4.3428 & 1 & 4.3428 & 1 & 4.3400 & 1 & 83.2620  & 1 & NaN & 1 \\
    $1/2^{4}$  & 4.1744 & 1 & 4.1744 & 1 & 4.1744 & 1 & 4.1744 & 1 & 4.1697 & 1 & 10.4080  & 1 & NaN & 1 \\
    $1/2^{5}$  & 4.0498 & 1 & 4.0503 & 1 & 4.0498 & 1 & 4.0498 & 1 & 4.0453 & 1 & 4.1936   & 1 & NaN & 1 \\
    $1/2^{6}$  & 3.8585 & 1 & 3.8588 & 1 & 3.8585 & 1 & 3.8584 & 1 & 3.8518 & 1 & 4183.8   & 1 & NaN & 1 \\
    $1/2^{7}$  & 3.7620 & 1 & 3.7619 & 1 & 3.7620 & 1 & 3.7617 & 1 & 3.7567 & 1 & 5.6631   & 1 & NaN & 1 \\
    \bottomrule
  \end{tabular}
  \end{adjustbox}
  }
\end{table}

\begin{table}[H]
  \centering
  \caption{2D Rotated Hyper-Ellipsoid Function}
  \label{tab:rothe2d_compact}
  {\fontsize{9}{11}\selectfont
  \setlength{\tabcolsep}{4pt}
  \begin{adjustbox}{max width=\textwidth}
  \begin{tabular}{l
                  r c
                  r c
                  r c
                  r c
                  r c
                  r c
                  r c}
    \toprule
    \makecell[l]{h} &
    \makecell{FB\\$\bar{p}$} & \makecell{FB\\S/F} &
    \makecell{Semi\\$\bar{p}$} & \makecell{Semi\\S/F} &
    \makecell{FD\\$\bar{p}$} & \makecell{FD\\S/F} &
    \makecell{IMEX\\$\bar{p}$} & \makecell{IMEX\\S/F} &
    \makecell{IPAHD\\$\bar{p}$} & \makecell{IPAHD\\S/F} &
    \makecell{NM\\$\bar{p}$} & \makecell{NM\\S/F} &
    \makecell{GD\\$\bar{p}$} & \makecell{GD\\S/F} \\
    \midrule
    1          & 4.8494 & 1 & 4.8494 & 1 & 4.8494 & 1 & 4.8501 & 1 & 4.8517 & 1 & 9.3867  & 1 & 0.25    & 0 \\
    $1/2^{1}$  & 4.7888 & 1 & 4.7861 & 1 & 4.7888 & 1 & 4.7864 & 1 & 4.7894 & 1 & 5.1663  & 1 & -0.4444 & 0 \\
    $1/2^{2}$  & 4.7189 & 1 & 4.7084 & 1 & 4.7189 & 1 & 4.7067 & 1 & 4.7116 & 1 & 7.6625  & 1 & 2.0000  & 0 \\
    $1/2^{3}$  & 4.6577 & 1 & 4.6348 & 1 & 4.6577 & 1 & 4.6313 & 1 & 4.6335 & 1 & 2.6310  & 1 & 0.9630  & 0 \\
    $1/2^{4}$  & 4.5745 & 1 & 4.5381 & 1 & 4.5745 & 1 & 4.5344 & 1 & 4.5358 & 1 & 1.6467  & 1 & 0.7873  & 0 \\
    $1/2^{5}$  & 4.5036 & 1 & 4.4604 & 1 & 4.5036 & 1 & 4.4580 & 1 & 4.4561 & 1 & 1.3303  & 1 & 0.7224  & 0 \\
    $1/2^{6}$  & 4.4091 & 1 & 4.3491 & 1 & 4.4091 & 1 & 4.3474 & 1 & 4.3425 & 1 & 1.1623  & 1 & 0.6936  & 0 \\
    $1/2^{7}$  & 4.3320 & 1 & 4.2746 & 1 & 4.3320 & 1 & 4.2740 & 1 & 4.2686 & 1 & 1.1269  & 1 & 0.6799  & 0 \\
    \bottomrule
  \end{tabular}
  \end{adjustbox}
  }
\end{table}


\begin{table}[H]
  \centering
  \caption{10D Rotated Hyper-Ellipsoid Function}
  \label{tab:rothe10d_compact}
  {\fontsize{9}{11}\selectfont
  \setlength{\tabcolsep}{4pt}
  \begin{adjustbox}{max width=\textwidth}
  \begin{tabular}{l
                  r c
                  r c
                  r c
                  r c
                  r c
                  r c
                  r c}
    \toprule
    \makecell[l]{h} &
    \makecell{FB\\$\bar{p}$} & \makecell{FB\\S/F} &
    \makecell{Semi\\$\bar{p}$} & \makecell{Semi\\S/F} &
    \makecell{FD\\$\bar{p}$} & \makecell{FD\\S/F} &
    \makecell{IMEX\\$\bar{p}$} & \makecell{IMEX\\S/F} &
    \makecell{IPAHD\\$\bar{p}$} & \makecell{IPAHD\\S/F} &
    \makecell{NM\\$\bar{p}$} & \makecell{NM\\S/F} &
    \makecell{GD\\$\bar{p}$} & \makecell{GD\\S/F} \\
    \midrule
    1          & 4.6463 & 1 & 4.6518 & 1 & 4.6463 & 1 & 4.6518 & 1 & 4.6529 & 1 & 0.9986 & 0 & 0.2500 & 0 \\
    $1/2^{1}$  & 4.4999 & 1 & 4.5119 & 1 & 4.4999 & 1 & 4.5119 & 1 & 4.5133 & 1 & 1.7811 & 1 & -0.4444 & 0 \\
    $1/2^{2}$  & 4.3215 & 1 & 4.3403 & 1 & 4.3215 & 1 & 4.3419 & 1 & 4.3434 & 1 & 1.6257 & 1 & 2.0000  & 0 \\
    $1/2^{3}$  & 4.1734 & 1 & 4.1835 & 1 & 4.1734 & 1 & 4.1847 & 1 & 4.1879 & 1 & 1.2726 & 1 & 0.9630 & 0 \\
    $1/2^{4}$  & 3.9520 & 1 & 3.9744 & 1 & 3.9520 & 1 & 3.9755 & 1 & 3.9808 & 1 & 1.1790 & 1 & 0.7873 & 0 \\
    $1/2^{5}$  & 3.8103 & 1 & 3.8318 & 1 & 3.8103 & 1 & 3.8325 & 1 & 3.8375 & 1 & 1.0921 & 1 & 0.7224 & 0 \\
    $1/2^{6}$  & 3.5715 & 1 & 3.5965 & 1 & 3.5715 & 1 & 3.5971 & 1 & 3.6054 & 1 & 1.0450 & 1 & 0.6936 & 0 \\
    $1/2^{7}$  & 3.4781 & 1 & 3.5029 & 1 & 3.4781 & 1 & 3.5032 & 1 & 3.5090 & 1 & 1.0201 & 1 & 0.6799 & 0 \\
    \bottomrule
  \end{tabular}
  \end{adjustbox}
  }
\end{table}

\begin{table}[H]
  \centering
  \caption{50D Rotated Hyper-Ellipsoid Function}
  \label{tab:rothe50d_compact}
  {\fontsize{9}{11}\selectfont
  \setlength{\tabcolsep}{4pt}
  \begin{adjustbox}{max width=\textwidth}
  \begin{tabular}{l
                  r c
                  r c
                  r c
                  r c
                  r c
                  r c
                  r c}
    \toprule
    \makecell[l]{h} &
    \makecell{FB\\$\bar{p}$} & \makecell{FB\\S/F} &
    \makecell{Semi\\$\bar{p}$} & \makecell{Semi\\S/F} &
    \makecell{FD\\$\bar{p}$} & \makecell{FD\\S/F} &
    \makecell{IMEX\\$\bar{p}$} & \makecell{IMEX\\S/F} &
    \makecell{IPAHD\\$\bar{p}$} & \makecell{IPAHD\\S/F} &
    \makecell{NM\\$\bar{p}$} & \makecell{NM\\S/F} &
    \makecell{GD\\$\bar{p}$} & \makecell{GD\\S/F} \\
    \midrule
    1          & 3.7996 & 1 & 3.7990 & 1 & 3.7996 & 1 & 3.7990 & 1 & 3.7990 & 1 & 0.9987 & 0 & 0.2500 & 0 \\
    $1/2^{1}$  & 3.3899 & 1 & 3.3893 & 1 & 3.3899 & 1 & 3.3893 & 1 & 3.3893 & 1 & 0.9985 & 0 & -0.4444 & 0 \\
    $1/2^{2}$  & 2.9133 & 1 & 2.9130 & 1 & 2.9133 & 1 & 2.9130 & 1 & 2.9130 & 1 & 0.9984 & 0 & 2.0000  & 0 \\
    $1/2^{3}$  & 2.6119 & 1 & 2.6090 & 1 & 2.6119 & 1 & 2.6255 & 1 & 2.6090 & 1 & 1.2954 & 1 & 0.9630 & 0 \\
    $1/2^{4}$  & 2.0507 & 1 & 2.0499 & 1 & 2.0507 & 1 & 2.0499 & 1 & 2.0499 & 1 & 1.0820 & 1 & 0.7873 & 0 \\
    $1/2^{5}$  & 1.9610 & 1 & 1.9568 & 1 & 1.9610 & 1 & 1.9640 & 1 & 1.9567 & 1 & 1.0578 & 1 & 0.7224 & 0 \\
    $1/2^{6}$  & 1.3213 & 1 & 1.3258 & 1 & 1.3213 & 1 & 1.3258 & 1 & 1.3258 & 1 & 1.0141 & 1 & 0.6936 & 0 \\
    $1/2^{7}$  & 1.7860 & 1 & 1.7786 & 1 & 1.7860 & 1 & 1.7798 & 1 & 1.7784 & 1 & 1.0135 & 1 & 0.6936 & 0 \\
    \bottomrule
  \end{tabular}
  \end{adjustbox}
  }
\end{table}

In summary, for the new inertial dynamics based algorithms, $F^{k} - F^*$ decreases monotonically after the first few iterations. This is consistent with the energy stability property proved in Theorem~\ref{continuous:thm}, Theorem~\ref{fully-discretized:energystability}, and Theorem~\ref{imexrb:energystability}. In contrast, NM and GD show oscillations or increase in the energy during the iteration. 
This difference highlights why the Onsager principle and structure-preserving numerical schemes tend to be more stable: they provide better control over the inertia in the system and help avoid the unnecessary oscillations that occur in some traditional momentum methods.

\subsection{Test of swarm-based algorithms}
\noindent\indent In many practical applications, optimization problems are non-convex, and the objective functions possess multiple local minima. This makes finding the global optimum using classical optimization methods more challenging. Swarm-based algorithms often perform better in such settings because the underlying mass-transport dynamics promote exploration and help the method escape local minima.

In the previous sections, we focused on the construction of inertial algorithms that preserve discrete energy dissipation. In this section, we apply these inertial algorithms to several representative non-convex test functions to evaluate their performance.

\noindent \textbf{Test Functions:}

We consider the Ackley and Rastrigin functions as our test functions and examine the performance of the swarm-based algorithms over the two functions  in various dimensions. The definitions of these two functions in d-dimension are given as follows:
\begin{equation}
	\begin{aligned}
		&F_{\mathrm{Ackley}}(\mathbf{x})= -20 \exp\!\left(-\frac{0.2}{\sqrt{d}}\left(\sum_{i=1}^{d} (\mathbf{x}_B)_i^{2}\right)^{\frac{1}{2}}\right)- \exp\!\left(-\frac{1}{\sqrt{d}}\sum_{i=1}^{d} \cos\!\bigl(2\pi (\mathbf{x}_B)_i\bigr)\right)+ 20 + e + C, \\
		&F_{\mathrm{Rastrigin}}(\mathbf{x})= \frac{1}{d} \sum_{i=1}^{d}\left( (\mathbf{x}_B)_i^{2}- 10 \cos\!\bigl(2\pi (\mathbf{x}_B)_i\bigr)+ 10 \right)+ C,
	\end{aligned}
\end{equation}
where $\mathbf{x}\in\mathbb{R}^d$, $d$ denotes the dimension, $\mathbf{x}_B = \mathbf{x} - B$, and $B$ and $C$ are parameters. These parameters are adjusted according to the experiments and will be specified. We fix $C=0$ in the following numerical experiments.

Both the Ackley and Rastrigin function are non-convex and have many local minima, making them challenging in global optimizations.  As a result, they are commonly used as representative objective functions for testing optimization methods. Although these functions have many local minima, they share the same unique global minimum at
\[x^* = (B, \dots, B).\]

The plots of the 1D test functions with shifting parameters $B = 0$ and $C = 0$ are shown below:
\begin{figure}[H]
  \centering
  \captionsetup{font=small, skip=4pt} 

  \begin{subfigure}[b]{0.45\textwidth}
    \centering
    \includegraphics[width=\textwidth]{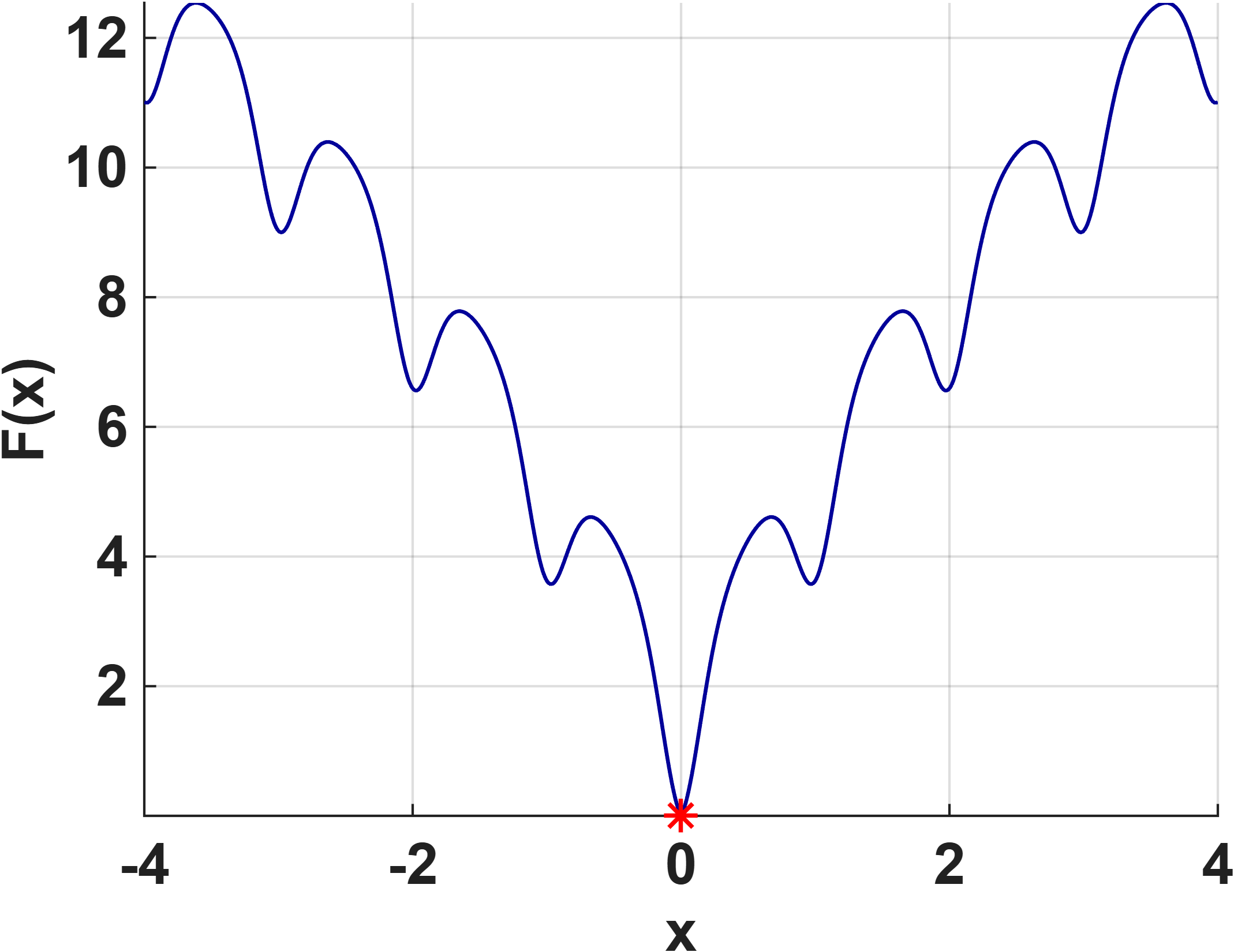}
    \caption{1D Ackley}
    \label{fig:1d_ackley}
  \end{subfigure}\hfill
  \begin{subfigure}[b]{0.45\textwidth}
    \centering
    \includegraphics[width=\textwidth]{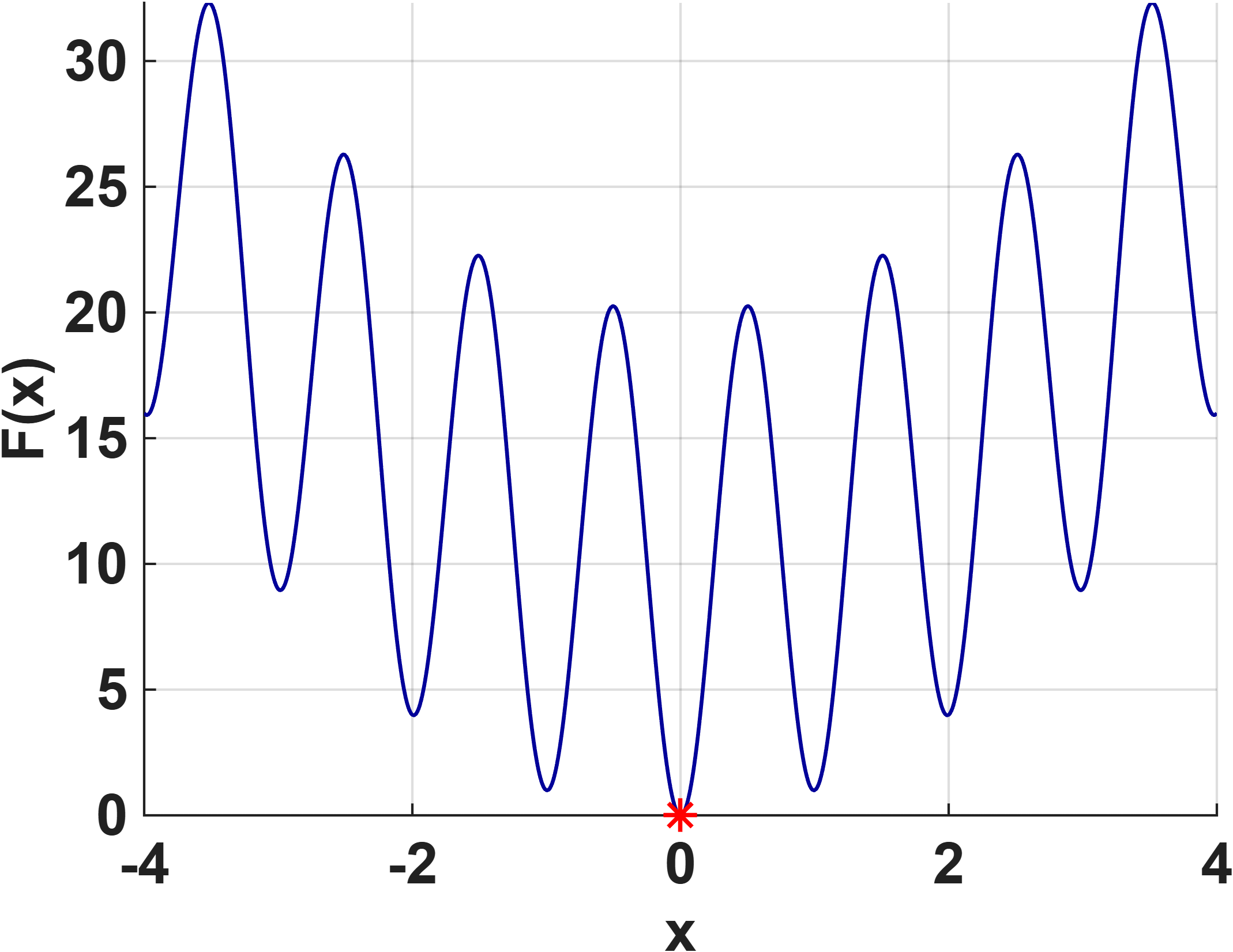}
    \caption{1D Rastrigin}
    \label{fig:1d_rastrigin}
  \end{subfigure}

  \caption{1D test functions ($B=0$).}
  \label{fig:1d_test_functions}
\end{figure}

The Ackley function is smooth and symmetric, with a global minimum at the origin. It has a deep and narrow central valley surrounded by small oscillations caused by the $\cos$ term. These oscillations decrease away from the origin, and the function becomes nearly flat in the outer region. This combination of flat regions and a sharp valley makes the problem challenging for optimization methods.

The Rastrigin function is highly oscillatory over the whole domain due to the $\cos$ term. It has many local minima, and the global minimum is at the origin. Since the oscillations remain strong everywhere, the problem is strongly nonconvex and optimization methods can easily be trapped in local minima. It is therefore widely used to test the stability of optimization algorithms.

\noindent \textbf{Parameters Settings:}

For the swarm-based algorithms, we generate $N=10$ agents randomly with uniform distribution over the search domain, and each agent is assigned an initial mass $m_i = 1/N,\quad i=1,\dots,N$. For the inertial algorithms, we use a time-step size of $\Delta t = 10^{-2}$ for the IMEX-RB method, while a fixed step size $h = 1$ for the FB, Semi and FD algorithms, respectively. These choices ensure numerical stability and computational efficiency in the experiments.

\noindent \textbf{Stopping Criteria:}

In addition to the stopping criteria used in the convex experiments, we introduce one additional stopping condition for the swarm-based methods:
\[\text{number of agents} = 1.\]
This condition forces the swarm-based methods converge to one single agent eventually. The full stopping criteria here is therefore given by
\[\|F(x^{n+1}) - F(x^{n})\| \le 10^{-6}, \quad\|x^{n+1} - x^{n}\| \le 10^{-6},\quad\text{number of agents} = 1.\]

\noindent \textbf{Performance Evaluation:}

To evaluate the performance of the swarm-based methods, we define several quantities that are recorded in the numerical experiments. 

If $\|F(x_\text{output}) - F(x^*)\| \le 10^{-4}$, then we call this is a success run.
We compute the success rate by repeating each experiment $1000$ times using randomly generated initial agents. The success rate is defined as
\[\text{Success rate}=\frac{\text{Number of successful runs}}{1000}.\]

We also record the number of iterations in each run and report the averages over the $1000$ runs, together with the corresponding average CPU time. The success rate, average iteration number, and average CPU time provide a set of metrics for overall performance of the swarm-based methods without being influenced by a few poor initializations.

We use the same parameters as in the convergence rate tests to ensure a fair comparison. This allows us to compare swarm-based methods with different inertial algorithms under the same communication dynamics and to evaluate their performance in high-dimensional cases.

The numerical results consist of two parts. First, 
we provide Tables~\ref{tab:1DAck:part1}--\ref{tab:10DRas:part2}, which report the success rate, the average number of iterations, and the average CPU time for each method. These results demonstrate the efficiency, stability, and computational cost of the swarm-based algorithms.
Second, we present plots of the energy evolution for swarm-based methods with different inertial algorithms. The discrete energy of the $i$-th agent at the $k$-th iteration is defined in Definition~\ref{def:sb:energy} as
\[E_i^k = E(x_i^k)= \frac{1}{2} m(x_i^k, t_k) \|\dot{x}_i^k(t)\|^2+ a_i F^k(x_i^k).\]
Figures~\ref{fig:swar:1DAck}--\ref{fig:swar:10DRas} in Appendix~\ref{Appendix:sb:energy:plot} present the evolution of the total discrete energy $E^k=\sum_i E_i^k$ in the swarm-based framework and illustrate the behavior of the SBIMs from an ODE dynamics perspective.

\subsubsection{1D cases}
\noindent \indent In the 1D experiments, we set the dimension $d = 1$ to clearly show the performance of the SBIMs. For the mass transport parameter $p$ in \eqref{masstrans}, we fix $p = 1$, corresponding to the standard linear interaction.
Although different values of $p$ may influence performance, our main goal is to study the effect of inertial algorithms. The role of $p$ has already been discussed in \cite{lu2024swarm}. Therefore, we keep $p = 1$ in all experiments.

All agents are initialized independently with a uniform distribution on $[B-4,\, B+4]$. This interval is large enough to include local minima and the global minimum for both test functions.
This setting allows us to examine how the SBIMs escape local minima and converge to the global optimum in these nonconvex problems.

The parameters $R_i$ describe the relaxation dynamics in the SBIMs. 
We choose these parameters to match the corresponding inertial algorithms, such as the swarm-based Nesterov method in Algorithm~\ref{sbnm:alg} and the new underdamped inertial algorithms in Algorithms~\ref{sbfd:alg}--\ref{sbsemifb:alg}.
This comparison shows how different inertial algorithms affect the performance of SBIMs in non-convex optimization problems for simplicity.

\paragraph{1D Ackley Function}

 Tables~\ref{tab:1DAck:part1}--\ref{tab:1DAck:part2} report results in terms of the success rate, the average number of iterations, and the average CPU time for SBIM with different inertial algorithms on the 1D Ackley function with various shift parameter values $B$.
First, the success rates show that SB-FB, SB-Semi, and SB-IPAHD perform better than the other methods. The main difference between these new inertial schemes (together with IPAHD) and the classical optimization methods is that the former include Hessian information, while the latter rely only on first-order gradient information. Although the SB-IMEX-RB and SB-FD methods fail to reach the global minimum in several cases, as shown in the discrete energy plots in Figure~\ref{fig:swar:1DAck}, their agents are often trapped in local minima close to the global minimum. In contrast, for SB-NM and SB-GD, the agents tend to be trapped in local minima near the boundary.
These results suggest that including Hessian information into the inertial dynamics improves landscape exploration, helps the agents escape local minima, and increases the likelihood of approaching the global minimum in the non-convex optimization problem.

Second, the average number of iterations and the CPU times show that SB-FB, SB-Semi, and SB-IPAHD reach the global minimum efficiently. They require relatively few iterations and therefore incur low computational cost. Although SB-IMEX-RB and SB-FD do not always reach the global minimum, their iteration counts and CPU times indicate that they remain effective in practice. Their larger average iteration numbers, compared with the classical optimization methods, suggest that they spend more time exploring the search space. Even when they are eventually trapped in local minima, the final agents are often close to the global minimum.

In contrast, SB-NM and SB-GD behave quite differently. They stop much sooner before reaching the global minimum, so the agents have limited ability to explore the landscape. As a result, the methods may converge to local minima that are far from the global minimum. 

\begin{table}[H]
\centering
\fontsize{9}{11}\selectfont
\caption{1D Ackley Function: part 1 - successful methods in reaching the global minimum.}
\label{tab:1DAck:part1}
\begin{adjustbox}{width=0.75\textwidth,totalheight=0.28\textheight,keepaspectratio}
\begin{tabular}{l
                S[table-format=1.0] S[table-format=1.0] S[table-format=1.4]
                S[table-format=1.0] S[table-format=1.0] S[table-format=1.4]
                S[table-format=1.0] S[table-format=1.0] S[table-format=1.4]}
\toprule
Parameter
  & \multicolumn{3}{c}{SB-FB}
  & \multicolumn{3}{c}{SB-Semi}
  & \multicolumn{3}{c}{SB-IPAHD} \\
  & {\shortstack{Success\\Rate}} & {\shortstack{Avg\\Number}} & {\shortstack{Avg\\CPU\\(s)}}
  & {\shortstack{Success\\Rate}} & {\shortstack{Avg\\Number}} & {\shortstack{Avg\\CPU\\(s)}}
  & {\shortstack{Success\\Rate}} & {\shortstack{Avg\\Number}} & {\shortstack{Avg\\CPU\\(s)}} \\
\midrule
B=0   & 1 & 2 & 0.0715 & 1 & 2 & 0.0703 & 1 & 2 & 0.0702 \\
B=15  & 1 & 2 & 0.0738 & 1 & 2 & 0.0744 & 1 & 2 & 0.0754 \\
B=25  & 1 & 2 & 0.0785 & 1 & 2 & 0.0789 & 1 & 2 & 0.0778 \\
\bottomrule
\end{tabular}
\end{adjustbox}
\end{table}

\begin{table}[H]
\centering
\fontsize{9}{11}\selectfont
\caption{1D Ackley Function: part 2 - unsuccessful methods in reaching the global minimum}
\label{tab:1DAck:part2}
\begin{adjustbox}{width=0.98\textwidth,totalheight=0.28\textheight,keepaspectratio}
\begin{tabular}{l
                S[table-format=1.0] S[table-format=6.0] S[table-format=2.4]
                S[table-format=1.0] S[table-format=5.0] S[table-format=1.4]
                S[table-format=1.0] S[table-format=5.3] S[table-format=1.4]
                S[table-format=1.4] S[table-format=3.4] S[table-format=1.4]}
\toprule
Parameter
  & \multicolumn{3}{c}{SB-IMEX-RB}
  & \multicolumn{3}{c}{SB-FD}
  & \multicolumn{3}{c}{SB-NM}
  & \multicolumn{3}{c}{SB-GD} \\
  & {\shortstack{Success\\Rate}} & {\shortstack{Avg\\Number}} & {\shortstack{Avg\\CPU\\(s)}}
  & {\shortstack{Success\\Rate}} & {\shortstack{Avg\\Number}} & {\shortstack{Avg\\CPU\\(s)}}
  & {\shortstack{Success\\Rate}} & {\shortstack{Avg\\Number}} & {\shortstack{Avg\\CPU\\(s)}}
  & {\shortstack{Success\\Rate}} & {\shortstack{Avg\\Number}} & {\shortstack{Avg\\CPU\\(s)}} \\
\midrule
B=0
  & 0 & 1750 & 9.0100
  & 0 & 583  & 0.2479
  & 0 & 13.1150 & 0.0003
  & 0.0060 & 8.2060 & 0.0006 \\

B=15
  & 0 & 1780 & 8.4299
  & 0 & 580  & 0.2188
  & 0 & 13.1150 & 0.0003
  & 0.0070 & 8.2180 & 0.0006 \\

B=25
  & 0 & 1810 & 8.0871
  & 0 & 594  & 0.2312
  & 0 & 13.1690 & 0.0004
  & 0.0080 & 8.2450 & 0.0008 \\

\bottomrule
\end{tabular}
\end{adjustbox}
\end{table}

We show the evolution of the discrete energy $E^k$ for a representative run on the 1D Ackley function (see Appendix, Figure~\ref{fig:swar:1DAck}). From Figure~\ref{fig:swar:1DAck}, we see that when a method converges to the global minimum, the discrete energy decays to zero. 
If the method becomes trapped in a local or boundary minimum, the energy converges to a positive constant or oscillates.

For the new underdamped inertial methods, the discrete energy decreases rapidly at the beginning and remains stable, even when the method is trapped in a local minimum. In these cases, although the energy approaches a positive constant, the function values still remain close to the global minimum.
In contrast, for the SB-NM and SB-GD, the discrete energy shows persistent oscillations when they do not reach the global minimum. 
This indicates weaker stability and less predictability for their large-time behavior.

Overall, the results show that the swarm-based methods derived from the new underdamped inertial dynamics proposed in this study are more stable than those based on classical inertial schemes. The discrete energy $E^k$ provides a useful tool to analyze and compare stability and convergence behavior from an energy viewpoint.

\paragraph{1D Rastrigin Function}

Tables~\ref{tab:1DRas:part1} and~\ref{tab:1DRas:part2} summarize the performance of the swarm-based methods with various inertial algorithms on the 1D Rastrigin function.

From the success rates, SB-FD and SB-IMEX-RB perform better than the others, which is different from the 1D Ackley case. Since the new inertial algorithms include both Hessian and gradient information in the damping term, this turns out to be  particularly useful for the highly oscillatory landscape, and makes global exploration more effective. Therefore, SB-FD and SB-IMEX-RB achieve consistently high success rates for all tested shift parameters.
We also note that SB-FB and SB-Semi always perform poorly, and SB-FD and SB-IMEX-RB are not always the best choices. In higher-dimensional tests, different outcomes show up in some cases.

The discretization of the inertial dynamics also plays an important role. FD and IMEX-RB preserve the energy structure, shown in Theorems~\ref{fully-discretized:energystability} and~\ref{imexrb:energystability}. In contrast, the FB and Semi schemes do not have a theoretical guarantee of energy stability. This may have something to do with their reduced success rates with the shift parameter, even though all methods are derived from the same inertial dynamics. 

Nevertheless, there is no general rule for selecting the best discretization and the resulting algorithm. Because the Rastrigin function is highly oscillatory, its gradient can be large in many regions, and different schemes (explicit or implicit) may behave differently. Implicit schemes may improve stability, while explicit schemes may be more efficient in some cases. This helps explain why methods derived from the same dynamics can perform differently in different dimensions.   Overall, the numerical evidence does how that combining gradient and Hessian information in the damping term improves performance for oscillatory problems.

From the computational perspective, SB-FD and SB-IMEX-RB often require more iterations, but their CPU times remain moderate. Thus, the overall cost is reasonable.
In summary, for the 1D Rastrigin function, swarm-based methods with the new inertial algorithms perform better in both success rate and computational cost.

\begin{table}[H]
\centering
\fontsize{9}{11}\selectfont
\caption{1D Rastrigin Function: part 1 - partially success methods.}
\label{tab:1DRas:part1}
\begin{adjustbox}{width=0.75\textwidth,totalheight=0.28\textheight,keepaspectratio}
\begin{tabular}{l
                S[table-format=1.0] S[table-format=3.4] S[table-format=1.4]
                S[table-format=1.0] S[table-format=3.0] S[table-format=1.4]
                S[table-format=1.0] S[table-format=3.4] S[table-format=1.4]}
\toprule
Parameter
  & \multicolumn{3}{c}{SB-FB}
  & \multicolumn{3}{c}{SB-Semi}
  & \multicolumn{3}{c}{SB-IPAHD} \\
  & {\shortstack{Success\\Rate}}
  & {\shortstack{Avg\\Number}}
  & {\shortstack{Avg\\CPU\\(s)}}
  & {\shortstack{Success\\Rate}}
  & {\shortstack{Avg\\Number}}
  & {\shortstack{Avg\\CPU\\(s)}}
  & {\shortstack{Success\\Rate}}
  & {\shortstack{Avg\\Number}}
  & {\shortstack{Avg\\CPU\\(s)}} \\
\midrule
B=0   & 1 & 23.5040 & 0.1371 & 1 & 138 & 0.7760 & 1 & 23.4910 & 0.1364 \\
B=15  & 0 & 23.4900 & 0.1657 & 0 & 137 & 0.8311 & 0 & 23.5160 & 0.1538 \\
B=25  & 0 & 23.5320 & 0.1544 & 0 & 137 & 0.7878 & 0 & 23.5640 & 0.1481 \\
\bottomrule
\end{tabular}
\end{adjustbox}
\end{table}

\begin{table}[H]
\centering
\fontsize{9}{11}\selectfont
\caption{1D Rastrigin Function: part 2 - partially successful and unsuccessful methods.}
\label{tab:1DRas:part2}
\begin{adjustbox}{width=0.98\textwidth,totalheight=0.28\textheight,keepaspectratio}
\begin{tabular}{l
                S[table-format=1.3] S[table-format=4.0] S[table-format=2.4]
                S[table-format=1.3] S[table-format=3.0] S[table-format=1.4]
                S[table-format=1.0] S[table-format=2.3] S[table-format=1.4]
                S[table-format=1.3] S[table-format=1.4] S[table-format=1.4]}
\toprule
Parameter
  & \multicolumn{3}{c}{SB-IMEX-RB}
  & \multicolumn{3}{c}{SB-FD}
  & \multicolumn{3}{c}{SB-NM}
  & \multicolumn{3}{c}{SB-GD} \\
  & {\shortstack{Success\\Rate}}
  & {\shortstack{Avg\\Number}}
  & {\shortstack{Avg\\CPU\\(s)}}
  & {\shortstack{Success\\Rate}}
  & {\shortstack{Avg\\Number}}
  & {\shortstack{Avg\\CPU\\(s)}}
  & {\shortstack{Success\\Rate}}
  & {\shortstack{Avg\\Number}}
  & {\shortstack{Avg\\CPU\\(s)}}
  & {\shortstack{Success\\Rate}}
  & {\shortstack{Avg\\Number}}
  & {\shortstack{Avg\\CPU\\(s)}} \\
\midrule
B=0
  & 1.000 & 2000 & 8.0200
  & 0.999 & 220 & 0.0648
  & 0 & 10.1590 & 0.0002
  & 0.135 & 4.3920 & 0.0003 \\

B=15
  & 0.999 & 2000 & 8.3840
  & 0.998 & 230 & 0.0859
  & 0 & 10.3410 & 0.0002
  & 0.118 & 4.5320 & 0.0004 \\

B=25
  & 0.996 & 2000 & 8.1469
  & 0.996 & 227 & 0.0817
  & 0 & 9.9830 & 0.0001
  & 0.143 & 4.3600 & 0.0003 \\

\bottomrule
\end{tabular}
\end{adjustbox}
\end{table}

Similar to the 1D Ackley case, we present a representative run of the energy evolution plots in Figure~\ref{fig:swar:1DRas}.
For the 1D Rastrigin function, all swarm-based new inertial algorithms reach the global minimum, and the discrete energy converges to $0$. As in the 1D Ackley case, the energy decreases rapidly during the first few iterations when using these new inertial algorithms. This behavior is consistent with our previous observations and demonstrates better stability from the energy perspective.

In contrast, SB-NM and SB-GD do not reach the global minimum. Their discrete energy oscillates and increases during iterations. Compared with the new inertial algorithms, these classical methods are less stable within the swarm-based framework. As a result, their large-time behavior is more difficult to predict and control, especially for non-convex optimization problems.
Overall, the results for the 1D Rastrigin function lead to the same conclusions regarding stability and convergence as those for the 1D Ackley case.

\subsubsection{High-dimension cases}
\noindent \indent In the higher-dimensional experiments, we test the Ackley and Rastrigin functions with dimensions $d = 2$ and $d = 10$, respectively. Although these functions are difficult to visualize in higher dimensions, their landscapes and oscillatory behavior are similar to the 1D case. Therefore, we focus on the performance of the optimization methods.

As in the 1D case, the search domain is set to $[B-4,\, B+4]^d$. This domain is large enough to include many local minima and the unique global minimum of both functions. By fixing the search domain, we can focus on the effect of the inertial algorithms rather than on parameter values. Then the results illustrate how inertial algorithms influence energy convergence and the final positions of the agents in higher-dimensional non-convex optimization problems.

In all experiments, $N=10$ agents are initialized independently with a uniform distribution over the domain. The mass transport parameter is fixed at $p = 1$, as in the 1D experiments. The inertial parameter $R_i$ is chosen according to the corresponding SBIMs and is consistent with the 1D settings. Using the same parameter values allows us to study how the swarm-based methods perform as the dimension increases.


\paragraph{Higher Dimension Ackley Function Case}

The Tables~\ref{tab:2DAck:part1}-~\ref{tab:10DAck:part2} report the success rates, average iteration numbers, and average CPU times of the swarm-based methods for the 2D and 10D Ackley functions. From the success rates, we see that SB-FB, SB-Semi, and SB-IPAHD perform best under all shift parameters. In the 2D case, SB-IMEX-RB also shows competitive performance.

These results suggest that including Hessian information in damping improves the landscape exploration ability of the swarm-based methods. This helps the agents navigate through relatively flat local minima in the Ackley function. This result is consistent with the 1D Ackley case. The different discretization of the inertial dynamics plays an important role here  since different new inertial algorithms lead to different success rates.  

In terms of the computational cost, SB-FD requires more iterations on average than SB-NM and SB-GD do. This suggests that SB-FD spends more time exploring the search space. In contrast, SB-NM and SB-GD rely only on first-order information. They show some difficulties in choosing a suitable step size at each iteration in high-dimensional cases. 
Since the gradient has many components, its norm can be large, which requires a very small step size to maintain stability, as explained in detail in the convergence test section.
As a result, the methods converge slowly and are more likely to be trapped in local minima, limiting their ability to explore the search space globally.

These results explain why swarm-based methods with the new inertial algorithms generally perform better for non-convex optimization problems. This conclusion is consistent with the 1D experiments.

\begin{table}[H]
\centering
\fontsize{9}{11}\selectfont
\caption{2D Ackley Function: part 1-successful methods.}
\label{tab:2DAck:part1}
\begin{adjustbox}{width=0.75\textwidth,totalheight=0.28\textheight,keepaspectratio}
\begin{tabular}{l
                S[table-format=1.0] S[table-format=1.3] S[table-format=1.4]
                S[table-format=1.0] S[table-format=1.3] S[table-format=1.4]
                S[table-format=1.0] S[table-format=1.3] S[table-format=1.4]}
\toprule
Parameter
  & \multicolumn{3}{c}{SB-FB}
  & \multicolumn{3}{c}{SB-Semi}
  & \multicolumn{3}{c}{SB-IPAHD} \\
  & {\shortstack{Success\\Rate}}
  & {\shortstack{Avg\\Number}}
  & {\shortstack{Avg\\CPU\\(s)}}
  & {\shortstack{Success\\Rate}}
  & {\shortstack{Avg\\Number}}
  & {\shortstack{Avg\\CPU\\(s)}}
  & {\shortstack{Success\\Rate}}
  & {\shortstack{Avg\\Number}}
  & {\shortstack{Avg\\CPU\\(s)}} \\
\midrule
B=0   & 1 & 2      & 0.0707 & 1 & 2      & 0.0686 & 1 & 2      & 0.0681 \\
B=15  & 1 & 2      & 0.1759 & 1 & 2      & 0.1724 & 1 & 2      & 0.1651 \\
B=25  & 1 & 2.2050 & 0.1566 & 1 & 2.2110 & 0.1650 & 1 & 2.1910 & 0.1545 \\
\bottomrule
\end{tabular}
\end{adjustbox}
\end{table}
\begin{table}[H]
\centering
\fontsize{9}{11}\selectfont
\caption{2D Ackley Function: part 2- partially successful methods.}
\label{tab:2DAck:part2}
\begin{adjustbox}{width=0.98\textwidth,totalheight=0.28\textheight,keepaspectratio}
\begin{tabular}{l
                S[table-format=1.3] S[table-format=4.0] S[table-format=3.4]
                S[table-format=1.0] S[table-format=4.0] S[table-format=1.4]
                S[table-format=1.0] S[table-format=2.3] S[table-format=1.4]
                S[table-format=1.0] S[table-format=1.3] S[table-format=1.4]}
\toprule
Parameter
  & \multicolumn{3}{c}{SB-IMEX-RB}
  & \multicolumn{3}{c}{SB-FD}
  & \multicolumn{3}{c}{SB-NM}
  & \multicolumn{3}{c}{SB-GD} \\
  & {\shortstack{Success\\Rate}}
  & {\shortstack{Avg\\Number}}
  & {\shortstack{Avg\\CPU\\(s)}}
  & {\shortstack{Success\\Rate}}
  & {\shortstack{Avg\\Number}}
  & {\shortstack{Avg\\CPU\\(s)}}
  & {\shortstack{Success\\Rate}}
  & {\shortstack{Avg\\Number}}
  & {\shortstack{Avg\\CPU\\(s)}}
  & {\shortstack{Success\\Rate}}
  & {\shortstack{Avg\\Number}}
  & {\shortstack{Avg\\CPU\\(s)}} \\
\midrule
B=0
  & 0.000 & 4070 & 186.0000
  & 0     & 974  & 0.6056
  & 0     & 25.8220 & 0.0006
  & 0     & 8.8930  & 0.0013 \\

B=15
  & 0.300 & 5140 & 22.9461
  & 0     & 713  & 0.3985
  & 0     & 15.9900 & 0.0023
  & 0     & 8.9100  & 0.0015 \\

B=25
  & 0.221 & 6710 & 15.7393
  & 0     & 681  & 0.3196
  & 0     & 16.6830 & 0.0005
  & 0     & 8.8870  & 0.0008 \\

\bottomrule
\end{tabular}
\end{adjustbox}
\end{table}

\begin{table}[H]
\centering
\fontsize{9}{11}\selectfont
\caption{10D Ackley Function: part 1- successful methods. }
\label{tab:10DAck:part1}
\begin{adjustbox}{width=0.75\textwidth,totalheight=0.28\textheight,keepaspectratio}
\begin{tabular}{l
                S[table-format=1.0] S[table-format=1.3] S[table-format=1.4]
                S[table-format=1.0] S[table-format=1.3] S[table-format=1.4]
                S[table-format=1.0] S[table-format=1.3] S[table-format=1.4]}
\toprule
Parameter
  & \multicolumn{3}{c}{SB-FB}
  & \multicolumn{3}{c}{SB-Semi}
  & \multicolumn{3}{c}{SB-IPAHD} \\
  & {\shortstack{Success\\Rate}}
  & {\shortstack{Avg\\Number}}
  & {\shortstack{Avg\\CPU\\(s)}}
  & {\shortstack{Success\\Rate}}
  & {\shortstack{Avg\\Number}}
  & {\shortstack{Avg\\CPU\\(s)}}
  & {\shortstack{Success\\Rate}}
  & {\shortstack{Avg\\Number}}
  & {\shortstack{Avg\\CPU\\(s)}} \\
\midrule
B=0   & 1 & 2.0000 & 0.0707 & 1 & 2.0000 & 0.0686 & 1 & 2.0000 & 0.0681 \\
B=15  & 1 & 2.5040 & 0.3878 & 1 & 2.4690 & 0.4032 & 1 & 2.4830 & 0.3672 \\
B=25  & 1 & 3.1650 & 0.3656 & 1 & 3.1400 & 0.3813 & 1 & 3.1410 & 0.3478 \\
\bottomrule
\end{tabular}
\end{adjustbox}
\end{table}
\begin{table}[H]
\centering
\fontsize{9}{11}\selectfont
\caption{10D Ackley Function: part 2- unsuccessful methods.}
\label{tab:10DAck:part2}
\begin{adjustbox}{width=0.98\textwidth,totalheight=0.28\textheight,keepaspectratio}
\begin{tabular}{l
                S[table-format=1.0] S[table-format=4.0] S[table-format=3.4]
                S[table-format=1.0] S[table-format=4.0] S[table-format=1.4]
                S[table-format=1.0] S[table-format=2.3] S[table-format=1.4]
                S[table-format=1.0] S[table-format=1.3] S[table-format=1.4]}
\toprule
Parameter
  & \multicolumn{3}{c}{SB-IMEX-RB}
  & \multicolumn{3}{c}{SB-FD}
  & \multicolumn{3}{c}{SB-NM}
  & \multicolumn{3}{c}{SB-GD} \\
  & {\shortstack{Success\\Rate}}
  & {\shortstack{Avg\\Number}}
  & {\shortstack{Avg\\CPU\\(s)}}
  & {\shortstack{Success\\Rate}}
  & {\shortstack{Avg\\Number}}
  & {\shortstack{Avg\\CPU\\(s)}}
  & {\shortstack{Success\\Rate}}
  & {\shortstack{Avg\\Number}}
  & {\shortstack{Avg\\CPU\\(s)}}
  & {\shortstack{Success\\Rate}}
  & {\shortstack{Avg\\Number}}
  & {\shortstack{Avg\\CPU\\(s)}} \\
\midrule
B=0
  & 0 & 4070 & 186.0000
  & 0 & 974  & 0.6056
  & 0 & 25.8220 & 0.0006
  & 0 & 8.8930  & 0.0013 \\

B=15
  & 0 & 5300 & 216.0000
  & 0 & 980  & 0.6162
  & 0 & 25.5990 & 0.0007
  & 0 & 8.9200  & 0.0011 \\

B=25
  & 0 & 5430 & 221.0000
  & 0 & 981  & 0.5884
  & 0 & 25.7880 & 0.0007
  & 0 & 8.9170  & 0.0011 \\

\bottomrule
\end{tabular}
\end{adjustbox}
\end{table}

We present a representative run of the discrete energy evolution plots for the 2D Ackley function in Figure~\ref{fig:swar:2DAck} and for the 10D Ackley function in Figure~\ref{fig:swar:10DAck}.
For both higher-dimensional cases, the energy of the swarm-based new inertial dynamics algorithms decreases rapidly at the beginning and then converges. For methods that reach the global minimum, such as SB-FB, SB-Semi, and SB-IPAHD, the energy converges to zero. For methods that are trapped in a local minimum, such as SB-IMEX-RB and SB-FD, the energy converges to a positive value. This behavior is consistent with the 1D Ackley case.

In contrast, the SB-NM and SB-GD show oscillatory energy evolution. The energy increases during some iterations, and the overall trend does not show convergence. As in the 1D case, this makes the final agent positions difficult to predict.

\paragraph{Higher Dimension Rastrigin Function Case}

Tables~\ref{tab:2DRas:part1}-~\ref{tab:10DRas:part2} report the success rates, average iteration numbers, and average CPU times of the swarm-based methods for the 2D and 10D Rastrigin functions with different shift parameters.

In the 2D case, SB-IMEX-RB and SB-FD show relatively high success rates for all shift parameter values. In contrast, SB-FB, SB-Semi, and SB-IPAHD have a perfect success rate when $B=0$, but their success rates decrease to $25\%$ when $B=15$ and $B=25$. In the 10D case, SB-FB, SB-Semi, and SB-IPAHD perform better than SB IMEX-RB and SB-FD in the success rate. The SB-GD method performs moderately in 2D case, while the SB-NM method consistently fails to reach the global minimum.

These results indicate that including the Hessian and interaction terms into the inertial dynamics improves the ability of the agents to escape local minima in highly oscillatory problems such as the Rastrigin function. In contrast, SB-GD uses only first-order gradient  information, which limits its performance on non-convex landscapes. The consistent failure of SB-NM shows that adding momentum alone is not sufficient to navigate through highly oscillatory landscapes to search for global minima. The new inertial algorithms are more effective because they combine gradient and curvature information within the damping term, leading to better performance in the non-convex optimization.


In the computational cost, SB-IMEX-RB requires the largest number of iterations and the highest CPU time. However, the average CPU time is about 11 seconds, which is acceptable. SB-FD has similar success rates with roughly $1/10$ of the iterations of SB-IMEX-RB, leading to a much lower computational cost. SB-FB, SB-Semi, and SB-IPAHD also show differences in CPU time. These results show once again that the specific discretization of the new ODE dynamics has a significant impact on the computational cost. Overall, the new inertial algorithms show strong performance on the minimization of the Rastrigin function, consistent with the 1D experiments.

\begin{table}[H]
\centering
\fontsize{9}{11}\selectfont
\caption{2D Rastrigin Function: part 1}
\label{tab:2DRas:part1}
\begin{adjustbox}{width=0.75\textwidth,totalheight=0.28\textheight,keepaspectratio}
\begin{tabular}{l
                S[table-format=1.3] S[table-format=3.4] S[table-format=2.4]
                S[table-format=1.3] S[table-format=3.0] S[table-format=2.4]
                S[table-format=1.3] S[table-format=3.4] S[table-format=2.4]}
\toprule
Parameter
  & \multicolumn{3}{c}{SB-FB}
  & \multicolumn{3}{c}{SB-Semi}
  & \multicolumn{3}{c}{SB-IPAHD} \\
  & {\shortstack{Success\\Rate}}
  & {\shortstack{Avg\\Number}}
  & {\shortstack{Avg\\CPU\\(s)}}
  & {\shortstack{Success\\Rate}}
  & {\shortstack{Avg\\Number}}
  & {\shortstack{Avg\\CPU\\(s)}}
  & {\shortstack{Success\\Rate}}
  & {\shortstack{Avg\\Number}}
  & {\shortstack{Avg\\CPU\\(s)}} \\
\midrule
B=0   & 1.000 & 30.2250 & 0.3523 & 1.000 & 184 & 1.5815 & 1.000 & 30.2470 & 0.2816 \\
B=15  & 0.214 & 30.4280 & 0.3347 & 0.271 & 185 & 1.4031 & 0.271 & 30.4260 & 0.2672 \\
B=25  & 0.215 & 30.2560 & 0.3317 & 0.283 & 183 & 1.3733 & 0.283 & 30.2660 & 0.2647 \\
\bottomrule
\end{tabular}
\end{adjustbox}
\end{table}

\begin{table}[H]
\centering
\fontsize{9}{11}\selectfont
\caption{2D Rastrigin Function: part 2}
\label{tab:2DRas:part2}
\begin{adjustbox}{width=0.98\textwidth,totalheight=0.28\textheight,keepaspectratio}
\begin{tabular}{l
                S[table-format=1.3] S[table-format=4.0] S[table-format=2.4]
                S[table-format=1.3] S[table-format=3.0] S[table-format=2.4]
                S[table-format=1.0] S[table-format=4.0] S[table-format=2.4]
                S[table-format=1.3] S[table-format=1.4] S[table-format=1.4]}
\toprule
Parameter
  & \multicolumn{3}{c}{SB-IMEX-RB}
  & \multicolumn{3}{c}{SB-FD}
  & \multicolumn{3}{c}{SB-NM}
  & \multicolumn{3}{c}{SB-GD} \\
  & {\shortstack{Success\\Rate}}
  & {\shortstack{Avg\\Number}}
  & {\shortstack{Avg\\CPU\\(s)}}
  & {\shortstack{Success\\Rate}}
  & {\shortstack{Avg\\Number}}
  & {\shortstack{Avg\\CPU\\(s)}}
  & {\shortstack{Success\\Rate}}
  & {\shortstack{Avg\\Number}}
  & {\shortstack{Avg\\CPU\\(s)}}
  & {\shortstack{Success\\Rate}}
  & {\shortstack{Avg\\Number}}
  & {\shortstack{Avg\\CPU\\(s)}} \\
\midrule
B=0
  & 0.943 & 2200 & 11.1000
  & 0.940 & 369  & 0.1266
  & 0     & 3550 & 0.0380
  & 0.419 & 6.0950 & 0.0006 \\

B=15
  & 0.947 & 2190 & 10.5000
  & 0.944 & 373  & 0.1501
  & 0     & 5000 & 0.0552
  & 0.403 & 6.2790 & 0.0007 \\

B=25
  & 0.946 & 2200 & 10.3000
  & 0.951 & 372  & 0.1509
  & 0     & 4630 & 0.0521
  & 0.430 & 6.0480 & 0.0007 \\
\bottomrule
\end{tabular}
\end{adjustbox}
\end{table}

\begin{table}[H]
\centering
\fontsize{9}{11}\selectfont
\caption{10D Rastrigin Function: part 1}
\label{tab:10DRas:part1}
\begin{adjustbox}{width=0.75\textwidth,totalheight=0.28\textheight,keepaspectratio}
\begin{tabular}{l
                S[table-format=1.3] S[table-format=3.4] S[table-format=2.4]
                S[table-format=1.3] S[table-format=3.0] S[table-format=2.4]
                S[table-format=1.3] S[table-format=3.4] S[table-format=2.4]}
\toprule
Parameter
  & \multicolumn{3}{c}{SB-FB}
  & \multicolumn{3}{c}{SB-Semi}
  & \multicolumn{3}{c}{SB-IPAHD} \\
  & {\shortstack{Success\\Rate}}
  & {\shortstack{Avg\\Number}}
  & {\shortstack{Avg\\CPU\\(s)}}
  & {\shortstack{Success\\Rate}}
  & {\shortstack{Avg\\Number}}
  & {\shortstack{Avg\\CPU\\(s)}}
  & {\shortstack{Success\\Rate}}
  & {\shortstack{Avg\\Number}}
  & {\shortstack{Avg\\CPU\\(s)}} \\
\midrule
B=0   & 0.971 & 46.3830 & 1.2927 & 0.642 & 289 & 4.7608 & 0.642 & 46.4370 & 1.0786 \\
B=15  & 0.744 & 48.3930 & 1.4658 & 0.620 & 290 & 4.6385 & 0.620 & 48.3500 & 1.0673 \\
B=25  & 0.752 & 48.2340 & 1.4405 & 0.620 & 282 & 4.3262 & 0.620 & 48.1940 & 1.0346 \\
\bottomrule
\end{tabular}
\end{adjustbox}
\end{table}

\begin{table}[H]
\centering
\fontsize{9}{11}\selectfont
\caption{10D Rastrigin Function: part 2}
\label{tab:10DRas:part2}
\begin{adjustbox}{width=0.98\textwidth,totalheight=0.28\textheight,keepaspectratio}
\begin{tabular}{l
                S[table-format=1.3] S[table-format=4.0] S[table-format=3.4]
                S[table-format=1.3] S[table-format=3.0] S[table-format=2.4]
                S[table-format=1.0] S[table-format=5.0] S[table-format=2.4]
                S[table-format=1.3] S[table-format=1.3] S[table-format=1.4]}
\toprule
Parameter
  & \multicolumn{3}{c}{SB-IMEX-RB}
  & \multicolumn{3}{c}{SB-FD}
  & \multicolumn{3}{c}{SB-NM}
  & \multicolumn{3}{c}{SB-GD} \\
  & {\shortstack{Success\\Rate}}
  & {\shortstack{Avg\\Number}}
  & {\shortstack{Avg\\CPU\\(s)}}
  & {\shortstack{Success\\Rate}}
  & {\shortstack{Avg\\Number}}
  & {\shortstack{Avg\\CPU\\(s)}}
  & {\shortstack{Success\\Rate}}
  & {\shortstack{Avg\\Number}}
  & {\shortstack{Avg\\CPU\\(s)}}
  & {\shortstack{Success\\Rate}}
  & {\shortstack{Avg\\Number}}
  & {\shortstack{Avg\\CPU\\(s)}} \\
\midrule
B=0
  & 0.009 & 2930 & 182.0000
  & 0.004 & 582  & 0.2802
  & 0     & 8450 & 0.0928
  & 0.003 & 8.9320 & 0.0008 \\

B=15
  & 0.008 & 3030 & 157.0000
  & 0.006 & 582  & 0.3318
  & 0     & 9880 & 0.1320
  & 0.005 & 8.9300 & 0.0013 \\

B=25
  & 0.005 & 3100 & 158.0000
  & 0.004 & 583  & 0.3331
  & 0     & 9880 & 0.1330
  & 0.005 & 8.9420 & 0.0011 \\
\bottomrule
\end{tabular}
\end{adjustbox}
\end{table}

We present a representative run of the discrete energy evolution for the 2D Rastrigin function in Figure~\ref{fig:swar:2DRas} and for the 10D Rastrigin function in Figure~\ref{fig:swar:10DRas}.
For the 2D Rastrigin function, all new inertial algorithms reach the global minimum. In the 10D case, however, SB-IMEX-RB and SB-FD fail to do so. With all new inertial algorithms, the discrete energy decreases rapidly at the beginning and then converges. This behavior is consistent with the results from the 1D Rastrigin and Ackley cases.

In contrast, the SB-NM and SB-GD show energy oscillations during iterations. Although SB-GD reaches the global minimum in the 2D Rastrigin case, its oscillatory energy makes it difficult to predict whether the energy converges to $0$ or to a small positive constant in other cases, such as the 10D Rastrigin problem. 
Overall, these two classical methods lack certain stability from the perspective of the energy evolution. As in the 1D case, this makes the final positions of the agents difficult to predict.

\section{Conclusion}\label{section:conclude}

\noindent\indent In this work, we develop a framework for SBIMs by formulating the multi-agent optimization as a class of coupled dissipative inertial dynamical systems derived from the generalized Onsager principle. Within this framework, the friction operator $R$ and the scaling operator of the potential energy $a$ act as control parameters that determine the dissipation mechanism and the relaxation behavior over the landscape of the objective function. This approach provides a systematic way to construct inertial swarm dynamics beyond the standard swarm-based gradient descent.
Based on this framework, we propose underdamped inertial model~\eqref{ODE}, whose dissipation is influenced by both gradient and Hessian information. This yields a deceleration and acceleration mechanism biased on the direction of the agent trajectory beyond the standard swarm dynamics. Using a energy approach, we establish the energy dissipation property and objective function decay estimate of order $\bigO(1/\delta(t))$ for the continuous system. We then construct structure-preserving discretizations and prove analogous discrete function decay estimate of order $\bigO(1/\delta_k)$.

The numerical results in Section~\ref{numerical:exp} confirm the theoretical estimates. In convex test problems, the proposed inertial schemes exhibit energy stable behavior and better than expected convergence rates. In particular, the observed objective function decay rate is faster than the theoretical guarantee. 
For nonconvex test functions, including the Ackley and Rastrigin families, the proposed schemes~\ref{sbfd:alg},~\ref{sbimex:alg},~\ref{sbsemifb:alg} achieve high success rates and obtain more stable energy than the standard methods. These results suggest that combining swarm interaction with dissipation informed by momentum, gradients, and curvature information can improve robustness and range of exploration in challenging landscapes of objective functions.

Several directions remain for future work on this new framework. First, it would be valuable to extend the analysis beyond the current convex and smooth settings to broader nonconvex problems. Second, adaptive choices of the damping and control parameters may further improve robustness across challenging landscapes of objective functions.
Third, although our numerical schemes avoid explicit computing the Hessian by using gradient differences, they may still be computationally expensive for large-scale problems. Fourth, there are rooms for improvements in how to design more efficient implementations and reduce computational cost while preserving the dissipative structure. Fifth, it would be interesting to consider a stochastically forced version of \eqref{ODE}, where one could leverage the vanishing noise convergence results established in \cite{MR4408018}.
Finally, a deeper understanding of the interaction between communication dynamics and inertial motion may lead to improved mass-transfer rules and stronger convergence guarantees for the full swarm system, especially in high-dimensional settings.

\section*{Acknowledgments}

\noindent \indent Qi Wang and Qiyu Wu's research is partially supported by NSF award OIA-2242812 and DOE award DE-SC0025229. Qi Wang's research is also partially
supported by NSF award DMS-2038080.

\section*{Statements and Declarations}
\subsection*{Conflicts of interest} 
\noindent \indent The authors have no relevant financial or non-financial interests to disclose.
\bibliographystyle{unsrtnat}

\bibliography{references.bib}

\begin{appendices}
\section{Proof of the SBIM Nesterov Scheme}\label{proof:NM}
{\bf Proof of Theorem~\ref{thm:Nes}}
\begin{proof}
	By the definition of discrete energy, we have
	\begin{align*}
		E_i^{n+1}-E_i^{n}
		&=a_i^{n}(F_i^{n+1}-F_i^{n})
		+\frac{1}{2}m_i^{n+1}\|y_i^{n+1}\|^2
		-\frac{1}{2}m_i^{n}\|y_i^{n}\|^2 .
	\end{align*}
	
	For the first term $a_i^{n}(F_i^{n+1}-F_i^{n})$, since $F\in C^1$,
	\begin{align*}
		F_i^{n+1}-F_i^{n}
		&=\langle \nabla F(x_i^{n}),x_i^{n+1}-x_i^{n}\rangle
		+\mathcal O(\|x_i^{n+1}-x_i^{n}\|^2) \\
		&=\Delta t\langle \nabla F(x_i^{n}),y_i^{n+1}\rangle
		+\mathcal O(\Delta t^2).
	\end{align*}
	Since $y_i^{n+1}=y_i^{n}+\mathcal O(\Delta t)$, this gives
	\[
	a_i^{n}(F_i^{n+1}-F_i^{n})
	= a_i^{n}\Delta t\langle \nabla F(x_i^{n}),y_i^{n}\rangle
	+\mathcal O(\Delta t^2).
	\]
	
	For the kinetic energy part, from the update rule,
	\[
	y_i^{n+1}
	= y_i^{n}
	+\Delta t\Big[(-R_i^{n}+\tfrac12\phi_p(x_i^{n}))y_i^{n}
	-\frac{a_i^{n}}{m_i^{n}}\nabla F(x_i^{n})\Big],
	\]
	we obtain
	\begin{align*}
		\|y_i^{n+1}\|^2
		&=\|y_i^{n}\|^2
		+2\Delta t\Big\langle y_i^{n},
		(-R_i^{n}+\tfrac12\phi_p)y_i^{n}
		-\frac{a_i^{n}}{m_i^{n}}\nabla F(x_i^{n})
		\Big\rangle
		+\mathcal O(\Delta t^2).
	\end{align*}
	
	Since $m_i^{n+1}=m_i^{n}(1-\Delta t\phi_p(x_i^{n}))$ for all $i\neq i_n^-$, we have
	\begin{align*}
		\frac12 m_i^{n+1}\|y_i^{n+1}\|^2
		-\frac12 m_i^{n}\|y_i^{n}\|^2
		&= m_i^{n}\Delta t
		\Big\langle y_i^{n},
		(-R_i^{n}+\tfrac12\phi_p)y_i^{n}
		-\frac{a_i^{n}}{m_i^{n}}\nabla F(x_i^{n})
		\Big\rangle \\
		&\quad
		-\frac12\Delta t\phi_p(x_i^{n})\,m_i^{n}\|y_i^{n}\|^2
		+\mathcal O(\Delta t^2) \\
		&=
		- m_i^{n}\Delta t R_i^{n}\|y_i^{n}\|^2
		- a_i^{n}\Delta t\langle y_i^{n},\nabla F(x_i^{n})\rangle
		+\mathcal O(\Delta t^2).
	\end{align*}
	
	Combining these two parts, we have:
	\[
	E_i^{n+1}-E_i^{n}
	= -\,m_i^{n}\Delta t R_i^{n}\|y_i^{n}\|^2
	+\mathcal O(\Delta t^2).
	\]
	
	By the definition of $R_i^{n}$ and $a_i^{n}$ in \eqref{riai:Nes}, and since
	$a_i^{n}=m_i^{n}>0$ and $R_i^{n}>0$, neglecting higher-order terms
	$\mathcal O(\Delta t^2)$, we obtain
	\[
	E_i^{n+1}-E_i^{n}
	= -\,m_i^{n}\Delta t R_i^{n}\|y_i^{n}\|^2 \le 0,
	\]
	for all $i\neq i_n^-$.
\end{proof}

\section{Proofs of the Lemmas and Theorems for the IMEX-RB Algorithm}\label{proof:imexrb}

{\bf Proof of Theorem~\ref{imexrb:thm}}
\begin{proof} 
\noindent\indent
	From Theorem~\ref{continuous:thm}, the continuous solution $\by(t)=(\bx(t),\bV(t))^\top$ converges to $(\bx^*,\vec{0})$. Since $F\in C^2$ and $\nabla^2F$ is Lipschitz continuous, and $g(t,\by)\in C^1(I) $ is with respect to $\by$, then $\by\in C^2(I)$.
	
	We now verify that $g$ is Lipschitz continuous with respect to $\by$. Denote $\by_i=(\bx_i,\bV_i)$ for $i=1,2$. The first component of $g$ satisfies
	\[\|\bV_1-\bV_2\| \le \|\by_1-\by_2\|.\]
	
	For the second component, define
	\[s(t,\bx,\bV)= -\frac{\alpha}{t}\bV+ \frac{\alpha}{t}\langle\nabla F(\bx),\bV\rangle\mathbf{1}- \gamma\nabla^2F(\bx)\bV- \beta\nabla F(\bx).\]
	Then
	\[\mathrm{Term\ II} := s(t,\bx_1,\bV_1)-s(t,\bx_2,\bV_2).\]
	
	By adding and subtracting intermediate terms, we obtain
	\begin{align*}
		\mathrm{Term\ II}&= -\frac{\alpha}{t}(\bV_1-\bV_2)+ \frac{\alpha}{t}\bigl(\langle\nabla F(\bx_1),\bV_1-\bV_2\rangle+ \langle\nabla F(\bx_1)-\nabla F(\bx_2),\bV_2\rangle\bigr)\mathbf{1} \\
		&\quad -\gamma\nabla^2F(\bx_1)(\bV_1-\bV_2)-\gamma(\nabla^2F(\bx_1)-\nabla^2F(\bx_2))\bV_2-\beta(\nabla F(\bx_1)-\nabla F(\bx_2)).
	\end{align*}
	
	Since $\nabla F$ and $\nabla^2F$ are Lipschitz continuous and bounded on boundedsets, there exist constants $L_1,L_2>0$ such that
	\[\|\nabla F(\bx_1)-\nabla F(\bx_2)\|\le L_1\|\bx_1-\bx_2\|,\]
	\[\|\nabla^2F(\bx_1)-\nabla^2F(\bx_2)\|\le L_2\|\bx_1-\bx_2\|.\]
	Because $\bV$ is bounded on $I$, there exists $M>0$ such that $\|\bV\|_{L^\infty}\le M$.
	Therefore,
	\[\|\mathrm{Term\ II}\|\le C_1\|\bV_1-\bV_2\| + C_2\|\bx_1-\bx_2\|\le C\|\by_1-\by_2\|,\]
	for some constant $C>0$.
	
	Hence $g(t,\by)$ is Lipschitz continuous with respect to $\by$ for each fixed $t$.All assumptions of Theorem~\ref{imex:convergence:thm} in\cite{bassanini2025imex} are satisfied. Therefore, the sequence $\{\bu^{n}\}$ generated by the IMEX-RB scheme converges to $(\bx^*,\vec{0})$.
\end{proof}

{\bf Proof of Lemma~\ref{imex:E:lip}}
\begin{proof}
	Since $\nabla^2F$ is Lipschitz on the bounded invariant set $\mathcal B$, then $g$ is Lipschitz continuous, from Theorem~\ref{imexrb:thm}. We denote its Lipschitz constant by $L_g$.
	
	Since $F\in C^2(\mathcal B)$, we have
	\[\sup_{x\in\mathcal B}\|\nabla F(x)\|<\infty.\]
	Since $\alpha$, $\beta$, and $\gamma$ are uniformly bounded, it follows that $\delta_k$ and $C_k$ are uniformly bounded on any finite time interval. Moreover, $v_k$ is uniformly bounded on $\mathcal B$.
	We denote
	\[\|v_k\|\le M,\]
	where $M$ is independent of $k$.
	
	Let $(x^{k},x^{k-1})$ and $(\tilde x^{k},\tilde x^{k-1})$ are in $\mathcal B\times\mathcal B$, then
	\begin{align*}
		|E(x^{k},x^{k-1})-E(\tilde x^{k},\tilde x^{k-1})|
		&\le |\delta_k(F(x^{k})-F(\tilde x^{k}))|+ \frac12\big|\|v_k\|^2-\|\tilde v_k\|^2\big| \\
		&\le \delta_{\max}\sup_{x\in\mathcal B}\|\nabla F(x)\|\,\|x^{k}-\tilde x^{k}\|+ \frac12\big|\|v_k\|^2-\|\tilde v_k\|^2\big|.
	\end{align*}
	
	For the second term,
	\[\frac12\big|\|v_k\|^2-\|\tilde v_k\|^2\big|\le \frac12(\|v_k\|+\|\tilde v_k\|)\,\|v_k-\tilde v_k\|\le M\|v_k-\tilde v_k\|.\]
	
	From the definition of $v_k$ and the Lipschitz continuity of $\nabla F$, there exists a constant $L_1>0$ such that
	\begin{align*}
		\|v_k-\tilde v_k\|&\le\big(|\alpha-1| + \alpha\sup_{x\in\mathcal B}\|\nabla F(x)\| + k + \gamma_k k h L_1\big)\|x^{k}-\tilde x^{k}\| \\
		&\quad + k\,\|x^{k-1}-\tilde x^{k-1}\|.
	\end{align*}
	
	Combining the estimates above, we obtain
	\begin{align*}
		|E(x^{k},x^{k-1})-E(\tilde x^{k},\tilde x^{k-1})|&\le\big(\delta_{\max}\sup_{x\in\mathcal B}\|\nabla F(x)\|+ M L_v\big)\|x^{k}-\tilde x^{k}\| \\
		&\qquad + kM\|x^{k-1}-\tilde x^{k-1}\|,
	\end{align*}
	where $L_v$ denotes the constant multiplying $\|x^{k}-\tilde x^{k}\|$ above.
	
	Therefore, there exists a constant $L_E>0$ such that
	\[|E(x^{k},x^{k-1})-E(\tilde x^{k},\tilde x^{k-1})|\le L_E\big(\|x^{k}-\tilde x^{k}\|+\|x^{k-1}-\tilde x^{k-1}\|\big),\]
	which proves that $E$ is Lipschitz on $\mathcal B\times\mathcal B$.
\end{proof}

{\bf Proof of Lemma~\ref{imexrb:lemma3}}
\begin{proof}
	Since both methods start from the same point $\bu^n$, we have
	\begin{align*}
		\|\bu^{n+1}-\by^{n+1}\|&= \|\Delta t\,g(t_{n+1},V^n{V^n}^\top \bu^{n+1})-\Delta t\,g(t_{n+1},\by^{n+1})\| \\
		&\leq \Delta t\,L_g\|V^n{V^n}^\top \bu^{n+1}-\by^{n+1}\| \\
		&\leq \Delta t\,L_g\bigl(\|(I-V^n{V^n}^\top)\bu^{n+1}\|+\|\bu^{n+1}-\by^{n+1}\|\bigr) \\
		&\leq \Delta t\,L_g\bigl(\varepsilon\|\bu^{n+1}\|+\|\bu^{n+1}-\by^{n+1}\|\bigr),
	\end{align*}
	where the last inequality follows from the stopping condition of the IMEX-RB method.
	
	Rearranging the terms gives
	\[\|\bu^{n+1}-\by^{n+1}\|\leq \frac{L_g\varepsilon\Delta t}{1-L_g\Delta t}\,\|\bu^{n+1}\|.
	\]
	
	Since the iterates remain in the bounded domain $\mathcal B$,there exists a constant $M>0$, independent of $n$, such that $\|\bu^{n+1}\|\leq M$. Moreover, if $\Delta t\leq \frac{1}{2L_g}$, then
	\[\|\bu^{n+1}-\by^{n+1}\|\leq 2L_gM\,\varepsilon\,\Delta t.\]
	
	Taking $C=2L_gM$ completes the proof.
	
\end{proof}

{\bf Proof of Theorem~\ref{imexrb:energystability}}
\begin{proof}
	Let $\{\bu^k_{\mathrm{IMEX}}\}$ and $\{y^{k}_{\mathrm{BE}}\}$ be the sequences generated by the IMEX-RB method and the backward Euler method respectively, starting from the same initial value.
	The main idea is to compare $E^{\mathrm{IMEX}}_{k+1}$ with $E^{\mathrm{BE}}_{k+1}$. We then apply Theorem~\ref{fully-discretized:energystability} to conclude the result for the IMEX-RB method.
	
	Denote \[E^{k} := \bu^k_{\mathrm{IMEX}} - \by^{k}_{\mathrm{BE}}.\]
	
	Consider $E^{k+1}_{\mathrm{IMEX}} - E^k_{\mathrm{IMEX}}$,
	\begin{align*}
		E^{k+1}_{\mathrm{IMEX}} - E^k_{\mathrm{IMEX}}&=(E^{k+1}_{\mathrm{IMEX}} - E^{k+1}_{\mathrm{BE}})+ (E^{k+1}_{\mathrm{BE}} - E^k_{\mathrm{BE}})+ (E^k_{\mathrm{BE}} - E^k_{\mathrm{IMEX}}).
	\end{align*}
	
	From Theorem~\ref{fully-discretized:energystability}, $E^{k+1}_{\mathrm{BE}} - E^k_{\mathrm{BE}} = -\Delta_k \le 0$. From Lemma~\ref{imex:E:lip}, we obtain Lipschitz continuity for the discrete energy. Therefore,
	\beq\label{imexeq:1}
	E^{k+1}_{\mathrm{IMEX}} - E^k_{\mathrm{IMEX}}\le L_E(\|E^{k+1}\| + \|E^{k}\|) - \Delta_k.
	\eeq
	
	To estimate $\|E^{k+1}\|$, note that
	\[E^{k+1}=\phi_{\mathrm{IMEX}}(x^{\mathrm{IMEX}}_{k})-\phi_{\mathrm{IMEX}}(x^{\mathrm{BE}}_{k})+\phi_{\mathrm{IMEX}}(x^{\mathrm{BE}}_{k})-\phi_{\mathrm{BE}}(x^{\mathrm{BE}}_{k}).\]

	From Theorem~\ref{imex:convergence:thm}, we know that $g(t,y)$ is $C^1$ and $L_g$-Lipschitz. Therefore, the one-step IMEX-RB operator $\phi_{\mathrm{IMEX}}^{n}$, which is
	\[\phi_{\mathrm{IMEX}}(\bu^{k+1}) = \bu^{k} + \Delta t\, g(t_{n+1}, P \bu^{k+1}),\]
	is $C^1$ on $\mathcal{B}$. Moreover, the operator is uniformly bounded by $C_\phi$ in $\mathcal{B}$:
	\[\|\phi_{\mathrm{IMEX}}\| \le \frac{1}{1 - \Delta t L_g} = C_\phi \le 2,\]
	since the projector $P = V^n {V^n}^\top$ is fixed at each iteration and $\Delta t \le \frac{1}{2L_g}$.

	Then 
	\[\|E^{k+1}\|\le C_\phi \|E^{k}\|+\|\phi_{\mathrm{IMEX}}(y^{k}_{\mathrm{BE}}) -\phi_{\mathrm{BE}}(y^{k}_{\mathrm{BE}})\|.\]
	
	By Lemma~\ref{imexrb:lemma3},
	\[\|\phi_{\mathrm{IMEX}}(y^{k}_{\mathrm{BE}}) - \phi_{\mathrm{BE}}(y^{k}_{\mathrm{BE}})\|\le C\,\varepsilon\,\Delta t.\]
	
	Hence,
	\beq\label{imexeq:2}
	\|E^{k+1}\| \le C_\phi\|E^{k}\| + C\,\varepsilon\,\Delta t.
	\eeq
	
	On finite time interval $[0,T]$, with the same initial value, $e_0=0$, and
	\[\|E^{k}\| \le C\,\varepsilon\,\Delta t\sum_{j=0}^{k-1}C_\phi^j= C\,\varepsilon\,\Delta t\cdot\frac{C_\phi^k-1}{C_\phi-1}.\]
	
	Since  $t_k\le T$, then there exists an integer $N>0$ such that $k\le N$, and 
	\[\|E^{k}\| \le C\,\varepsilon\,\Delta t\frac{C_\phi^N-1}{C_\phi-1}\le C\,\varepsilon\,\Delta t\frac{e^{T L_g}-1}{\Delta t L_g} = C\,\varepsilon\frac{e^{T L_g}-1}{L_g}=C_e\,\varepsilon,\]
	where $C_e= C \frac{e^{T L_g}-1}{L_g}$.

	Substituting into \eqref{imexeq:1} yields
	\begin{align*}
		E^{k+1}_{\mathrm{IMEX}} - E^k_{\mathrm{IMEX}}&\le L_E\big((1+C_\phi)\|E^{k}\| + C\,\varepsilon\,\Delta t\big) - \Delta_k\\
		&\le L_E\big((1+C_\phi)C_e\,\varepsilon + C\,\varepsilon\,\Delta t\big) - \Delta_k\\
		&\le L_E\big((1+C_\phi)C_e\,\varepsilon + C\,\varepsilon\,\Delta t\big) - c_0\Delta t\\
		&\le L_EM\varepsilon- c_0\Delta t
	\end{align*}
	where $M=(1+C_\phi)C_e+ C/(2L_g)$.
	
	Since $	\varepsilon\le \frac{c_0}{2 L_E M}\Delta t$, then for all $t_k\le T$
	\[E^{k+1}_{\mathrm{IMEX}} - E^k_{\mathrm{IMEX}}\le -\frac{1}{2}c_0\Delta t\le 0.\]
	
	Then we conclude
	\[F(\bu^k_{\mathrm{IMEX}})-\min F = \mathcal O\!\left(\frac{1}{\delta_k}\right).\]
\end{proof}


\section{Numerical Results on Convergence Rate test}\label{num:append:con}
\subsection{Kinetic Energy $\|v_k\|_2$ Plots for the 10D Rotated Hyper-Ellipsoid Function with Time Step $h = 1/16$}
\begin{figure}[H]
  \centering
  \captionsetup[subfigure]{justification=centering}

  \begin{subfigure}{0.24\textwidth}\centering
    \includegraphics[width=0.8\textwidth]{"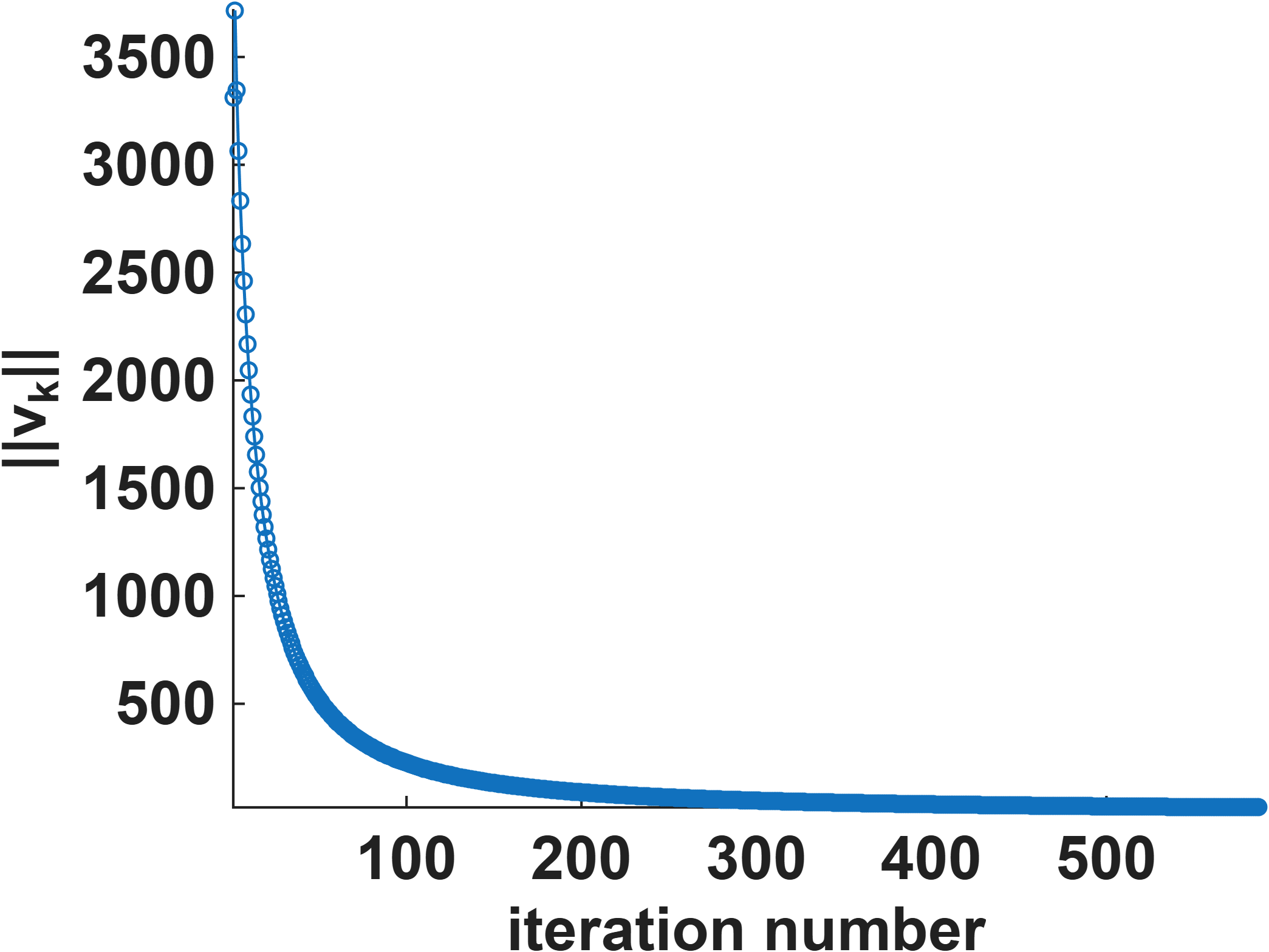"}
    \caption{FD}
  \end{subfigure}\hfill
  \begin{subfigure}{0.24\textwidth}\centering
    \includegraphics[width=0.8\textwidth]{"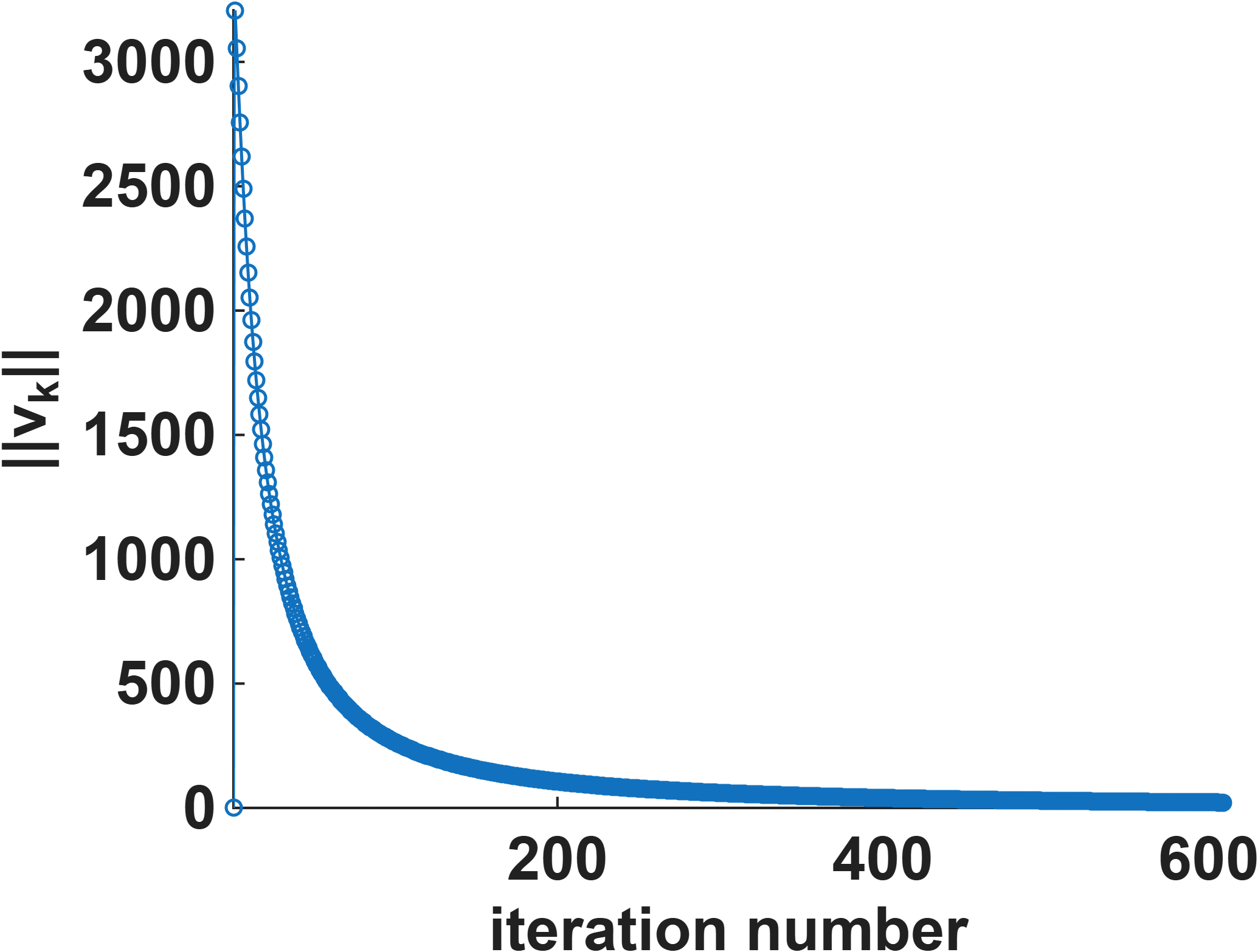"}
    \caption{IMEX-RB}
  \end{subfigure}\hfill
  \begin{subfigure}{0.24\textwidth}\centering
    \includegraphics[width=0.8\textwidth]{"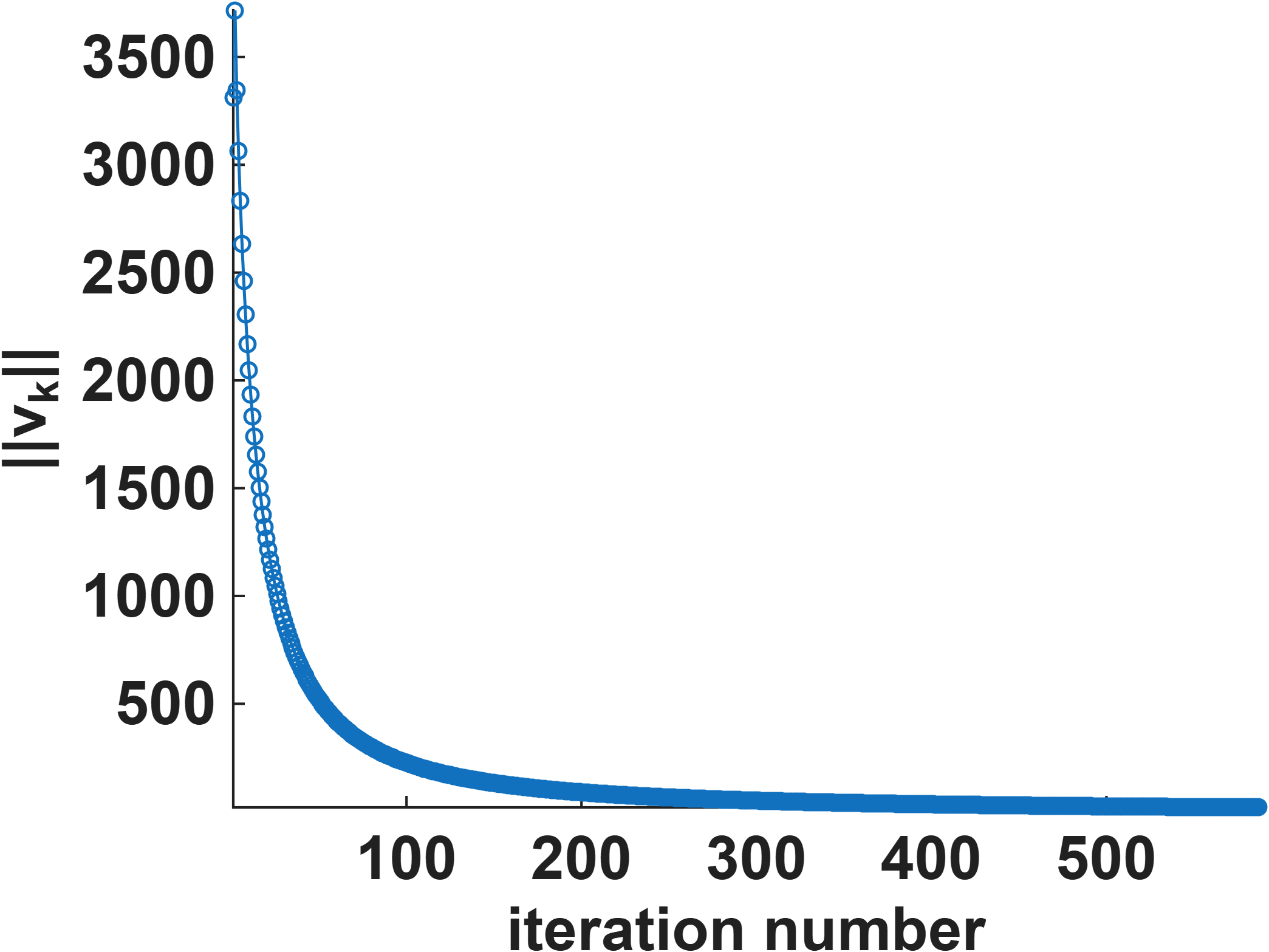"}
    \caption{FB}
  \end{subfigure}\hfill
  \begin{subfigure}{0.24\textwidth}\centering
    \includegraphics[width=0.8\textwidth]{"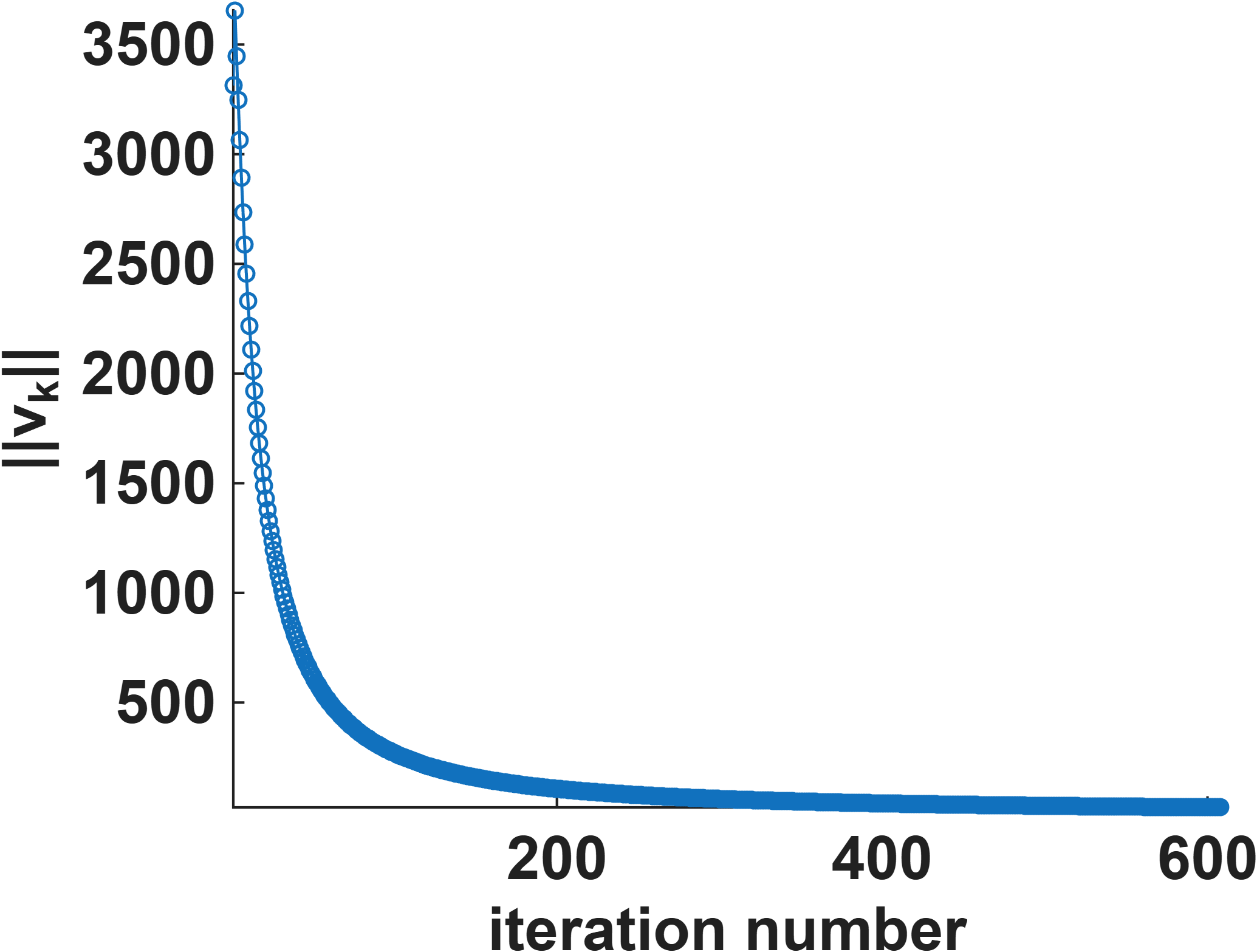"}
    \caption{Semi}
  \end{subfigure}

  \vspace{0.45em}

 \begin{subfigure}{0.24\textwidth}\centering
    \includegraphics[width=0.8\textwidth]{"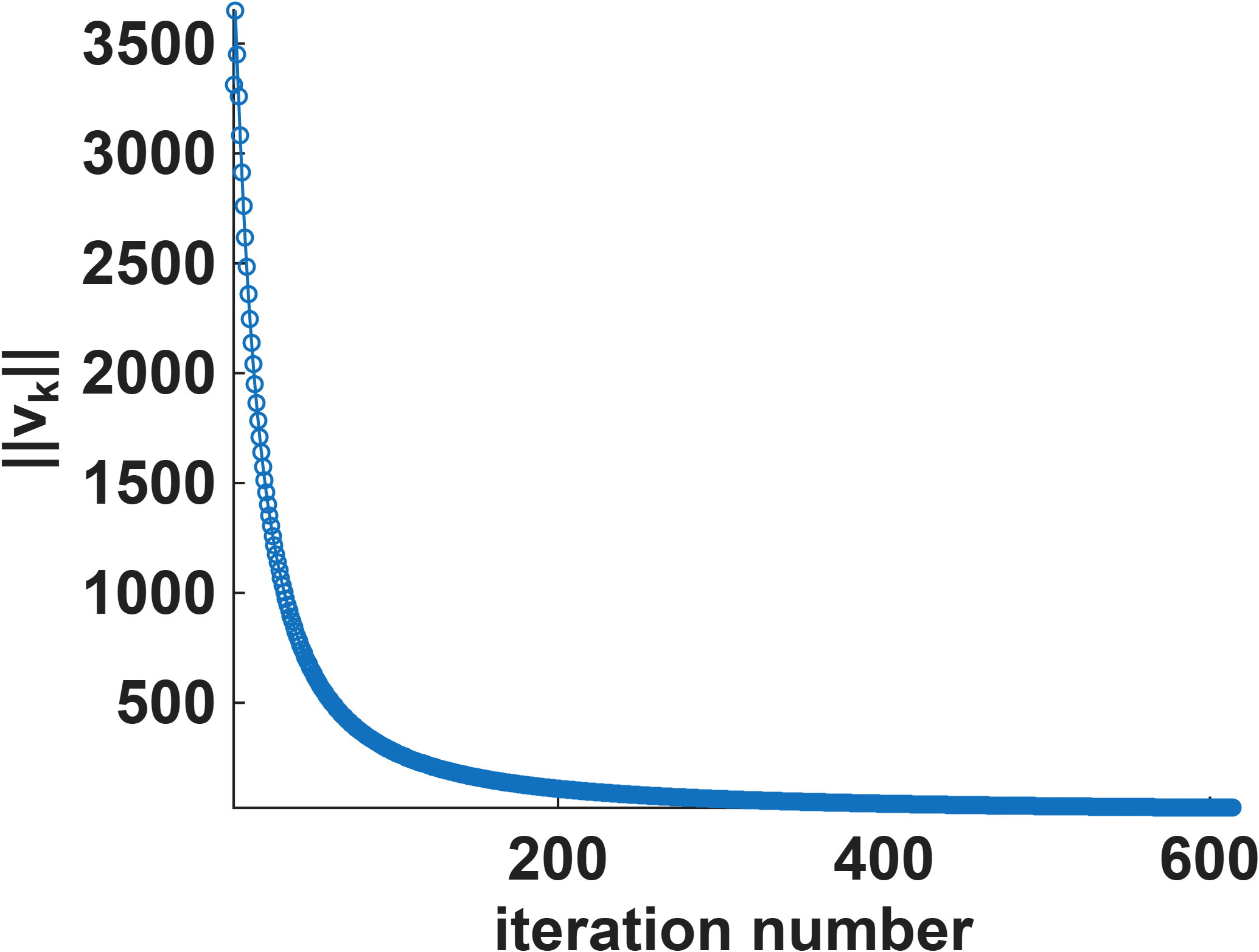"}
    \caption{IPAHD}
  \end{subfigure}\hfill
 \begin{subfigure}{0.24\textwidth}\centering
    \includegraphics[width=0.8\textwidth]{"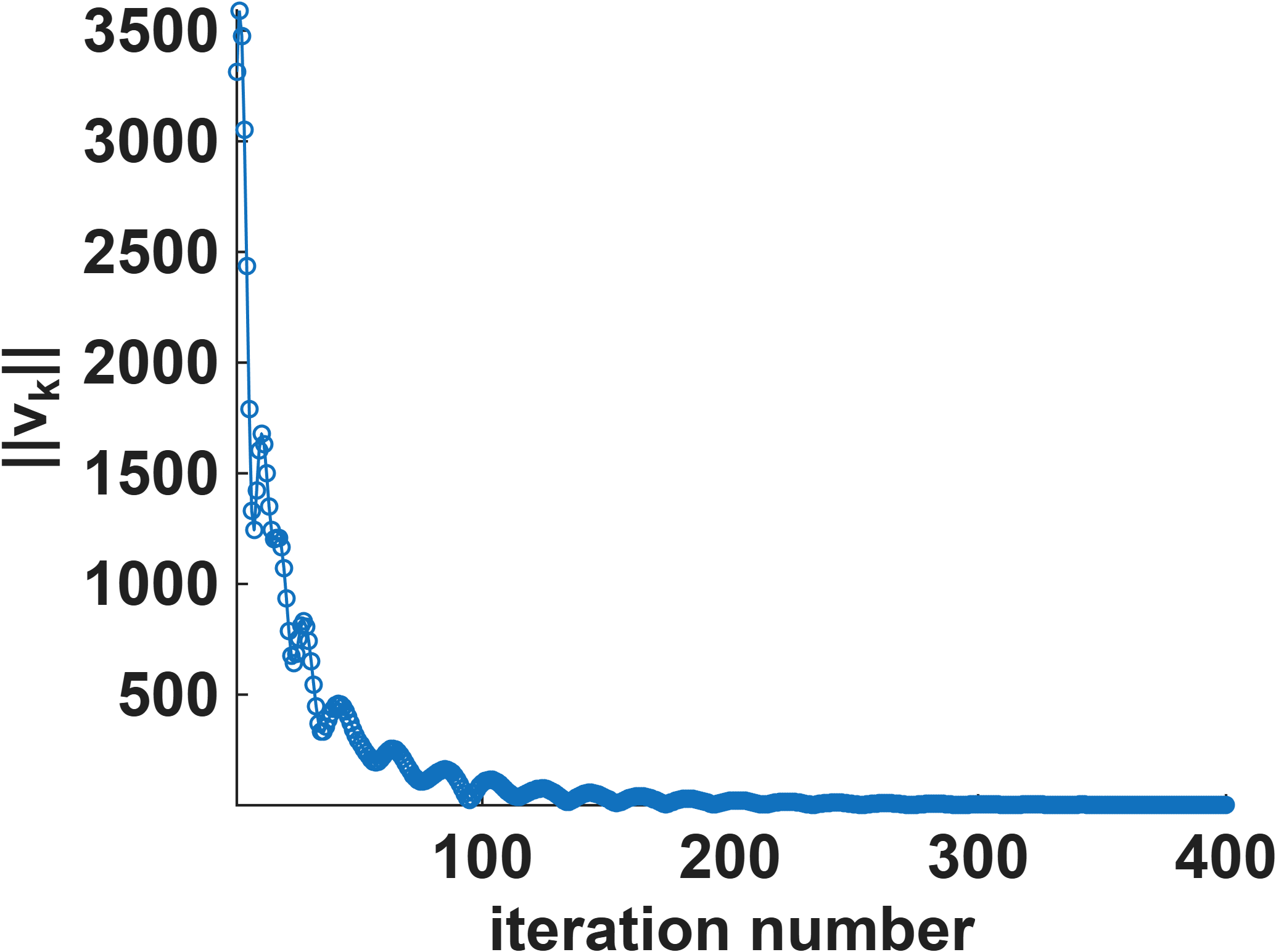"}
    \caption{Nesterov}
  \end{subfigure}\hfill
  \begin{subfigure}{0.24\textwidth}\centering
    \includegraphics[width=0.8\textwidth]{"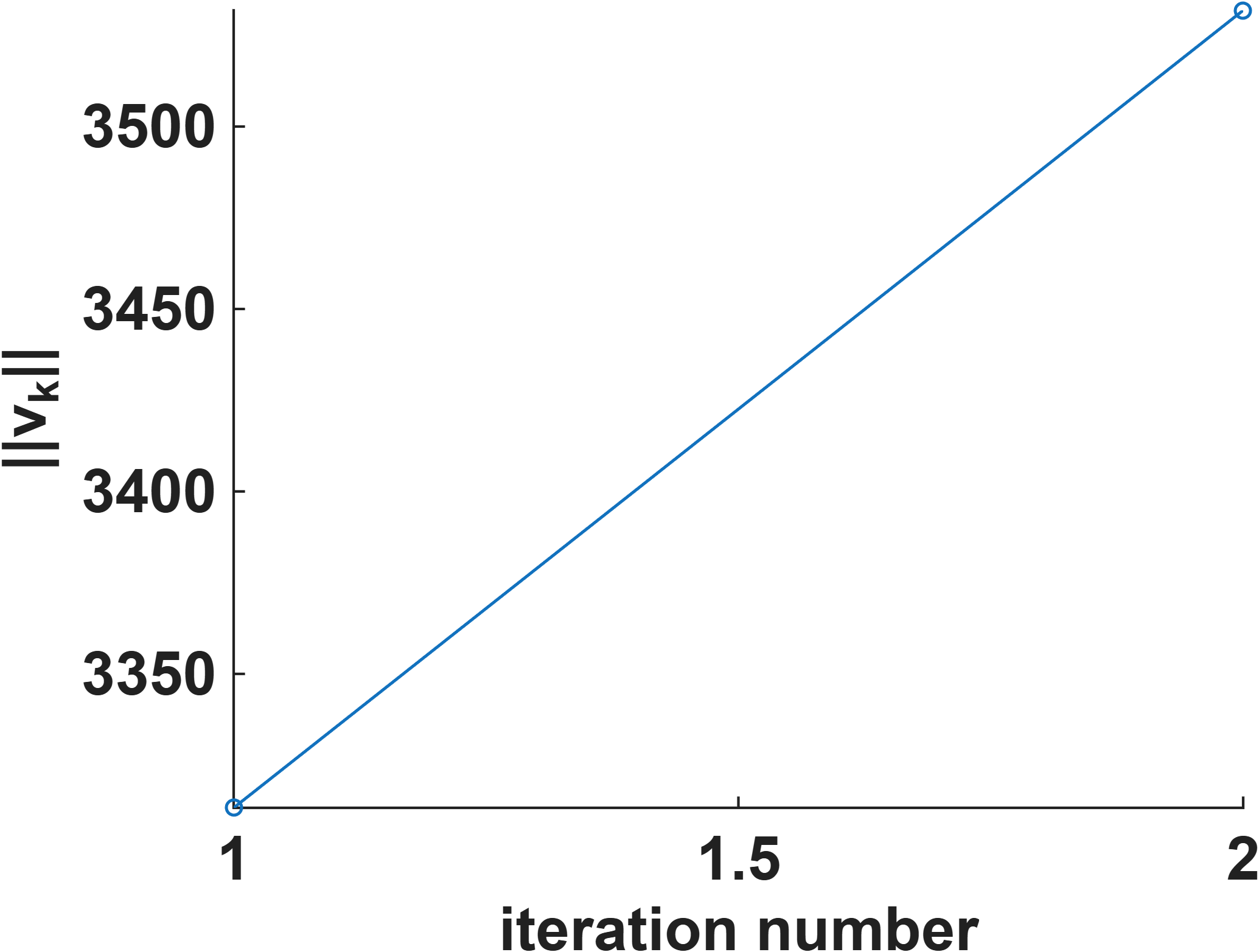"}
    \caption{GD}
  \end{subfigure}

  \caption{Comparison of $\|v_k\|_2$ across methods for the 10D Rotated Hyper-Ellipsoid Function (\(h=1/16\)). Each subplot is one method (left-to-right, top-to-bottom).}
  \label{fig:kinetic_comparison}
\end{figure}

\subsection{All tables of convergence rates on higher-dimensional testing functions}

\begin{table}[H]
  \centering
  \caption{1D Sphere Function }
  \label{tab:1d_sphere_compact}
  {\fontsize{9}{11}\selectfont
  \setlength{\tabcolsep}{4pt}
  \begin{adjustbox}{max width=\textwidth}
  \begin{tabular}{l
                  r c
                  r c
                  r c
                  r c
                  r c
                  r c
                  r c}
    \toprule
    \makecell[l]{h} &
    \makecell{FB\\$\bar{p}$} & \makecell{FB\\S/F} &
    \makecell{Semi\\$\bar{p}$} & \makecell{Semi\\S/F} &
    \makecell{FD\\$\bar{p}$} & \makecell{FD\\S/F} &
    \makecell{IMEX\\$\bar{p}$} & \makecell{IMEX\\S/F} &
    \makecell{IPAHD\\$\bar{p}$} & \makecell{IPAHD\\S/F} &
    \makecell{NM\\$\bar{p}$} & \makecell{NM\\S/F} &
    \makecell{GD\\$\bar{p}$} & \makecell{GD\\S/F} \\
    \midrule
    1          & 4.6697 & 1 & 4.6697 & 1 & 4.6697 & 1 & 4.6697 & 1 & 4.6697 & 1 & 241.7813 & 1 & NaN      & 1 \\
    $1/2^{1}$  & 4.5371 & 1 & 4.5372 & 1 & 4.5371 & 1 & 4.5371 & 1 & 4.5371 & 1 & 10.7844  & 1 & NaN      & 1 \\
    $1/2^{2}$  & 4.3740 & 1 & 4.3740 & 1 & 4.3740 & 1 & 4.3740 & 1 & 4.3754 & 1 & 2.0145   & 1 & NaN      & 1 \\
    $1/2^{3}$  & 4.2230 & 1 & 4.2230 & 1 & 4.2230 & 1 & 4.2229 & 1 & 4.2238 & 1 & 4.5747   & 1 & NaN      & 1 \\
    $1/2^{4}$  & 4.0242 & 1 & 4.0242 & 1 & 4.0242 & 1 & 4.0242 & 1 & 4.0271 & 1 & 10.8744  & 1 & NaN      & 1 \\
    $1/2^{5}$  & 3.8859 & 1 & 3.8852 & 1 & 3.8859 & 1 & 3.8859 & 1 & 3.8882 & 1 & 4.3273   & 1 & NaN      & 1 \\
    $1/2^{6}$  & 3.6615 & 1 & 3.6610 & 1 & 3.6615 & 1 & 3.6615 & 1 & 3.6645 & 1 & 5128.0000& 1 & NaN      & 1 \\
    $1/2^{7}$  & 3.5659 & 1 & 3.5659 & 1 & 3.5659 & 1 & 3.5655 & 1 & 3.5683 & 1 & 5.9379   & 1 & NaN      & 1 \\
    \bottomrule
  \end{tabular}
  \end{adjustbox}
  }
\end{table}

\begin{table}[H]
  \centering
  \caption{2D Sphere Function }
  \label{tab:2d_sphere_compact}
  {\fontsize{9}{11}\selectfont
  \setlength{\tabcolsep}{4pt}
  \begin{adjustbox}{max width=\textwidth}
  \begin{tabular}{l
                  r c
                  r c
                  r c
                  r c
                  r c
                  r c
                  r c}
    \toprule
    \makecell[l]{h} &
    \makecell{FB\\$\bar{p}$} & \makecell{FB\\S/F} &
    \makecell{Semi\\$\bar{p}$} & \makecell{Semi\\S/F} &
    \makecell{FD\\$\bar{p}$} & \makecell{FD\\S/F} &
    \makecell{IMEX\\$\bar{p}$} & \makecell{IMEX\\S/F} &
    \makecell{IPAHD\\$\bar{p}$} & \makecell{IPAHD\\S/F} &
    \makecell{NM\\$\bar{p}$} & \makecell{NM\\S/F} &
    \makecell{GD\\$\bar{p}$} & \makecell{GD\\S/F} \\
    \midrule
    1          & 4.7276 & 1 & 4.7276 & 1 & 4.7276 & 1 & 4.7276 & 1 & 4.7270 & 1 & 241.7813 & 1 & NaN      & 1 \\
    $1/2^{1}$  & 4.6175 & 1 & 4.6175 & 1 & 4.6175 & 1 & 4.6176 & 1 & 4.6159 & 1 & 10.2989  & 1 & NaN      & 1 \\
    $1/2^{2}$  & 4.4820 & 1 & 4.4829 & 1 & 4.4820 & 1 & 4.4820 & 1 & 4.4793 & 1 & 27.6623  & 1 & NaN      & 1 \\
    $1/2^{3}$  & 4.3533 & 1 & 4.3532 & 1 & 4.3533 & 1 & 4.3525 & 1 & 4.3499 & 1 & 83.2620  & 1 & NaN      & 1 \\
    $1/2^{4}$  & 4.1873 & 1 & 4.1880 & 1 & 4.1873 & 1 & 4.1873 & 1 & 4.1822 & 1 & 10.4080  & 1 & NaN      & 1 \\
    $1/2^{5}$  & 4.0641 & 1 & 4.0646 & 1 & 4.0641 & 1 & 4.0641 & 1 & 4.0585 & 1 & 4.1936   & 1 & NaN      & 1 \\
    $1/2^{6}$  & 3.8757 & 1 & 3.8761 & 1 & 3.8757 & 1 & 3.8757 & 1 & 3.8679 & 1 & 4183.8   & 1 & NaN      & 1 \\
    $1/2^{7}$  & 3.7793 & 1 & 3.7793 & 1 & 3.7793 & 1 & 3.7792 & 1 & 3.7729 & 1 & 5.6631   & 1 & NaN      & 1 \\
    \bottomrule
  \end{tabular}
  \end{adjustbox}
  }
\end{table}

\begin{table}[H]
  \centering
  \caption{10D Sphere Function}
  \label{tab:10d_sphere_compact}
  {\fontsize{9}{11}\selectfont
  \setlength{\tabcolsep}{4pt}
  \begin{adjustbox}{max width=\textwidth}
  \begin{tabular}{l
                  r c
                  r c
                  r c
                  r c
                  r c
                  r c
                  r c}
    \toprule
    \makecell[l]{h} &
    \makecell{FB\\$\bar{p}$} & \makecell{FB\\S/F} &
    \makecell{Semi\\$\bar{p}$} & \makecell{Semi\\S/F} &
    \makecell{FD\\$\bar{p}$} & \makecell{FD\\S/F} &
    \makecell{IMEX\\$\bar{p}$} & \makecell{IMEX\\S/F} &
    \makecell{IPAHD\\$\bar{p}$} & \makecell{IPAHD\\S/F} &
    \makecell{NM\\$\bar{p}$} & \makecell{NM\\S/F} &
    \makecell{GD\\$\bar{p}$} & \makecell{GD\\S/F} \\
    \midrule

    1          & 4.7511 & 1 & 4.7510 & 1 & 4.7511 & 1 & 4.7511 & 1 & 4.7511 & 1 & 241.7813 & 1 & 1.2460 & 1 \\
    $1/2^{1}$  & 4.6497 & 1 & 4.6497 & 1 & 4.6497 & 1 & 4.6497 & 1 & 4.6490 & 1 & 10.2989  & 1 & 1.2294 & 1 \\
    $1/2^{2}$  & 4.5244 & 1 & 4.5243 & 1 & 4.5244 & 1 & 4.5244 & 1 & 4.5237 & 1 & 26.4058  & 1 & 1.2195 & 1 \\
    $1/2^{3}$  & 4.4042 & 1 & 4.4049 & 1 & 4.4042 & 1 & 4.4043 & 1 & 4.4031 & 1 & 83.2620  & 1 & 1.2669 & 1 \\
    $1/2^{4}$  & 4.2507 & 1 & 4.2507 & 1 & 4.2507 & 1 & 4.2502 & 1 & 4.2480 & 1 & 10.4080  & 1 & 1.2297 & 1 \\
    $1/2^{5}$  & 4.1347 & 1 & 4.1346 & 1 & 4.1347 & 1 & 4.1347 & 1 & 4.1319 & 1 & 3.8423   & 1 & 1.2788 & 1 \\
    $1/2^{6}$  & 3.9590 & 1 & 3.9590 & 1 & 3.9590 & 1 & 3.9590 & 1 & 3.9553 & 1 & 4183.8000 & 1 & 1.2419 & 1 \\
    $1/2^{7}$  & 3.8653 & 1 & 3.8653 & 1 & 3.8653 & 1 & 3.8653 & 1 & 3.8623 & 1 & 5.4302   & 1 & 1.3061 & 1 \\
    \bottomrule
  \end{tabular}
  \end{adjustbox}
  }
\end{table}

\begin{table}[H]
  \centering
  \caption{50D Sphere Function }
  \label{tab:50d_sphere_compact}
  {\fontsize{9}{11}\selectfont
  \setlength{\tabcolsep}{4pt}
  \begin{adjustbox}{max width=\textwidth}
  \begin{tabular}{l
                  r c
                  r c
                  r c
                  r c
                  r c
                  r c
                  r c}
    \toprule
    \makecell[l]{h} &
    \makecell{FB\\$\bar{p}$} & \makecell{FB\\S/F} &
    \makecell{Semi\\$\bar{p}$} & \makecell{Semi\\S/F} &
    \makecell{FD\\$\bar{p}$} & \makecell{FD\\S/F} &
    \makecell{IMEX\\$\bar{p}$} & \makecell{IMEX\\S/F} &
    \makecell{IPAHD\\$\bar{p}$} & \makecell{IPAHD\\S/F} &
    \makecell{NM\\$\bar{p}$} & \makecell{NM\\S/F} &
    \makecell{GD\\$\bar{p}$} & \makecell{GD\\S/F} \\
    \midrule
    1          & 4.8034 & 1 & 4.8035 & 1 & 4.8034 & 1 & 4.8031 & 1 & 4.8042 & 1 & 241.7813 & 1 & NaN      & 1 \\
    $1/2^{1}$  & 4.7235 & 1 & 4.7253 & 1 & 4.7235 & 1 & 4.7220 & 1 & 4.7231 & 1 & 10.2989  & 1 & NaN      & 1 \\
    $1/2^{2}$  & 4.6272 & 1 & 4.6320 & 1 & 4.6272 & 1 & 4.6244 & 1 & 4.6220 & 1 & 24.0752  & 1 & NaN      & 1 \\
    $1/2^{3}$  & 4.5385 & 1 & 4.5444 & 1 & 4.5385 & 1 & 4.5337 & 1 & 4.5233 & 1 & 66.9773  & 1 & NaN      & 1 \\
    $1/2^{4}$  & 4.4231 & 1 & 4.4289 & 1 & 4.4231 & 1 & 4.4186 & 1 & 4.3978 & 1 & 10.4080  & 1 & NaN      & 1 \\
    $1/2^{5}$  & 4.3301 & 1 & 4.3337 & 1 & 4.3301 & 1 & 4.3271 & 1 & 4.2997 & 1 & 4.1737   & 1 & NaN      & 1 \\
    $1/2^{6}$  & 4.1991 & 1 & 4.2016 & 1 & 4.1991 & 1 & 4.1967 & 1 & 4.1554 & 1 & 4183.8   & 0 & NaN      & 1 \\
    $1/2^{7}$  & 4.1090 & 1 & 4.1101 & 1 & 4.1090 & 1 & 4.1079 & 1 & 4.0703 & 1 & 5.4302   & 0 & NaN      & 1 \\
    \bottomrule
  \end{tabular}
  \end{adjustbox}
  }
\end{table}

\begin{table}[H]
  \centering
  \caption{1D Modified Sphere Function}
  \label{tab:modsphere1d_compact}
  {\fontsize{9}{11}\selectfont
  \setlength{\tabcolsep}{4pt}
  \begin{adjustbox}{max width=\textwidth}
  \begin{tabular}{l
                  r c
                  r c
                  r c
                  r c
                  r c
                  r c
                  r c}
    \toprule
    \makecell[l]{h} &
    \makecell{FB\\$\bar{p}$} & \makecell{FB\\S/F} &
    \makecell{Semi\\$\bar{p}$} & \makecell{Semi\\S/F} &
    \makecell{FD\\$\bar{p}$} & \makecell{FD\\S/F} &
    \makecell{IMEX\\$\bar{p}$} & \makecell{IMEX\\S/F} &
    \makecell{IPAHD\\$\bar{p}$} & \makecell{IPAHD\\S/F} &
    \makecell{NM\\$\bar{p}$} & \makecell{NM\\S/F} &
    \makecell{GD\\$\bar{p}$} & \makecell{GD\\S/F} \\
    \midrule
    1          & 4.7396 & 1 & 4.7397 & 1 & 4.7396 & 1 & 4.7392 & 1 & 4.7395 & 1 & 1.4140 & 1 & 1.0042 & 1 \\
    $1/2^{1}$  & 4.8544 & 1 & 4.8546 & 1 & 4.8544 & 1 & 4.8551 & 1 & 4.8535 & 1 & 1.0704 & 1 & 1.0034 & 1 \\
    $1/2^{2}$  & 5.5196 & 1 & 5.5163 & 1 & 5.5196 & 1 & 5.5229 & 1 & 5.5627 & 1 & 1.0235 & 1 & 1.0029 & 1 \\
    $1/2^{3}$  & 3.3144 & 1 & 3.3120 & 1 & 3.3144 & 1 & 3.3168 & 1 & 3.3520 & 1 & 1.0573 & 1 & 1.0052 & 1 \\
    $1/2^{4}$  & 2.7222 & 1 & 2.7222 & 1 & 2.7222 & 1 & 2.7222 & 1 & 2.7169 & 1 & 1.1546 & 1 & 1.0033 & 1 \\
    $1/2^{5}$  & 2.5157 & 1 & 2.5156 & 1 & 2.5157 & 1 & 2.5159 & 1 & 2.5228 & 1 & 1.1642 & 1 & 1.0056 & 1 \\
    $1/2^{6}$  & 2.3680 & 1 & 2.3680 & 1 & 2.3680 & 1 & 2.3681 & 1 & 2.3711 & 1 & 1.0080 & 1 & 1.0035 & 1 \\
    $1/2^{7}$  & 2.4007 & 1 & 2.4007 & 1 & 2.4007 & 1 & 2.3917 & 1 & 2.4066 & 1 & 1.0074 & 1 & 1.0058 & 1 \\
    \bottomrule
  \end{tabular}
  \end{adjustbox}
  }
\end{table}

\begin{table}[H]
  \centering
  \caption{2D Modified Sphere Function}
  \label{tab:modsphere2d_compact}
  {\fontsize{9}{11}\selectfont
  \setlength{\tabcolsep}{4pt}
  \begin{adjustbox}{max width=\textwidth}
  \begin{tabular}{l
                  r c
                  r c
                  r c
                  r c
                  r c
                  r c
                  r c}
    \toprule
    \makecell[l]{h} &
    \makecell{FB\\$\bar{p}$} & \makecell{FB\\S/F} &
    \makecell{Semi\\$\bar{p}$} & \makecell{Semi\\S/F} &
    \makecell{FD\\$\bar{p}$} & \makecell{FD\\S/F} &
    \makecell{IMEX\\$\bar{p}$} & \makecell{IMEX\\S/F} &
    \makecell{IPAHD\\$\bar{p}$} & \makecell{IPAHD\\S/F} &
    \makecell{NM\\$\bar{p}$} & \makecell{NM\\S/F} &
    \makecell{GD\\$\bar{p}$} & \makecell{GD\\S/F} \\
    \midrule
    1          & 4.7063 & 1 & 4.7063 & 1 & 4.7063 & 1 & 4.7057 & 1 & 4.7062 & 1 & 1.4244 & 1 & 1.0056 & 1 \\
    $1/2^{1}$  & 4.6001 & 1 & 4.6001 & 1 & 4.6001 & 1 & 4.6000 & 1 & 4.6002 & 1 & 1.0777 & 1 & 1.0049 & 1 \\
    $1/2^{2}$  & 4.4736 & 1 & 4.4736 & 1 & 4.4736 & 1 & 4.4730 & 1 & 4.4756 & 1 & 1.0286 & 1 & 1.0044 & 1 \\
    $1/2^{3}$  & 4.3614 & 1 & 4.3614 & 1 & 4.3614 & 1 & 4.3622 & 1 & 4.3629 & 1 & 1.0609 & 1 & 1.0065 & 1 \\
    $1/2^{4}$  & 4.2202 & 1 & 4.2200 & 1 & 4.2202 & 1 & 4.2201 & 1 & 4.2224 & 1 & 1.1572 & 1 & 1.0048 & 1 \\
    $1/2^{5}$  & 4.0563 & 1 & 4.0562 & 1 & 4.0563 & 1 & 4.0566 & 1 & 4.0614 & 1 & 1.1660 & 1 & 1.0069 & 1 \\
    $1/2^{6}$  & 3.8290 & 1 & 3.8289 & 1 & 3.8290 & 1 & 3.8292 & 1 & 3.8358 & 1 & 1.0093 & 1 & 1.0049 & 1 \\
    $1/2^{7}$  & 3.6278 & 1 & 3.6277 & 1 & 3.6278 & 1 & 3.6279 & 1 & 3.6366 & 1 & 1.0083 & 1 & 1.0071 & 1 \\
    \bottomrule
  \end{tabular}
  \end{adjustbox}
  }
\end{table}

\begin{table}[H]
  \centering
  \caption{10D Modified Sphere Function}
  \label{tab:modsphere10d_compact}
  {\fontsize{9}{11}\selectfont
  \setlength{\tabcolsep}{4pt}
  \begin{adjustbox}{max width=\textwidth}
  \begin{tabular}{l
                  r c
                  r c
                  r c
                  r c
                  r c
                  r c
                  r c}
    \toprule
    \makecell[l]{h} &
    \makecell{FB\\$\bar{p}$} & \makecell{FB\\S/F} &
    \makecell{Semi\\$\bar{p}$} & \makecell{Semi\\S/F} &
    \makecell{FD\\$\bar{p}$} & \makecell{FD\\S/F} &
    \makecell{IMEX\\$\bar{p}$} & \makecell{IMEX\\S/F} &
    \makecell{IPAHD\\$\bar{p}$} & \makecell{IPAHD\\S/F} &
    \makecell{NM\\$\bar{p}$} & \makecell{NM\\S/F} &
    \makecell{GD\\$\bar{p}$} & \makecell{GD\\S/F} \\
    \midrule
    1          & 4.6499 & 1 & 4.6513 & 1 & 4.6499 & 1 & 4.6499 & 1 & 4.6580 & 1 & 1.4548 & 1 & 0.2500 & 0 \\
    $1/2^{1}$  & 4.5149 & 1 & 4.5212 & 1 & 4.5149 & 1 & 4.5116 & 1 & 4.5164 & 1 & 1.0983 & 1 & -0.4444 & 0 \\
    $1/2^{2}$  & 4.3619 & 1 & 4.3699 & 1 & 4.3619 & 1 & 4.3538 & 1 & 4.3441 & 1 & 1.0430 & 1 & 2.0000 & 0 \\
    $1/2^{3}$  & 4.2237 & 1 & 4.2319 & 1 & 4.2237 & 1 & 4.2167 & 1 & 4.1827 & 1 & 1.0724 & 1 & 0.9630 & 0 \\
    $1/2^{4}$  & 4.0406 & 1 & 4.0464 & 1 & 4.0406 & 1 & 4.0358 & 1 & 3.9773 & 1 & 1.1649 & 1 & 0.7873 & 0 \\
    $1/2^{5}$  & 3.9177 & 1 & 3.9212 & 1 & 3.9177 & 1 & 3.9143 & 1 & 3.8365 & 1 & 1.1723 & 1 & 0.7224 & 0 \\
    $1/2^{6}$  & 3.7113 & 1 & 3.7133 & 1 & 3.7113 & 1 & 3.7093 & 1 & 3.6081 & 1 & 1.0135 & 1 & 0.6936 & 0 \\
    $1/2^{7}$  & 3.6329 & 1 & 3.6339 & 1 & 3.6329 & 1 & 3.6317 & 1 & 3.5173 & 1 & 1.0114 & 1 & 0.6799 & 0 \\
    \bottomrule
  \end{tabular}
  \end{adjustbox}
  }
\end{table}

\begin{table}[H]
  \centering
  \caption{1D Sum Squares Function}
  \label{tab:sumsq1d_compact}
  {\fontsize{9}{11}\selectfont
  \setlength{\tabcolsep}{4pt}
  \begin{adjustbox}{max width=\textwidth}
  \begin{tabular}{l
                  r c
                  r c
                  r c
                  r c
                  r c
                  r c
                  r c}
    \toprule
    \makecell[l]{h} &
    \makecell{FB\\$\bar{p}$} & \makecell{FB\\S/F} &
    \makecell{Semi\\$\bar{p}$} & \makecell{Semi\\S/F} &
    \makecell{FD\\$\bar{p}$} & \makecell{FD\\S/F} &
    \makecell{IMEX\\$\bar{p}$} & \makecell{IMEX\\S/F} &
    \makecell{IPAHD\\$\bar{p}$} & \makecell{IPAHD\\S/F} &
    \makecell{NM\\$\bar{p}$} & \makecell{NM\\S/F} &
    \makecell{GD\\$\bar{p}$} & \makecell{GD\\S/F} \\
    \midrule
    1          & 4.7179 & 1 & 4.7178 & 1 & 4.7179 & 1 & 4.7179 & 1 & 4.7180 & 1 & 241.7813 & 1 & NaN & 1 \\
    $1/2^{1}$  & 4.6044 & 1 & 4.6044 & 1 & 4.6044 & 1 & 4.6035 & 1 & 4.6026 & 1 & 10.2989  & 1 & NaN & 1 \\
    $1/2^{2}$  & 4.4637 & 1 & 4.4647 & 1 & 4.4637 & 1 & 4.4637 & 1 & 4.4617 & 1 & 27.7666  & 1 & NaN & 1 \\
    $1/2^{3}$  & 4.3309 & 1 & 4.3317 & 1 & 4.3309 & 1 & 4.3310 & 1 & 4.3289 & 1 & 83.2620  & 1 & NaN & 1 \\
    $1/2^{4}$  & 4.1603 & 1 & 4.1603 & 1 & 4.1603 & 1 & 4.1596 & 1 & 4.1554 & 1 & 10.4080  & 1 & NaN & 1 \\
    $1/2^{5}$  & 4.0338 & 1 & 4.0343 & 1 & 4.0338 & 1 & 4.0338 & 1 & 4.0295 & 1 & 4.1936   & 1 & NaN & 1 \\
    $1/2^{6}$  & 3.8394 & 1 & 3.8394 & 1 & 3.8394 & 1 & 3.8394 & 1 & 3.8332 & 1 & 4183.8   & 1 & NaN & 1 \\
    $1/2^{7}$  & 3.7426 & 1 & 3.7425 & 1 & 3.7426 & 1 & 3.7421 & 1 & 3.7377 & 1 & 5.6631   & 1 & NaN & 1 \\
    \bottomrule
  \end{tabular}
  \end{adjustbox}
  }
\end{table}

\begin{table}[H]
  \centering
  \caption{2D Sum Squares Function}
  \label{tab:sumsq2d_compact}
  {\fontsize{9}{11}\selectfont
  \setlength{\tabcolsep}{4pt}
  \begin{adjustbox}{max width=\textwidth}
  \begin{tabular}{l
                  r c
                  r c
                  r c
                  r c
                  r c
                  r c
                  r c}
    \toprule
    \makecell[l]{h} &
    \makecell{FB\\$\bar{p}$} & \makecell{FB\\S/F} &
    \makecell{Semi\\$\bar{p}$} & \makecell{Semi\\S/F} &
    \makecell{FD\\$\bar{p}$} & \makecell{FD\\S/F} &
    \makecell{IMEX\\$\bar{p}$} & \makecell{IMEX\\S/F} &
    \makecell{IPAHD\\$\bar{p}$} & \makecell{IPAHD\\S/F} &
    \makecell{NM\\$\bar{p}$} & \makecell{NM\\S/F} &
    \makecell{GD\\$\bar{p}$} & \makecell{GD\\S/F} \\
    \midrule
    1          & 4.7138 & 1 & 4.7138 & 1 & 4.7138 & 1 & 4.7138 & 1 & 4.7145 & 1 & 16.2964 & 1 & 0.8723 & 0 \\
    $1/2^{1}$  & 4.5967 & 1 & 4.5967 & 1 & 4.5967 & 1 & 4.5977 & 1 & 4.5986 & 1 & 6.0346  & 1 & -0.8553 & 0 \\
    $1/2^{2}$  & 4.4535 & 1 & 4.4535 & 1 & 4.4535 & 1 & 4.4535 & 1 & 4.4555 & 1 & 17.5578 & 1 & 3.8178 & 0 \\
    $1/2^{3}$  & 4.3180 & 1 & 4.3180 & 1 & 4.3180 & 1 & 4.3188 & 1 & 4.3209 & 1 & 4.9300  & 1 & 2.8488 & 0 \\
    $1/2^{4}$  & 4.1415 & 1 & 4.1408 & 1 & 4.1415 & 1 & 4.1415 & 1 & 4.1465 & 1 & 1.8354  & 1 & 2.2949 & 0 \\
    $1/2^{5}$  & 4.0142 & 1 & 4.0142 & 1 & 4.0142 & 1 & 4.0142 & 1 & 4.0190 & 1 & 1.2223  & 1 & 2.1021 & 0 \\
    $1/2^{6}$  & 3.8139 & 1 & 3.8135 & 1 & 3.8139 & 1 & 3.8138 & 1 & 3.8210 & 1 & 1.1073  & 1 & 2.0177 & 0 \\
    $1/2^{7}$  & 3.7196 & 1 & 3.7196 & 1 & 3.7196 & 1 & 3.7198 & 1 & 3.7251 & 1 & 1.0436  & 1 & 1.9778 & 0 \\
    \bottomrule
  \end{tabular}
  \end{adjustbox}
  }
\end{table}

\begin{table}[H]
  \centering
  \caption{10D Sum Squares Function}
  \label{tab:sumsq10d_compact}
  {\fontsize{9}{11}\selectfont
  \setlength{\tabcolsep}{4pt}
  \begin{adjustbox}{max width=\textwidth}
  \begin{tabular}{l
                  r c
                  r c
                  r c
                  r c
                  r c
                  r c
                  r c}
    \toprule
    \makecell[l]{h} &
    \makecell{FB\\$\bar{p}$} & \makecell{FB\\S/F} &
    \makecell{Semi\\$\bar{p}$} & \makecell{Semi\\S/F} &
    \makecell{FD\\$\bar{p}$} & \makecell{FD\\S/F} &
    \makecell{IMEX\\$\bar{p}$} & \makecell{IMEX\\S/F} &
    \makecell{IPAHD\\$\bar{p}$} & \makecell{IPAHD\\S/F} &
    \makecell{NM\\$\bar{p}$} & \makecell{NM\\S/F} &
    \makecell{GD\\$\bar{p}$} & \makecell{GD\\S/F} \\
    \midrule
    1          & 4.7945 & 1 & 4.7956 & 1 & 4.7945 & 1 & 4.7937 & 1 & 4.7915 & 1 & 0.9990  & 0 & 0.2500 & 0 \\
    $1/2^{1}$  & 4.7132 & 1 & 4.7150 & 1 & 4.7132 & 1 & 4.7114 & 1 & 4.7054 & 1 & 4.0115  & 1 & -0.4444 & 0 \\
    $1/2^{2}$  & 4.6146 & 1 & 4.6174 & 1 & 4.6146 & 1 & 4.6123 & 1 & 4.5985 & 1 & 3.5929  & 1 & 2.0000 & 0 \\
    $1/2^{3}$  & 4.5193 & 1 & 4.5216 & 1 & 4.5193 & 1 & 4.5166 & 1 & 4.4941 & 1 & 1.2750  & 1 & 0.9630 & 0 \\
    $1/2^{4}$  & 4.3996 & 1 & 4.4019 & 1 & 4.3996 & 1 & 4.3976 & 1 & 4.3614 & 1 & 1.1237  & 1 & 0.7873 & 0 \\
    $1/2^{5}$  & 4.3031 & 1 & 4.3044 & 1 & 4.3031 & 1 & 4.3017 & 1 & 4.2586 & 1 & 1.0518  & 1 & 0.7224 & 0 \\
    $1/2^{6}$  & 4.1647 & 1 & 4.1656 & 1 & 4.1647 & 1 & 4.1640 & 1 & 4.1064 & 1 & 1.0250  & 1 & 0.6936 & 0 \\
    $1/2^{7}$  & 4.0754 & 1 & 4.0760 & 1 & 4.0754 & 1 & 4.0753 & 1 & 4.0187 & 1 & 1.0099  & 1 & 0.6799 & 0 \\
    \bottomrule
  \end{tabular}
  \end{adjustbox}
  }
\end{table}

\begin{table}[H]
  \centering
  \caption{50D Sum Squares Function}
  \label{tab:sumsq50d_compact}
  {\fontsize{9}{11}\selectfont
  \setlength{\tabcolsep}{4pt}
  \begin{adjustbox}{max width=\textwidth}
  \begin{tabular}{l
                  r c
                  r c
                  r c
                  r c
                  r c
                  r c
                  r c}
    \toprule
    \makecell[l]{h} &
    \makecell{FB\\$\bar{p}$} & \makecell{FB\\S/F} &
    \makecell{Semi\\$\bar{p}$} & \makecell{Semi\\S/F} &
    \makecell{FD\\$\bar{p}$} & \makecell{FD\\S/F} &
    \makecell{IMEX\\$\bar{p}$} & \makecell{IMEX\\S/F} &
    \makecell{IPAHD\\$\bar{p}$} & \makecell{IPAHD\\S/F} &
    \makecell{NM\\$\bar{p}$} & \makecell{NM\\S/F} &
    \makecell{GD\\$\bar{p}$} & \makecell{GD\\S/F} \\
    \midrule
    1          & 4.8622 & 1 & 4.8704 & 1 & 4.8622 & 1 & 4.8537 & 1 & 4.8304 & 1 & 0.9990  & 0 & 0.2500 & 0 \\
    $1/2^{1}$  & 4.8245 & 1 & 4.8371 & 1 & 4.8245 & 1 & 4.8134 & 1 & 4.7599 & 1 & 0.9989  & 0 & -0.4444 & 0 \\
    $1/2^{2}$  & 4.7809 & 1 & 4.7973 & 1 & 4.7809 & 1 & 4.7690 & 1 & 4.6717 & 1 & 0.9989  & 0 & 2.0000 & 0 \\
    $1/2^{3}$  & 4.7366 & 1 & 4.7506 & 1 & 4.7366 & 1 & 4.7270 & 1 & 4.5845 & 1 & 2.0213  & 1 & 0.9630 & 0 \\
    $1/2^{4}$  & 4.6820 & 1 & 4.6934 & 1 & 4.6820 & 1 & 4.6745 & 1 & 4.4744 & 1 & 1.3968  & 1 & 0.7873 & 0 \\
    $1/2^{5}$  & 4.6275 & 1 & 4.6343 & 1 & 4.6275 & 1 & 4.6231 & 1 & 4.3860 & 1 & 1.1722  & 1 & 0.7224 & 0 \\
    $1/2^{6}$  & 4.5598 & 1 & 4.5639 & 1 & 4.5598 & 1 & 4.5575 & 1 & 4.2586 & 1 & 1.0650  & 1 & 0.6936 & 0 \\
    $1/2^{7}$  & 4.3752 & 0 & 4.3746 & 0 & 4.3752 & 0 & 4.3695 & 0 & 4.1791 & 0 & 1.0273  & 1 & 0.6799 & 0 \\
    \bottomrule
  \end{tabular}
  \end{adjustbox}
  }
\end{table}

\subsection{Energy evolution in Swarm-based Algorithm test}\label{Appendix:sb:energy:plot}
\subsubsection{Ackley Function}
\begin{figure}[H]
  \centering
  \captionsetup[subfigure]{justification=centering}

  \begin{subfigure}{0.24\textwidth}\centering
    \includegraphics[width=\textwidth]{"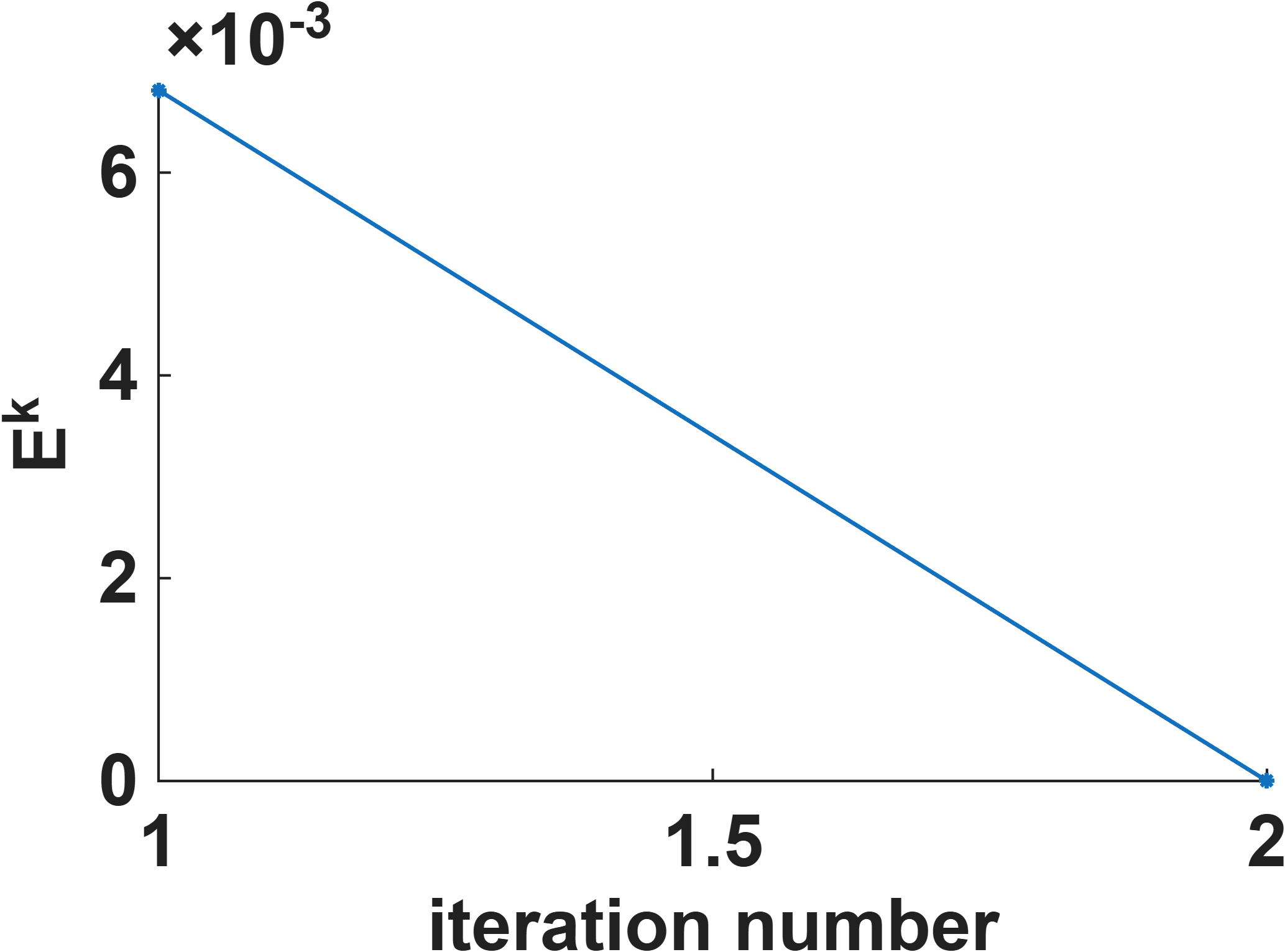"}
    \caption{SB-FB}
  \end{subfigure}\hfill
  \begin{subfigure}{0.24\textwidth}\centering
    \includegraphics[width=\textwidth]{"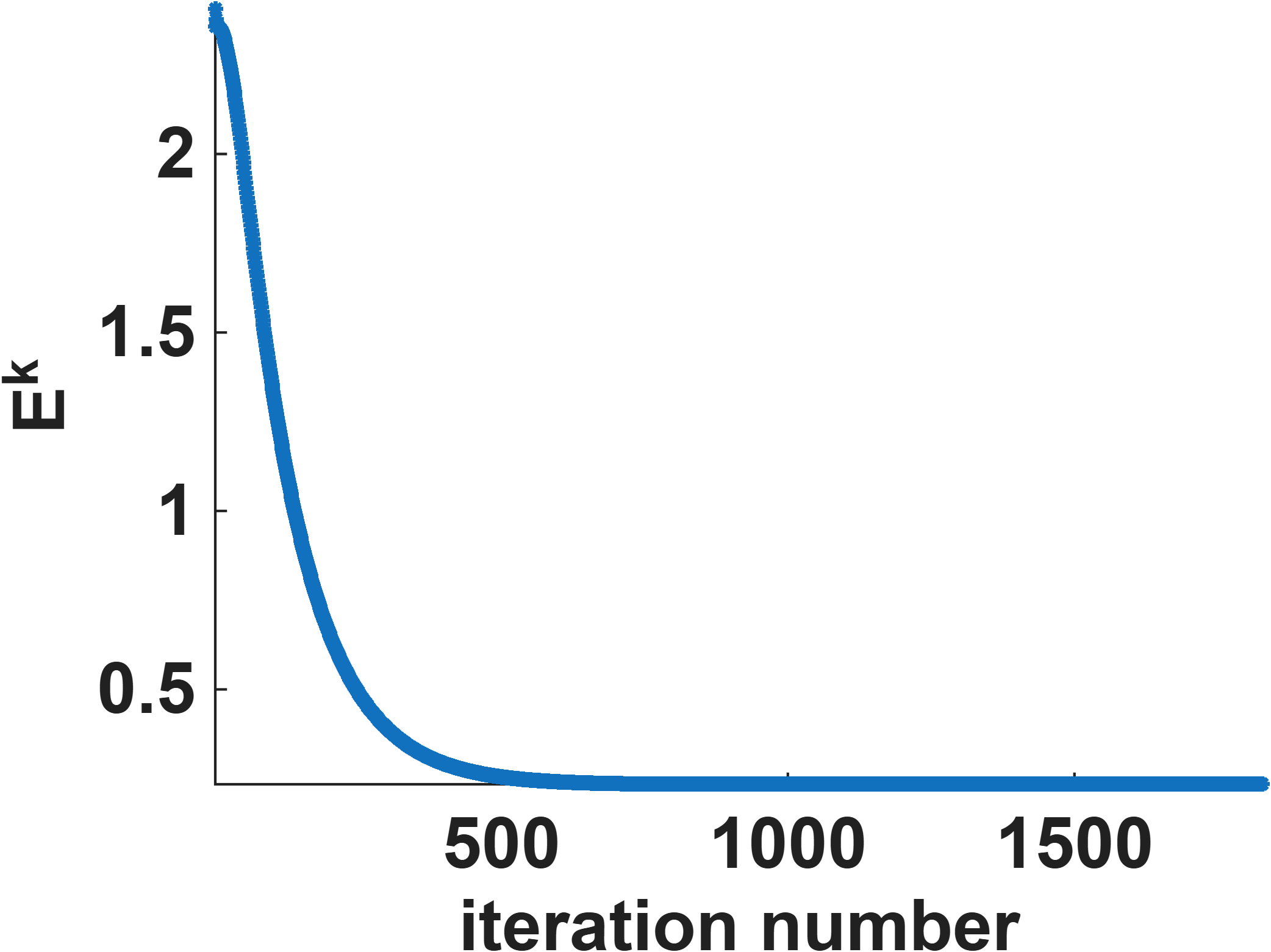"}
    \caption{SB-IMEX-RB}
  \end{subfigure}\hfill
  \begin{subfigure}{0.24\textwidth}\centering
    \includegraphics[width=\textwidth]{"1D_Ack_B0FB_Energy_i1000.png"}
    \caption{SB-Semi}
  \end{subfigure}\hfill
  \begin{subfigure}{0.24\textwidth}\centering
    \includegraphics[width=\textwidth]{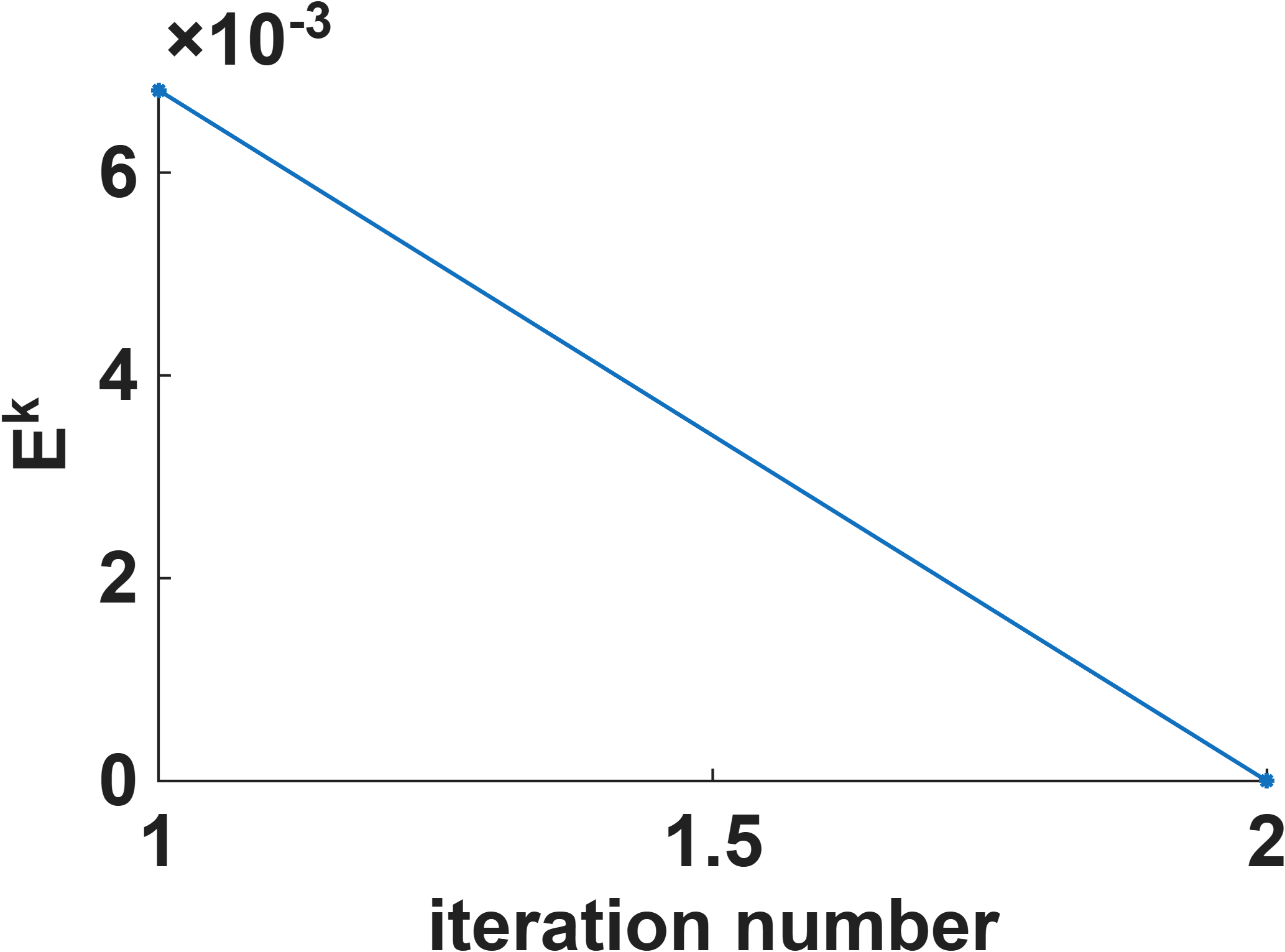}
    \caption{SB-IPAHD}
  \end{subfigure}

  \vspace{0.45em}

  \begin{subfigure}{0.24\textwidth}\centering
    \includegraphics[width=\textwidth]{"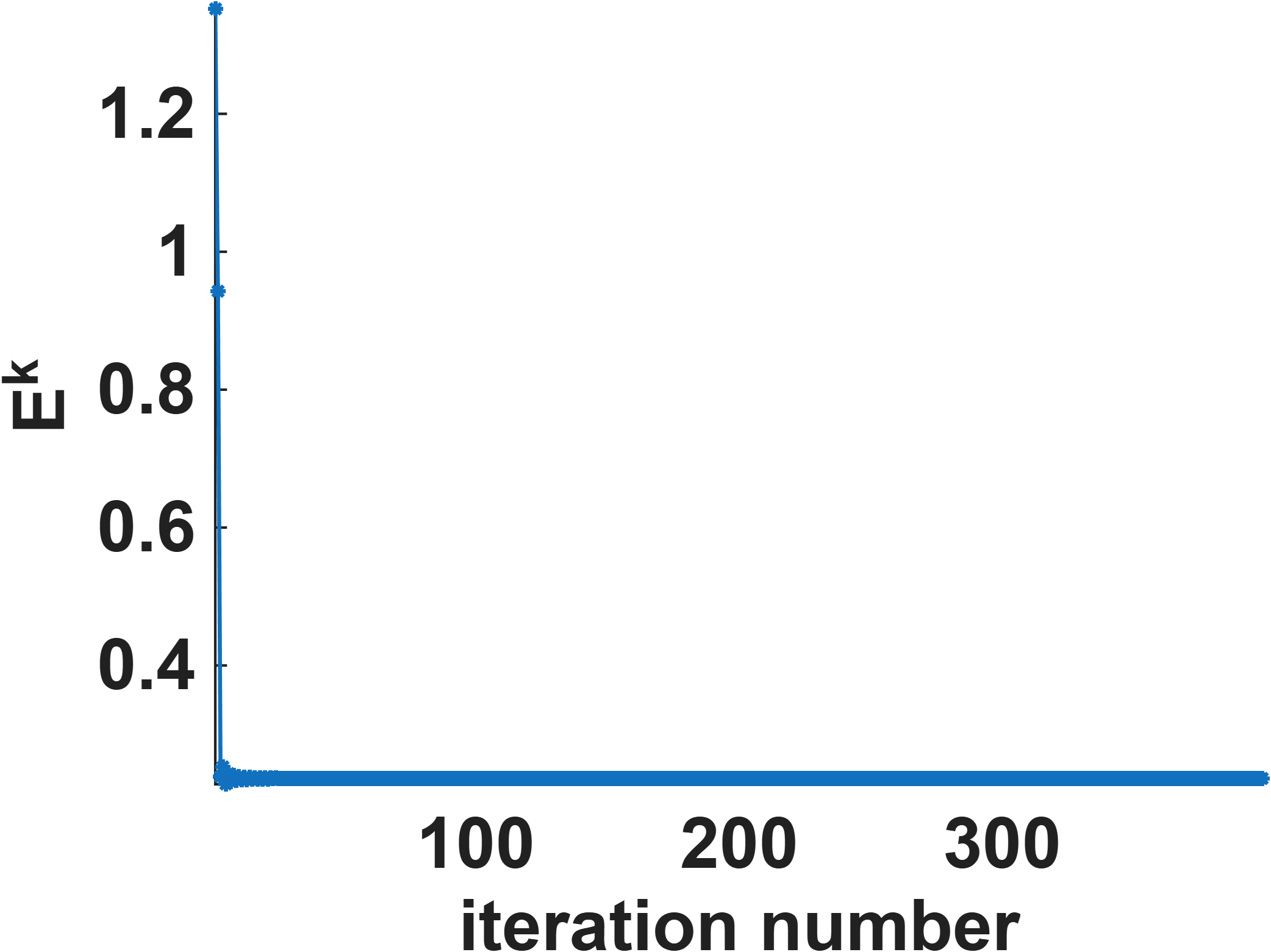"}
    \caption{SB-FD}
  \end{subfigure}\hfill
  \begin{subfigure}{0.24\textwidth}\centering
    \includegraphics[width=\textwidth]{"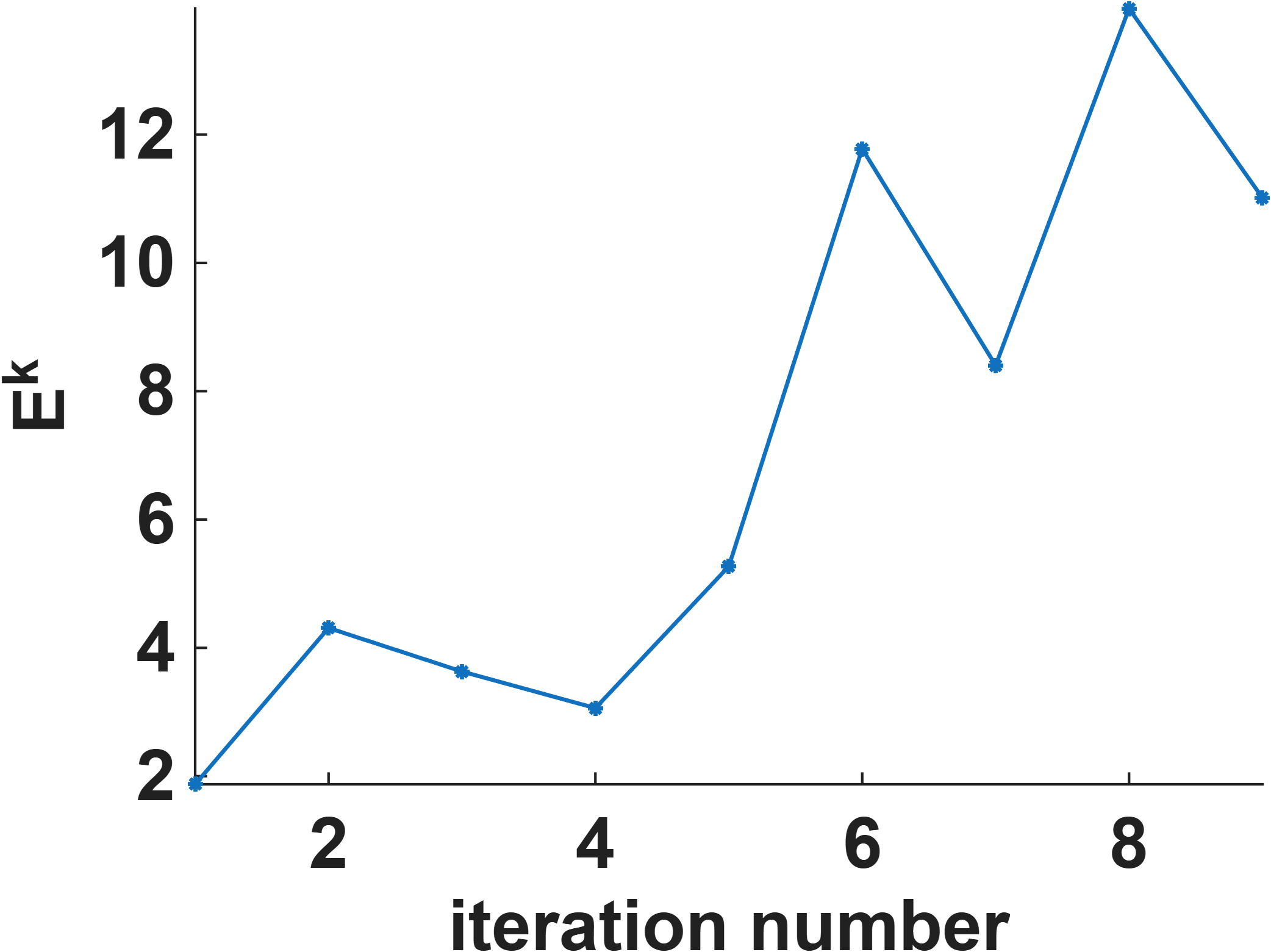"}
    \caption{SB-NM}
  \end{subfigure}\hfill
  \begin{subfigure}{0.24\textwidth}\centering
    \includegraphics[width=\textwidth]{"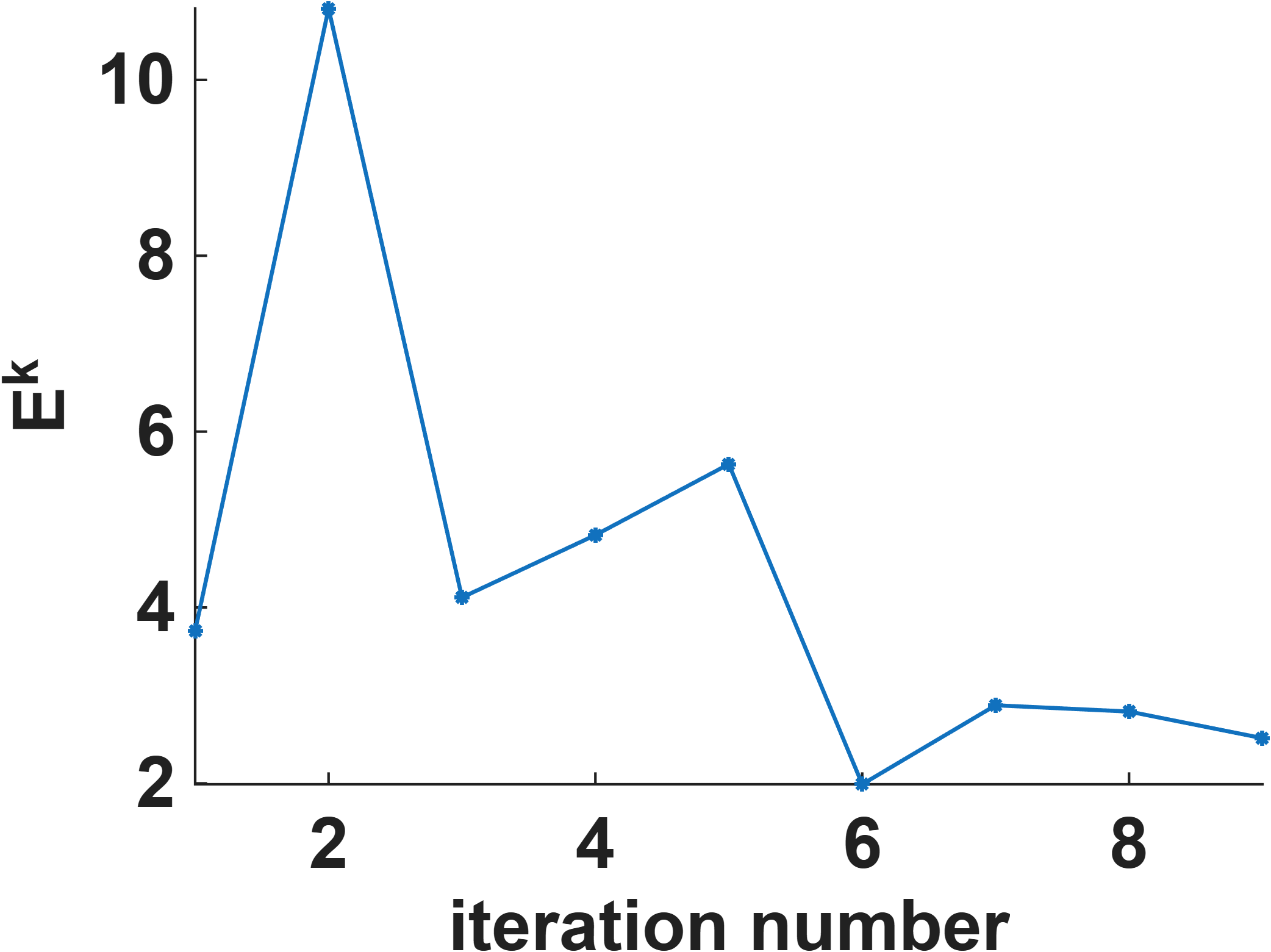"}
    \caption{SB-GD}
  \end{subfigure}

  \caption{Comparison of $E^k$ across methods for the 1D Ackley Function with $B=0$. Each subplot is one method (left-to-right, top-to-bottom).}
  \label{fig:swar:1DAck}
\end{figure}

\begin{figure}[H]
  \centering
  \captionsetup[subfigure]{justification=centering}

  \begin{subfigure}{0.24\textwidth}\centering
    \includegraphics[width=\textwidth]{"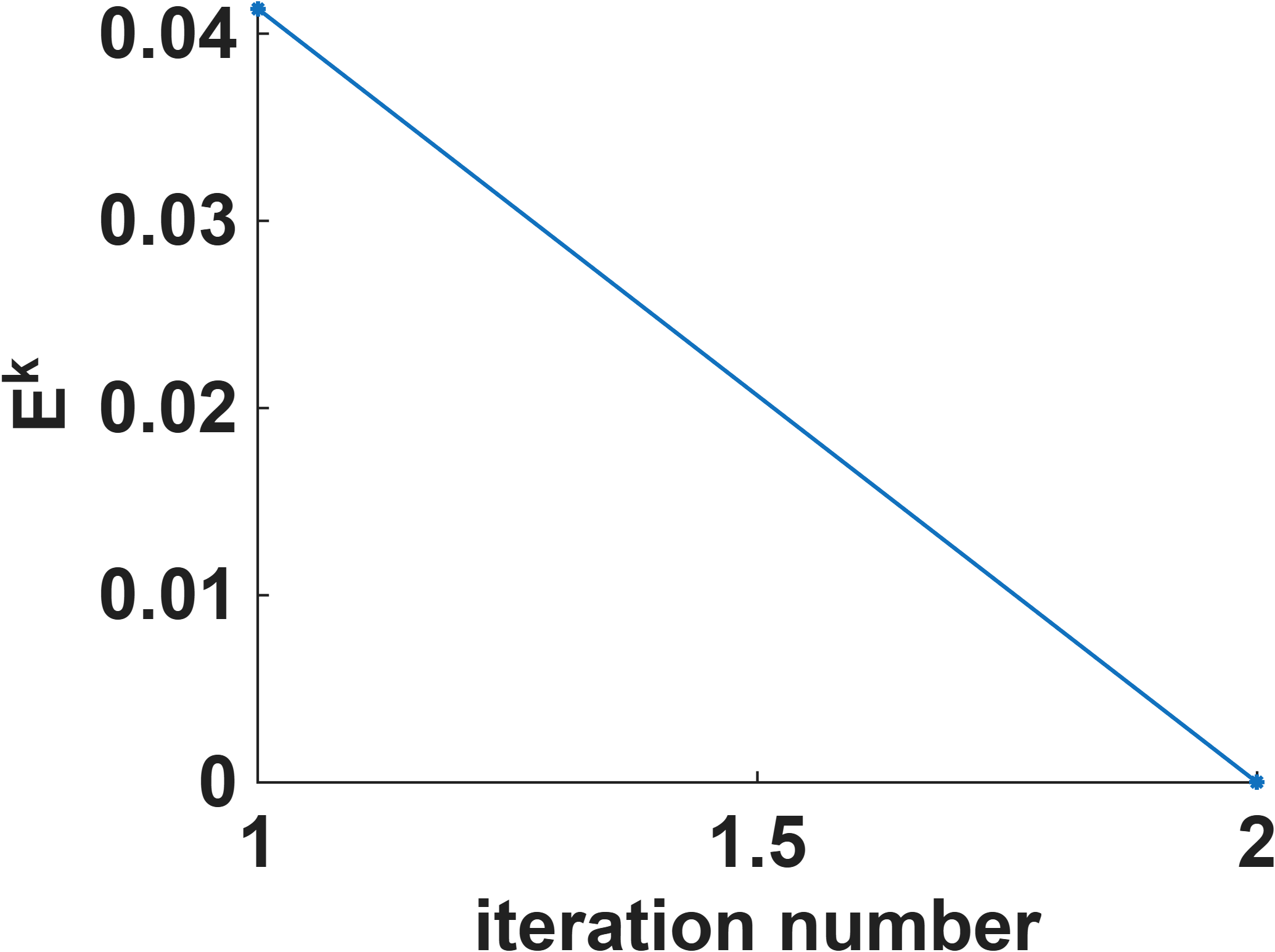"}
    \caption{SB-FB}
  \end{subfigure}\hfill
  \begin{subfigure}{0.24\textwidth}\centering
    \includegraphics[width=\textwidth]{"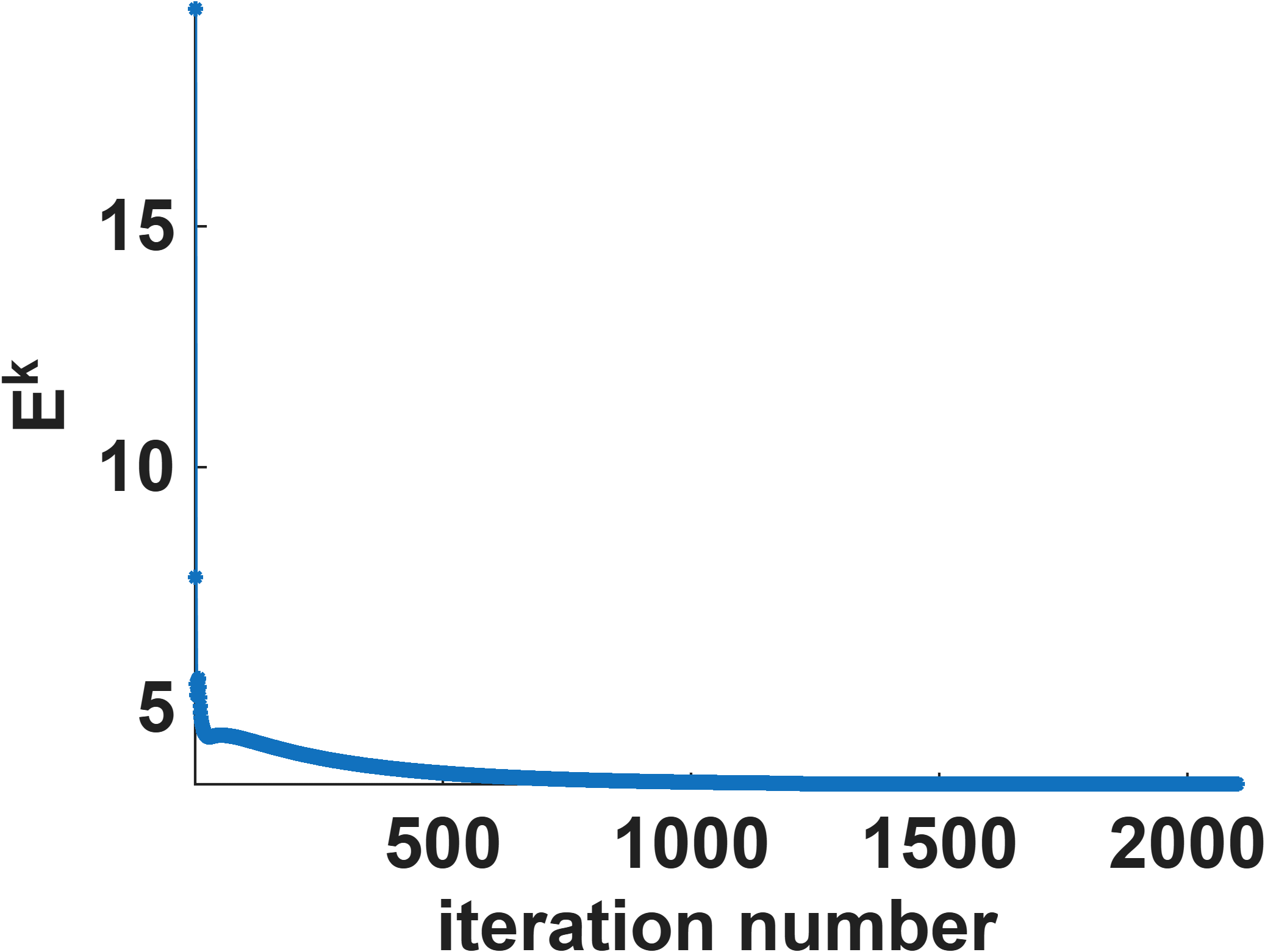"}
    \caption{SB-IMEX-RB}
  \end{subfigure}\hfill
  \begin{subfigure}{0.24\textwidth}\centering
    \includegraphics[width=\textwidth]{"2D_Ack_B0FB_Energy_i1000.png"}
    \caption{SB-Semi}
  \end{subfigure}\hfill
  \begin{subfigure}{0.24\textwidth}\centering
    \includegraphics[width=\textwidth]{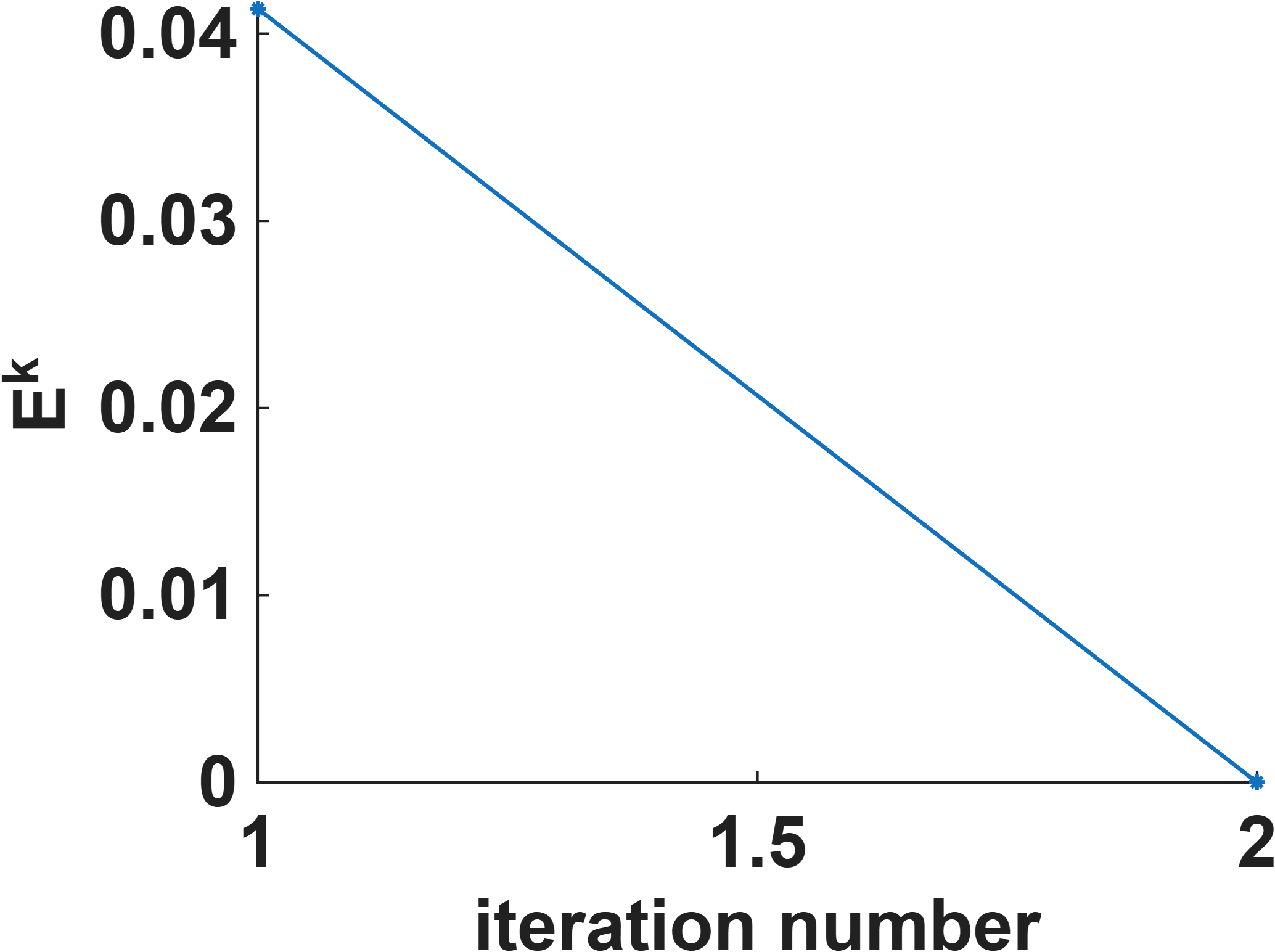}
    \caption{SB-IPAHD}
  \end{subfigure}

  \vspace{0.45em}

  \begin{subfigure}{0.24\textwidth}\centering
    \includegraphics[width=\textwidth]{"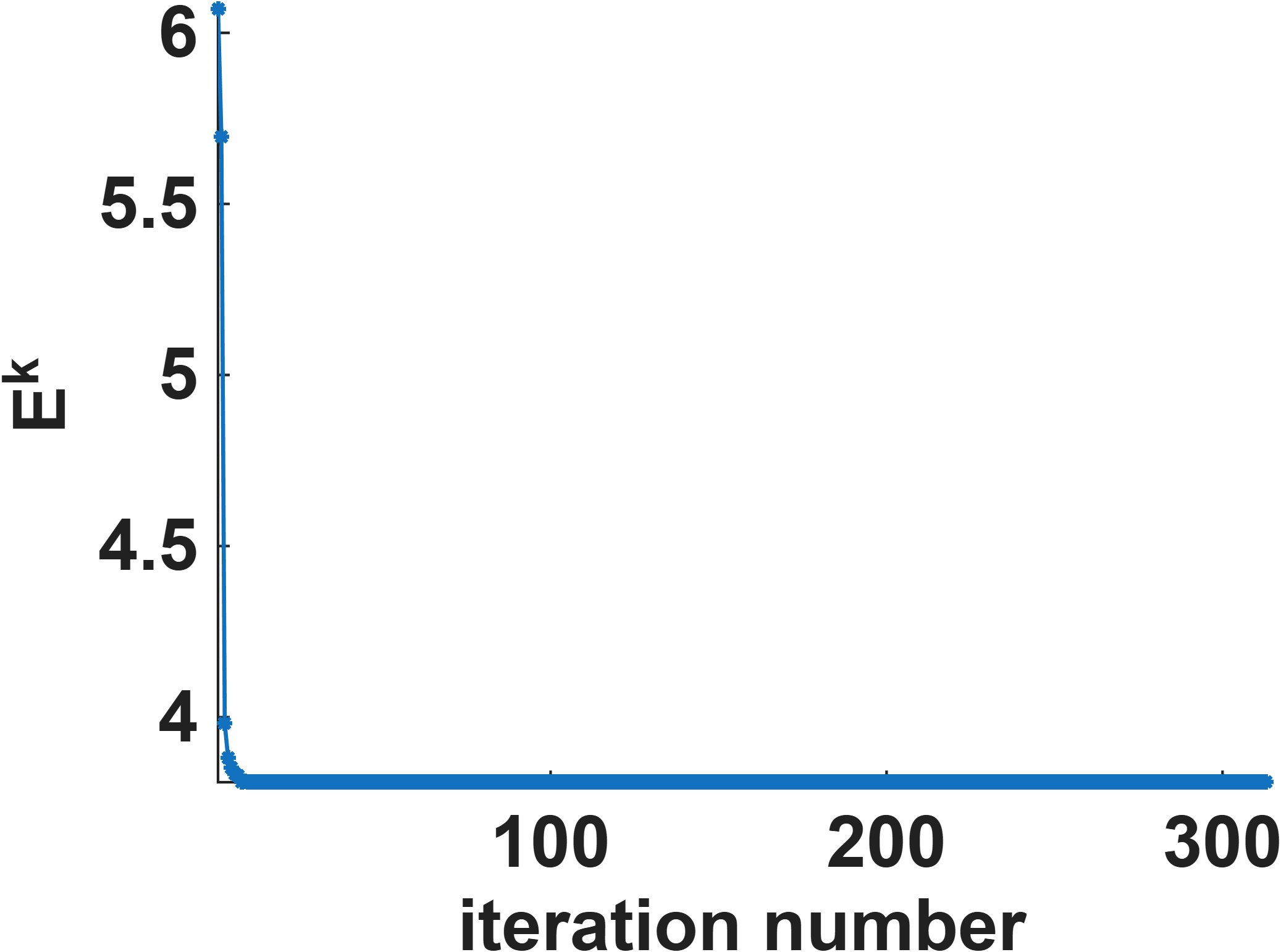"}
    \caption{SB-FD}
  \end{subfigure}\hfill
  \begin{subfigure}{0.24\textwidth}\centering
    \includegraphics[width=\textwidth]{"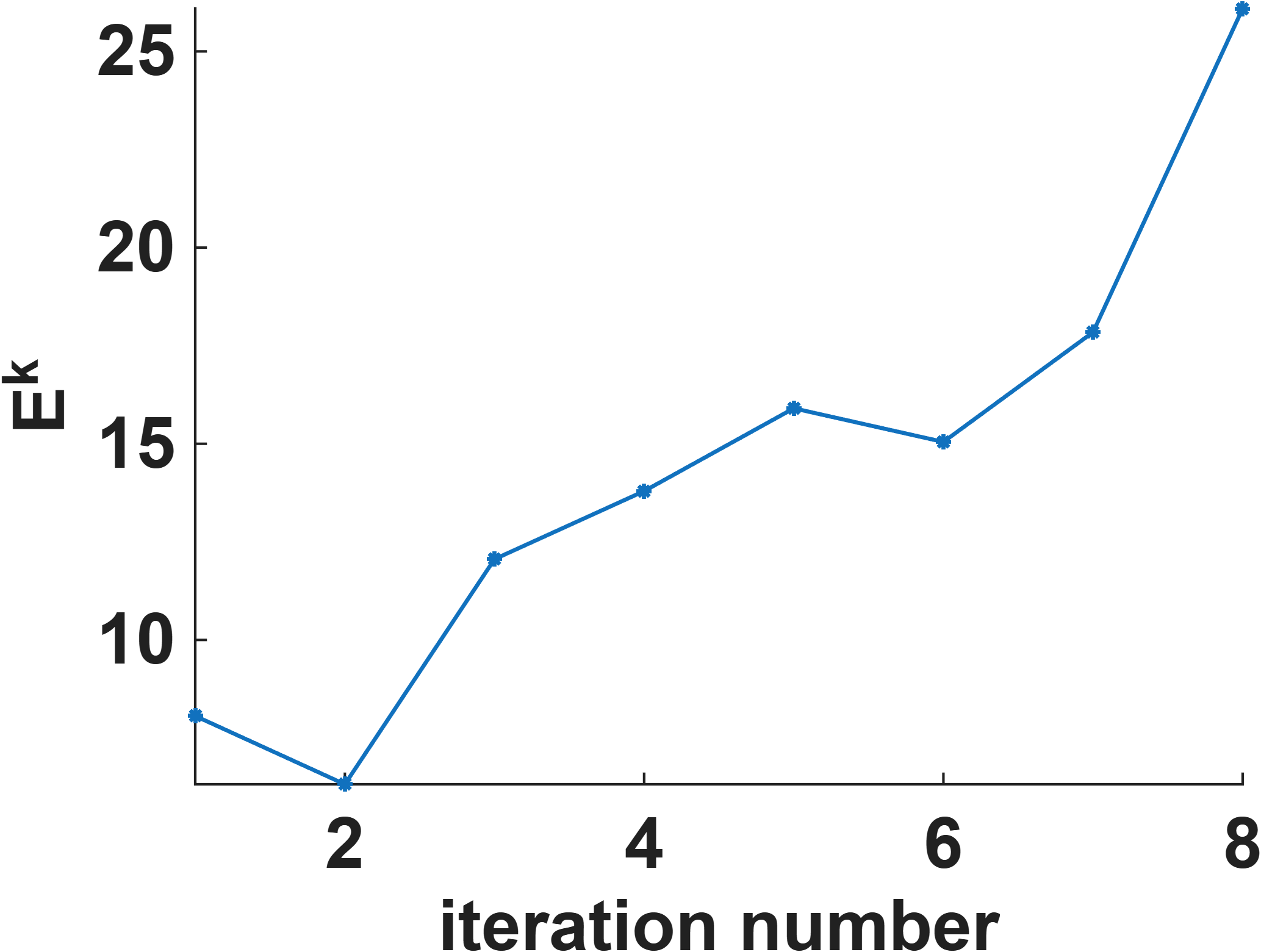"}
    \caption{SB-NM}
  \end{subfigure}\hfill
  \begin{subfigure}{0.24\textwidth}\centering
    \includegraphics[width=\textwidth]{"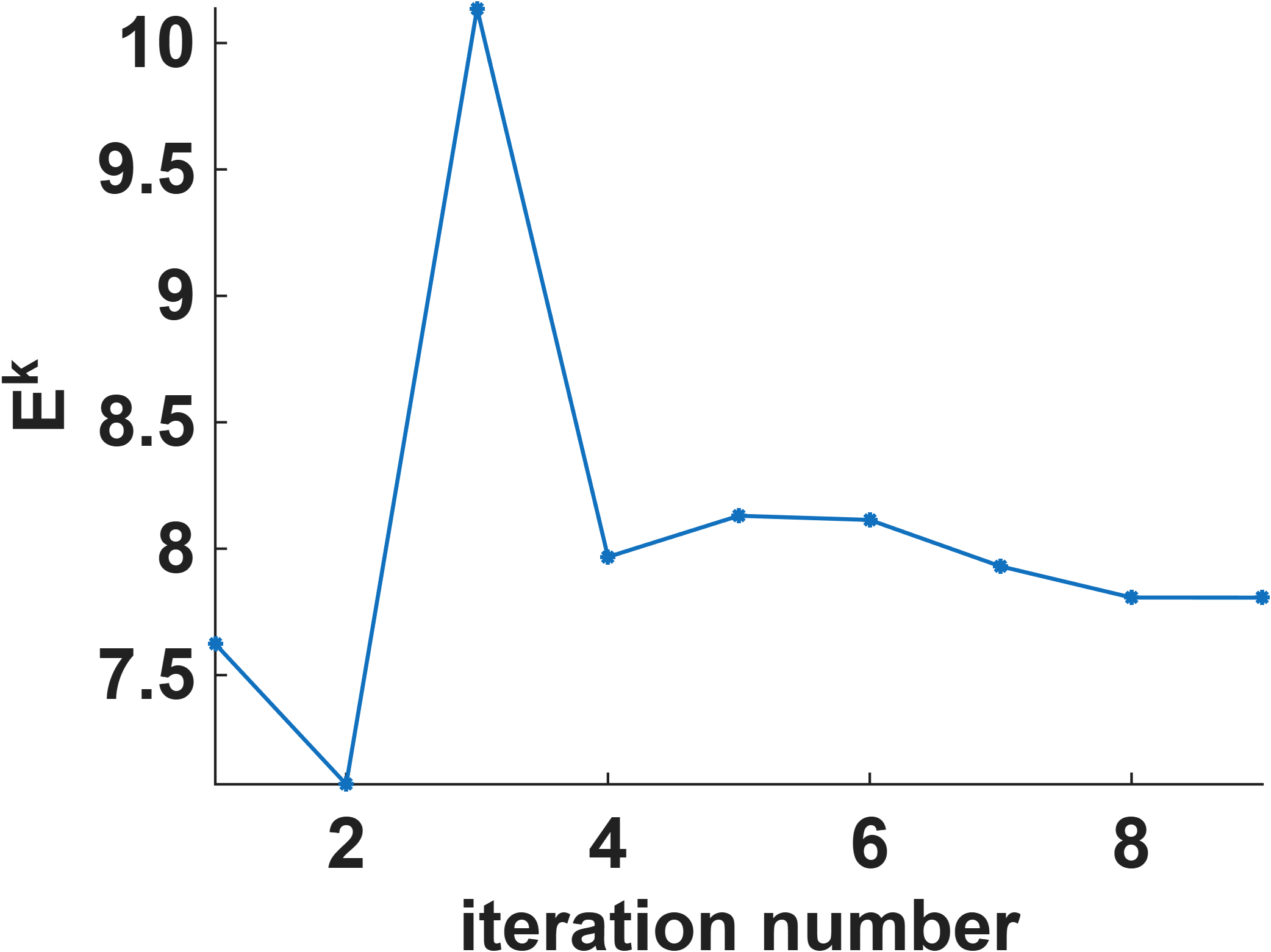"}
    \caption{SB-GD}
  \end{subfigure}

  \caption{Comparison of $E^k$ across methods for the 2D Ackley Function with $B=0$. Each subplot is one method (left-to-right, top-to-bottom).}
  \label{fig:swar:2DAck}
\end{figure}

\begin{figure}[H]
  \centering
  \captionsetup[subfigure]{justification=centering}

  \begin{subfigure}{0.24\textwidth}\centering
    \includegraphics[width=\textwidth]{"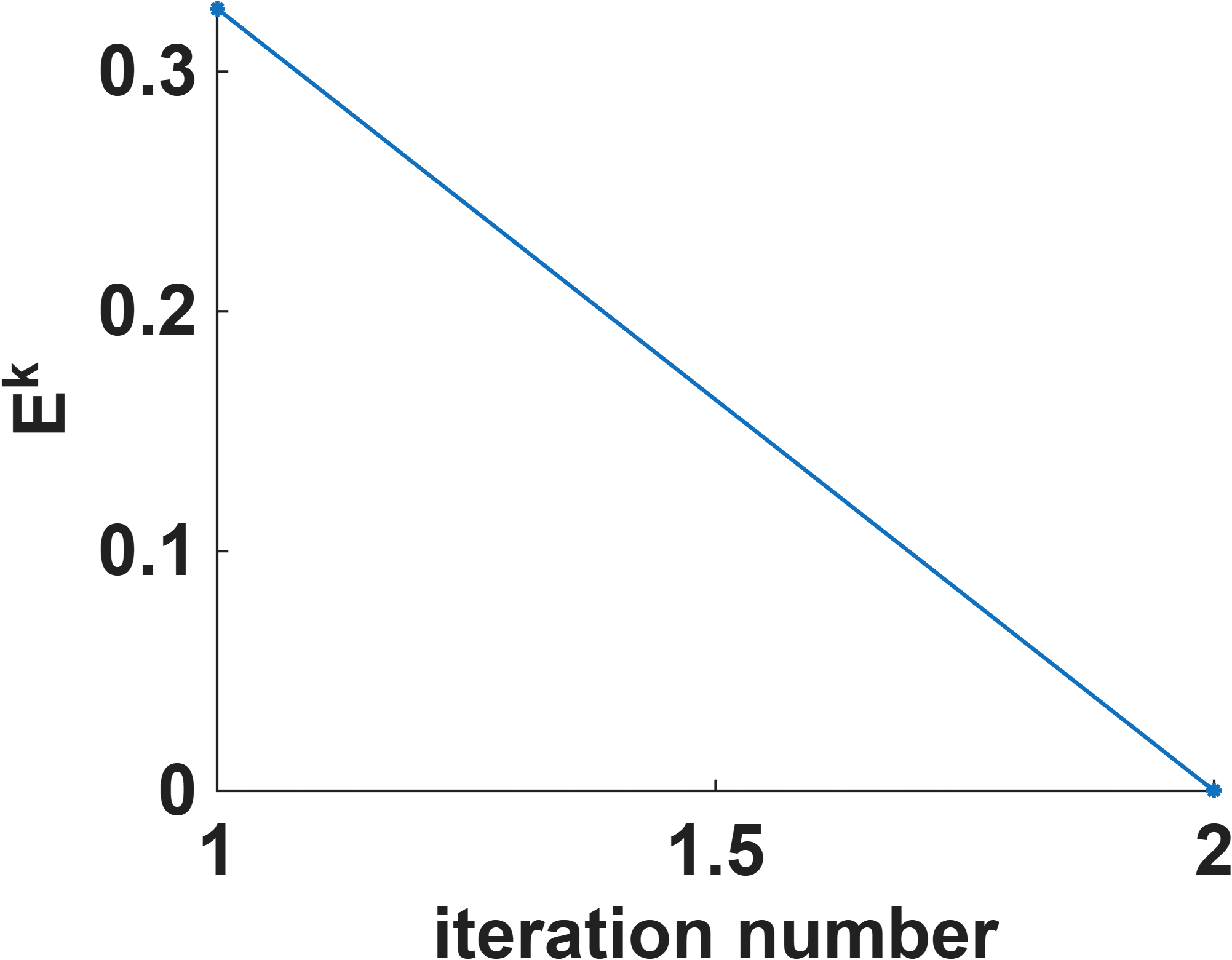"}
    \caption{SB-FB}
  \end{subfigure}\hfill
  \begin{subfigure}{0.24\textwidth}\centering
    \includegraphics[width=\textwidth]{"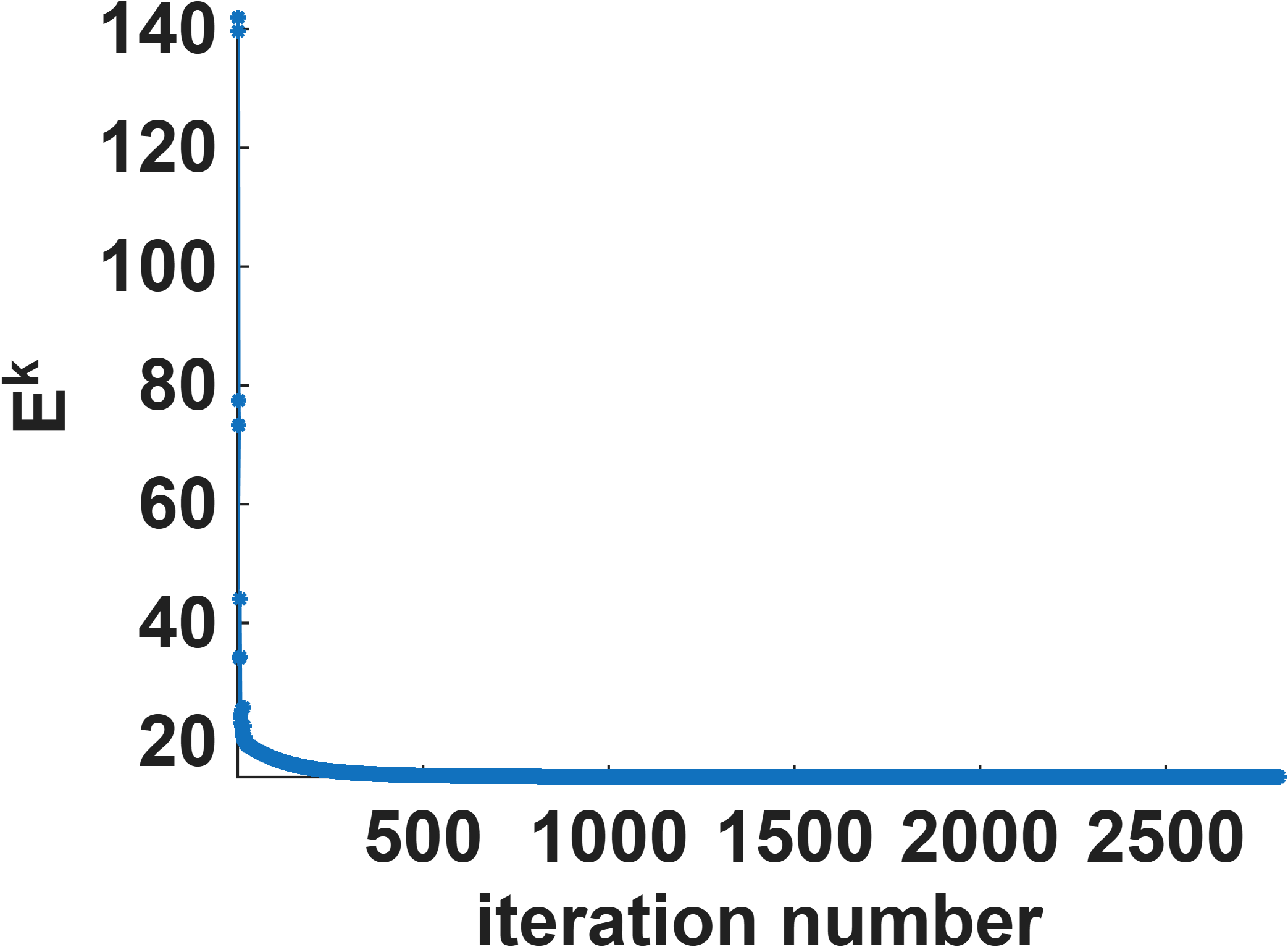"}
    \caption{SB-IMEX-RB}
  \end{subfigure}\hfill
  \begin{subfigure}{0.24\textwidth}\centering
    \includegraphics[width=\textwidth]{"10D_Ack_B0FB_Energy_i1000.png"}
    \caption{SB-Semi}
  \end{subfigure}\hfill
  \begin{subfigure}{0.24\textwidth}\centering
    \includegraphics[width=\textwidth]{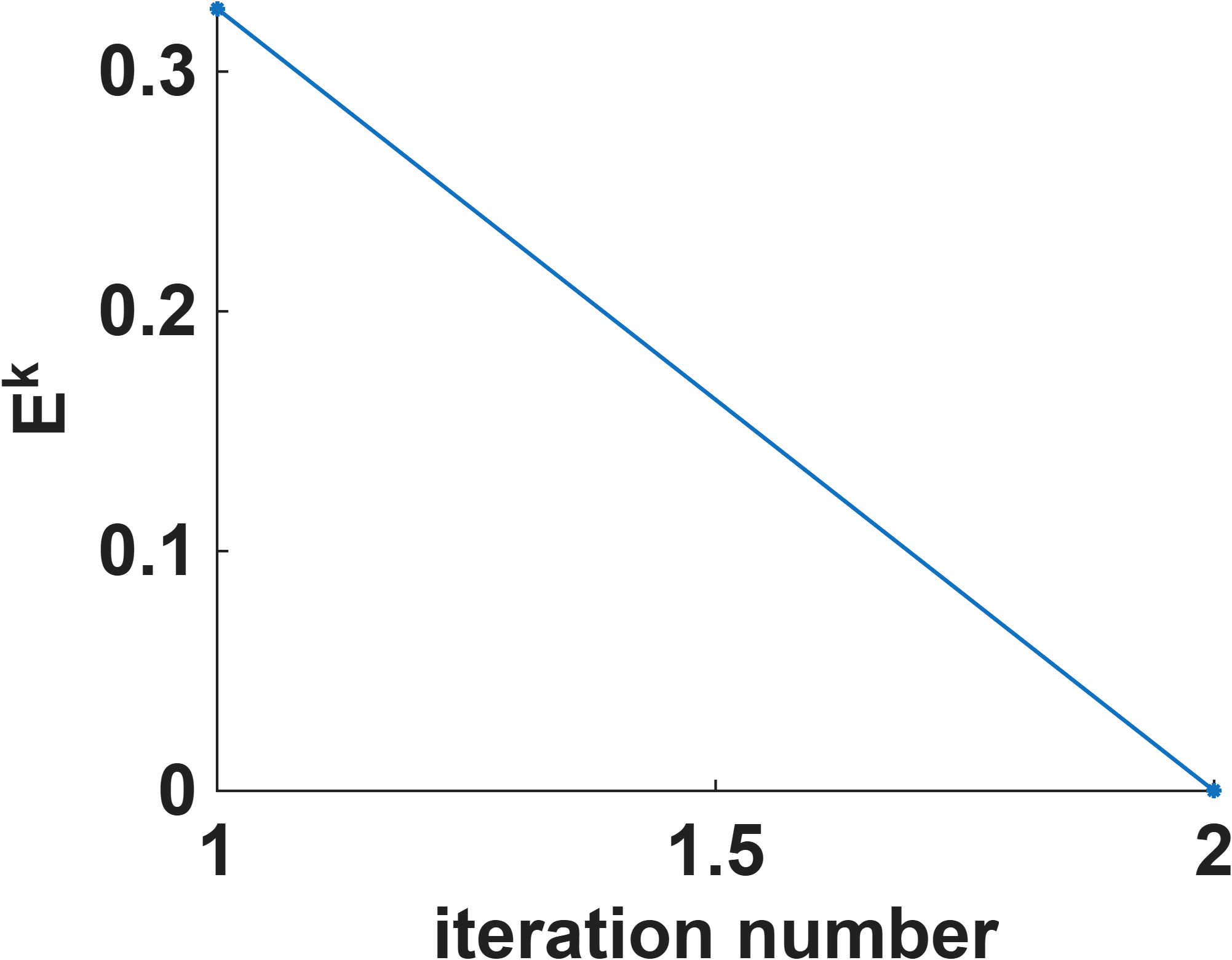}
    \caption{SB-IPAHD}
  \end{subfigure}

  \vspace{0.45em}

  \begin{subfigure}{0.24\textwidth}\centering
    \includegraphics[width=\textwidth]{"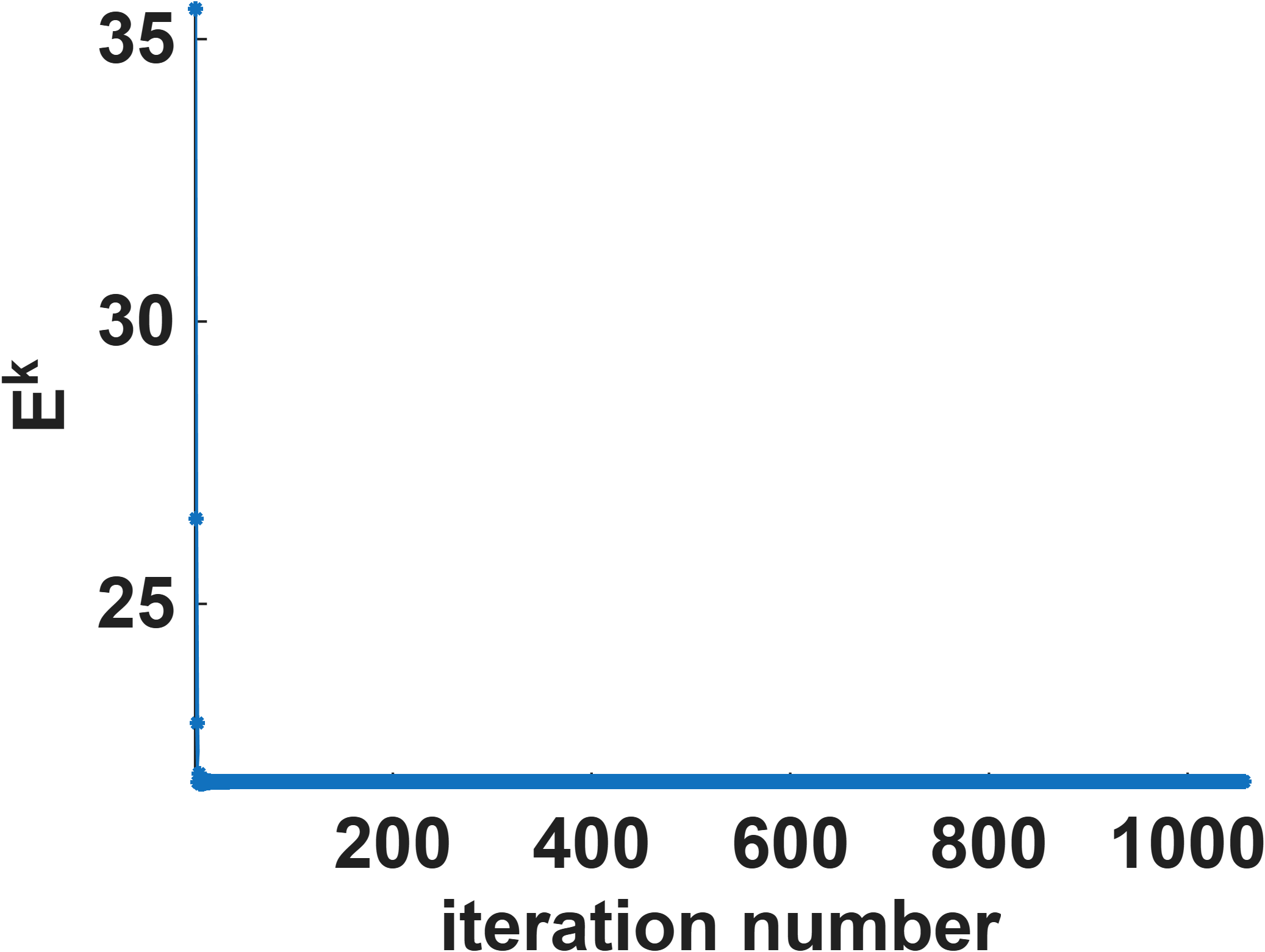"}
    \caption{SB-FD}
  \end{subfigure}\hfill
  \begin{subfigure}{0.24\textwidth}\centering
    \includegraphics[width=\textwidth]{"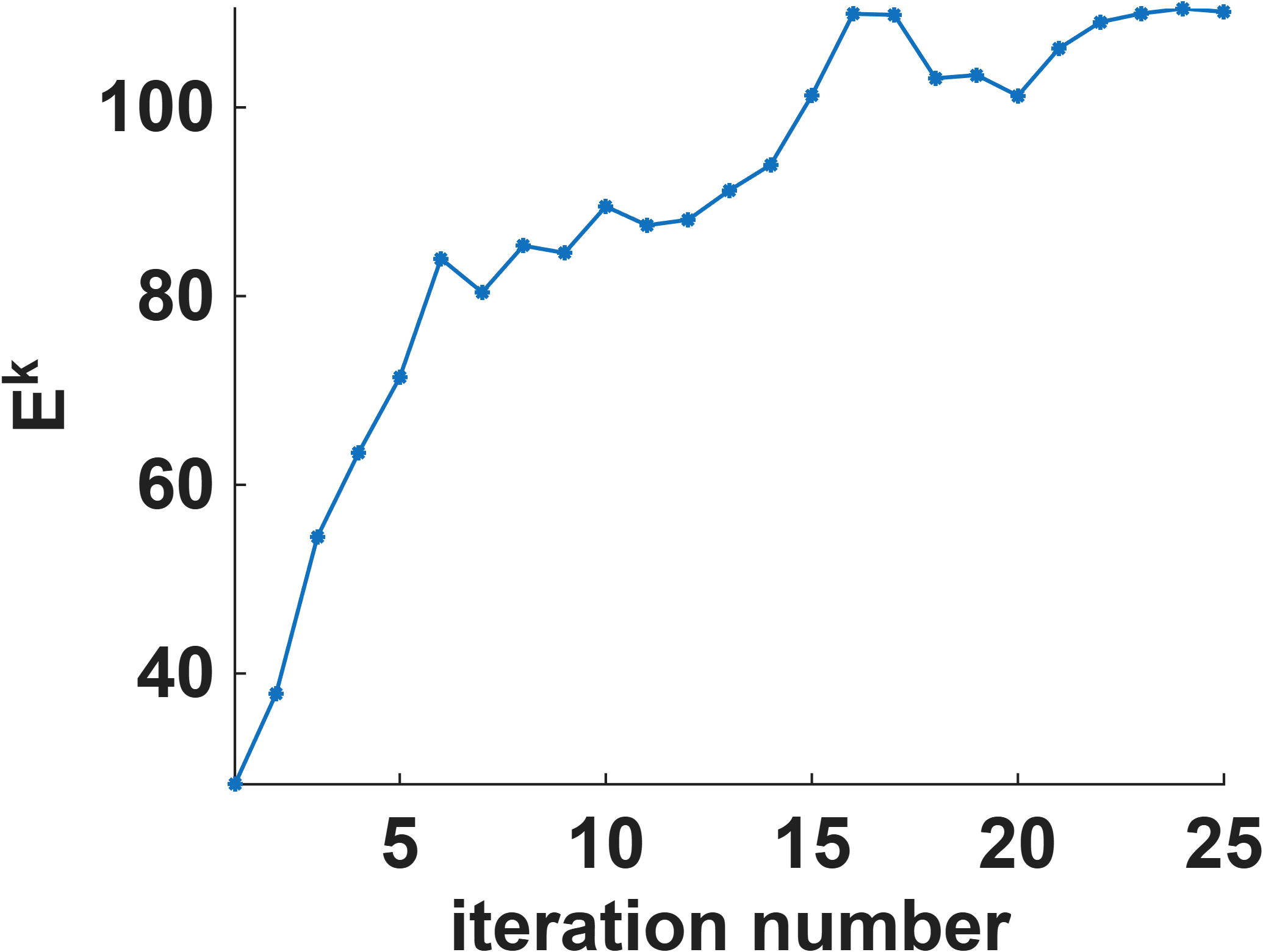"}
    \caption{SB-NM}
  \end{subfigure}\hfill
  \begin{subfigure}{0.24\textwidth}\centering
    \includegraphics[width=\textwidth]{"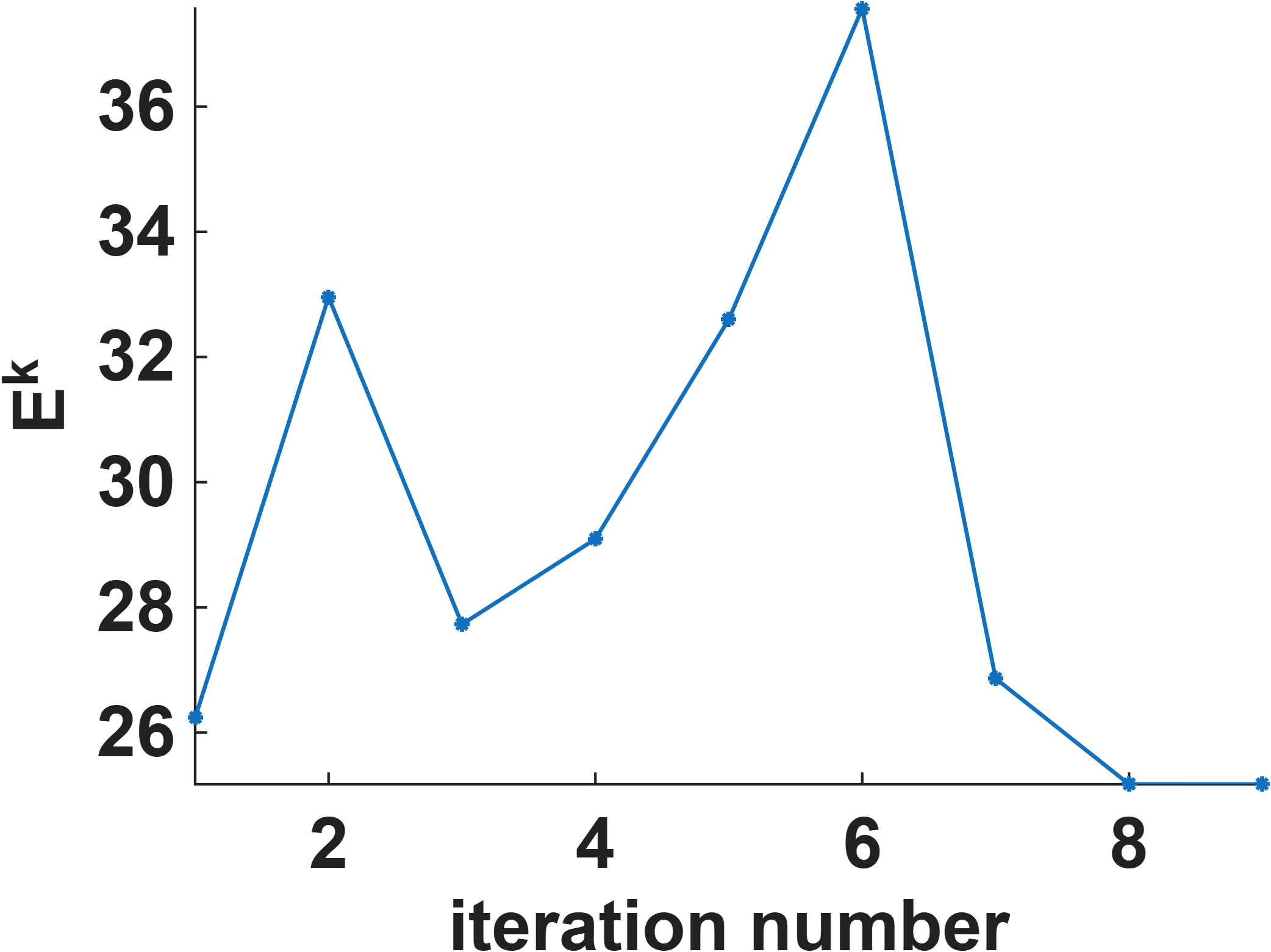"}
    \caption{SB-GD}
  \end{subfigure}

  \caption{Comparison of $E^k$ across methods for the 10D Ackley Function with $B=0$. Each subplot is one method (left-to-right, top-to-bottom).}
  \label{fig:swar:10DAck}
\end{figure}

\subsubsection{Rastrigin Function}
\begin{figure}[H]
  \centering
  \captionsetup[subfigure]{justification=centering}

  \begin{subfigure}{0.24\textwidth}\centering
    \includegraphics[width=\textwidth]{"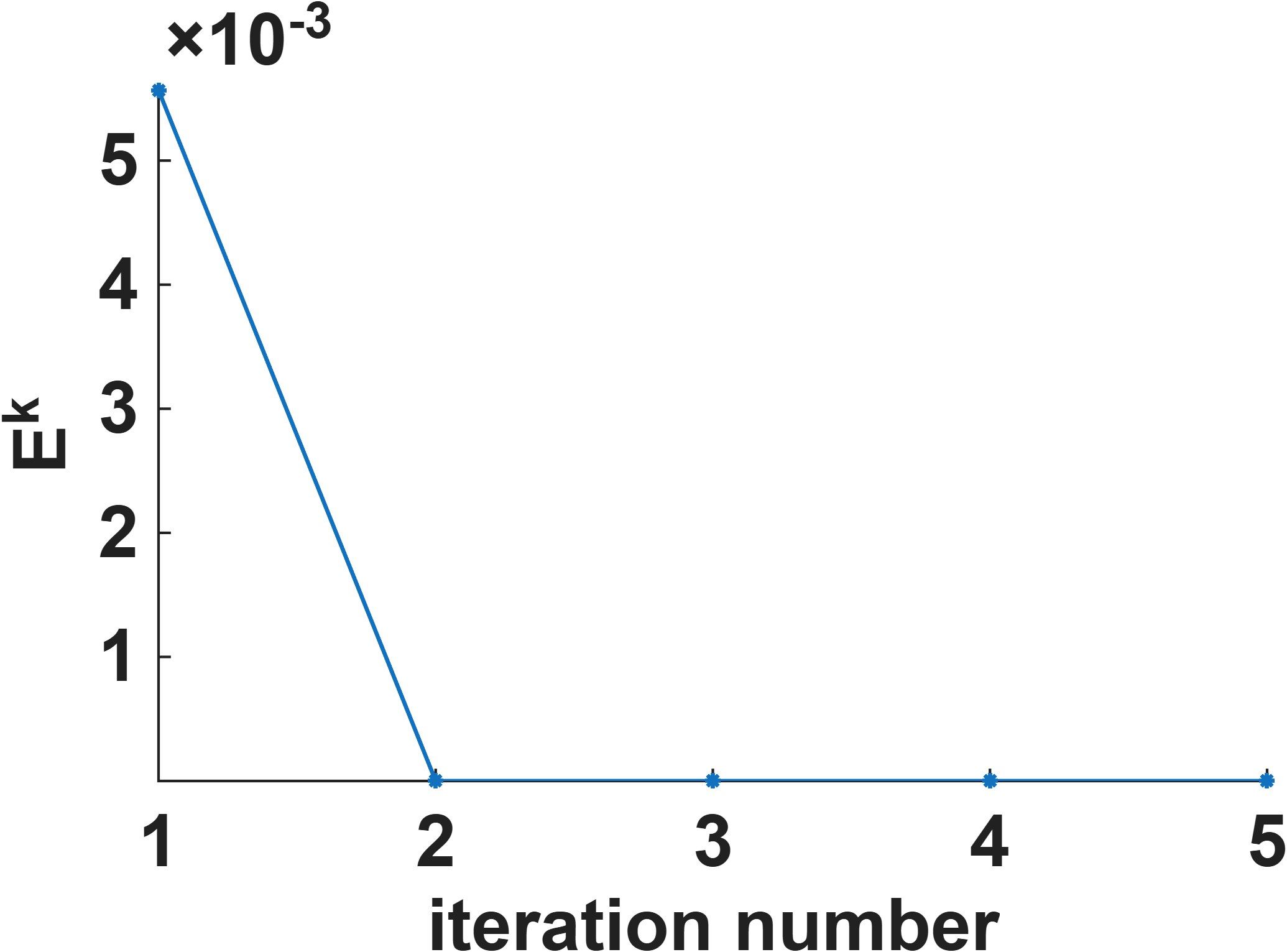"}
    \caption{SB-FB}
  \end{subfigure}\hfill
  \begin{subfigure}{0.24\textwidth}\centering
    \includegraphics[width=\textwidth]{"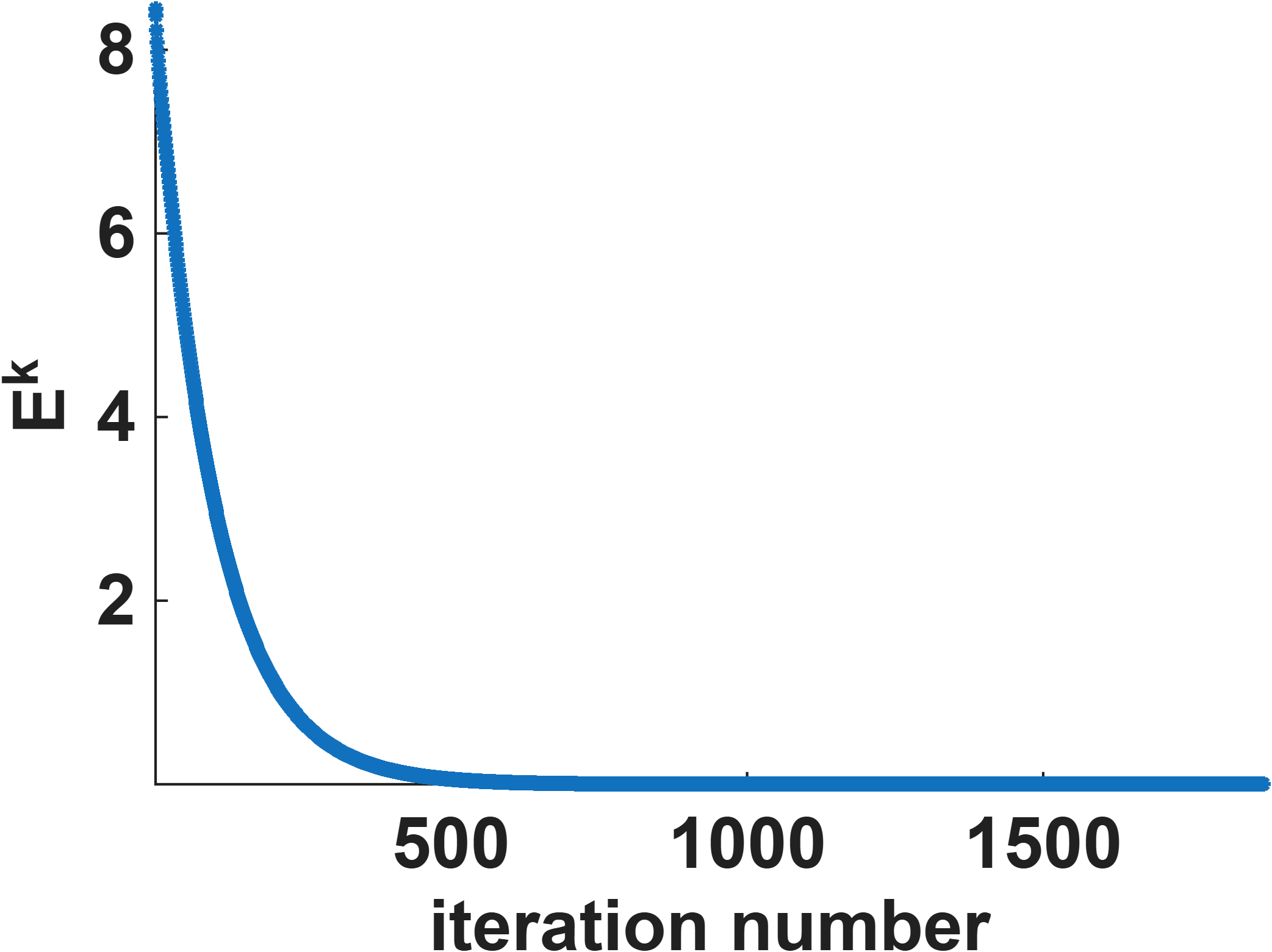"}
    \caption{SB-IMEX-RB}
  \end{subfigure}\hfill
  \begin{subfigure}{0.24\textwidth}\centering
    \includegraphics[width=\textwidth]{"1D_Ras_B0FB_Energy_i1000.png"}
    \caption{SB-Semi}
  \end{subfigure}\hfill
  \begin{subfigure}{0.24\textwidth}\centering
    \includegraphics[width=\textwidth]{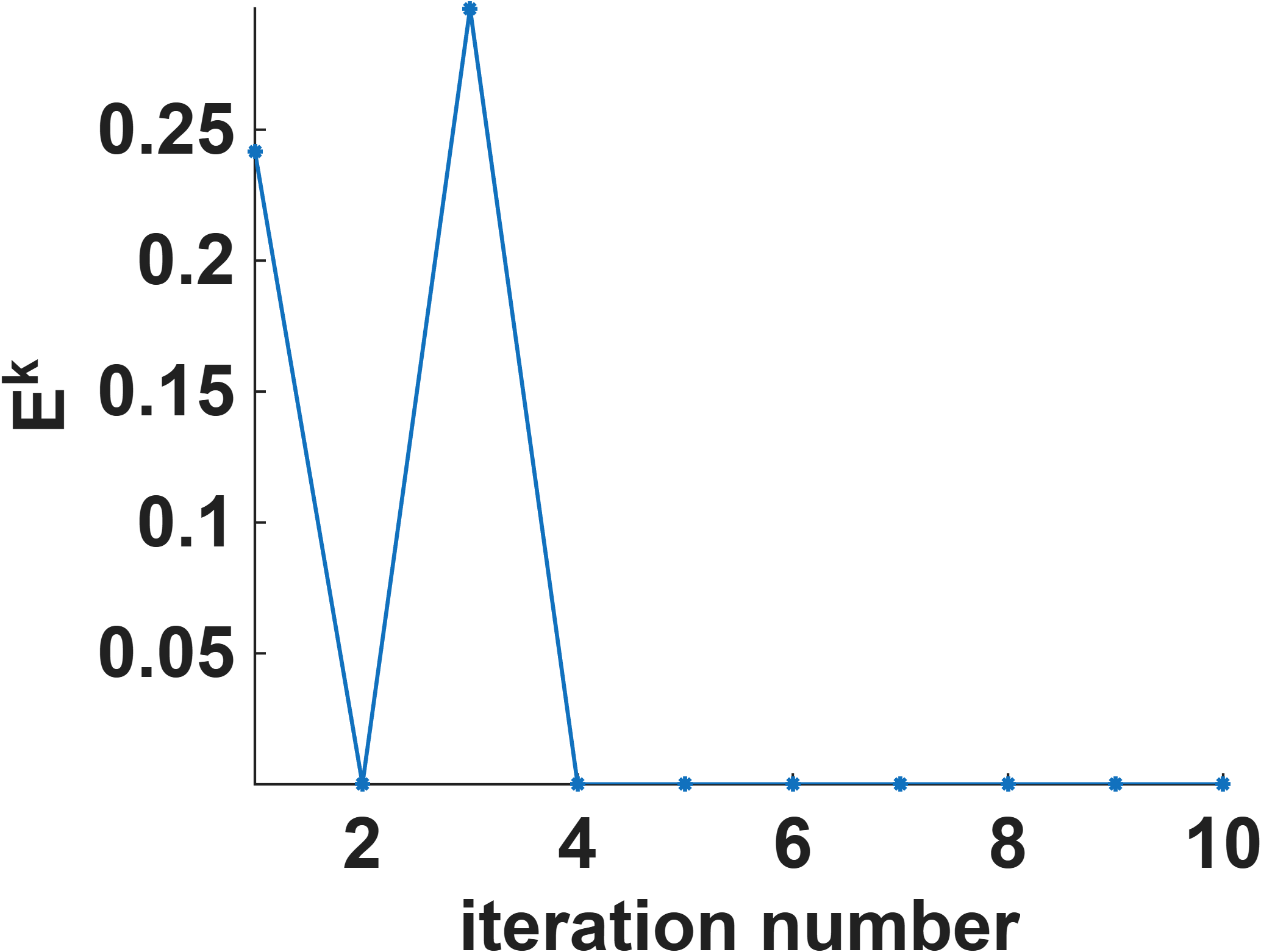}
    \caption{SB-IPAHD}
  \end{subfigure}

  \vspace{0.45em}

  \begin{subfigure}{0.24\textwidth}\centering
    \includegraphics[width=\textwidth]{"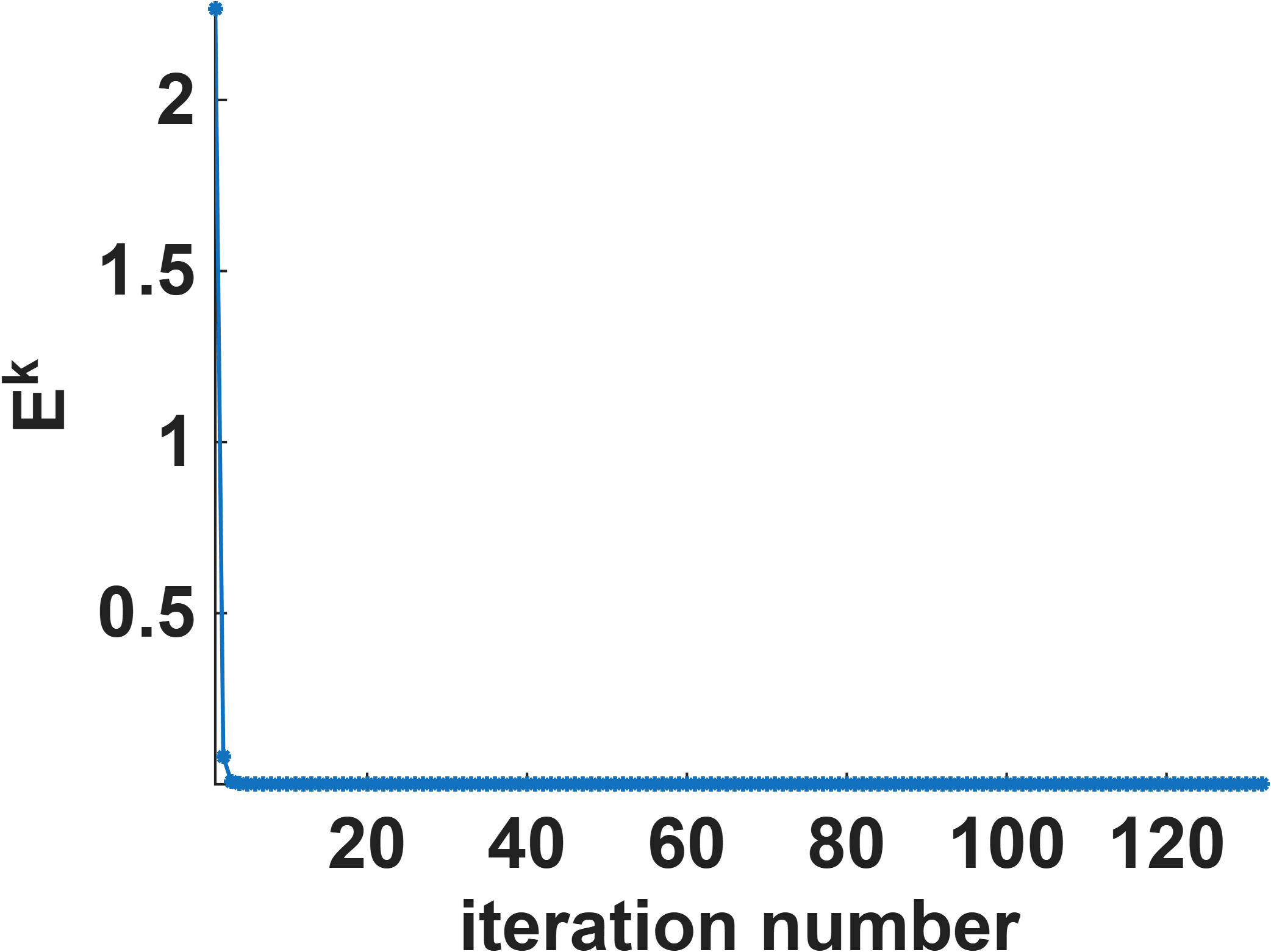"}
    \caption{SB-FD}
  \end{subfigure}\hfill
  \begin{subfigure}{0.24\textwidth}\centering
    \includegraphics[width=\textwidth]{"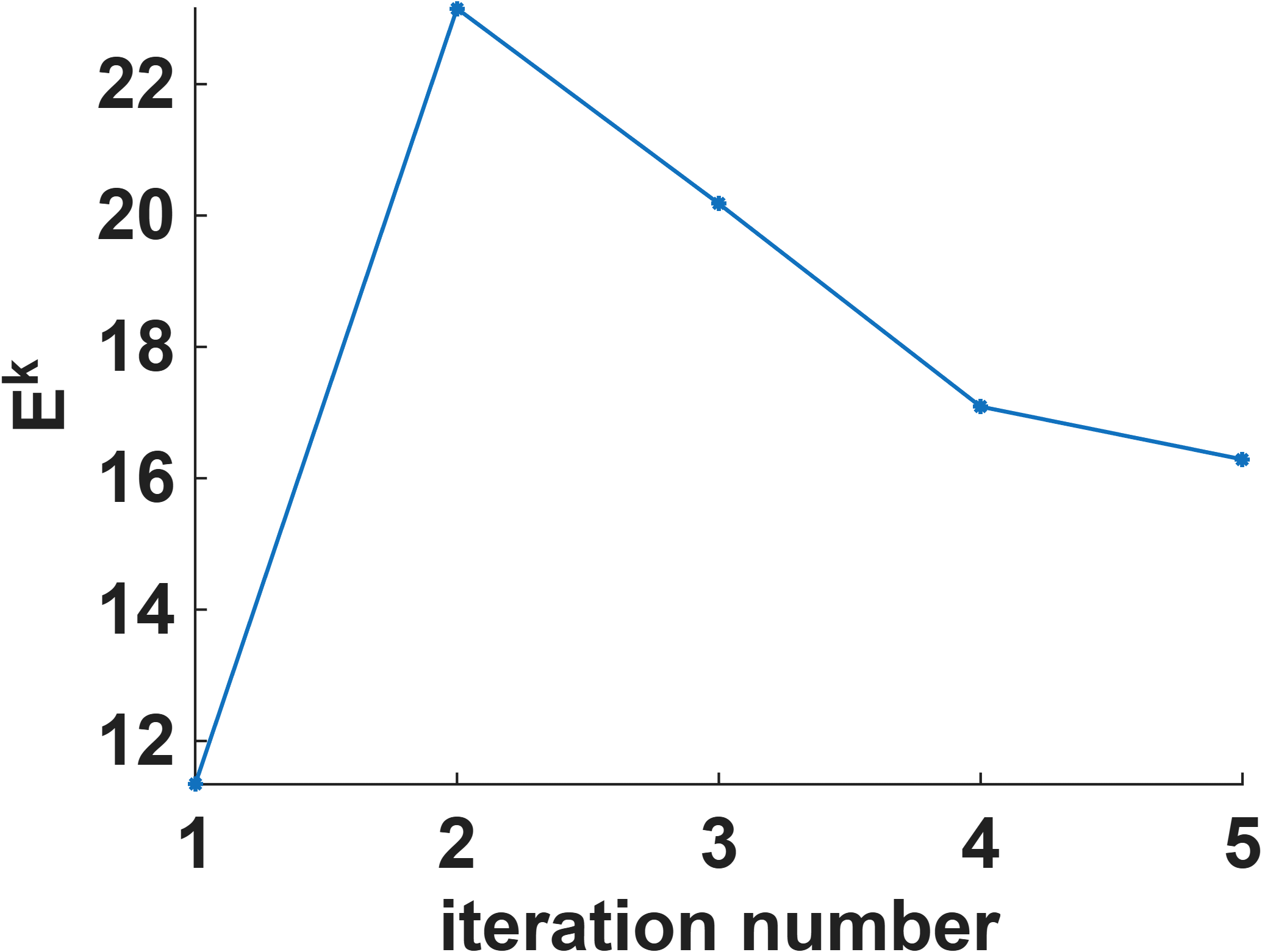"}
    \caption{SB-NM}
  \end{subfigure}\hfill
  \begin{subfigure}{0.24\textwidth}\centering
    \includegraphics[width=\textwidth]{"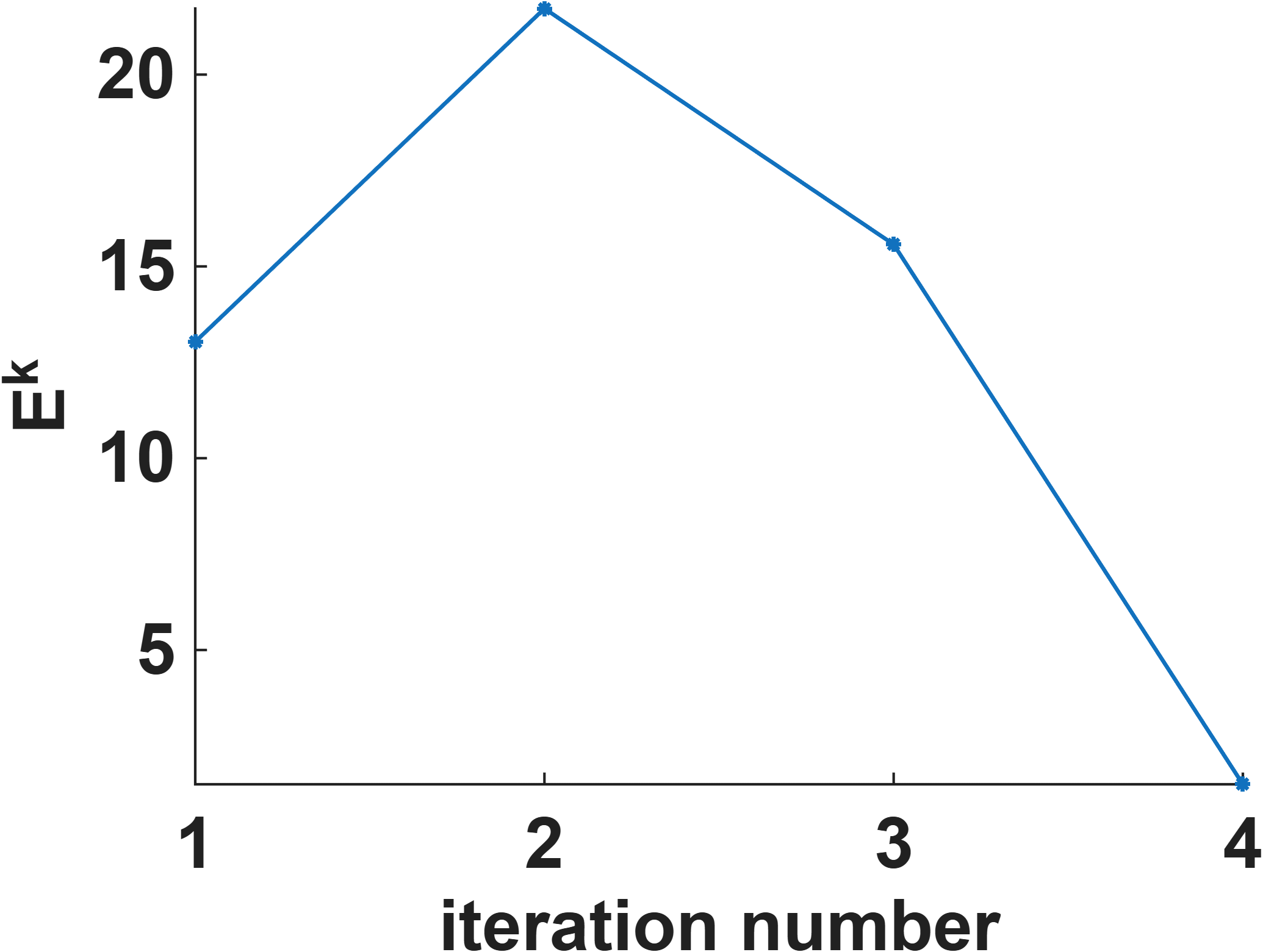"}
    \caption{SB-GD}
  \end{subfigure}

  \caption{Comparison of $E^k$ across methods for the 1D Rastrigin Function with $B=0$. Each subplot is one method (left-to-right, top-to-bottom).}
  \label{fig:swar:1DRas}
\end{figure}

\begin{figure}[H]
  \centering
  \captionsetup[subfigure]{justification=centering}

  \begin{subfigure}{0.24\textwidth}\centering
    \includegraphics[width=\textwidth]{"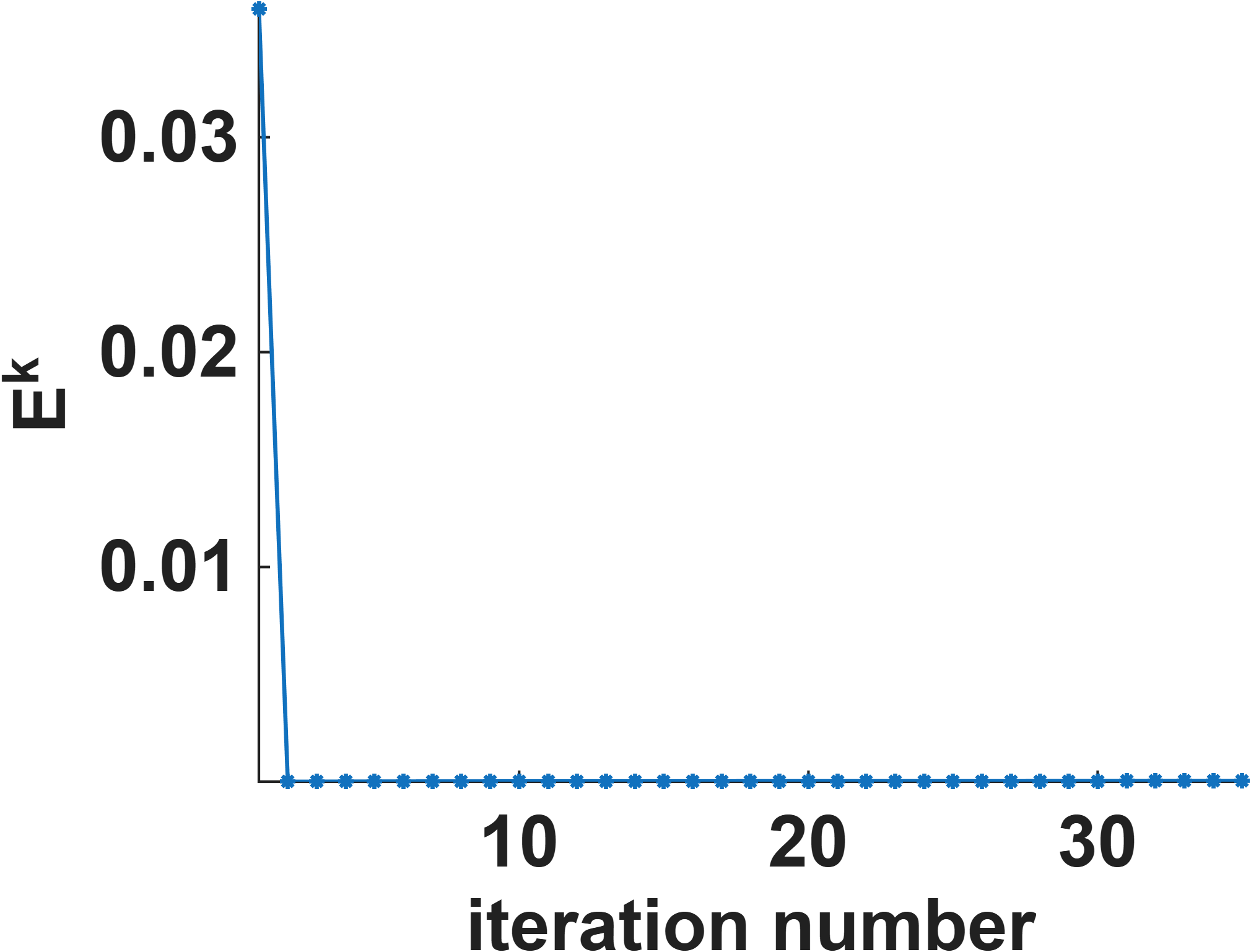"}
    \caption{SB-FB}
  \end{subfigure}\hfill
  \begin{subfigure}{0.24\textwidth}\centering
    \includegraphics[width=\textwidth]{"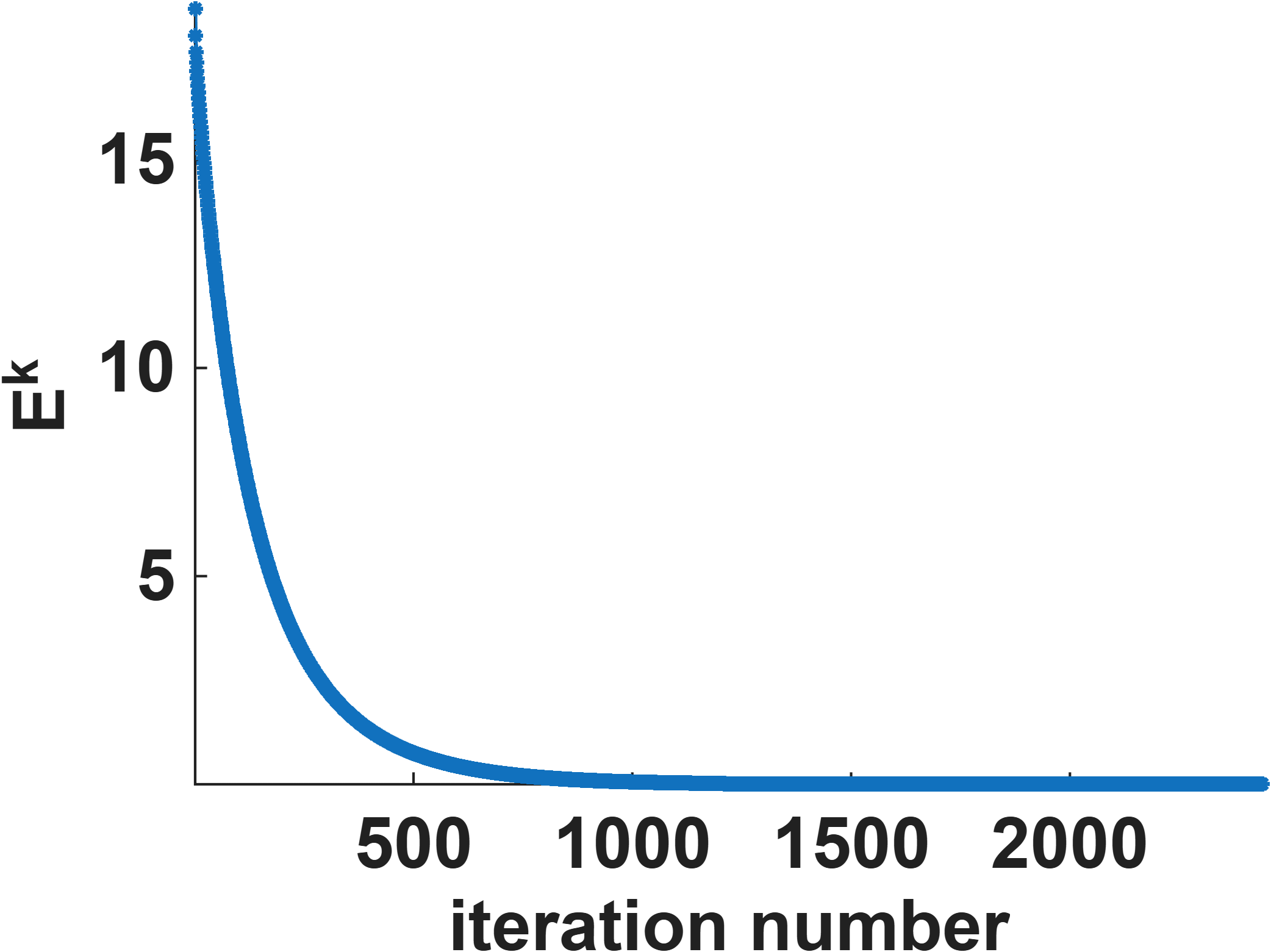"}
    \caption{SB-IMEX-RB}
  \end{subfigure}\hfill
  \begin{subfigure}{0.24\textwidth}\centering
    \includegraphics[width=\textwidth]{"2D_Ras_B0FB_Energy_i100.png"}
    \caption{SB-Semi}
  \end{subfigure}\hfill
  \begin{subfigure}{0.24\textwidth}\centering
    \includegraphics[width=\textwidth]{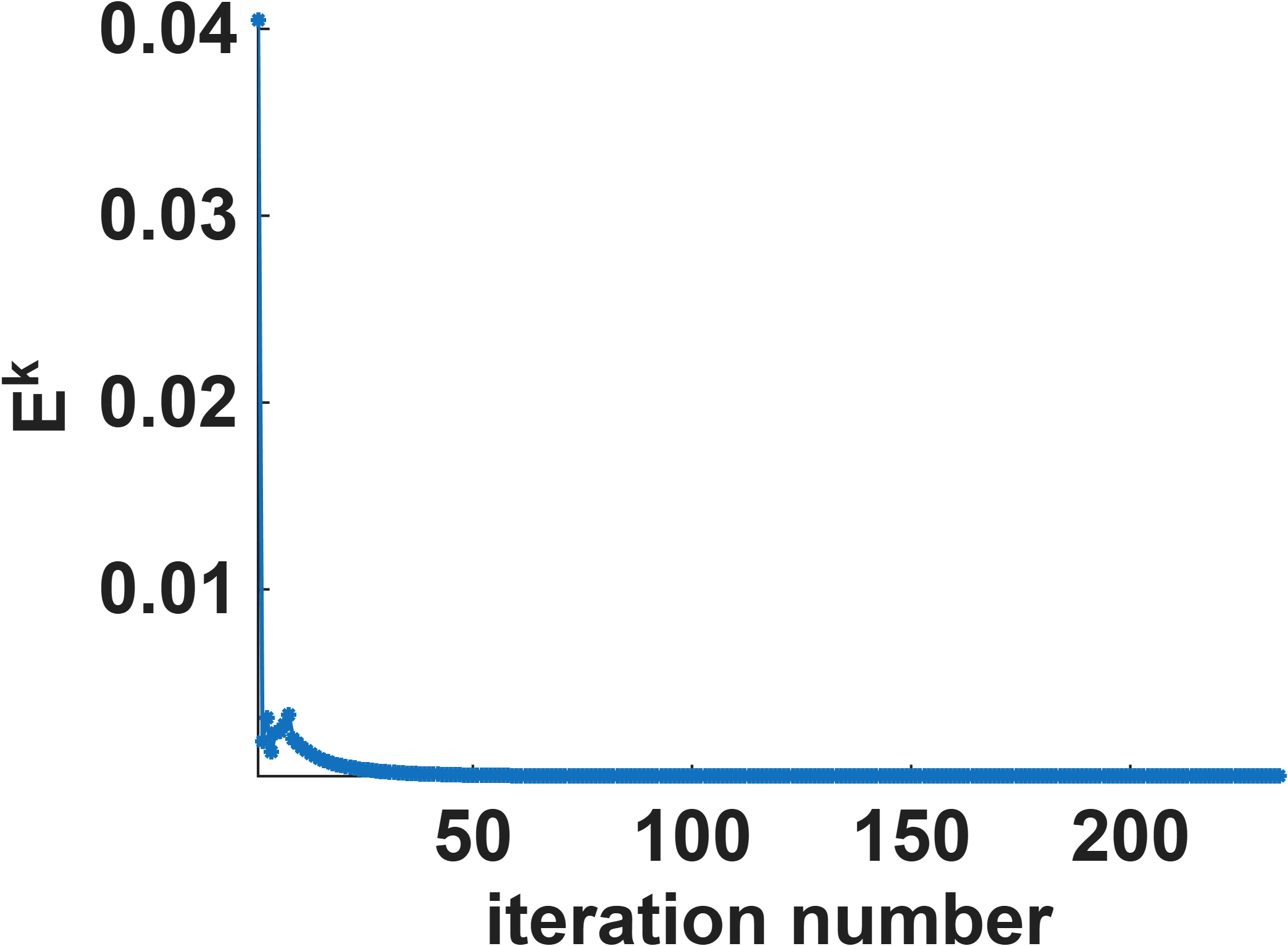}
    \caption{SB-IPAHD}
  \end{subfigure}

  \vspace{0.45em}

  \begin{subfigure}{0.24\textwidth}\centering
    \includegraphics[width=\textwidth]{"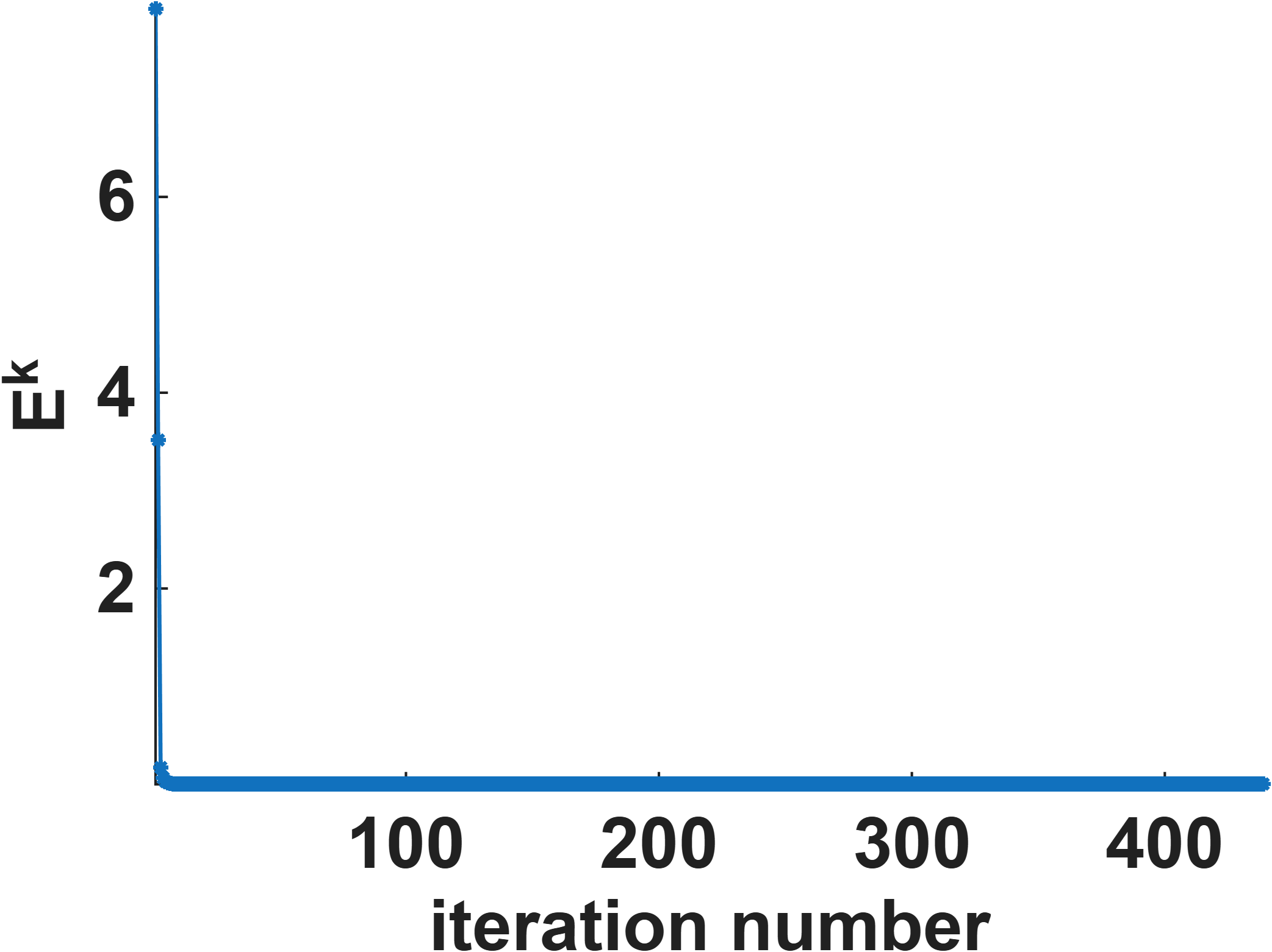"}
    \caption{SB-FD}
  \end{subfigure}\hfill
  \begin{subfigure}{0.24\textwidth}\centering
    \includegraphics[width=\textwidth]{"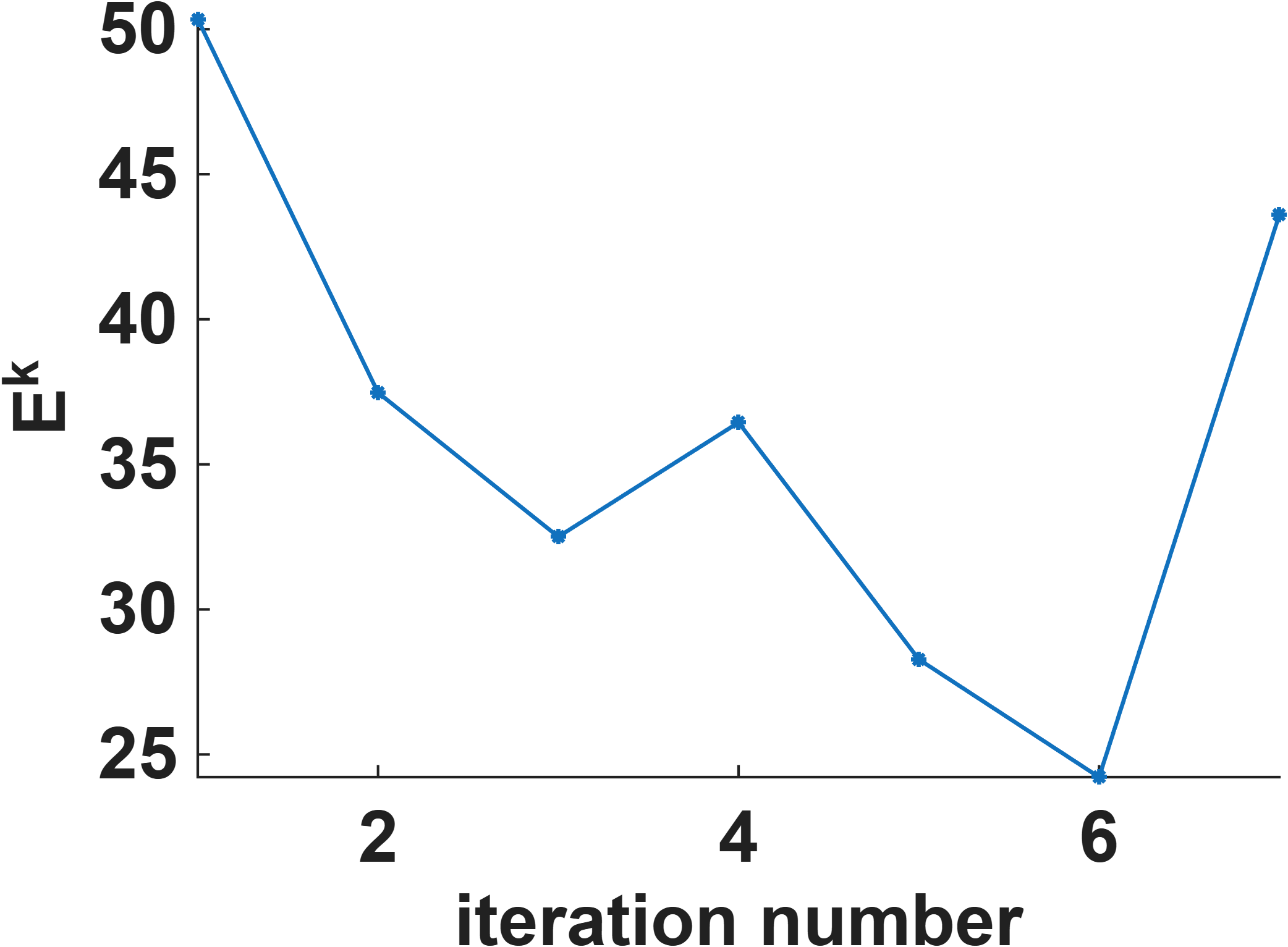"}
    \caption{SB-NM}
  \end{subfigure}\hfill
  \begin{subfigure}{0.24\textwidth}\centering
    \includegraphics[width=\textwidth]{"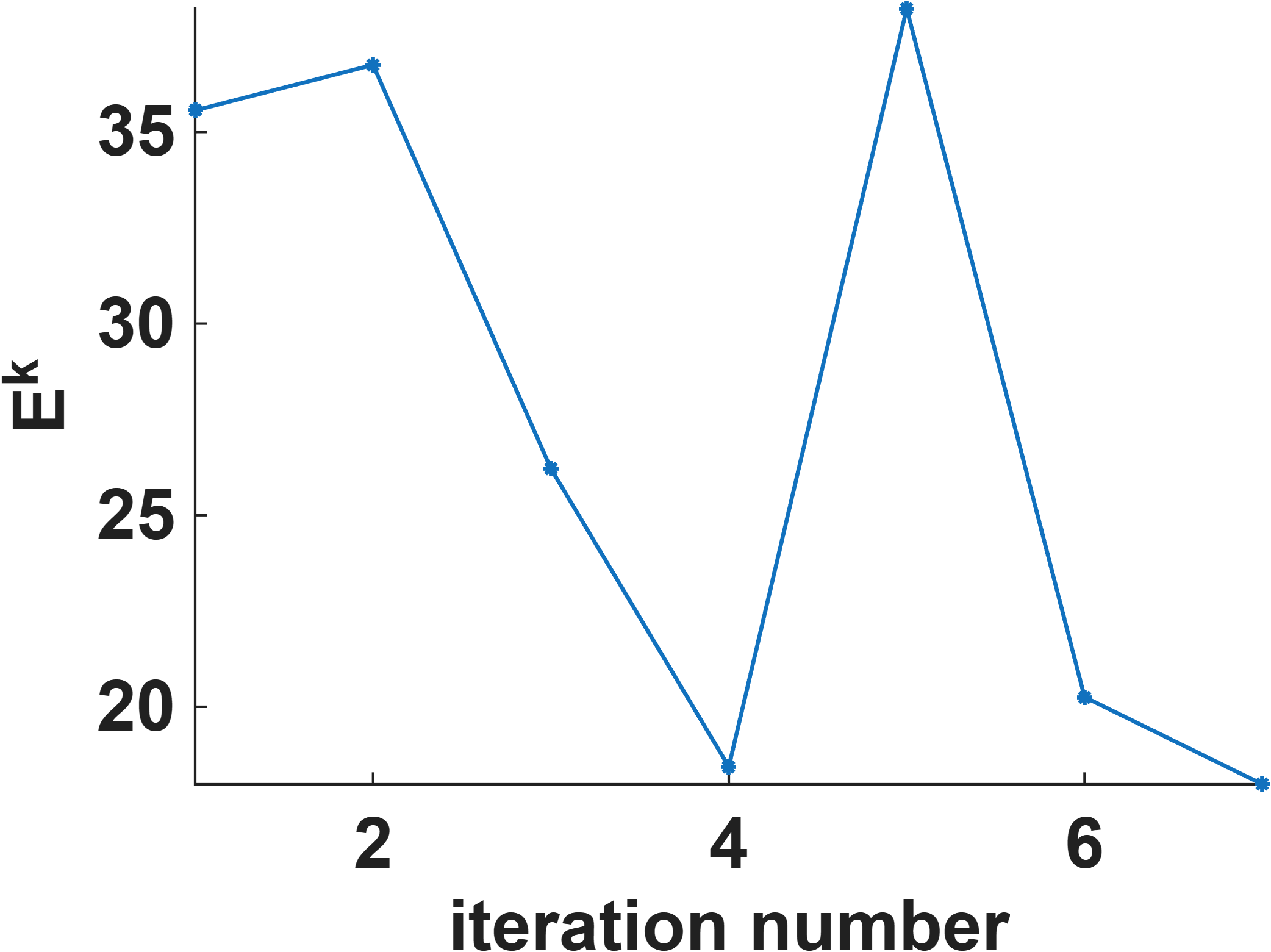"}
    \caption{SB-GD}
  \end{subfigure}

  \caption{Comparison of $E^k$ across methods for the 2D Rastrigin Function with $B=0$. Each subplot is one method (left-to-right, top-to-bottom).}
  \label{fig:swar:2DRas}
\end{figure}

\begin{figure}[H]
  \centering
  \captionsetup[subfigure]{justification=centering}

  \begin{subfigure}{0.24\textwidth}\centering
    \includegraphics[width=\textwidth]{"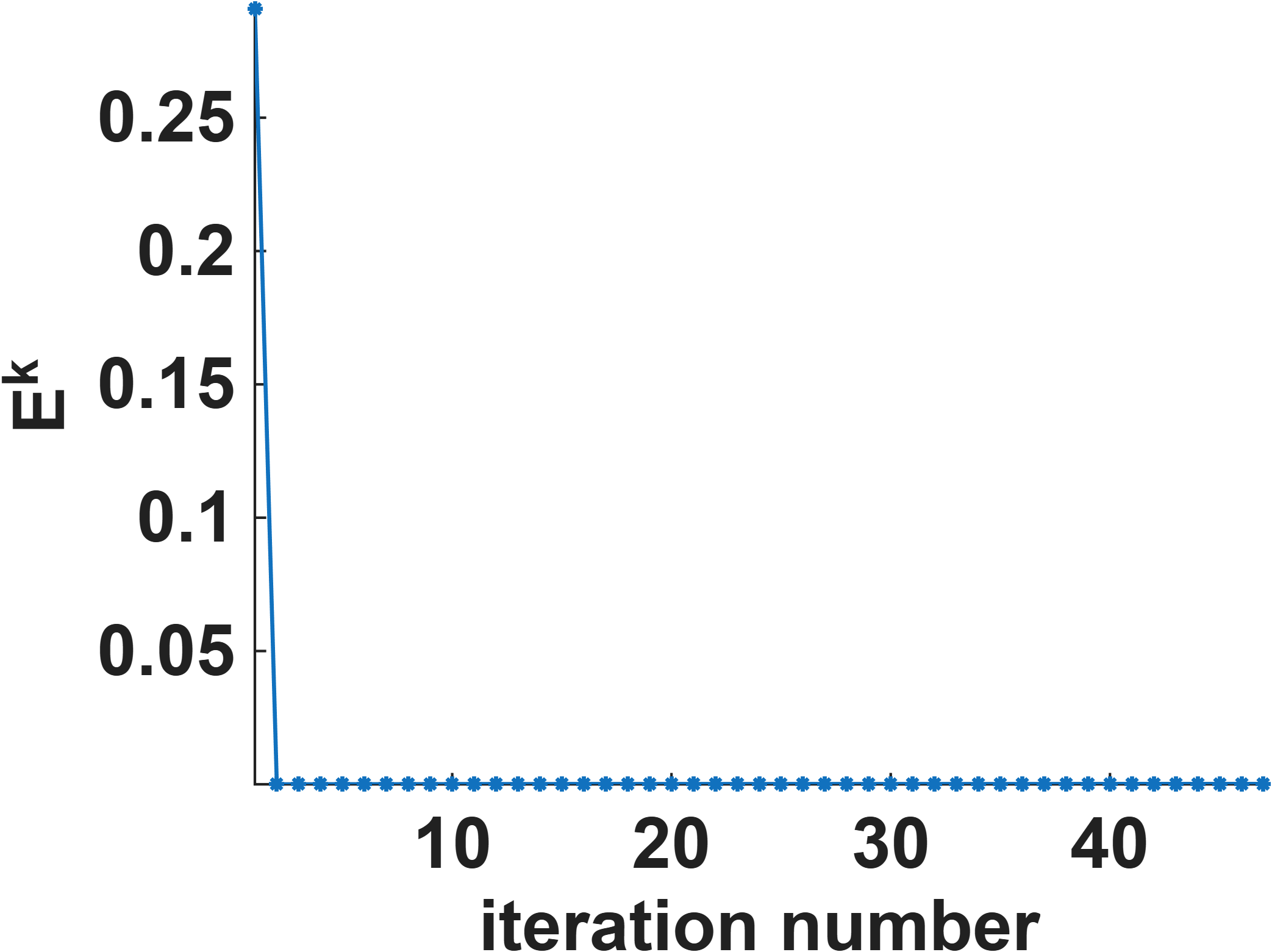"}
    \caption{SB-FB}
  \end{subfigure}\hfill
  \begin{subfigure}{0.24\textwidth}\centering
    \includegraphics[width=\textwidth]{"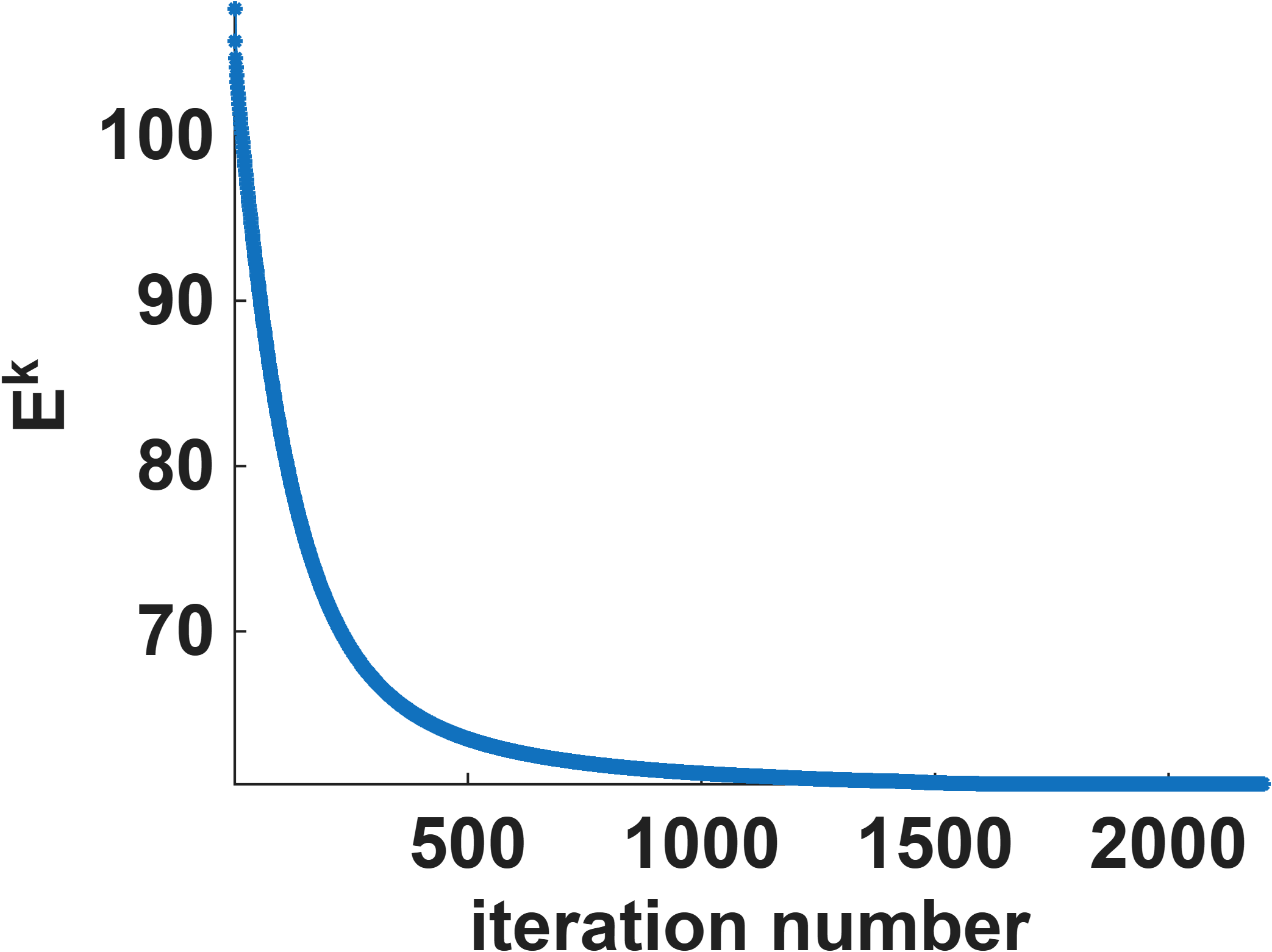"}
    \caption{SB-IMEX-RB}
  \end{subfigure}\hfill
  \begin{subfigure}{0.24\textwidth}\centering
    \includegraphics[width=\textwidth]{"10D_Ras_B0FB_Energy_i1000.png"}
    \caption{SB-Semi}
  \end{subfigure}\hfill
  \begin{subfigure}{0.24\textwidth}\centering
    \includegraphics[width=\textwidth]{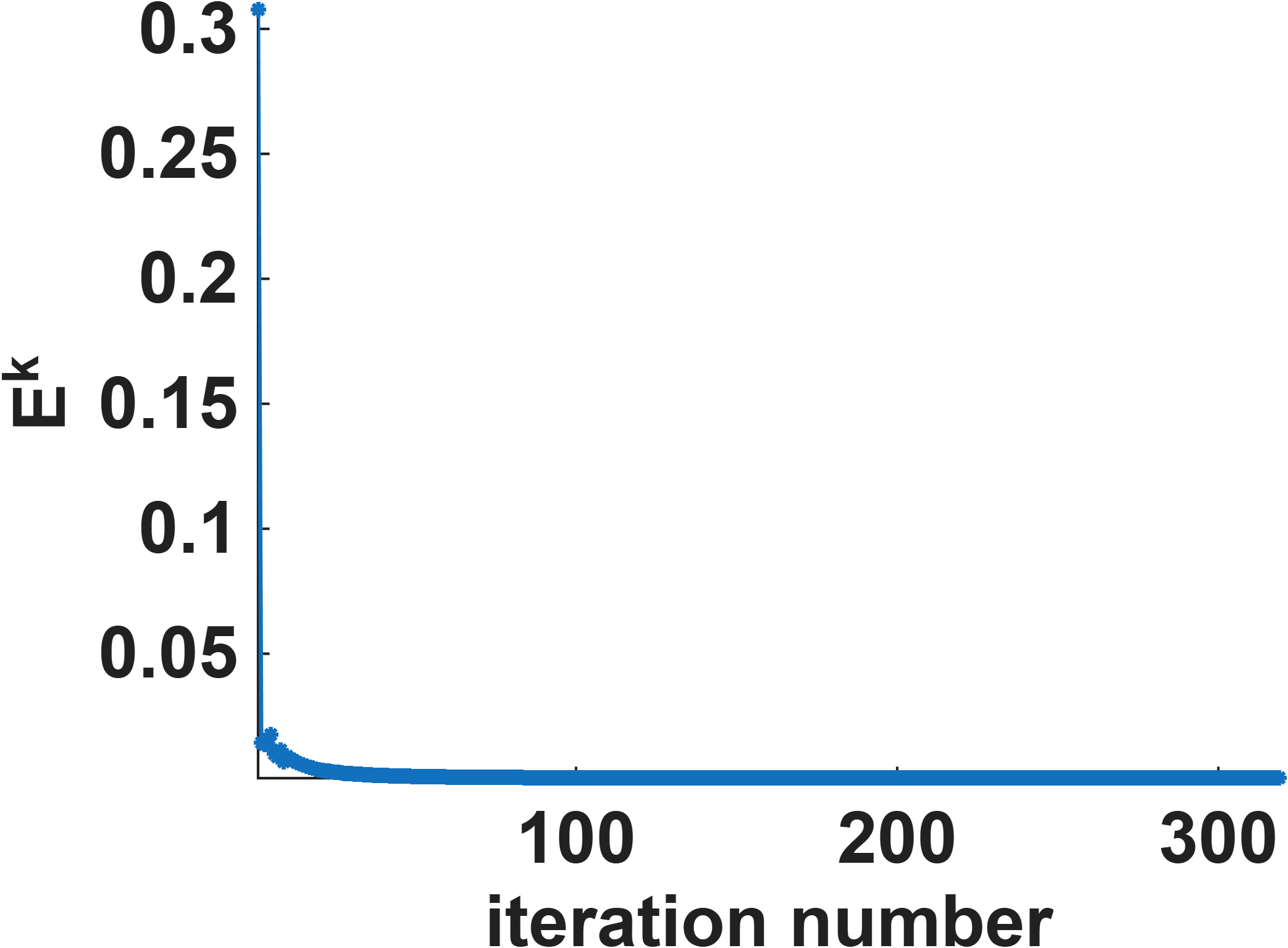}
    \caption{SB-IPAHD}
  \end{subfigure}

  \vspace{0.45em}

  \begin{subfigure}{0.24\textwidth}\centering
    \includegraphics[width=\textwidth]{"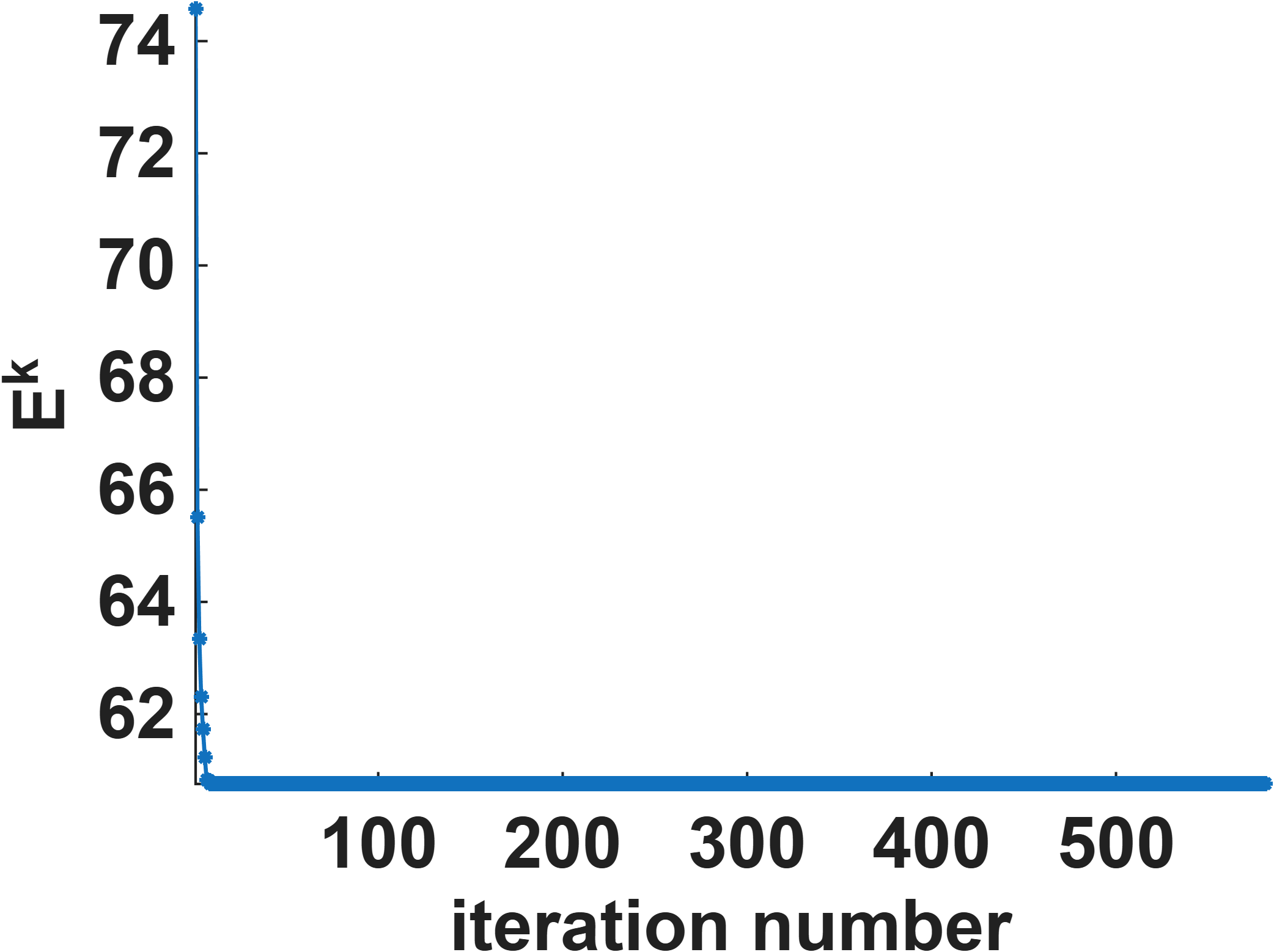"}
    \caption{SB-FD}
  \end{subfigure}\hfill
  \begin{subfigure}{0.24\textwidth}\centering
    \includegraphics[width=\textwidth]{"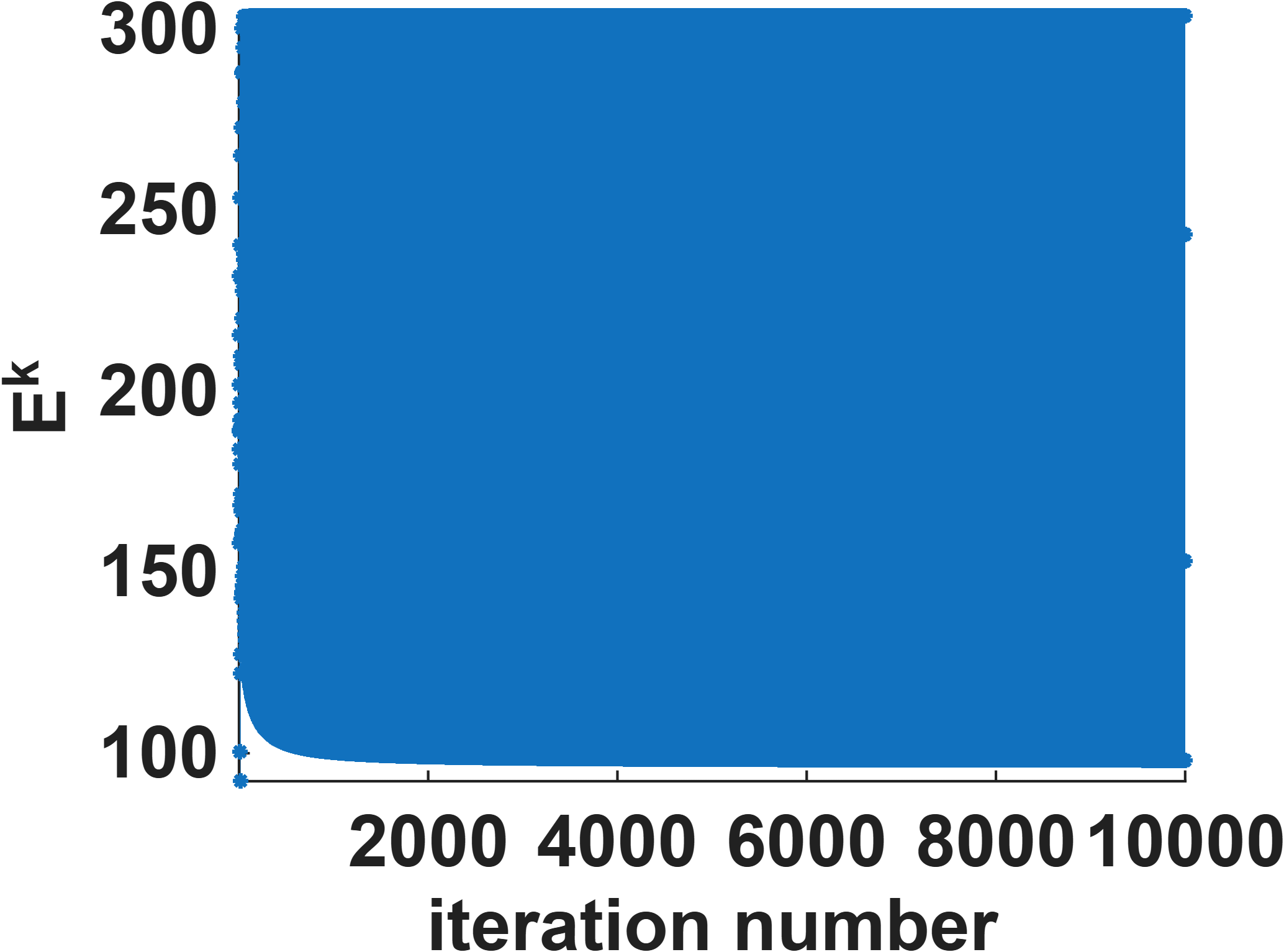"}
    \caption{SB-NM}
  \end{subfigure}\hfill
  \begin{subfigure}{0.24\textwidth}\centering
    \includegraphics[width=\textwidth]{"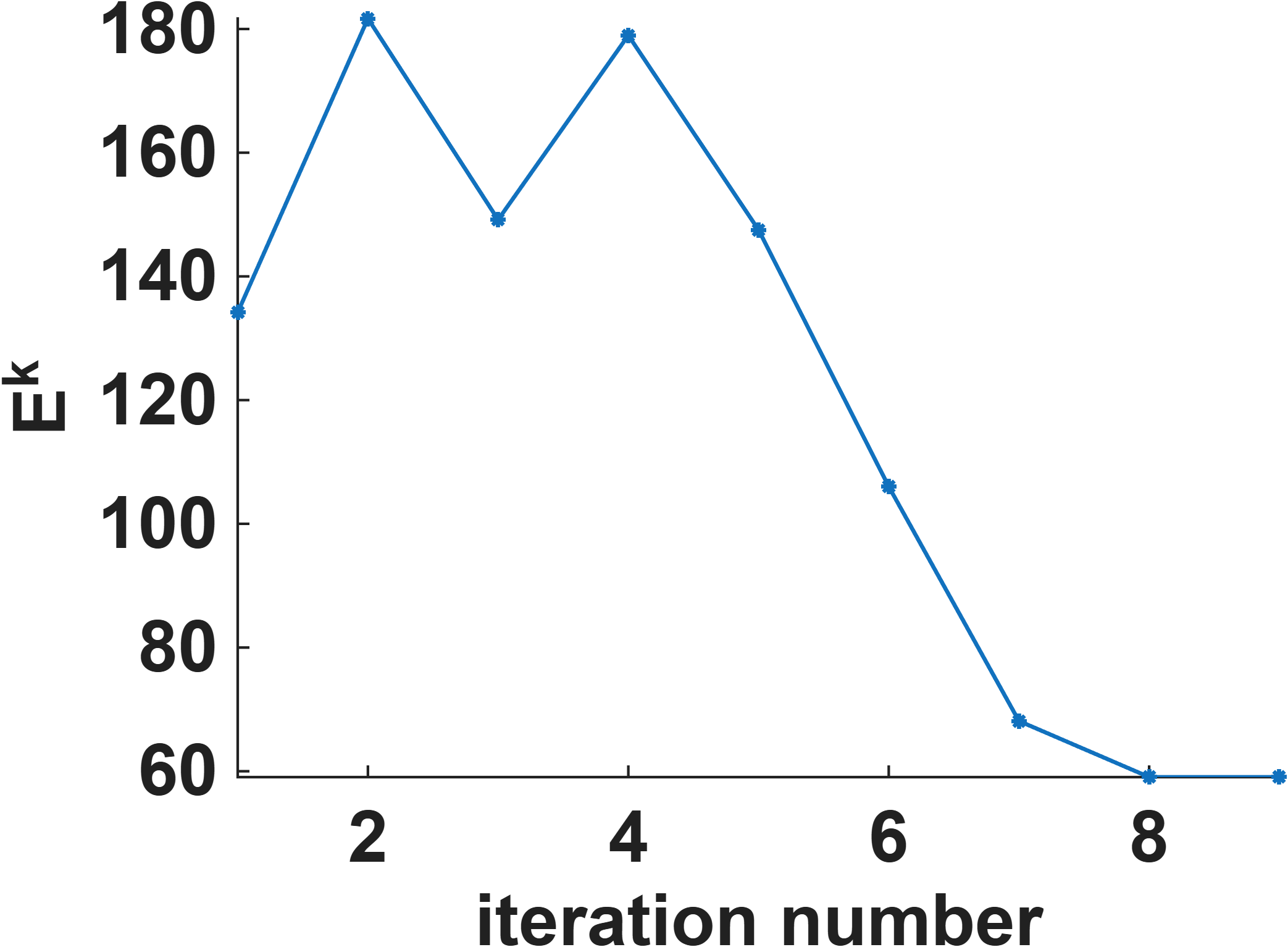"}
    \caption{SB-GD}
  \end{subfigure}

  \caption{Comparison of $E^k$ across methods for the 10D Rastrigin Function with $B=0$. Each subplot is one method (left-to-right, top-to-bottom).}
  \label{fig:swar:10DRas}
\end{figure}

\end{appendices}

\end{document}